\newcommand*{\rom}[1]{\expandafter\@slowromancap\romannumeral #1@}
\newtheorem{thm}{Theorem}[section]
\newtheorem{prop}[thm]{Proposition}
\newtheorem{lem}[thm]{Lemma}
\newtheorem{defn}[thm]{Definition}
\newtheorem{cor}[thm]{Corollary}
\newtheorem{rmk}[thm]{Remark}
\newtheorem{conj}[thm]{Conjecture}
\begin{document}
\author{Zhongwei Yang}
\address{Current: Beijing International Center For Mathematical Research, Peking University, No.5 Yiheyuan Road, Haidian District, Beijing 100871, China}
\email{yzw@connect.ust.hk}
\title[Class polynomials for some affine Hecke algebras]{Class polynomials for some affine Hecke algebras}




\begin{abstract}
Class polynomials attached to affine Hecke algebras were first introduced by X.~He in \cite{He1}. They play an important role in the study of affine Deligne-Lusztig varieties. Motivated by \cite{He2}, we compute the class polynomials attached to an affine Hecke algebra of type (twisted) $\widetilde{A}_2$. Using these class polynomials we prove a conjecture of G\"{o}rtz-Haines-Kottwitz-Reuman for the general linear group, unitary group and division algebra of semisimple rank 2. Furthermore, we discuss some interesting patterns on affine Deligne-Lusztig varieties.
\end{abstract}


\maketitle

\section*{Introduction}

In this paper, we study class polynomials of affine Hecke algebras and apply them to the study of affine Deligne-Lusztig varieties in some affine flag varieties.

Let's recall the classical Deligne-Lusztig variety first. It was introduced by Deligne and Lusztig in 1976, which was used to construct linear representations of finite groups of Lie type (see \cite{DL}). Let $\mathbb{F}_q$ be a finite field and $\mathbf{k}$ be its algebraic closure. Let $G$ be a reductive group defined over $\mathbb{F}_q$ with a Frobenius automorphism $\sigma$ and let $B$ be a Borel subgroup defined over $\mathbb{F}_q$. We have the Bruhat decomposition $G=\bigsqcup_{w\in W}BwB$, where $W$ is the Weyl group. The \emph{Deligne-Lusztig variety} associated with $w\in W$ is defined by\[X_w=\{gB\in G/B\mid g^{-1}\sigma(g)\in BwB\}.\] It is a locally closed subvariety of the flag variety $G/B$ of dimension $\ell(w)$.

The notion of an affine Deligne-Lusztig variety was first introduced by Rapoport in \cite{R}, which is an analogue of Deligne and Lusztig's classical construction. For simplicity, let $G$ be as above and let $L=\mathbf{k}((\epsilon))$ be the field of the Laurent series. Again we denote $\sigma$ by the automorphism on the loop group $G(L)$. Let $I$ be a $\sigma$-stable Iwahori subgroup of $G(L)$. We have the Iwahori-Bruhat decomposition $G(L)=\bigsqcup_{\tilde{w}\in\widetilde{W}}I\tilde{w}I$ where $\widetilde{W}$ is the Iwahori-Weyl group. By definition, the \emph{affine Deligne-Lusztig variety} $X_{\tilde{w}}(b)$ associated with $\tilde{w}\in\widetilde{W}$ and $b\in G(L)$ is defined as\[X_{\tilde{w}}(b)=\{gI\in G(L)/I\mid g^{-1}b\sigma(g)\in I\tilde{w}I\}.\]

The affine Deligne-Lusztig variety $X_{\tilde{w}}(b)$ plays an important role in the study of the reduction of Shimura varieties with Iwahori level structure. More precisely, it is related to the intersection of the Newton stratum and the Kottwitz-Rapoport stratum. On the special fiber of a Shimura variety there are two important stratifications: one is the Newton stratification whose strata are indexed by certain $\sigma$-conjugacy classes $[b]\subset G(L)$; the other is the Kottwitz-Rapoport stratification whose strata are indexed by specific elements $\tilde{w}$ of the extended affine Weyl group $\widetilde{W}$ (see \cite{GHKR1}, \cite{H} and \cite{R} for details).

We are particularly interested in the following questions:
\begin{itemize}
\item When is $X_{\tilde{w}}(b)\neq\emptyset$?
\item If $X_{\tilde{w}}(b)$ is nonempty, what is $\dim X_{\tilde{w}}(b)$?
\end{itemize}
 It has been studied by many other authors see \cite{GHKR2}, \cite{GH}, \cite{GHN}, \cite{He2}, \cite{Re1}, \cite{Re2} and \cite{V}. But an explicit answer for these questions is still unknown for general $\tilde{w}$ and $b$.

In \cite{He2}, He obtained a remarkable breakthrough in the study of affine Deligne-Lusztig varieties in affine flag varieties, and He showed that the emptiness/nonemptiness pattern and dimension formula for affine Deligne-Lusztig varieties can be deduced by class polynomials of affine Hecke algebras. Thus, we can reduce questions in arithmetic geometry and number theory to questions in representation theory and Lie theory.

In this paper, we obtain the following results:
\begin{itemize}
\item We calculate class polynomials of the affine Hecke algebra of type (twisted) $\widetilde{A}_2$.
\item Using class polynomials we obtain emptiness/nonemptiness pattern of $X_{\tilde{w}}(b)$ and its dimension formula for $GL_3$, $U_3$ and $\mathbb{D}_3^{\times}$ (see \S \ref{ADLVbasic} for details).
\item We verify a conjecture of G\"{o}rtz-Haines-Kottwitz-Reuman for the general linear group, unitary group and division algebra of semisimple rank 2 (see Theorem \ref{ghkr}).
\item We obtain a close formula for suberbasic $b$ on the number of the rational points in $X_{\tilde{w}}(b)$.
\item Based on information of class polynomials and the reduction method, it is expected that the irreducible components of $X_{\tilde{w}}(b)$ of maximal dimension are controlled by the leading coefficient of the corresponding class polynomial. Then we describe the leading leading coefficients of corresponding class polynomials.
\end{itemize}

Here, we give a quick review of the content of this paper. In \S\ref{Prem}, we recall some definitions, e.g. Coxeter systems, (affine) Hecke algebras, loop groups, class polynomials and affine Deligne-Lusztig varieties. We also explain the algorithm of computation of class polynomials. We recall the ``Dimension$=$Degree" theorem as well. Section \S\ref{Class} is the most technical part (see my PhD thesis \cite{Y} for more detailed calculations). We first classify all the conjugacy ($\delta$-conjugacy) classes of $\widetilde{W}$, then we calculate the class polynomials. In \S\ref{Appl}, we apply class polynomials to the study of affine Deligne-Lusztig varieties.

\section{Preliminary data}\label{Prem}
\subsection{Coxeter systems and Hecke algebras}\label{Heckeclass}
To provide some context, let's recall Hecke algebras of Coxeter groups first. We follow \cite{Bo}, let $W$ be a group with identity 1 and $\mathbb{S}$ be a set of generators of $W$ such that $\mathbb{S}=\mathbb{S}^{-1}$ and $1\notin \mathbb{S}$. Every element of $W$ is the product of a finite sequence of elements of $\mathbb{S}$. We also assume that every element of $\mathbb{S}$ is of order 2.

\begin{defn}
$(W,\ \mathbb{S})$ is said to be a Coxeter system if it satisfies the following condition:\\
For $s,\ s'\in \mathbb{S}$, let $m_{ss'}$ be the order of $ss'$ and let $I_0$ be the set of pairs $(s,\ s')$ such that $m_{ss'}$ is finite. The generating set $\mathbb{S}$ and the relations $ss'^{m_{ss'}}=1$ for $(s,\ s')\in I_0$ form a presentation of the group $W$.
\end{defn}

In this condition, we call $W$ a Coxeter group. Let $(W,\ \mathbb{S})$ be a Coxeter system and $w\in W$. We recall the \emph{length of $w$} (with respect to $\mathbb{S}$), denoted by $\ell_{\mathbb{S}}(w)$ or simply by $\ell(w)$ is the smallest integer $r\geqslant0$ such that $w$ is the product of some (or equivalently, any) sequence of $r$ elements of $\mathbb{S}$.

We keep notations as in \cite{HY} $\S$1.1. Let $H$ be a group of automorphisms of the group $W$ that preserves $\mathbb{S}$. Set $W'=W\rtimes H$. Then an element in $W'$ is of the form $w \delta$ for some $w \in W$ and $\delta \in H$. We have that $(w \delta) (w' \delta')=w \delta(w') \delta \delta' \in W'$ with $\delta, \delta' \in H$. For $w \in W$ and $\delta \in H$, we set $\ell(w \delta)=\ell(w)$, where $\ell(w)$ is the length of $w$ in the Coxeter group $(W,\mathbb{S})$. Thus $H$ consists of length $0$ elements in $W'$.

For $J \subset \mathbb{S}$, we denote by $W_J$ the standard parabolic subgroup of $W$ generated by $s_j$ for $s_j\in J$ and by $W^J$ (resp. ${}^J W$) the set of minimal coset representatives in $W/W_J$ (resp. $W_J \backslash W$).

Let $\delta \in H$. For each $\delta$-orbit in $\mathbb{S}$, we pick a single element. Let $g$ be the product of these elements (in any order) and put $c=g \delta \in W'$. We call $c$ a \emph{Coxeter element} of $W'$ (see \cite{Spr}). Let $\mathbb{O}$ be a $\delta$-conjugacy class of $W'$, by definition $\mathbb{O}$ is called \emph{Coxeter} if it contains a Coxeter element of $W'$.

Let $(W,\ \mathbb{S})$ be a Coxeter system and $R_0$ be a commutative ring with 1 (in abuse of notation), and let $q\in\mathbb{C}^*$.

\begin{defn}\label{Hecke1}
The Hecke algebra $H$ (with identity $T_1$) associated to the Coxeter system $(W,\ \mathbb{S})$ over $R_0$ is the associative $R_0$-algebra which is given by the following presentations:
\begin{itemize}
\item Generators: $T_s$, $s\in \mathbb{S}$;
\item Relations:  $T_s^2=(q-1)T_s+qT_1$ and $(T_sT_t)^{m_{st}}=(T_tT_s)^{m_{ts}}$ for all $s,\ t\in \mathbb{S}$.
\end{itemize}
\end{defn}

\subsection{Twisted loop groups}
Let $\mathbf{k}$ be the algebraic closure of a finite field $\mathbb{F}_q$. Let $F=\mathbb{F}_q((\epsilon))$ and $L=\mathbf{k}((\epsilon))$ be the fields of Laurent series. Let $G$ be a connected reductive group over $F$ and splits over a tamely ramified extension of $L$. Let $S\subset G$ be a maximal $L$-split torus defined over $F$, $T=Z_G(S)$ be its centralizer and $N$ be the normalizer of $T$. Since $\mathbf{k}$ is algebraically closed, $G$ is quasi-split over $L$. Furthermore, $T$ is a maximal torus.

\begin{defn}
The algebraic loop group $LG$ associated with $G$ is the ind-group scheme over $\mathbf{k}$ representing the functor\[R\longmapsto LG(R)=G(R((\epsilon)))\]on the category of $\mathbf{k}$-algebras.
\end{defn}

Let $\sigma\in Gal(L/F)$ be the Frobenius automorphism. It induces an automorphism on $G(L)$, and we denote the induced automorphism by the same symbol.

Let $\mathbb{A}$ be the \emph{apartment} of $G(L)$ corresponding to $S$ and $\mathfrak{a}_C$ be a $\sigma$-invariant alcove in $\mathbb{A}$. Let $I\subset G(L)$ be the Iwahori subgroup corresponding to  $\mathfrak{a}_C$ over $L$ and $\widetilde{\mathbb{S}}$ be the set of simple reflections at the walls of  $\mathfrak{a}_C$.

By definition, the \emph{finite Weyl group $W$} associated with $S$ is\[W=N(L)/T(L)\]and the \emph{Iwahori-Weyl group $\widetilde{W}$} associated with $S$ is\[\widetilde{W}=N(L)/T(L)_1\]where $T(L)_1$ is the unique parahoric subgroup of $T(L)$. Since $\widetilde{W}\cong I\backslash G(L)/I$, we embed $\widetilde{W}$ set-theoretically into $G(L)$. Thus we identify representatives of $\widetilde{W}$ in $G(L)$ with elements in $\widetilde{W}$. We have the \emph{Iwahori-Bruhat decomposition}\[G(L)=\bigsqcup_{\tilde{w}\in\widetilde{W}}I\tilde{w}I.\]

Let $\Gamma$ be the absolute Galois group $Gal(\bar{L}/L)$ and $P$ be the $\Gamma$-coinvariants of $X_*(T)$. By \cite{HR}, and by choosing a special vertex in $\mathbb{A}$ we identify $T(L)/T(L)_1$ with $P$. We also obtain a split short exact sequence $1\longrightarrow P\longrightarrow\widetilde{W}\longrightarrow W\longrightarrow1$ and a semi-direct product $\widetilde{W}=P\rtimes W$ . The automorphism $\sigma$ on $G(L)$ induces an automorphism on $\widetilde{W}$ and we denote it by $\delta$. The map gives a bijection on $\widetilde{\mathbb{S}}$. We choose a special vertex in $\mathbb{A}$ such that the previous split short exact sequence is preserved by $\delta$. Thus $\delta$ induces an automorphism on $W$ and we denote it by the same symbol.

Let $\Phi$ be the set of roots of $(G,\ S)$ over $L$ and $\Phi_a$ be the set of affine roots. Let $\mathbb{S}$ be the set of simple roots in $\Phi$. We identify $\mathbb{S}$ with the set of simple reflections in $W$, then  $\mathbb{S}$ is a $\delta$-stable proper subset of $\widetilde{\mathbb{S}}$. We denote by $\Phi^+$ the set of positive roots of $\Phi$ and $\rho$ the half of the sum of all positive roots in $\Phi$.

Let $G_1$ be the subgroup of $G(L)$ generated by all parahoric subgroups. We take $N_1=N(L)\cap G_1$, then by \cite{BT}, the quadruple $(G_1,\ I,\ N_1,\ \widetilde{\mathbb{S}})$ is a double Tits system with affine Weyl group
\[W_a=N_1/(N(L)\cap I).\]

We identify $W_a$ to the Iwahori-Weyl group of the simply connected cover $G_{sc}$ of the the derived group $G_{der}$ of $G$. Let $T_{sc}$ be the maximal torus of $G_{sc}$ giving by $T$. Thus we have $W_a=X_*(T_{sc})_{\Gamma}\rtimes W$. It showed that there exists a reduced root system $\Delta$ such that $W_a=Q^{\vee}(\Delta)\rtimes W(\Delta)$, where $Q^{\vee}(\Delta)$ is the coroot lattice of $\Delta$. We write $Q$ for $Q^{\vee}(\Delta)$ and identify $Q$ with $X_*(T_{sc})_{\Gamma}$ and $W(\Delta)$ with $W$.

\begin{rmk}
The pairs $(W,\ \mathbb{S})$ and $(W_a,\ \widetilde{\mathbb{S}})$ are Coxeter systems. Thus we already have length functions on $W$ and $W_a$ (see \S \ref{Heckeclass}). But the Iwahori-Weyl group $\widetilde{W}$ is not a Coxeter group. For any element $\tilde{w}\in\widetilde{W}$, the length of $\tilde{w}$  (denoted as $\ell(\tilde{w})$) is the number of ``affine root hyperplanes" between $\tilde{w}(\mathfrak{a}_C)$ and $\mathfrak{a}_C$ in $\mathbb{A}$. Let $\Omega$ be the subgroup of $\widetilde{W}$ consisting length 0 elements. The Iwahori-Weyl group $\widetilde{W}$ is a quasi-Coxeter group in the sense that $\widetilde{W}=W_a\rtimes\Omega$.
\end{rmk}

\subsection{Class polynomials}
Analogizing the definition of a Hecke algebra associated to a Coxeter system, we recall a Hecke algebra associated with a quasi-Coxeter system.

\begin{defn}
Let $\widetilde{H}$ be the Hecke algebra associated with $\widetilde{W}$, i.e., $\widetilde{H}$ is the associative $A=\mathbb{Z}[v,\ v^{-1}]$-algebra with basis $T_{\tilde{w}}$ for $\tilde{w}\in\widetilde{W}$ and multiplication is given by \[T_{\tilde{x}}T_{\tilde{y}}=T_{\tilde{x}\tilde{y}},\ \ \ \ \ \  if\  \ \ell(\tilde{x})+\ell(\tilde{y})=\ell(\tilde{x}\tilde{y});\]
\[(T_s-v)(T_s+v^{-1})=0,\ \ \ \ \ for \ \ s\in\widetilde{\mathbb{S}}.\]
\end{defn}
Note that the map $T_{\tilde{w}}\mapsto T_{\delta(\tilde{w})}$ defines an $A$-algebra automorphism of $\widetilde{H}$, and we still denote it as $\delta$.

For any $\tilde{w},\ \tilde{w}'\in\widetilde{W}$ are said to be $\delta$-conjugate if $\tilde{w}'=\tilde{x}\tilde{w}\delta(\tilde{x})^{-1}$ for some $\tilde{x}\in\widetilde{W}$. For $\tilde{w},\ \tilde{w}'\in \widetilde{W}$ and $s_i\in\widetilde{\mathbb{S}}$, we write $\tilde{w}\xrightarrow[]{s_i}_{\delta}\tilde{w}'$ if $\tilde{w}'=s_i\tilde{w}s_{\delta(i)}$ and $\ell(\tilde{w})'\leqslant\ell(\tilde{w})$. We write $\tilde{w}\xrightarrow[]{}_{\delta}\tilde{w}'$ if there is a sequence $\tilde{w}=\tilde{w}_0,\ \tilde{w}_1,\ \cdots,\ \tilde{w}_n=\tilde{w}'$ of elements in $\widetilde{W}$ such that for any $k$, $\tilde{w}_{k-1}\xrightarrow[]{s_i}_{\delta}\tilde{w}_k$ for some $s_i\in\tilde{\mathbb{S}}$. We write $\tilde{w}\approx_{\delta}\tilde{w}'$ if $\tilde{w}\xrightarrow[]{}_{\delta}\tilde{w}'$ and $\tilde{w}'\xrightarrow[]{}_{\delta}\tilde{w}$. Write $\tilde{w}\tilde{\approx}_{\delta}\tilde{w}'$ if $\tilde{w}\approx_{\delta}\tau\tilde{w}'\delta(\tau)^{-1}$ for some $\tau\in\Omega$.

We say that $\tilde{w},\ \tilde{w}'\in \widetilde{W}$ are elementarily strongly $\delta$-conjugate if $\ell(\tilde{w})=\ell(\tilde{w}')$ and there exists $\tilde{x}\in\widetilde{W}$ such that $\tilde{w}'=\tilde{x}\tilde{w}\delta(\tilde{x})^{-1}$ and $\ell(\tilde{x}\tilde{w})=\ell(\tilde{x})+\ell(\tilde{w})$ or $\ell(\widetilde{w}\delta(\widetilde{x})^{-1})=\ell(\tilde{x})+\ell(\tilde{w})$. And we call that $\tilde{w},\ \tilde{w}'$ are strongly $\delta$-conjugate if there is a sequence $\tilde{w}=\tilde{w}_0,\ \tilde{w}_1,\ \cdots,\ \tilde{w}_n=\tilde{w}'$ of elements in $\widetilde{W}$ such that for any $i$, $\tilde{w}_{i-1}$ is elementarily strongly $\delta$-conjugate to $\tilde{w}_i$. We write $\tilde{w}\tilde{\thicksim}_{\delta}\tilde{w}'$ if $\tilde{w}$ and $\tilde{w}'$ are strongly $\delta$-conjugate.

In \cite{HN}, He and Nie proved that minimal length elements $w_{\mathbb{O}}$ of any $\delta$-conjugacy class $\mathbb{O}$ of $\widetilde{W}$ satisfy some special properties, generalizing the results of Geck and Pfeiffer \cite{GP} on finite Weyl groups. These properties play a key role in the study of affine Deligne-Lusztig varieties and affine Hecke algebras. In \cite{He2} and \cite{HN}, it is showed that for any $\delta$-conjugacy class $\mathbb{O}$, we can fix a minimal length representative $w_{\mathbb{O}}$ and the image of $T_{w_{\mathbb{O}}}$ in $\tilde{H}/[\tilde{H},\ \tilde{H}]_{\delta}$ where $[\tilde{H},\ \tilde{H}]_{\delta}$ is the $A$-submodule (we regard $\tilde{H}$ as a left $A$-module) of $\tilde{H}$ generated by all $\delta$-commutators (i.e. for any $h,\ h'\in\tilde{H}$, $[h,\ h']_{\delta}=hh'-h'\delta{(h)}$). Moreover, $T_{w_{\mathbb{O}}}$ is independent of the choice of $w_{\mathbb{O}}$ and forms a basis of  $\tilde{H}/[\tilde{H},\ \tilde{H}]_{\delta}$. It is proved in \cite[Theorem 6.7]{HN} that

\begin{thm}[He-Nie]
The elements $T_{\tilde{w}_{\mathbb{O}}}$ form an $A$-basis of $\widetilde{H}/[\widetilde{H},\ \widetilde{H}]_{\delta}$, here $\mathbb{O}$ runs over all the $\delta$-conjugacy classes of $\widetilde{W}$.
\end{thm}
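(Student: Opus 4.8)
The plan is to prove the two halves of the statement separately: that the elements $T_{\tilde w_{\mathbb O}}$ \emph{span} $\widetilde H/[\widetilde H,\widetilde H]_\delta$ over $A$, and that they are \emph{$A$-linearly independent}. Write $\bar H=\widetilde H/[\widetilde H,\widetilde H]_\delta$ and use $\equiv$ for equality of images in $\bar H$. Since $\delta(T_{\tilde x})=T_{\delta(\tilde x)}$, the defining relation of the $\delta$-commutator gives the basic congruence $T_{\tilde x}T_{\tilde y}\equiv T_{\tilde y}T_{\delta(\tilde x)}$ for all $\tilde x,\tilde y\in\widetilde W$. I would first record the two elementary reduction relations in $\bar H$ attached to an arrow $\tilde w\xrightarrow{s}_{\delta}\tilde w'$ with $\tilde w'=s\tilde w\delta(s)$ and $\ell(\tilde w')\le\ell(\tilde w)$. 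By a standard length lemma one may assume $\ell(s\tilde w)<\ell(\tilde w)$, so that $T_{\tilde w}=T_sT_{s\tilde w}\equiv T_{s\tilde w}T_{\delta(s)}$; then the quadratic relation $T_{\delta(s)}^2=(v-v^{-1})T_{\delta(s)}+1$ yields
\[
T_{\tilde w}\equiv T_{\tilde w'}\ \ \text{if }\ell(\tilde w')=\ell(\tilde w),\qquad
T_{\tilde w}\equiv(v-v^{-1})T_{s\tilde w}+T_{\tilde w'}\ \ \text{if }\ell(\tilde w')=\ell(\tilde w)-2,
\]
where in the second case both $s\tilde w$ and $\tilde w'$ are strictly shorter than $\tilde w$.

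For spanning I would argue by induction on $\ell(\tilde w)$. If $\tilde w$ already has minimal length in its class $\mathbb O$ there is nothing to do. Otherwise the minimal-length properties of He and Nie recalled above---existence of a length-non-increasing chain $\tilde w\to_\delta\tilde w''$ to a minimal element, together with the fact that any two minimal elements of $\mathbb O$ are linked by length-preserving strong conjugations---let me apply the displayed relations step by step: the length-preserving steps replace $T_{\tilde w}$ by an equivalent $T$, and the first length-dropping step rewrites $T_{\tilde w}$ as an $A$-combination of $T$'s of strictly smaller length, to which the induction hypothesis applies. This simultaneously produces the class polynomials $f_{\tilde w,\mathbb O}\in A$ with $T_{\tilde w}\equiv\sum_{\mathbb O}f_{\tilde w,\mathbb O}T_{\tilde w_{\mathbb O}}$; their being well defined (independent of the chosen chain) is exactly what the He--Nie properties guarantee, and in particular $f_{\tilde w_{\mathbb O},\mathbb O'}$ equals $1$ if $\mathbb O=\mathbb O'$ and $0$ otherwise.

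The independence is where the real difficulty lies. The cleanest route I would take is to specialize $v=1$: the quadratic relation becomes $T_s^2=1$, so $\widetilde H\otimes_A\mathbb Z=\mathbb Z[\widetilde W]$, and since $xy$ and $y\delta(x)=x^{-1}(xy)\delta(x)$ are $\delta$-conjugate (via $x^{-1}$), one checks that $[\,\cdot\,,\cdot\,]_\delta$ collapses $\mathbb Z[\widetilde W]$ to exactly the free $\mathbb Z$-module on the $\delta$-conjugacy classes, with basis the images of the $\tilde w_{\mathbb O}$. Thus any relation $\sum_{\mathbb O}c_{\mathbb O}T_{\tilde w_{\mathbb O}}\equiv0$ with $c_{\mathbb O}\in A$ (only finitely many nonzero) forces $c_{\mathbb O}(1)=0$, i.e. $(v-1)\mid c_{\mathbb O}$. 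To promote this single specialization to genuine $A$-independence I would prove that $\bar H$ is torsion-free over $A$: then writing $c_{\mathbb O}=(v-1)c_{\mathbb O}'$ the relation $(v-1)\sum c_{\mathbb O}'T_{\tilde w_{\mathbb O}}\equiv0$ implies $\sum c_{\mathbb O}'T_{\tilde w_{\mathbb O}}\equiv0$, and iterating shows $(v-1)^k\mid c_{\mathbb O}$ for every $k$, whence $c_{\mathbb O}=0$ because a nonzero Laurent polynomial is divisible by $(v-1)$ only finitely often.

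I expect the torsion-freeness (equivalently, the $A$-freeness) of the cocenter to be the main obstacle, as it is precisely the content invisible to a single specialization. An alternative and more structural way to finish is to show that the well-defined class polynomials assemble into an $A$-linear map $\Phi\colon\widetilde H\to\bigoplus_{\mathbb O}A$, $T_{\tilde w}\mapsto(f_{\tilde w,\mathbb O})_{\mathbb O}$, annihilating every $\delta$-commutator and hence descending to $\bar\Phi\colon\bar H\to\bigoplus_{\mathbb O}A$ with $\bar\Phi(T_{\tilde w_{\mathbb O}})=e_{\mathbb O}$; independence of the $e_{\mathbb O}$ then forces independence of the $T_{\tilde w_{\mathbb O}}$. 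Verifying that $\Phi$ kills all commutators---reducing a general $\delta$-commutator to the generating ones $[T_s,T_{\tilde u}]_\delta$ and matching them against the two reduction relations---is again the crux, and it is here that the full strength of the He--Nie minimal-length machinery is indispensable.
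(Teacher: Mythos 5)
A preliminary remark: the paper does not prove this statement at all --- it quotes it from [HN, Theorem 6.7], and the inductive recipe for $f_{\tilde{w},\mathbb{O}}$ recalled immediately afterwards already presupposes it --- so your attempt has to stand on its own, measured against the proof in [HN]. The spanning half of your argument is fine: the two reduction relations in $\bar{H}$ are correct (granting the left/right variant you invoke via the standard length lemma), and induction on $\ell(\tilde{w})$, together with the He--Nie facts that every element reduces to a minimal-length one and that $T_{\tilde{w}}\equiv T_{\tilde{w}'}$ for strongly $\delta$-conjugate minimal-length elements, yields $\widetilde{H}=\sum_{\mathbb{O}}A\,T_{\tilde{w}_{\mathbb{O}}}+[\widetilde{H},\widetilde{H}]_{\delta}$. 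But one claim in that paragraph is wrong in a telling way: the well-definedness of the coefficients $f_{\tilde{w},\mathbb{O}}$ is \emph{not} ``exactly what the He--Nie properties guarantee.'' Once spanning is known, uniqueness of the coefficients is logically equivalent to the linear independence you still have to prove; the reduction machinery by itself gives, for each chosen chain, \emph{some} expression, and nothing more.

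The independence half is where the proposal genuinely fails. The $v=1$ specialization correctly shows $(v-1)\mid c_{\mathbb{O}}$, since $\widetilde{H}/(v-1)\widetilde{H}\cong\mathbb{Z}[\widetilde{W}]$ and the $\delta$-commutator quotient of the group algebra is visibly free on the $\delta$-conjugacy classes. But the promotion step requires that $\bar{H}$ have no $(v-1)$-torsion, i.e.\ that $(v-1)h\in[\widetilde{H},\widetilde{H}]_{\delta}$ forces $h\in[\widetilde{H},\widetilde{H}]_{\delta}$, and you leave this unproved while acknowledging it is ``the main obstacle.'' That is not a loose end one may defer: nothing in your argument constrains the submodule $[\widetilde{H},\widetilde{H}]_{\delta}$ beyond its image at $v=1$, and torsion-freeness of the cocenter is essentially as strong as the theorem itself. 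The same applies to your fallback: constructing $\Phi=(f_{\cdot,\mathbb{O}})_{\mathbb{O}}$ and verifying that it annihilates every $\delta$-commutator (equivalently, that all reduction paths are compatible) is precisely the content of [HN, Theorem 6.7], not a routine check. So what you have is an accurate map of where the difficulty sits --- spanning is cheap, independence is the theorem --- with the difficulty itself left intact; as a proof it is incomplete.
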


From now on we denote $T_{\tilde{w}_{\mathbb{O}}}$ as $T_{\mathbb{O}}$ for simplicity. For any $\tilde{w}\in\widetilde{W}$ and a $\delta$-conjugacy class $\mathbb{O}$, there exists a unique $f_{\tilde{w},\mathbb{O}}\in A$ such that\[T_{\tilde{w}}\equiv\sum_{\mathbb{O}}f_{\tilde{w},\mathbb{O}}T_{\mathbb{O}}\ \ \mod\ [\widetilde{H},\ \widetilde{H}]_{\delta}.\]

$f_{\tilde{w},\mathbb{O}}$ is a polynomial in $\mathbb{Z}[v-v^{-1}]$ with nonnegative coefficient. This is called the \textbf{class polynomial} attached to $\tilde{w}$ and $\mathbb{O}$, and it can be constructed inductively as follows:

If $\tilde{w}$ is a minimal length element in a $\delta$-conjugacy class of $\widetilde{W}$, then we set \[f_{\tilde{w},\mathbb{O}}=\left\{
                                                                                                                                                 \begin{array}{ll}
                                                                                                                                                   1, & \text{if}\ \tilde{w}\in\mathbb{O},\\
                                                                                                                                                   0, & \text{if}\ \tilde{w}\notin\mathbb{O}.
                                                                                                                                                 \end{array}
                                                                                                                                               \right.
\]
If $\tilde{w}$ is not a minimal length element in its $\delta$-conjugacy class and for any $\tilde{w}'\in\tilde{W}$ with $\ell(\tilde{w}')<\ell(\tilde{w})$, $f_{\tilde{w}',\mathbb{O}}$ is constructed. By \cite{HN}, there exists $\tilde{w}_1\approx_{\delta}\tilde{w}$ and $s_i\in\widetilde{\mathbb{S}}$ such that $\ell(s_i\tilde{w}_1s_{\delta(i)})<\ell(\tilde{w}_1)=\ell(\tilde{w}).$ In this case, $\ell(s_i\tilde{w})<\ell(\tilde{w})$ and we define $f_{\tilde{w}, \mathbb{O}}$ as $$f_{\tilde{w},\mathbb{O}}=(v-v^{-1})f_{s_i\tilde{w}_1,\mathbb{O}}+f_{s_i\tilde{w}_1s_{\delta(i)},\mathbb{O}}.$$

\subsection{The ``Dimension$=$Degree'' theorem}\label{DD}
We keep the notations as before. Let $\textbf{Fl}=G(L)/I$ be the $fppf$ quotient, then $\textbf{Fl}$ is represented by an ind-scheme, ind-projective over $\mathbf{k}$. We also have the Iwahori-Bruhat decomposition:\[\textbf{Fl}=\bigsqcup_{\tilde{w}\in \widetilde{W}}I\tilde{w}I/I.\]

\begin{defn}
 For $\tilde{w}\in\widetilde{W}$ and $b\in G(L)$, the affine Deligne-Lusztig variety associated with $\tilde{w}$ and $b$ is the locally closed sub-ind scheme $X_{\tilde{w}}(b)(\mathbf{k})$ in the affine flag variety $\mathbf{Fl}$ defined as \[X_{\tilde{w}}(b)(\mathbf{k})=\{gI\in G(L)/I\mid g^{-1}b\sigma(g)\in I\tilde{w}I\}.\]
\end{defn}

\begin{rmk}
The affine Deligne-Lusztig variety $X_{\tilde{w}}(b)(\mathbf{k})$ is a finite-dimensional $\mathbf{k}$-scheme and locally of finite type over $\mathbf{k}$.
\end{rmk}

We call an element $\tilde{w}\in\widetilde{W}$ a $\delta$-\emph{straight element} if and only if for any $n\in\mathbb{N}$ we have $\ell(\tilde{w}\delta(\tilde{w})\cdots\delta^{n-1}(\tilde{w}))=n\ell(\tilde{w})$. We call a $\delta$-conjugacy class in $\widetilde{W}$ \emph{straight} if it contains some straight element. We denote by $\cdot_{\sigma}$ the $\sigma$-conjugation action on $G(L)$ and it is defined by that for any $g,\ g'\in G(L)$, $g\cdot_{\sigma}g'=gg'\sigma(g)^{-1}$. We have the following Kottwitz's classification of $\sigma$-conjugacy classes $B(G)$ on $G(L)$.

\begin{thm}[Kottwitz, He]
For any straight $\delta$-conjugacy class $\mathbb{O}$ of $\widetilde{W}$, we fix a minimal length representative $\tilde{w}_{\mathbb{O}}$. Then\[G(L)=\bigsqcup_{\mathbb{O}}G(L)\cdot_{\sigma}\tilde{w}_{\mathbb{O}},\]here $\mathbb{O}$ runs over all the straight $\delta$-conjugacy classes of $\widetilde{W}$.
\end{thm}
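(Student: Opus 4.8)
The plan is to reformulate the statement as the bijectivity of a single natural map. For a straight $\delta$-conjugacy class $\mathbb{O}$, choose a minimal length representative $\tilde{w}_{\mathbb{O}}$; since $\mathbb{O}$ is straight, $\tilde{w}_{\mathbb{O}}$ is a straight element, and by the minimal length theory of He--Nie \cite{HN} any two such representatives are linked by $\approx_{\delta}$, hence are $\delta$-conjugate. Lifting $\tilde{w}_{\mathbb{O}}$ to $N(L)\subset G(L)$ and using that $\sigma$ induces $\delta$ on $\widetilde{W}$, I would first check that $\delta$-conjugate elements of $\widetilde{W}$ admit $\sigma$-conjugate lifts: if $\tilde{w}'=\tilde{x}\tilde{w}\delta(\tilde{x})^{-1}$ and $\dot{m},\dot{n}$ are lifts of $\tilde{x},\tilde{w}$, then $\dot{m}\,\dot{n}\,\sigma(\dot{m})^{-1}=\dot{m}\cdot_{\sigma}\dot{n}$ lifts $\tilde{w}'$, while independence of the lift chosen in the coset $T(L)_1$ follows from a Lang-type surjectivity for the pro-torus $T(L)_1$. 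This makes the assignment $\Psi\colon\mathbb{O}\mapsto [\tilde{w}_{\mathbb{O}}]_{\sigma}\in B(G)$ well defined and shows that the sets $G(L)\cdot_{\sigma}\tilde{w}_{\mathbb{O}}$ appearing in the statement are genuine $\sigma$-conjugacy classes depending only on $\mathbb{O}$; so the asserted decomposition is exactly the statement that $\Psi$ is a bijection onto $B(G)$.

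Next I would pin down $\Psi(\mathbb{O})$ through Kottwitz's two invariants. The Kottwitz point is read off from the projection $\widetilde{W}\to\Omega\cong\pi_1(G)$ followed by the image under the Kottwitz homomorphism in $\pi_1(G)_{\Gamma}$; it is manifestly constant on $\delta$-conjugacy classes. The Newton point of $\tilde{w}_{\mathbb{O}}$ is computed from the translation part of a high power: straightness gives $\ell(\tilde{w}_{\mathbb{O}}\delta(\tilde{w}_{\mathbb{O}})\cdots\delta^{n-1}(\tilde{w}_{\mathbb{O}}))=n\,\ell(\tilde{w}_{\mathbb{O}})$, so for a suitable $n$ this product is a pure translation $t_{\lambda}$ and the average $\bar\nu_{\mathbb{O}}=\lambda/n$ (taken dominant) is precisely the Newton point, satisfying $\langle\bar\nu_{\mathbb{O}},2\rho\rangle=\ell(\tilde{w}_{\mathbb{O}})$. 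Injectivity of $\Psi$ then reduces to a combinatorial statement inside $\widetilde{W}$: two straight classes with the same pair $(\kappa,\bar\nu)$ coincide. To prove this I would bring each straight element, by $\delta$-conjugation, into the standard position attached to its Newton point, namely a length-zero-times-translation element lying in the Levi-type subgroup $\widetilde{W}_{\bar\nu}$ determined by $\bar\nu$, and then check that two such standard elements sharing the Kottwitz point are $\delta$-conjugate.

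Finally, surjectivity of $\Psi$ onto $B(G)$ is where the real work lies, and I expect it to be the main obstacle. It splits into two parts. The first is that every $\sigma$-conjugacy class meets $\widetilde{W}$: starting from $b\in I\tilde{w}I$ via the Iwahori-Bruhat decomposition, one performs a $\sigma$-conjugation reduction inside the cell $I\tilde{w}I$ (a Lang/Bruhat argument) to replace $b$ by an element of $\widetilde{W}$. The second, deeper part is that every element of $\widetilde{W}$ is $\sigma$-conjugate in $G(L)$ to a straight element; here $\delta$-conjugacy inside $\widetilde{W}$ alone does not suffice, since non-straight $\delta$-classes must still collapse onto straight ones at the group level. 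The mechanism is the reduction method: for $s\in\widetilde{\mathbb{S}}$ with $\ell(s\tilde{w}\delta(s))<\ell(\tilde{w})$ one shows that $[\tilde{w}]_{\sigma}$ equals $[s\tilde{w}\delta(s)]_{\sigma}$ or $[s\tilde{w}]_{\sigma}$, and iterating drives $\tilde{w}$ down in length until a straight element is reached. Equivalently, one may prove surjectivity constructively by exhibiting, for each admissible pair $(\kappa,\bar\nu)$ in Kottwitz's list, an explicit straight element realizing it. Combining well-definedness, injectivity, and surjectivity with Kottwitz's parametrization $B(G)\cong\{(\kappa,\bar\nu)\}$ yields the claimed disjoint decomposition.
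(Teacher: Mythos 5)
The paper itself contains no proof of this theorem: it is quoted as a known result of Kottwitz \cite{K} and He \cite{He2}, so your attempt can only be measured against the argument in those references, which your outline is essentially trying to reconstruct. Your well-definedness step (lifts of $\delta$-conjugate elements of $\widetilde{W}$ are $\sigma$-conjugate in $G(L)$, with independence of the choice of lift coming from a Lang-type theorem for $T(L)_1$) and your injectivity step (reduce to the He--Nie theorem \cite{HN} that straight $\delta$-conjugacy classes are separated by the invariant $f=(\bar{\nu},\kappa)$) are both correct in outline, granted those cited results.

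The genuine gap is in surjectivity, exactly where you say the real work lies. Your proposed mechanism --- iterate the reduction ``$[\tilde{w}]_{\sigma}=[s\tilde{w}\delta(s)]_{\sigma}$ or $[s\tilde{w}]_{\sigma}$'' until a straight element is reached --- does not terminate at a straight element; it terminates at a minimal length element of some $\delta$-conjugacy class, because the reduction step requires an $s\in\widetilde{\mathbb{S}}$ with $\ell(s\tilde{w}\delta(s))<\ell(\tilde{w})$, and no such $s$ exists once $\tilde{w}$ is of minimal length in its class. Minimal length elements of non-straight classes are not straight, so the iteration gets stuck. A counterexample occurs in this very paper: for split $PGL_3$ ($\delta=\mathrm{id}$), the element $s_1\in\mathbb{O}_1$ has minimal length $1$ in its class but Newton point $0$, hence is not straight, and no elementary reduction applies to it; nevertheless its lift $\dot{s}_1$ must be $\sigma$-conjugate to $1$, the representative of the basic straight class. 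That collapse is not produced by any length reduction: it requires Lang's theorem applied to the connected pro-algebraic parahoric subgroup attached to the finite part of $\tilde{w}$, combined with the He--Nie structure theorem that a minimal length element of $\mathbb{O}$ is, up to $\delta$-conjugation, a product $u\tilde{x}$ with $\tilde{x}$ straight, $f(\tilde{x})=f(\mathbb{O})$, and $u$ lying in a finite parabolic subgroup normalized by $\tilde{x}\delta$. This Lang-theorem collapse of the finite part is the central idea of He's proof in \cite{He2} and is absent from your outline. Your fallback constructive route (exhibit a straight element for each admissible pair $(\kappa,\bar{\nu})$) can be made to work, but only by taking as input Kottwitz's full classification of $B(G)$, in particular the bijection between basic classes and $\pi_1(G)_{\Gamma}$, which is what guarantees that length-zero elements of $\Omega$ (and of $\Omega_M$ for Levi subgroups $M$) exhaust all basic classes; that input plays precisely the role of the collapse statement your iteration skipped.
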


Let $(P/Q)_{\delta}$ be the $\delta$-coinvariants on $P/Q$, let \[\kappa:\widetilde{W}\longrightarrow\widetilde{W}/W_a\cong P/Q\longrightarrow(P/Q)_{\delta}\] be the natural projection. We call $\kappa$ the \emph{Kottwitz map}.

Let $P_{\mathbb{Q}}=P\otimes_{\mathbb{Z}}\mathbb{Q}$ and $P_{\mathbb{Q}}/W$ be the quotient of $P_{\mathbb{Q}}$ by the natural action of $W$. We can identify $P_{\mathbb{Q}}/W$ with $P_{\mathbb{Q},+}=\{\chi\in P_{\mathbb{Q}}\mid \alpha(\chi)\geqslant0,\ for\ all\ \alpha\in \Phi^+\}.$ Let $P^{\delta}_{\mathbb{Q},+}$ be the set of $\delta$-invariant points in $P_{\mathbb{Q},+}$.

Since the image of $W\rtimes\langle\delta\rangle$ in $Aut(W)$ is a finite group, for each $\tilde{w}=t^{\chi}w\in\widetilde{W}$, there exists $n\in\mathbb{N}$ such that $\delta^n=1$ and $w\delta(w)\delta^2(w)\cdots\delta^{n-1}(w)=1$. Then $\tilde{w}\delta(\tilde{w})\delta^2(\tilde{w})\cdots\delta^{n-1}(\tilde{w})=t^{\lambda}$ for some $\lambda\in P$. Let $\nu_{\tilde{w}}=\lambda/n\in P_{\mathbb{Q}}$ and $\bar{\nu}_{\tilde{w}}$ be the corresponding element in $P_{\mathbb{Q},+}$. Note that $\nu_{\tilde{w}}$ is independent of the choice of $n$. Let $\bar{\nu}_{\tilde{w}}$ be the unique element in $P_{\mathbb{Q},+}$ that lies in the $W$-orbit of $\nu_{\tilde{w}}$. Since $t^{\lambda}=\tilde{w}t^{\delta(\lambda)}\tilde{w}^{-1}=t^{w\delta(\lambda)}$, $\bar{\nu}_{\tilde{w}}\in P^{\delta}_{\mathbb{Q},+}$. We call the map $\widetilde{W}\longrightarrow P^{\delta}_{\mathbb{Q},+}$ with $\tilde{w}\mapsto\bar{\nu}_{\tilde{w}}$ the \emph{Newton map}. We define \[f:\widetilde{W}\longrightarrow P^{\delta}_{\mathbb{Q},+}\times(P/Q)_{\delta}\ \ \ \ \ by\ \ \ \  \tilde{w}\longmapsto(\bar{\nu}_{\tilde{w}},\ \kappa(\tilde{w})).\]Follows \cite{K}, $f$ 
is constant on each $\delta$-conjugacy class of $\widetilde{W}$. We denote the image of the map by $B(\widetilde{W},\ \delta)$.

\begin{rmk}
(1) There is a bijection between the set of $\sigma$-conjugacy classes $B(G)$ of $G(L)$ and the set of straight conjugacy classes of $\widetilde{W}$.

(2) Any $\sigma$-conjugacy class of $G(L)$ contains a representative in $\widetilde{W}$. Moreover, the map $f:\widetilde{W}\longrightarrow P^{\delta}_{\mathbb{Q},+}\times(P/Q)_{\delta}$ is in fact the restriction of a map defined on $G(L)$ as $b\mapsto (\bar{\nu}_{b},\ \kappa(b))$ and we call $\bar{\nu}_{b}$ the \emph{Newton vector} of $b$.
\end{rmk}

The ``Dimension$=$Degree'' theorem is a main result in \cite{He2}, we quote it as follows:

\begin{thm}[He]\label{DimDeg}
Let $b\in G(L)$ and $\tilde{w}\in\widetilde{W}$. Then\[\dim(X_{\tilde{w}}(b))=\max_\mathbb{O}\frac{1}{2}(\ell(\tilde{w})+\ell(\mathbb{O})+\deg(f_{\tilde{w},\mathbb{O}}))-\langle\bar{\nu}_b,2\rho\rangle,\]
here $\mathbb{O}$ runs over $\delta$-conjugacy classes of $\widetilde{W}$ with $f(\mathbb{O})=f(b)$ and $\ell(\mathbb{O})$ is the length of any minimal length element in $\mathbb{O}$.
\end{thm}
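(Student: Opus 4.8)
The plan is to prove the equality by induction on $\ell(\tilde{w})$, matching the behaviour of $\dim X_{\tilde{w}}(b)$ under the Deligne--Lusztig reduction method against the inductive recipe for the class polynomials $f_{\tilde{w},\mathbb{O}}$ recalled above. Fixing $b$, I would write $d(\tilde{w})=\dim X_{\tilde{w}}(b)+\langle\bar{\nu}_b,2\rho\rangle$ and $D(\tilde{w})=\max_{\mathbb{O}}\tfrac{1}{2}\bigl(\ell(\tilde{w})+\ell(\mathbb{O})+\deg f_{\tilde{w},\mathbb{O}}\bigr)$, the maximum running over $\delta$-conjugacy classes $\mathbb{O}$ with $f(\mathbb{O})=f(b)$, with the conventions $\dim\emptyset=-\infty$ and $\deg 0=-\infty$. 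The assertion to be proved is then $d(\tilde{w})=D(\tilde{w})$, since $\langle\bar{\nu}_b,2\rho\rangle$ depends only on $b$ and is constant throughout the induction.

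The geometric engine is the affine Deligne--Lusztig reduction: for $s\in\widetilde{\mathbb{S}}$ with $\ell(s\tilde{w}\delta(s))\leqslant\ell(\tilde{w})$, if $\ell(s\tilde{w}\delta(s))=\ell(\tilde{w})$ there is a universal homeomorphism $X_{\tilde{w}}(b)\cong X_{s\tilde{w}\delta(s)}(b)$, while if $\ell(s\tilde{w}\delta(s))=\ell(\tilde{w})-2$ there is a decomposition $X_{\tilde{w}}(b)=X_1\sqcup X_2$ into locally closed pieces fibred with one-dimensional fibres (a $\mathbb{G}_m$- resp.\ $\mathbb{A}^1$-bundle) over $X_{s\tilde{w}\delta(s)}(b)$ and $X_{s\tilde{w}}(b)$. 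Taking dimensions, the first case yields $d(\tilde{w})=d(s\tilde{w}\delta(s))$ and the second $d(\tilde{w})=\max\bigl(d(s\tilde{w}),\,d(s\tilde{w}\delta(s))\bigr)+1$. A direct computation shows $D$ satisfies the very same two recursions: invariance of the class polynomials under $\approx_{\delta}$ gives $f_{\tilde{w},\mathbb{O}}=f_{s\tilde{w}\delta(s),\mathbb{O}}$ and hence $D(\tilde{w})=D(s\tilde{w}\delta(s))$ in the length-preserving case, whereas the defining relation $f_{\tilde{w},\mathbb{O}}=(v-v^{-1})f_{s\tilde{w},\mathbb{O}}+f_{s\tilde{w}\delta(s),\mathbb{O}}$ together with the non-negativity of the coefficients (so that no leading terms cancel) gives $\deg f_{\tilde{w},\mathbb{O}}=\max\bigl(\deg f_{s\tilde{w},\mathbb{O}}+1,\ \deg f_{s\tilde{w}\delta(s),\mathbb{O}}\bigr)$ and therefore $D(\tilde{w})=\max\bigl(D(s\tilde{w}),\,D(s\tilde{w}\delta(s))\bigr)+1$. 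Since, by \cite{HN}, any $\tilde{w}$ that is not of minimal length in its class admits after an $\approx_{\delta}$-move a reflection $s$ with $\ell(s\tilde{w}\delta(s))=\ell(\tilde{w})-2$, both $d$ and $D$ are thereby reduced to strictly shorter elements, and the inductive step closes.

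It remains to treat the base case, $\tilde{w}$ of minimal length in its $\delta$-conjugacy class $\mathbb{O}_0$. Here $f_{\tilde{w},\mathbb{O}}$ equals $1$ for $\mathbb{O}=\mathbb{O}_0$ and $0$ otherwise, and $\ell(\mathbb{O}_0)=\ell(\tilde{w})$, so $D(\tilde{w})=\ell(\tilde{w})$ when $f(\mathbb{O}_0)=f(b)$ and $D(\tilde{w})=-\infty$ otherwise; the task is to prove $\dim X_{\tilde{w}}(b)=\ell(\tilde{w})-\langle\bar{\nu}_b,2\rho\rangle$ exactly when $f(\mathbb{O}_0)=f(b)$, and $X_{\tilde{w}}(b)=\emptyset$ otherwise. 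I would invoke the He--Nie structure theory of minimal length elements: up to $\tilde{\approx}_{\delta}$ such a $\tilde{w}$ takes a standard form attached to a $\delta$-stable parabolic, presenting $X_{\tilde{w}}(b)$ as an iterated fibration over a classical (finite) Deligne--Lusztig variety inside a Levi, with a straight element governing the torus direction. For the straight representative $\tilde{w}_{\mathbb{O}_0}$ the Kottwitz--He classification quoted above gives $X_{\tilde{w}_{\mathbb{O}_0}}(b)\neq\emptyset$ if and only if $b\in G(L)\cdot_{\sigma}\tilde{w}_{\mathbb{O}_0}$, that is $f(\mathbb{O}_0)=f(b)$; the straight-element identity $\ell(\tilde{w}_{\mathbb{O}_0})=\langle\bar{\nu}_{\tilde{w}_{\mathbb{O}_0}},2\rho\rangle$ then forces $\dim X_{\tilde{w}_{\mathbb{O}_0}}(b)=0$, and the classical Deligne--Lusztig factor supplies the remaining $\ell(\tilde{w})-\langle\bar{\nu}_b,2\rho\rangle$.

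The hard part is the base case, not the reduction step. The reduction lemma, though delicate in the loop-group setting with its twisted Frobenius (local triviality of the two fibrations, the universal homeomorphism, and the ind-scheme bookkeeping over $\mathbf{k}$), is a faithful affine analogue of the classical construction and follows once the decomposition of $I\tilde{w}I$ under left multiplication by $s$ is written out. By contrast, the emptiness criterion and the exact dimension for minimal length elements draw on the full strength of \cite{HN}: one needs both the classification of minimal length elements and their reduction to straight elements, and it is precisely here that the sharp invariant $f(\mathbb{O})=f(b)$ --- rather than some coarser datum --- must be shown to govern nonemptiness.
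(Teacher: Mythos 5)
First, a structural point: this paper does not prove Theorem \ref{DimDeg} at all --- it is quoted verbatim from He \cite{He2} (``we quote it as follows''), so the only meaningful comparison is with He's original argument, and your outline does mirror its overall architecture: induction on length via Deligne--Lusztig reduction, matching the recursion satisfied by $\dim X_{\tilde w}(b)$ against the defining recursion of the class polynomials, with minimal length elements as the base case. Your inductive step is correct as sketched: the two reduction cases, the identity $\deg f_{\tilde w,\mathbb{O}}=\max\bigl(\deg f_{s\tilde w,\mathbb{O}}+1,\ \deg f_{s\tilde w\delta(s),\mathbb{O}}\bigr)$ (valid because the coefficients are non-negative, so leading terms cannot cancel), the bookkeeping $\ell(s\tilde w)=\ell(\tilde w)-1$, $\ell(s\tilde w\delta(s))=\ell(\tilde w)-2$, and the appeal to He--Nie to bring any non-minimal element by an $\approx_\delta$-move to one admitting a length-decreasing reflection.

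The genuine gap is in your base case, and it is twofold. First, you slide from ``$\tilde w$ of minimal length in $\mathbb{O}_0$'' to ``the straight representative $\tilde w_{\mathbb{O}_0}$'': a general $\delta$-conjugacy class is not straight, and its minimal length elements are not straight (straightness is equivalent to $\ell(\tilde w)=\langle\bar\nu_{\tilde w},2\rho\rangle$, which fails whenever $\mathbb{O}_0$ has a nontrivial finite part); the He--Nie standard form is $\tilde w\,\tilde\approx_\delta\,$(straight element)$\,\cdot\,$(element of a finite parabolic $W_J$), and the two factors require separate treatment. Second, and more seriously, the Kottwitz--He classification you invoke (the bijection between $\sigma$-conjugacy classes of $G(L)$ and straight classes of $\widetilde W$, as quoted in this paper) only tells you that $\dot{\tilde w}_{\mathbb{O}_0}\in[b]$ if and only if $f(\mathbb{O}_0)=f(b)$. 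That yields nonemptiness of $X_{\tilde w}(b)$ when $f(\mathbb{O}_0)=f(b)$ (replace $b$ by $\dot{\tilde w}$; the base coset then lies in $X_{\tilde w}(b)$), but it does \emph{not} yield the converse, namely emptiness when $f(\mathbb{O}_0)\neq f(b)$. For that one needs He's theorem that for $\tilde w$ of minimal length in its class the entire double coset $I\dot{\tilde w}I$ is contained in the single $\sigma$-conjugacy class $[\dot{\tilde w}]$; and even granted nonemptiness, the equality $\dim X_{\tilde w}(b)=\ell(\tilde w)-\langle\bar\nu_b,2\rho\rangle$ for minimal length $\tilde w$ is itself a substantive theorem of \cite{He2}, proved by a further reduction to superbasic elements in Levi subgroups, not a formal consequence of ``the classical Deligne--Lusztig factor.'' These two statements are precisely the deep content of \cite{He2}, and they cannot be recovered from the results quoted in this paper; your proposal correctly locates the difficulty there but does not supply it.
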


\begin{rmk}
We use the convention that the dimension of an empty variety and the degree of a zero polynomial are both $-\infty$.
\end{rmk}

Theorem \ref{DimDeg} relates the dimension of affine Deligne-Lusztig varieties to the degree of the class polynomials which provides both the theoretic and practical way to determine the dimension of affine Deligne-Lusztig varieties. It shows that the dimension and emptiness/nonemptiness pattern of affine Deligne-Lusztig varieties $X_{\tilde{w}}(b)$ only depend on the data $(\widetilde{W}, \delta, \tilde{w}, f(b))$ and thus independent of the choice of $G$. And it implies that the emptiness/nonemptiness and dimension formula of affine Deligne-Lusztig varieties only rely on the reduction method. This theorem inspired my study of the class polynomials of affine Hecke algebras.

\section{Class polynomials for affine Hecke algebras of type (twisted) $\widetilde{A}_2$}\label{Class}
We calculate class polynomials for affine Hecke algebras of type $\widetilde{A}_2$ for split groups of adjoint type ($PGL_3$). We let $\widetilde{W}$ be the Iwahori-Weyl group, then $\widetilde{W}=P\rtimes W=W_a\rtimes\Omega$ where $P$ is the coweight lattice and $W_a$ is the affine Wely group generated by the simple reflections $\tilde{\mathbb{S}}=\{s_0,\ s_1,\ s_2\}$. Here $\Omega=\langle\tau\rangle$ with $\tau^3=1$ and $\tau s_0 \tau^{-1}=s_1,\ \tau s_1 \tau^{-1}=s_2,\ \tau s_2 \tau^{-1}=s_0$. Let $\alpha_1$ and $\alpha_2$ be the corresponding simple roots.

Before we calculate class polynomials, let me quote the following lemma \cite[Lemma 5.1]{HN} which will be heavily used in the following computations.
\begin{lem}[He-Nie]\label{Hn}
Let $\tilde{w},\ \tilde{w}'\in\widetilde{W}$ with $\tilde{w}\tilde{\thicksim}\tilde{w}'$. Then\[T_{\tilde{w}}\equiv T_{\tilde{w}'}\ \mod [\widetilde{H},\widetilde{H}].\]
\end{lem}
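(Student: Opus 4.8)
The plan is to reduce the statement to a single elementary strongly $\delta$-conjugacy step and then carry out one short computation inside $\widetilde{H}$. Since $\tilde{w}\,\tilde{\thicksim}_{\delta}\,\tilde{w}'$ means there is a chain $\tilde{w}=\tilde{w}_0,\tilde{w}_1,\dots,\tilde{w}_n=\tilde{w}'$ whose consecutive terms are elementarily strongly $\delta$-conjugate, and since congruence modulo $[\widetilde{H},\ \widetilde{H}]_{\delta}$ is an equivalence relation, it suffices to treat one step: $\tilde{w}'=\tilde{x}\tilde{w}\delta(\tilde{x})^{-1}$ with $\ell(\tilde{w}')=\ell(\tilde{w})$ and one of the two length-additivity conditions. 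First I would note that the two conditions are interchangeable. In the case $\ell(\tilde{w}\delta(\tilde{x})^{-1})=\ell(\tilde{x})+\ell(\tilde{w})$, the element $\tilde{x}^{-1}\tilde{w}'=\tilde{w}\,\delta(\tilde{x})^{-1}$ has length $\ell(\tilde{x})+\ell(\tilde{w})=\ell(\tilde{x}^{-1})+\ell(\tilde{w}')$, so the triple $(\tilde{w}',\tilde{w},\tilde{x}^{-1})$ satisfies the first condition $\ell(\tilde{x}^{-1}\tilde{w}')=\ell(\tilde{x}^{-1})+\ell(\tilde{w}')$ and has the same conclusion. Hence I may assume $\ell(\tilde{x}\tilde{w})=\ell(\tilde{x})+\ell(\tilde{w})$.

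The heart of the argument is to exhibit $z:=\tilde{x}\tilde{w}$ through two length-additive factorizations. On one hand, $\ell(\tilde{x}\tilde{w})=\ell(\tilde{x})+\ell(\tilde{w})$ gives $T_z=T_{\tilde{x}}T_{\tilde{w}}$, hence $T_{\tilde{w}}=T_{\tilde{x}}^{-1}T_z$ (each $T_{\tilde{u}}$ is invertible in $\widetilde{H}$, with $T_s^{-1}=T_s-(v-v^{-1})$). On the other hand, $z=\tilde{w}'\delta(\tilde{x})$, and using $\ell(\tilde{w}')=\ell(\tilde{w})$ together with the $\delta$-invariance of length one gets $\ell(\tilde{w}')+\ell(\delta(\tilde{x}))=\ell(\tilde{w})+\ell(\tilde{x})=\ell(z)$, so this factorization is also length-additive and $T_z=T_{\tilde{w}'}T_{\delta(\tilde{x})}$, i.e. $T_{\tilde{w}'}=T_z\,T_{\delta(\tilde{x})}^{-1}$. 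Now I would invoke the defining property of the quotient, namely that $hh'\equiv h'\delta(h)$ for \emph{all} $h,h'\in\widetilde{H}$, and the fact that $\delta$ is an $A$-algebra automorphism, so $\delta(T_{\tilde{x}}^{-1})=T_{\delta(\tilde{x})}^{-1}$. Taking $h=T_{\tilde{x}}^{-1}$ and $h'=T_z$ then yields
\[ T_{\tilde{w}}=T_{\tilde{x}}^{-1}T_z\ \equiv\ T_z\,\delta(T_{\tilde{x}}^{-1})=T_z\,T_{\delta(\tilde{x})}^{-1}=T_{\tilde{w}'}\ \mod\ [\widetilde{H},\ \widetilde{H}]_{\delta}, \]
which is exactly the claim.

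The step I expect to require the most care is the length bookkeeping that legitimizes the two factorizations of $z$; in particular the hypothesis $\ell(\tilde{w}')=\ell(\tilde{w})$ is precisely what forces $z=\tilde{w}'\delta(\tilde{x})$ to be reduced, and without it the second factorization would not split the $T$'s. It is tempting to instead imitate the Geck--Pfeiffer finite-group argument and reduce $\tilde{x}$ to a product of simple reflections, handling one reflection at a time. I would deliberately avoid that route: the intermediate conjugates $(s_j\cdots s_k)\,\tilde{w}\,(s_j\cdots s_k)^{-1}$ need not have length $\ell(\tilde{w})$ --- one sees this already for $W$ of type $A_2$, where conjugating $s_1$ by $\tilde{x}=s_1 s_2$ passes through $s_2 s_1 s_2$, the longest element --- so the single-reflection steps are not themselves equal-length elementary moves. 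Working directly modulo $[\widetilde{H},\ \widetilde{H}]_{\delta}$, where $hh'\equiv h'\delta(h)$ holds for arbitrary elements rather than only for generators, sidesteps this difficulty entirely.
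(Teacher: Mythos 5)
Your proof is correct. Note, however, that the paper itself offers no argument for this lemma: it is quoted verbatim from He--Nie \cite[Lemma 5.1]{HN}, so there is no internal proof to compare against. What you have written is essentially a reconstruction of the He--Nie argument: reduce to one elementary strongly $\delta$-conjugate step, pass to the case $\ell(\tilde{x}\tilde{w})=\ell(\tilde{x})+\ell(\tilde{w})$ by the symmetry $(\tilde{w}',\tilde{w},\tilde{x}^{-1})$, and then exploit the two length-additive factorizations $z=\tilde{x}\tilde{w}=\tilde{w}'\delta(\tilde{x})$ together with the relation $hh'\equiv h'\delta(h)$, which holds for \emph{arbitrary} $h,h'\in\widetilde{H}$ by the very definition of the $A$-submodule $[\widetilde{H},\widetilde{H}]_{\delta}$. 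The two points where care is genuinely needed are exactly the ones you flagged: the second factorization splits as $T_z=T_{\tilde{w}'}T_{\delta(\tilde{x})}$ only because $\delta$ permutes $\widetilde{\mathbb{S}}$ and hence preserves length (so $\ell(\tilde{w}')+\ell(\delta(\tilde{x}))=\ell(z)$ follows from $\ell(\tilde{w}')=\ell(\tilde{w})$), and the cancellation at the end uses invertibility of $T_{\tilde{x}}$ in the quasi-Coxeter setting, which holds since $T_{\tilde{x}}$ factors as $T_{\tau}T_{s_{i_1}}\cdots T_{s_{i_k}}$ with $T_{\tau}^{-1}=T_{\tau^{-1}}$ and $T_{s}^{-1}=T_{s}-(v-v^{-1})$. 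Your closing remark is also well taken: a na\"ive one-simple-reflection-at-a-time imitation of Geck--Pfeiffer would fail here, precisely because intermediate conjugates can have strictly larger length; working modulo $[\widetilde{H},\widetilde{H}]_{\delta}$ with arbitrary elements avoids this, and is what makes the strong-conjugacy hypothesis (rather than mere $\delta$-conjugacy) the right one.
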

As you will see that the whole section is the most technique part in this paper, we'd like to make some comments here. It is quite hard to calculate class polynomials and we know little about them in general. And till now, we have found neither an efficient nor a systematic way to calculate them in general situations. Even in the following low rank cases (of type (twisted) $\widetilde{A}_2$), the calculations are quite difficult and highly nontrivial. From the definition of a class polynomial $f_{\tilde{w},\mathbb{O}}$, two parameters are involved: an extended affine Weyl group element $\tilde{w}$ and a ($\delta$-) conjugacy class $\mathbb{O}$ in $\widetilde{W}$. As we know that for any $\tilde{w}$ we can be weakly reduced to $\tilde{w}'$ i.e. ($\tilde{w}\xrightarrow[]{}_{\delta}\tilde{w}'$) $\tilde{w}\xrightarrow[]{}\tilde{w}'$, where $\tilde{w}'$ is a minimal length element in the ($\delta$-) conjugacy class $\mathbb{O}$ of $\tilde{w}$ \cite{HN}. To calculate class polynomials, we will by way of doing the reduction procedure, and thus there are quite lot paths involved which cause enormous complexities and extreme difficulties. There is an illusion that the calculations in the coming subsections looks routine. Since for different $\tilde{w}$ we have to work hard on choosing correct path to make the calculations work, actually, those calculations are very subtle and difficult and you will see how in the following subsections.
\subsection{Class polynomials for elements in $W_a$}\label{split1}
We first classify all $\widetilde{W}$-conjugacy classes in $W_a$.
\begin{lem}\label{leng1}
Note that all elements in $W_a$ with length 1 are $\widetilde{W}$-conjugate. Let $\mathbb{O}_1$ be the $\widetilde{W}$-conjugacy class in $W_a$ with minimal length 1. Then
$$\mathbb{O}_1=\{t^{k\alpha_1}s_1, \ t^{k\alpha_2}s_2, \ t^{k(\alpha_1+\alpha_2)}s_1s_2s_1\mid k\in\mathbb{Z}\}.$$
\end{lem}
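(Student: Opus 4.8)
The plan is to first identify the length-$1$ elements and then determine the full $\widetilde{W}$-conjugacy class they generate. Since $(W_a,\widetilde{\mathbb{S}})$ is a Coxeter system, its length-$1$ elements are exactly the simple reflections $s_0,s_1,s_2$. The relations $\tau s_0\tau^{-1}=s_1$, $\tau s_1\tau^{-1}=s_2$, $\tau s_2\tau^{-1}=s_0$ with $\tau\in\Omega\subset\widetilde{W}$ show at once that $s_0,s_1,s_2$ are $\widetilde{W}$-conjugate, which is the first assertion; write $\mathbb{O}_1$ for their common class and $C_a(x)$ for the $W_a$-conjugacy class of any $x$. To compute $\mathbb{O}_1$, I would reduce to conjugacy inside $W_a$: using $\widetilde{W}=W_a\rtimes\Omega$, every $g\in\widetilde{W}$ can be written $g=a\tau^j$ with $a\in W_a$ and $j\in\{0,1,2\}$, so $gs_1g^{-1}=a(\tau^j s_1\tau^{-j})a^{-1}$ with $\tau^j s_1\tau^{-j}\in\{s_0,s_1,s_2\}$. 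Hence $\mathbb{O}_1=C_a(s_0)\cup C_a(s_1)\cup C_a(s_2)$, and it suffices to compute these inside $W_a=Q\rtimes W$ using conjugation by the coroot lattice $Q$ and by the finite Weyl group $W$ alone.

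Next I would run the two elementary moves on $t^{\lambda}\bar w\in W_a$: conjugation by $t^{\mu}$ $(\mu\in Q)$ gives $t^{\lambda+(1-\bar w)\mu}\bar w$, while conjugation by $v\in W$ gives $t^{v\lambda}\,v\bar w v^{-1}$. Starting from $s_1=t^{0}s_{\alpha_1}$, the $W$-conjugates of $s_{\alpha_1}$ are the three reflections $s_1,s_2,s_\theta$ (here $\theta=\alpha_1+\alpha_2$ and $s_\theta=s_1s_2s_1$), forming the single class of reflections in $W\cong S_3$; and for each positive root $\beta$ the translation part sweeps out the sublattice $(1-s_\beta)Q$. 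A short root-system computation in type $A_2$ (using $\langle\alpha_2,\alpha_1^{\vee}\rangle=-1$, etc.) gives the exact equality $(1-s_\beta)Q=\mathbb{Z}\beta$ for every $\beta\in\Phi^+=\{\alpha_1,\alpha_2,\alpha_1+\alpha_2\}$. This produces, starting from $s_1$, precisely the set
\[
S:=\{\,t^{\lambda}s_\beta\mid \beta\in\Phi^+,\ \lambda\in\mathbb{Z}\beta\,\}=\{\,t^{k\alpha_1}s_1,\ t^{k\alpha_2}s_2,\ t^{k\theta}s_\theta\mid k\in\mathbb{Z}\,\},
\]
so $S\subseteq C_a(s_1)$. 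For the reverse inclusion I would check that $S$ is stable under both moves: $W$-conjugation sends $t^{\lambda}s_\beta$ to $t^{v\lambda}s_{v\beta}=t^{v\lambda}s_{\beta'}$ for the positive root $\beta'=\pm v\beta$, with $v\lambda\in v(\mathbb{Z}\beta)=\mathbb{Z}(v\beta)=\mathbb{Z}\beta'$; and translation-conjugation keeps $\lambda$ inside $\mathbb{Z}\beta=(1-s_\beta)Q$. Since $s_1\in S$, stability forces $C_a(s_1)=S$.

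Finally I would collapse the three $W_a$-classes into one. The element $s_2=t^{0}s_2$ lies in $S$, and---this is the key point---so does the affine simple reflection, via the standard identity $s_0=t^{\alpha_1+\alpha_2}s_1s_2s_1$, i.e. the member of $S$ with $\beta=\theta$ and $k=1$. As distinct conjugacy classes are disjoint, $C_a(s_0)=C_a(s_2)=C_a(s_1)=S$, whence $\mathbb{O}_1=S$, which is exactly the displayed set. The genuinely content-bearing steps---and the ones I expect to require care---are the exact determination of the translation lattices $(1-s_\beta)Q=\mathbb{Z}\beta$ together with the stability check that bounds $\mathbb{O}_1$ from above, and the observation $s_0=t^{\alpha_1+\alpha_2}s_1s_2s_1$ that merges the three a priori distinct $W_a$-classes; everything else is bookkeeping in $\widetilde{W}=W_a\rtimes\Omega$.
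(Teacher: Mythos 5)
Your proof is correct, and it takes a genuinely different route from the paper. The paper actually omits a proof of this lemma: it proves the analogous Lemma \ref{leng2} as a representative and states that the remaining classification lemmas follow by the same method, namely (i) check the proposed set is stable under $\widetilde{W}$-conjugation, then (ii) show by induction on length, conjugating by well-chosen simple reflections to strictly decrease length, that every element of the set reaches the minimal-length representative. You replace step (ii) by a direct orbit computation in the semidirect product $W_a=Q\rtimes W$: the $W$-move permutes the three reflections $s_{\alpha_1},s_{\alpha_2},s_{\theta}$ (one class of transpositions in $S_3$), the translation move sweeps out exactly $(1-s_\beta)Q=\mathbb{Z}\beta$ for each positive root $\beta$ (your lattice computation is right, and in type $A_2$ the identification of roots with coroots makes the statement consistent with the paper's conventions), and the three a priori distinct $W_a$-classes of $s_0,s_1,s_2$ are merged by the $\Omega$-action together with the identity $s_0=t^{\alpha_1+\alpha_2}s_1s_2s_1$, which indeed exhibits $s_0$ as the element of your set $S$ with $\beta=\theta$, $k=1$. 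What your approach buys is a clean, induction-free argument exploiting that every element of $\mathbb{O}_1$ is an affine reflection $t^{k\beta}s_\beta$, so its conjugacy class is transparent from the structure theory; what the paper's approach buys is uniformity, since the same length-reduction scheme handles all four classification lemmas, including $\mathbb{O}_2$, $\mathbb{O}_\lambda$, $\mathbb{C}_i$, $\mathbb{C}'_i$, whose elements are not reflections and for which your exact-orbit computation would be considerably messier.
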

\begin{lem}\label{leng2}
All elements in $W_a$ with minimal length 2 are $\widetilde{W}$-conjugate. Let $\mathbb{O}_2$ be the $\widetilde{W}$-conjugacy class in $W_a$ with minimal length 2. Then
$$\mathbb{O}_2=\{t^{\lambda}s_1s_2,\  t^{\lambda}s_2s_1 \mid \lambda\in Q\}.$$
\end{lem}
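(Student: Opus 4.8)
The plan is to follow the template already used for Lemma \ref{leng1}, splitting the statement into two halves: first that there is a \emph{unique} $\widetilde{W}$-conjugacy class in $W_a$ of minimal length $2$, and then the explicit identification of that class as a subset of $\widetilde{W}=P\rtimes W$. Throughout I would work in the semidirect description $\widetilde{W}=P\rtimes W$ and exploit the two length-preserving symmetries available to us: conjugation by $\tau\in\Omega$ (a length-$0$ element) and conjugation by the finite part $W$.

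For the first half, I would enumerate the length-$2$ elements directly. Since in type $\widetilde{A}_2$ every pair of distinct simple reflections satisfies $m_{ij}=3$, the length-$2$ elements of $W_a$ are exactly the six products $s_is_j$ with $i\neq j$, $i,j\in\{0,1,2\}$. Conjugation by $\tau$ sends $s_i\mapsto s_{i+1}$, so it organizes these six into the two orbits $\{s_0s_1,s_1s_2,s_2s_0\}$ and $\{s_1s_0,s_2s_1,s_0s_2\}$; conjugation by $s_1\in W$ then sends $s_1s_2\mapsto s_2s_1$ and fuses the two orbits. Hence all six are $\widetilde{W}$-conjugate, defining a single class $\mathbb{O}_2$. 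To see $\ell(\mathbb{O}_2)=2$ I would compute finite parts: writing $s_0=t^{\alpha_1+\alpha_2}s_1s_2s_1$, each $s_is_j$ equals $t^{\lambda}w$ with $\lambda\in Q$ and $w$ a $3$-cycle, i.e.\ $w\in\{s_1s_2,s_2s_1\}$. Since the only elements of $W_a$ of length $\leqslant 1$ are $1$ (finite part $1$) and $s_0,s_1,s_2$ (finite parts reflections, constituting $\mathbb{O}_1$), no element with $3$-cycle finite part can have length $<2$. Thus $\mathbb{O}_2$ has minimal length exactly $2$, and any class of minimal length $2$ must contain one of these six elements, hence equals $\mathbb{O}_2$.

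For the second half I would use the conjugation formula: for $v\in W$ and $\mu\in P$,
\[
(t^{\mu}v)(t^{\lambda}w)(t^{\mu}v)^{-1}=t^{\,v\lambda+(1-vwv^{-1})\mu}\,vwv^{-1}.
\]
Taking $w=s_1s_2$, the finite part $vwv^{-1}$ ranges over the full conjugacy class $\{s_1s_2,s_2s_1\}$, while the reachable translation parts are governed by $(1-w)P$. The crucial computation is that $(1-w)P=Q$: since $W$ acts trivially on $P/Q$ we always have $(1-u)P\subseteq Q$ for $u\in W$, whereas $[P:(1-w)P]=|\det(1-w)|=3=[P:Q]$, forcing equality. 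Consequently, conjugation by $t^{\mu}$ with $\mu\in P$ realizes \emph{every} translation part in $Q$, and together with the $W$-conjugation $s_1s_2\leftrightarrow s_2s_1$ this shows $\{t^{\lambda}s_1s_2,\,t^{\lambda}s_2s_1\mid\lambda\in Q\}\subseteq\mathbb{O}_2$. The reverse inclusion is the observation that this set is stable under $\widetilde{W}$-conjugation: the finite part stays a $3$-cycle, and since $vQ=Q$ and $(1-vwv^{-1})P\subseteq Q$, the translation part stays in $Q$. Hence the two sets coincide.

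The main obstacle is precisely the lattice identity $(1-w)P=Q$ for the Coxeter element $w$. This is what separates $\widetilde{W}$-conjugacy from $W_a$-conjugacy: conjugating only within $W_a$ one reaches merely $(1-w)Q$, a sublattice of index $3$ in $Q$, so the statement genuinely needs the extended group (equivalently, the extra freedom supplied by $\tau$ and by translations in $P\setminus Q$). Everything else is routine bookkeeping with the semidirect product structure.
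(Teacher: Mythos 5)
Your proposal is correct, but the route is genuinely different from the paper's. The paper proves the same two inclusions and handles the easy one (stability of $\{t^{\lambda}s_1s_2,\ t^{\lambda}s_2s_1\mid\lambda\in Q\}$ under $\widetilde{W}$-conjugation) exactly as you do; but for the hard inclusion it runs an induction on $\ell(\tilde{w})$: writing $\tilde{w}=xt^{\mu}y$ with $\mu\in Q\cap P_+$, $x\in W$, $y\in {}^{I(\mu)}W$, it first conjugates away $x$ without increasing length, and then a case-by-case analysis on the dominant translation part exhibits explicit elements ($s_0$, $s_1$, $s_2$, $s_1s_0$ or $s_2s_0$, depending on the case) whose conjugation drops the length by $2$, terminating at $s_0s_1$. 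You replace that entire induction by one closed-form computation: the semidirect-product conjugation formula plus the lattice identity $(1-w)P=Q$ for $w=s_1s_2$, proved by the index count $[P:(1-w)P]=\lvert\det(1-w)\rvert=3=[P:Q]$ together with $(1-w)P\subseteq Q$. This buys a shorter, case-free argument, and it isolates exactly why the extended group is needed ($(1-w)Q$ has index $3$ in $Q$, so $W_a$-conjugation alone cannot sweep out the class) --- a point the paper leaves implicit. What the paper's method buys is uniformity: your argument hinges on $w$ being elliptic, so that $1-w$ is injective and translation-conjugation sweeps out a full-rank sublattice; it does not transfer to Lemmas \ref{leng1} and \ref{lengi}, where the finite parts are reflections and the classes $\mathbb{O}_1$, $\mathbb{C}_i$, $\mathbb{C}'_i$ are cut out by congruence conditions on the translation part rather than by a full lattice, whereas the paper's length-reduction induction is the template used for all the classification lemmas and again in the class-polynomial computations. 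One small logical reordering in your write-up: to conclude $\ell(\mathbb{O}_2)=2$ in your first half you need that \emph{every} element of $\mathbb{O}_2$, not just the six length-$2$ ones, has a $3$-cycle finite part; this is immediate because the projection $\widetilde{W}\to W$ is a homomorphism, so conjugation preserves the $W$-conjugacy class of the finite part, but you only invoke that fact in your second half.
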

Let $Q_{sh}=Q-\{k(\alpha_1+2\alpha_2),\ k(2\alpha_1+\alpha_2),\ k(\alpha_1-\alpha_2)\mid k\in\mathbb{Z}\}$.
\begin{lem}\label{lenglambda}
For any $\lambda\in P_+\cap Q_{sh}$, i.e. $\lambda=m\alpha_1+n\alpha_2$ where $m,\ n\in\mathbb{Z}$ and $1\leqslant m\leqslant n\leqslant2m-1$ or $1\leqslant n\leqslant m\leqslant2n-1$. We set $\mathbb{O}_{\lambda}=\widetilde{W}\cdot t^{\lambda}$, then $$\mathbb{O}_{\lambda}=\{t^{m\alpha_1+n\alpha_2},\ t^{(n-m)\alpha_1+n\alpha_2},\ t^{m\alpha_1+(m-n)\alpha_2},\ t^{-n\alpha_1-m\alpha_2},\ t^{-n\alpha_1+(m-n)\alpha_2},\ t^{(n-m)\alpha_1-m\alpha_2}\},$$ with $\ell(\mathbb{O}_{\lambda})=\ell(t^{\lambda})$. If $\lambda=m(\alpha_1+2\alpha_2)$, then $$\mathbb{O}_{\lambda}=\{t^{m(\alpha_1+2\alpha_2)},\ t^{-m(2\alpha_1+\alpha_2)},\ t^{m(\alpha_1-\alpha_2)}\},$$ or if $\lambda=m(2\alpha_1+\alpha_2)$, then $$\mathbb{O}_{\lambda}=\{t^{m(2\alpha_1+\alpha_2)},\ t^{-m(\alpha_1+2\alpha_2)},\ t^{-m(\alpha_1-\alpha_2)}\},$$with $\ell(\mathbb{O}_{\lambda})=\ell(t^{\lambda})$.
\end{lem}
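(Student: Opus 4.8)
The goal is to compute the $\widetilde{W}$-conjugacy class of a pure translation $t^\lambda$ inside $\widetilde{W} = P \rtimes W$ of type $\widetilde{A}_2$. The plan is to exploit the conjugation formula for translations: for $w \in W$ we have $w\, t^\mu\, w^{-1} = t^{w(\mu)}$, and more generally for a general element $t^\chi w$ one computes $(t^\chi w)\, t^\lambda\, (t^\chi w)^{-1} = t^{w(\lambda)}$. Thus the $\widetilde{W}$-orbit of $t^\lambda$ under conjugation is exactly $\{t^{w(\lambda)} \mid w \in W\}$, i.e.\ the set of translations by the $W$-orbit of $\lambda$ in $P$. So the entire problem reduces to describing the orbit of $\lambda = m\alpha_1 + n\alpha_2$ under the Weyl group $W = W(A_2) \cong S_3$, which is generated by the two simple reflections $s_1, s_2$ acting on the coroot lattice.

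First I would record the action of $s_1, s_2$ on $\alpha_1, \alpha_2$ explicitly from the $A_2$ Cartan matrix: $s_1(\alpha_1) = -\alpha_1$, $s_1(\alpha_2) = \alpha_1 + \alpha_2$, $s_2(\alpha_1) = \alpha_1 + \alpha_2$, $s_2(\alpha_2) = -\alpha_2$. From these I would compute the images $s_1(\lambda), s_2(\lambda), s_1 s_2(\lambda), s_2 s_1(\lambda), s_1 s_2 s_1(\lambda)$ directly, obtaining the six lattice points listed in the regular case. For generic $\lambda$ (i.e.\ $\lambda \in P_+ \cap Q_{sh}$, which is precisely the condition that $\lambda$ is not fixed by any nontrivial element of $W$, so it is \emph{regular}), the stabilizer is trivial and the orbit has exactly six elements, matching the first displayed set. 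I would verify that the six coordinate pairs $(m,n),\ (n-m,n),\ (m,m-n),\ (-n,-m),\ (-n,m-n),\ (n-m,-m)$ are exactly these images, which is a short direct check.

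For the two degenerate cases $\lambda = m(\alpha_1 + 2\alpha_2)$ and $\lambda = m(2\alpha_1 + \alpha_2)$, the point $\lambda$ lies on a reflection wall: here $\lambda$ is fixed by one of the reflections (for instance $\alpha_1 + 2\alpha_2$ is orthogonal, in the appropriate pairing, to a root so that one simple-type reflection stabilizes it), giving a nontrivial stabilizer of order $2$. By orbit-stabilizer the orbit then has size $|W|/2 = 3$, consistent with the three-element sets claimed; I would again identify the three images by the explicit reflection formulas. The length claim $\ell(\mathbb{O}_\lambda) = \ell(t^\lambda)$ follows from the standard fact that for a dominant (or antidominant) coweight the translation $t^\lambda$ is of minimal length in its conjugacy class, via the length formula $\ell(t^\lambda) = \sum_{\alpha \in \Phi^+} |\langle \lambda, \alpha \rangle|$, which is minimized on the dominant representative of the $W$-orbit.

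The computation is essentially mechanical once the reduction to the $W$-orbit is made, so there is no deep obstacle. The main point requiring care is the bookkeeping in the degenerate cases: one must correctly identify which simple reflection (or which reflection conjugate) stabilizes the boundary coweight so as to explain why the orbit collapses from six to three elements, and one must confirm that the listed representatives are genuinely the full orbit rather than a proper subset. A secondary subtlety is the minimal-length assertion: while $t^\lambda$ is minimal for dominant $\lambda$, one should check the convention for $P_+$ and the pairing so that the stated $\lambda$ is indeed dominant and the length formula applies cleanly.
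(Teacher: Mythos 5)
Your proposal is correct, and it takes a cleaner route than the one the paper sketches. The paper omits the proof of this lemma, saying it is "similar" to its proof of Lemma \ref{leng2}, which proceeds by (i) checking that the proposed set is stable under conjugation by generators and (ii) an induction on length, conjugating by simple reflections to drive an arbitrary element down to the minimal-length representative. Your argument short-circuits all of that by exploiting the special feature of this class: every element is a pure translation, and since $P$ is abelian, $(t^\chi w)\,t^\lambda\,(t^\chi w)^{-1}=t^{w(\lambda)}$ for \emph{every} conjugator $t^\chi w\in\widetilde{W}$ at once. Hence $\mathbb{O}_\lambda=\{t^{w(\lambda)}\mid w\in W\}$ with no induction needed, and the problem collapses to a finite rank-2 orbit computation (six points for regular $\lambda$, three when $\lambda$ lies on the wall fixed by $s_1$ or $s_2$), which you verify correctly. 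The trade-off: the paper's generator-by-generator method is the one that generalizes to the classes $\mathbb{O}_1$, $\mathbb{O}_2$, $\mathbb{C}_i$, $\mathbb{C}'_i$, which contain elements $t^\mu u$ with $u\neq 1$, where conjugation does not act this simply; your shortcut is specific to translation classes but is the conceptually right proof for this lemma. One small imprecision: you invoke the fact that $\ell(t^\mu)=\sum_{\alpha\in\Phi^+}|\langle\mu,\alpha\rangle|$ is "minimized on the dominant representative," but this function is actually constant on $W$-orbits, so every element of $\mathbb{O}_\lambda$ has the same length and the claim $\ell(\mathbb{O}_\lambda)=\ell(t^\lambda)$ is automatic -- a stronger and simpler statement than the one you appeal to.
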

\begin{lem}\label{lengi}
For $i\in\mathbb{N}_+$, set $\mathbb{C}_i=\{t^{k\alpha_1+i\alpha_2}s_1,\ t^{(-i)\alpha_1+k\alpha_2}s_2,\ t^{k\alpha_1+(k-i)\alpha_2}s_1s_2s_1\mid k\in\mathbb{Z}\}$ and $\mathbb{C}'_i=\{t^{k\alpha_1-i\alpha_2}s_1,\ t^{i\alpha_1+k\alpha_2}s_2,\ t^{(k-i)\alpha_1+k\alpha_2}s_1s_2s_1\mid k\in\mathbb{Z}\}$. Then $\mathbb{C}_i$ and $\mathbb{C}'_i$ are $\widetilde{W}$-conjugacy classes of $W_a$ with minimal length $3i$ if $i$ is odd or with minimal length $3i+1$ if $i$ is even.
\end{lem}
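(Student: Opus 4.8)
The plan is to prove both assertions for $\mathbb{C}_i$ (the case of $\mathbb{C}'_i$ being entirely parallel) by direct computation, using two explicit tools. The first is the conjugation formula in $\widetilde{W}=P\rtimes W$: for $\mu\in P$, $x\in W$ and a reflection $s_\beta\in W$ one has
\[
(t^\mu x)\,(t^\lambda s_\beta)\,(t^\mu x)^{-1}=t^{\,x\lambda+\langle\mu,x\beta\rangle (x\beta)^\vee}\,s_{x\beta},
\]
which follows from $(1-s_\gamma)\mu=\langle\mu,\gamma\rangle\gamma^\vee$. The second is the length formula
\[
\ell(t^\lambda w)=\sum_{\substack{\alpha\in\Phi^+\\ w^{-1}\alpha\in\Phi^+}}|\langle\lambda,\alpha\rangle|+\sum_{\substack{\alpha\in\Phi^+\\ w^{-1}\alpha\in\Phi^-}}|\langle\lambda,\alpha\rangle-1|,
\]
which for $\widetilde{A}_2$ involves only $\Phi^+=\{\alpha_1,\alpha_2,\alpha_1+\alpha_2\}$.

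First I would show that $\mathbb{C}_i$ lies in a single conjugacy class. All three families have Weyl part a reflection, and $\{s_1,s_2,s_1s_2s_1\}$ is one conjugacy class of $W$. Conjugating the first family $t^{k\alpha_1+i\alpha_2}s_1$ by $t^\mu$ shifts its translation by $\langle\mu,\alpha_1\rangle\alpha_1$, and since $\langle\mu,\alpha_1\rangle$ runs through all of $\mathbb{Z}$ this realizes every $k$ while fixing the $\alpha_2$-coefficient $i$; conjugating by $s_2$ produces $t^{k\alpha_1+(k-i)\alpha_2}s_1s_2s_1$ and by $w_0=s_1s_2s_1$ produces $t^{-i\alpha_1-k\alpha_2}s_2$. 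Varying $k$ then sweeps out the second and third families exactly, so all of $\mathbb{C}_i$ is $\widetilde{W}$-conjugate.

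Next I would prove that $\mathbb{C}_i$ is stable under conjugation, hence equals that class. It suffices to test the generators $s_1,s_2,t^{\omega_1},t^{\omega_2}$ of $\widetilde{W}$ (this set generates $\widetilde{W}$, in particular $\tau$, and the translation computations hold for arbitrary $\mu$, so inverses come for free). Conjugation by $t^{\omega_j}$ preserves each family, only moving the free parameter $k$; conjugation by $s_1$ fixes the first family and interchanges the second and third, while conjugation by $s_2$ fixes the second family and interchanges the first and third; in each case the conjugation formula shows the image lands back in the prescribed family. This is twelve short checks of the same type as in Step~1. Together with Step~1 this gives that $\mathbb{C}_i$ is precisely one $\widetilde{W}$-conjugacy class; the Newton point $\tfrac{i}{2}(\alpha_1+2\alpha_2)$, computed from $(t^\lambda s_1)^2=t^{(1+s_1)\lambda}$, is constant on $\mathbb{C}_i$ and distinguishes it from $\mathbb{C}'_i$.

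Finally, for the minimal length I would feed the first family into the length formula. Since $\langle\lambda,\alpha_1\rangle=2k-i$, $\langle\lambda,\alpha_2\rangle=2i-k$ and $\langle\lambda,\alpha_1+\alpha_2\rangle=k+i$, and $s_1^{-1}\alpha_1\in\Phi^-$ while the other two positive roots stay positive, one gets
\[
\ell=|2k-i-1|+|2i-k|+|k+i|,
\]
a convex piecewise-linear function of $k$ whose slope changes sign at $k=(i+1)/2$. When $i$ is odd this is an integer and the value there is $3i$; when $i$ is even the two nearest integers $k=i/2,\,i/2+1$ both give $3i+1$. Since minimal length is a class invariant and the second and third families yield the same minimum by the analogous computation, the minimal length of $\mathbb{C}_i$ is $3i$ for $i$ odd and $3i+1$ for $i$ even, and $\mathbb{C}'_i$ is handled identically. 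The main obstacle is pinning down the correct normalization of the length formula and then the parity split in this minimization: one must confirm that all three families (not merely the first) share the same minimum, since otherwise the claimed value could be too small to be attained or the stated minimum too large.
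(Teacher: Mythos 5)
Your proof is correct, and it holds up at the points where it could have failed: I checked the conjugation identity, the pairings $\langle k\alpha_1+i\alpha_2,\alpha_1\rangle=2k-i$, $\langle k\alpha_1+i\alpha_2,\alpha_2\rangle=2i-k$, $\langle k\alpha_1+i\alpha_2,\alpha_1+\alpha_2\rangle=k+i$, and the obstacle you flag at the end — the second and third families give the functions $|2i+k|+|i+2k-1|+|k-i|$ and $|k+i-1|+|k-2i-1|+|2k-i-1|$, and both do attain the same minimum, $3i$ for odd $i$ and $3i+1$ for even $i$, so the claimed value is attained and no smaller value occurs. Your route, however, is genuinely different from the paper's. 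The paper omits this proof and refers to its proof of Lemma \ref{leng2} as the template: there, stability of the set under conjugation is asserted as easy, and an induction on length shows that every element can be conjugated down to a fixed minimal-length element by explicit length-reducing conjugations with simple reflections, so single-class-ness and the minimal length fall out of one and the same induction. You decouple the two: mutual conjugacy is established non-inductively (translation conjugation sweeps each family, while $s_2$ and $w_0$ carry the first family onto the third and second), stability is a finite check against the generators $s_1,s_2,t^{\omega_1},t^{\omega_2}$, and the minimal length is extracted from the Iwahori--Matsumoto length formula by minimizing a convex piecewise-linear function of $k$. What your approach buys is transparency: the parity split $3i$ versus $3i+1$ is read off from where $|2k-i-1|$ can vanish, with no case-by-case reduction. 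What the paper's reduction-style argument buys is reusability: the explicit length-decreasing conjugation paths and the minimal-length representatives (e.g.\ $t^{(\lfloor i/2\rfloor+1)\alpha_1+i\alpha_2}s_1$) are precisely the inputs consumed later in the class-polynomial computations of Proposition \ref{cpci}, material your argument would have to re-derive when it is needed.
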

The proofs of the above four lemmas \ref{leng1}, \ref{leng2}, \ref{lenglambda} and \ref{lengi} are direct and methods are quite similar to each other. It is enough for us to give a proof of \ref{leng2}  as an example, and we omit the others.
\begin{proof}[Proof of Lemma \ref{leng2}]
Let $\mathbb{O}'_2=\{t^{\lambda}s_1s_2,\  t^{\lambda}s_2s_1 \mid \lambda\in Q\}$ it is quite easy to show that $\widetilde{W}\cdot\mathbb{O}'_2\subset\mathbb{O}'_2$. Now we prove that for any element $\tilde{w}\in\mathbb{O}'_2$, then $\tilde{w}$ is $\widetilde{W}$-conjugate to $s_0s_1$. We use induction on length $\ell(\tilde{w})$. Assume for any $\tilde{w}'\in\mathbb{O}'_2$ and $\ell(\tilde{w}')<\ell(\tilde{w})$, then $\tilde{w}'$ is $\widetilde{W}$-conjugate to $s_0s_1$. We know $\tilde{w}$ can be written uniquely as $xt^{\mu}y$ where $x\in W,\ \mu\in Q\cap P_+,\ y\in {}^{I(\mu)}W$, here $I(\mu)=\{s_i\in \mathbb{S}\mid\langle\mu,\alpha_i\rangle=0\}$. If $\tilde{w}=xt^{\mu}y$, $x=s_{i_1}\cdots s_{i_r}\neq 1$(reduced expression) with $s_{i_j}\in\mathbb{S}$ for $1\leqslant j\leqslant r$. Then set $\tilde{w}_1=s_{i_1}\tilde{w}s_{i_1}=s_{i_2}\cdots s_{i_r}t^{\mu}ys_{i_1}$, then $$\ell(\tilde{w}_1)=\ell(\mu)+\ell(x)-1-\ell(ys_{i_1})\leqslant\ell(\mu)+\ell(x)-\ell(y)=\ell(\tilde{w}).$$ If $\ell(\tilde{w}_1)<\ell(\tilde{w})$, by induction we are done. If $\ell(\tilde{w}_1)=\ell(\tilde{w})$, we set $\tilde{w}_2=s_{i_2}\tilde{w}_1s_{i_2}=s_{i_3}\cdots s_{i_r}t^{\mu}ys_{i_1}s_{i_2}$ and $$\ell(\tilde{w}_2)=\ell(\mu)+\ell(x)-2-\ell(ys_{i_1}s_{i_2})\leqslant\ell(\mu)+\ell(x)-1-\ell(ys_{i_1})=\ell(\tilde{w}_1).$$
By the same argument, we can reduce to the case $\tilde{w}=t^{\mu}s_1s_2$ or $t^{\mu}s_2s_1$ with $\mu\in Q\cap P_+$. Now if $\tilde{w}=t^{m\alpha_1+n\alpha_2}s_1s_2$ with $1\leqslant m\leqslant n<2m-1$ or $1\leqslant n<m\leqslant2n$. If $\ell(\tilde{w})=2$, obvious. If $\ell(\tilde{w})>2$, set $\tilde{w}_1=s_0\tilde{w}s_0=t^{(1-n)\alpha_1+(2-m)\alpha_2}s_2s_1$, we have $\ell(\tilde{w}_1)=\ell(\tilde{w})-2.$ By induction $\tilde{w}_1$ is $\widetilde{W}$-conjugate to $s_0s_1$, so is $\tilde{w}$. For $\tilde{w}=t^{k\alpha_1+2k\alpha_2}s_1s_2$ $(k\geqslant1)$, check directly $\tilde{w}_1=s_1\tilde{w}s_1$, then $\ell(\tilde{w}_1)=\ell(\tilde{w})-2$, which deduce that $\tilde{w}$ is $\widetilde{W}$-conjugate to $s_0s_1$. For $\tilde{w}=t^{k\alpha_1+(2k-1)\alpha_2}s_1s_2$ then $\tilde{w}_1=s_1s_0\tilde{w}s_0s_1$ and $\ell(\tilde{w}_1)=\ell(\tilde{w})-2$. By induction, $\tilde{w}_1$ is $\widetilde{W}$-conjugate to $s_0s_1$, so is $\tilde{w}$. If $\tilde{w}=t^{m\alpha_1+n\alpha_2}s_2s_1$ with $1\leqslant m\leqslant n\leqslant2m$ or $1\leqslant n<m<2n-1$. If $\ell(\tilde{w})=2$, obvious. If $\ell(\tilde{w})>2$, set $\tilde{w}_1=s_0\tilde{w}s_0=t^{(2-n)\alpha_1+(1-m)\alpha_2}s_1s_2$. Also $\ell(\tilde{w}_1)=\ell(\tilde{w})-2.$ By induction, $\tilde{w}_1$ is $\widetilde{W}$-conjugate to $s_0s_1$, so is $\tilde{w}$. For $\tilde{w}=t^{2k\alpha_1+k\alpha_1}s_2s_1$ $(k\geqslant1)$, check directly $\tilde{w}_1=s_2\tilde{w}s_2$, then $\ell(\tilde{w}_1)=\ell(\tilde{w})-2$, which deduce $\tilde{w}$ is $\widetilde{W}$-conjugate to $s_0s_1$. For $\tilde{w}=t^{(2k-1)\alpha_1+k\alpha_2}s_2s_1$ $(k\geqslant1)$, then $\tilde{w}_1=s_2s_0\tilde{w}s_0s_2$ and $\ell(\tilde{w}_1)=\ell(\tilde{w})-2$. By induction, $\tilde{w}_1$ is $\widetilde{W}$-conjugate to $s_0s_1$, so is $\tilde{w}$.
\end{proof}
\begin{thm}
The $\widetilde{W}$-conjugacy classes in $W_a$ are as following:
\[\{Id\},\ \mathbb{O}_1,\ \mathbb{O}_2,\ \mathbb{O}_{\lambda},\ \mathbb{C}_i,\ and\ \mathbb{C}'_i\] where $\lambda\in P_+\cap Q$, and $i\in\mathbb{N}_+$.
\end{thm}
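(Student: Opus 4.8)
The plan is to organize the classification around a single coarse invariant: the image of an element under the projection $\pi\colon\widetilde{W}=P\rtimes W\to W$ that forgets the translation part. Since $\pi$ is a surjective group homomorphism (and $\Omega=\langle\tau\rangle$ maps into $W$, its finite part being a $3$-cycle), for any $\tilde{x}\in\widetilde{W}$ and $\tilde{w}\in W_a$ we have $\pi(\tilde{x}\tilde{w}\tilde{x}^{-1})=\pi(\tilde{x})\,\pi(\tilde{w})\,\pi(\tilde{x})^{-1}$. Hence the $W$-conjugacy class of $\pi(\tilde{w})$ is a $\widetilde{W}$-conjugation invariant. As $W\cong S_3$ has exactly three conjugacy classes---the identity, the transpositions $\{s_1,s_2,s_1s_2s_1\}$, and the $3$-cycles $\{s_1s_2,s_2s_1\}$---this partitions $W_a$ into three mutually non-conjugate strata, and the whole argument reduces to describing the $\widetilde{W}$-conjugacy classes inside each stratum, which the four preceding lemmas accomplish.

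First I would treat the identity stratum, the pure translations $\{t^{\mu}\mid\mu\in Q\}$. For these the finite part is trivial, so conjugation by a translation $t^{\nu}$ acts trivially, while conjugation by a general $\tilde{x}=t^{\nu}u$ (or by $\tau=t^{\omega}v$) sends $t^{\mu}$ to $t^{u\mu}$ (resp. $t^{v\mu}$); thus $\widetilde{W}$-conjugation on pure translations is exactly the $W$-action on $Q$. The $W$-orbits on $Q$ have unique dominant representatives in $P_+\cap Q$, giving the class $\{Id\}$ for $\mu=0$ and the classes $\mathbb{O}_{\lambda}$ for $0\neq\lambda\in P_+\cap Q$, which is precisely the content of Lemma \ref{lenglambda}.

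Next I would treat the $3$-cycle stratum, elements $t^{\mu}c$ with $c$ a $3$-cycle. The key point is that $c$ is elliptic, i.e. $1-c$ is invertible on $Q\otimes\mathbb{Q}$, so conjugation by translations already moves the translation part through the finite-index sublattice $(1-c)Q$ of $Q$; combined with $W$- and $\Omega$-conjugation this collapses everything to the single class $\mathbb{O}_2$, which is Lemma \ref{leng2}. The transposition stratum is the delicate case: for a reflection $s$ the fixed space is a line, so $(1-s)Q$ has rank one and the translation part retains a residual $\mathbb{Z}$-valued invariant (for $s=s_1$, the coefficient of $\alpha_2$ in $\mu$, read modulo $(1-s_1)Q=\mathbb{Z}\alpha_1$). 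Lemmas \ref{leng1} and \ref{lengi} show that this residual invariant, once the remaining $W$- and $\Omega$-conjugation is accounted for, produces the class $\mathbb{O}_1$ when the residual is $0$ and the classes $\mathbb{C}_i,\mathbb{C}'_i$ for $i\geqslant1$, with the sign of the residual distinguishing $\mathbb{C}_i$ from $\mathbb{C}'_i$.

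Finally I would assemble the three strata to conclude that the complete list is $\{Id\}$, $\mathbb{O}_1$, $\mathbb{O}_2$, the $\mathbb{O}_{\lambda}$, and the $\mathbb{C}_i,\mathbb{C}'_i$. Exhaustiveness is immediate, since every element of $W_a$ lies in one of the three strata and each stratum has been fully described, while distinctness follows from the invariants above (the finite part separates the three strata, the dominant representative $\lambda$ separates the $\mathbb{O}_{\lambda}$, and the residual invariant with its sign separates $\mathbb{O}_1,\mathbb{C}_i,\mathbb{C}'_i$). I expect the genuine difficulty to lie entirely in the transposition stratum: one must verify both that the residual $\mathbb{Z}$-invariant is \emph{complete}---that any two elements sharing it are actually conjugate, which is exactly the careful length-reduction carried out in the proofs of Lemmas \ref{leng1} and \ref{lengi}---and that neither $W$- nor $\Omega$-conjugation collapses the sign, so that $\mathbb{C}_i$ and $\mathbb{C}'_i$ remain genuinely distinct classes.
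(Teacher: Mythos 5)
Your proposal is correct, and its organization genuinely differs from the paper's. The paper proves Lemmas \ref{leng1}, \ref{leng2}, \ref{lenglambda} and \ref{lengi} by one uniform device --- induction on length, conjugating by simple reflections to shorten --- and then obtains the theorem in a single line from these lemmas together with the partition of $W_a$; distinctness of the listed classes is automatic because the sets are pairwise disjoint and each is asserted to be a full class. You instead stratify by the image under $\widetilde{W}\to W$, a conjugation invariant, so non-conjugacy across the three strata comes for free, and inside two strata you replace length induction by lattice linear algebra: conjugation on pure translations is exactly the $W$-action on $Q$ (dominant representatives give $\{Id\}$ and the $\mathbb{O}_{\lambda}$), and for a $3$-cycle $c$ ellipticity collapses everything to $\mathbb{O}_2$ --- most cleanly via $(1-c)P=Q$, so conjugating by $t^{\nu}$ with $\nu\in P$ already absorbs the entire translation part, which is slightly more direct than your route through $(1-c)Q$ plus $\Omega$-conjugation. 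This is more conceptual and would generalize (it is the standard elliptic-class argument). What it does not yield is the quantitative content the paper needs downstream: the length-reduction proofs simultaneously compute the minimal lengths ($\ell(\mathbb{O}_{\lambda})=\ell(t^{\lambda})$, $\ell(\mathbb{C}_i)=3i$ or $3i+1$), which the class-polynomial calculations and the ``Dimension$=$Degree'' theorem consume later, so the paper cannot discard that machinery in any case. Finally, you correctly locate the irreducible difficulty in the transposition stratum, where completeness of the residual $\mathbb{Z}$-invariant (and the fact that the sign separating $\mathbb{C}_i$ from $\mathbb{C}'_i$ survives conjugation) still requires the reduction arguments of Lemmas \ref{leng1} and \ref{lengi}, exactly as in the paper.
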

\begin{proof}
Following Lemmas \ref{leng1}, \ref{leng2}, \ref{lenglambda}, \ref{lengi} and $$W_a=\{Id\}\sqcup\mathbb{O}_1\sqcup\mathbb{O}_2\sqcup(\sqcup_{\lambda\in P_+\cap Q}\mathbb{O}_{\lambda})\sqcup(\sqcup_{i\in\mathbb{N}_+}(\mathbb{C}_i\sqcup\mathbb{C}'_i)),$$ we obtain the theorem directly.
\end{proof}
Before the calculating of class polynomials, for convenience, we fix some notations. We set $\mathbb{I}=\{t^{m\alpha_1+n\alpha_2}w\mid m\in\mathbb{N}_+,\ n\in\mathbb{Z},\ 1-m\leqslant n\leqslant2m-1,\ w\in W\}\sqcup\{t^{k\alpha_1+2k\alpha_2},\ t^{k\alpha_1+2k\alpha_2}s_2,\ t^{k\alpha_1+2k\alpha_2}s_2s_1,\ t^{k(\alpha_1-\alpha_2)},\ t^{k(\alpha_1-\alpha_2)}s_1,\ t^{k(\alpha_1-\alpha_2)}s_2\mid k\in\mathbb{N}_+\}\sqcup\{Id,\ s_2\}$. Since $W_a=\mathbb{I}\cup(\tau\cdot\mathbb{I})\cup(\tau^2\cdot\mathbb{I})$ and \ref{Hn}, it is sufficient to consider elements in $\mathbb{I}$. For $\alpha\in Q$ we set $Q_{\alpha}=\{\lambda\in Q\cap P_+\mid\lambda<\alpha\}\cap Q_{sh}$. For $\lambda\in Q\cap P_+$, let $\mathbb{C}(t^{\lambda}s_1)$ to be the $\widetilde{W}$-conjugacy class contains $t^{\lambda}s_1$, and let $\mathbb{O}^{\leqslant}_{t^{\lambda}s_1}$ to be the set of all $\mathbb{C}_i$ with $\ell(\mathbb{C}_i)\leqslant\ell(\mathbb{C}(t^{\lambda}s_1))$. Similarly, let $\mathbb{C}'(t^{\lambda}s_2)$ to be the $\widetilde{W}$-conjugacy class contains $t^{\lambda}s_2$, and let $\mathbb{O}'^{\leqslant}_{t^{\lambda}s_2}$ to be the set of all $\mathbb{C}'_i$ with $\ell(\mathbb{C}'_i)\leqslant\ell(\mathbb{C}'(t^{\lambda}s_2))$. We calculate directly and obtain the following class polynomials. In the following, since the computations are quite similar, we list some typical calculations and others are easily obtained by similar methods. For other detailed calculations see \cite[Chapter 3]{Y}.
\begin{prop}\label{cplambda}
If $\tilde{w}\in\mathbb{O}_{\lambda}$ where $\lambda\in P_+\cap Q$. Then \[f_{\tilde{w},\mathbb{O}}=\left\{
                                                             \begin{array}{ll}
                                                               1, & \mathbb{O}=\mathbb{O}_{\lambda}\\
                                                               0, & otherwise.
                                                             \end{array}
                                                           \right.\]
\end{prop}
\begin{proof}
Since $\mathbb{O}_{\lambda}$ only has finitely many elements for each $\lambda$ and any two elements in $\mathbb{O}_{\lambda}$ are $\tilde{\thicksim}$, the proposition follows directly from the definition of class polynomials.
\end{proof}
In the following sections, for any $\tilde{w},\ \tilde{w}'\in\widetilde{W}$, we will always write $$T_{\tilde{w}}\equiv T_{\tilde{w}'} \mod[\widetilde{H},\widetilde{H}]\  (\text{or}\ [\widetilde{H},\widetilde{H}]_{\delta})$$ as  $$T_{\tilde{w}}\equiv T_{\tilde{w}'}$$ for short.
\begin{prop}\label{cpa21}
(1) If $\tilde{w}=t^{k(\alpha_1+2\alpha_2)}s_2s_1$, with $k\in\mathbb{N}_+$, then\[f_{\tilde{w},\mathbb{O}}=\left\{
                                                             \begin{array}{ll}
                                                               (v-v^{-1})^2, & \mathbb{O}=\mathbb{O}_{\lambda},\ \lambda\in Q_{(k-1)\alpha_1+(2k-2)\alpha_2} \\
                                                               (v-v^{-1}), & \mathbb{O}\in\mathbb{O}^{\leqslant}_{t^{k\alpha_1+(2k-1)\alpha_2}s_1}
                                                               \cup\mathbb{O}'^{\leqslant}_{t^{(k-1)\alpha_1+(2k-2)\alpha_2}s_2}\\
                                                               1, & \mathbb{O}=\mathbb{O}_2\\
                                                               0, & otherwise.
                                                             \end{array}
                                                           \right.\]
(2) If $\tilde{w}=t^{k(\alpha_1+2\alpha_2)}s_1s_2$, with $k\in\mathbb{N}_+$, then\[f_{\tilde{w},\mathbb{O}}=\left\{
                                                             \begin{array}{ll}
                                                               (v-v^{-1})^2, & \mathbb{O}=\mathbb{O}_{\lambda},\ \lambda\in Q_{k\alpha_1+2k\alpha_2} \\
                                                               (v-v^{-1}), & \mathbb{O}\in\mathbb{O}^{\leqslant}_{t^{k\alpha_1+(2k-1)\alpha_2}s_1}
                                                               \cup\mathbb{O}'^{\leqslant}_{t^{k(\alpha_1+2\alpha_2)}s_2}\\
                                                               1, & \mathbb{O}=\mathbb{O}_2\\
                                                               0, & otherwise.
                                                             \end{array}
                                                           \right.\]
\end{prop}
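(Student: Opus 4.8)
\emph{The plan} is to argue by induction on $k$, applying the reduction recursion for class polynomials one step at a time and tracking carefully into which conjugacy class each of the two resulting terms falls. We work in the split ($PGL_3$) case, so $\delta=\mathrm{id}$ and $s_{\delta(i)}=s_i$.

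Consider first part (1), $\tilde{w}=t^{k(\alpha_1+2\alpha_2)}s_2s_1$. By Lemma \ref{leng2} this element lies in $\mathbb{O}_2$ (its Weyl part $s_2s_1$ has length two and its translation lies in $Q$), but it is far from minimal length. Following the reduction scheme used in the proof of Lemma \ref{leng2}, I would take $s_i=s_0$: since $(m,n)=(k,2k)$ falls in the range treated there, conjugation by $s_0$ gives $\ell(s_0\tilde{w}s_0)=\ell(\tilde{w})-2$, so with $\tilde{w}_1=\tilde{w}$ the recursion reads
$$
f_{\tilde{w},\mathbb{O}}=(v-v^{-1})\,f_{s_0\tilde{w},\mathbb{O}}+f_{s_0\tilde{w}s_0,\mathbb{O}}.
$$
The point is that the two terms live in geometrically distinct families. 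The element $s_0\tilde{w}s_0=t^{(2-2k)\alpha_1+(1-k)\alpha_2}s_1s_2$ again has Weyl part of length two, so it remains in the $\mathbb{O}_2$-family but with strictly smaller length; after bringing it to standard form by $W$- and $\Omega$-conjugation (which preserves the class polynomial, Lemma \ref{Hn}), its class polynomial is available by the induction hypothesis together with the $\tau$-symmetry and the $\tilde{\approx}$-equivalences relating the $s_2s_1$- and $s_1s_2$-elements, and it supplies the constant $1$ on $\mathbb{O}_2$ along with all the lower-level linear and quadratic contributions. By contrast $s_0\tilde{w}$ has Weyl part of length one, hence lies in one of the classes $\mathbb{C}_j$ or $\mathbb{C}'_j$ of Lemma \ref{lengi}, and it is the source of the new top-level terms.

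The mechanism producing the two powers of $(v-v^{-1})$ is then transparent. Since $s_0\tilde{w}$ has Weyl length one, its own class polynomial --- read off from the separately established formulas for the $t^{\mu}s_1$- and $t^{\mu}s_2$-type elements --- has a constant term $1$ on the $\mathbb{C}$- or $\mathbb{C}'$-class containing it and, after one further reduction to a pure translation, a factor $(v-v^{-1})$ on the newly reachable classes $\mathbb{O}_\lambda$. Multiplying by the outer $(v-v^{-1})$ turns these into the coefficient $(v-v^{-1})$ on the new $\mathbb{C}$/$\mathbb{C}'$-class and $(v-v^{-1})^2$ on the new $\mathbb{O}_\lambda$. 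Forming the union of these with the inductively known lower-level contributions is exactly what advances the ranges to the asserted intervals $Q_{(k-1)\alpha_1+(2k-2)\alpha_2}$, $\mathbb{O}^{\leqslant}_{t^{k\alpha_1+(2k-1)\alpha_2}s_1}$ and $\mathbb{O}'^{\leqslant}_{t^{(k-1)\alpha_1+(2k-2)\alpha_2}s_2}$. Because every class polynomial has nonnegative coefficients in $\mathbb{Z}[v-v^{-1}]$, no cancellation can occur, so the only thing to verify is that each class is reached with the correct multiplicity.

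That verification is \emph{the main obstacle}. The delicate part is not the growth of the ranges in principle but their exact endpoints: one must choose the reduction path so that precisely the intervals above are swept out, with no class skipped, none double-counted, and none assigned the wrong power of $(v-v^{-1})$ --- note in particular that some of the three ranges advance with $k$ while others do not, and reconciling this forces the reduction path to be pinned down unambiguously. This is also where the distinction between the $s_2s_1$- and $s_1s_2$-cases enters, the two statements differing only through the shift in the translation labels recorded in their intervals. Once the path is fixed and the base case $k=1$ is checked directly (there $Q_{0}=\varnothing$, so the $(v-v^{-1})^2$ term is absent), the inductive step closes by substituting the two lower-length class polynomials. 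Part (2), $\tilde{w}=t^{k(\alpha_1+2\alpha_2)}s_1s_2$, is handled identically: the same $s_0$-conjugation drops the length by two into an $\mathbb{O}_2$-element and peels off a Weyl-length-one element, and reading off the correspondingly shifted ranges yields the stated formula.
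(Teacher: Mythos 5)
Your inductive scheme does not close, and the problem is concrete and quantitative. A single $s_0$-conjugation drops the length by exactly $2$: from $\ell(\tilde{w})=6k-2$ to $\ell(s_0\tilde{w}s_0)=6k-4$. But the next member of the family you are inducting on, $t^{(k-1)(\alpha_1+2\alpha_2)}s_2s_1$, has length $6(k-1)-2=6k-8$. Since $\tilde{\approx}$, strong conjugation and $\Omega$-conjugation all preserve length, no amount of ``bringing to standard form'' can identify the length-$(6k-4)$ element $s_0\tilde{w}s_0=t^{(2-2k)\alpha_1+(1-k)\alpha_2}s_1s_2$ with an element covered by your induction hypothesis (which only concerns $t^{k'(\alpha_1+2\alpha_2)}s_2s_1$ and $t^{k'(\alpha_1+2\alpha_2)}s_1s_2$ for $k'<k$). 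In fact $s_0\tilde{w}s_0$ is, up to such equivalences, one of the elements $t^{k\alpha_1+(2k-1)\alpha_2}s_1s_2\,/\,s_2s_1$ of Proposition \ref{cpa1221} --- and in the paper that proposition is \emph{deduced from} Proposition \ref{cpa21}, not available before it. So your step either has a gap (the needed class polynomial is unproven) or is circular (if you import Proposition \ref{cpa1221}).

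The paper avoids this by not doing a one-step reduction: each decrement $k\to k-1$ within the family costs \emph{three} reduction steps (lengths $6k-2\to 6k-4\to 6k-6\to 6k-8$), peeling off three Weyl-length-one terms $t^{k\alpha_1+(2k-1)\alpha_2}s_1$, $t^{(k-1)\alpha_1+(2k-2)\alpha_2}s_1$, $t^{(k-1)\alpha_1+(2k-2)\alpha_2}s_2$, and only then landing on $t^{(k-1)(\alpha_1+2\alpha_2)}s_2s_1$. Those peeled-off elements are either minimal in their classes ($\mathbb{C}_{2k-1}$, $\mathbb{C}_{2k-2}$) or lie in the low-length range of $\mathbb{C}'_{k-1}$ where the class polynomial follows from a direct reduction through translation elements (the part of Proposition \ref{cpci}/\ref{cpc'i}(1)-type computations that does not rely on \ref{cpa21}) --- a point your appeal to ``separately established formulas'' for $t^{\mu}s_1$-, $t^{\mu}s_2$-elements also needs but does not verify, since the full versions of those formulas (parts (2),(3)) again depend on \ref{cpa21} through \ref{cpblambda1221}. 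To repair your argument you must either perform the three-step cycle as the paper does, or reorganize everything as a single simultaneous induction on length over \emph{all} elements of $\mathbb{O}_2$ (all the families of Propositions \ref{cpa21}--\ref{cpaclambda} together), so that the intermediate elements hit after one step are themselves covered by the inductive hypothesis.
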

\begin{prop}\label{cpb12}
(1) If $\tilde{w}=t^{k(2\alpha_1+\alpha_2)}s_1s_2$, with $k\in\mathbb{N}_+$, then\[f_{\tilde{w},\mathbb{O}}=\left\{
                                                             \begin{array}{ll}
                                                               (v-v^{-1})^2, & \mathbb{O}=\mathbb{O}_{\lambda},\ \lambda\in Q_{(2k-2)\alpha_1+(k-1)\alpha_2} \\
                                                               (v-v^{-1}), & \mathbb{O}\in \mathbb{O}^{\leqslant}_{t^{(k-1)(2\alpha_1+\alpha_2)}s_1}\cup\mathbb{O}'^{\leqslant}_{t^{(2k-1)\alpha_1+k\alpha_2}s_2}\\
                                                               1, & \mathbb{O}=\mathbb{O}_2\\
                                                               0, & otherwise.
                                                             \end{array}
                                                           \right.\]
(2) If $\tilde{w}=t^{k(2\alpha_1+\alpha_2)}s_2s_1$, with $k\in\mathbb{N}_+$, then\[f_{\tilde{w},\mathbb{O}}=\left\{
                                                             \begin{array}{ll}
                                                               (v-v^{-1})^2, & \mathbb{O}=\mathbb{O}_{\lambda},\ \lambda\in Q_{k(2\alpha_1+\alpha_2)} \\
                                                               (v-v^{-1}), & \mathbb{O}\in \mathbb{O}^{\leqslant}_{t^{k(2\alpha_1+\alpha_2)}s_1}\cup\mathbb{O}'^{\leqslant}_{t^{(2k-1)\alpha_1+k\alpha_2}s_2}\\
                                                               1, & \mathbb{O}=\mathbb{O}_2\\
                                                               0, & otherwise.
                                                             \end{array}
                                                           \right.\]
\end{prop}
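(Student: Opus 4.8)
The plan is to compute both class polynomials by executing the reduction algorithm for class polynomials on the element $\tilde{w}=t^{k(2\alpha_1+\alpha_2)}s_1s_2$ (resp. $t^{k(2\alpha_1+\alpha_2)}s_2s_1$), using the recursion $f_{\tilde{w},\mathbb{O}}=(v-v^{-1})f_{s_i\tilde{w}_1,\mathbb{O}}+f_{s_i\tilde{w}_1s_{\delta(i)},\mathbb{O}}$ together with Lemma \ref{Hn} to move freely within $\tilde{\thicksim}$-classes. Note first that $\kappa(\tilde{w})=0$, so $\tilde{w}\in W_a$ and every $\mathbb{O}$ appearing with nonzero coefficient must also lie in $W_a$; this immediately restricts the target classes to those enumerated in the theorem above, namely $\{Id\}$, $\mathbb{O}_1$, $\mathbb{O}_2$, the $\mathbb{O}_\lambda$, and the families $\mathbb{C}_i,\mathbb{C}'_i$. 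Because $\tilde{w}$ is a translation times a Coxeter element of $W$, its Newton point is nonzero and regular, so the straight class it feeds into forces the ``$1$'' coefficient to sit on $\mathbb{O}_2$ (the minimal-length class of length $2$ in its fiber $f(\mathbb{O})=f(\tilde{w})$), consistent with the stated answer.

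First I would reduce $\tilde{w}$ toward minimal length by repeatedly conjugating by the simple reflections that decrease length, exactly as in the proof of Lemma \ref{leng2}: conjugation by $s_0$ turns $t^{k(2\alpha_1+\alpha_2)}s_1s_2$ into a shorter element of $\mathbb{O}'_2$, and each such step of the form $\tilde{w}\xrightarrow{s_i}\tilde{w}'$ with $\ell(\tilde{w}')=\ell(\tilde{w})-2$ produces no recursion terms (since both $s_i\tilde{w}$ and $s_i\tilde{w}s_i$ drop in length, the algorithm's branch with the $(v-v^{-1})$ factor is the relevant one). The subtle work is identifying, at each stage where the length drops by only $1$, precisely which shorter element $s_i\tilde{w}_1$ is produced and which $\widetilde{W}$-conjugacy class it belongs to. Tracking these ``$s_1s_2s_1$-type'' and ``$s_2$-type'' residues is what generates the two towers $\mathbb{O}^{\leqslant}_{t^{(k-1)(2\alpha_1+\alpha_2)}s_1}$ and $\mathbb{O}'^{\leqslant}_{t^{(2k-1)\alpha_1+k\alpha_2}s_2}$ (resp. their part~(2) analogues) carrying coefficient $(v-v^{-1})$, together with the translation classes $\mathbb{O}_\lambda$ for $\lambda\in Q_{(2k-2)\alpha_1+(k-1)\alpha_2}$ (resp. $Q_{k(2\alpha_1+\alpha_2)}$) carrying $(v-v^{-1})^2$. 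Each such $\mathbb{O}_\lambda$ arises from a branch in which the length drops by $1$ twice in succession before landing on a translation of minimal length in $\mathbb{O}_\lambda$.

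The main obstacle will be bookkeeping the branching of the recursion so that every path is accounted for exactly once and the coefficients accumulate to the claimed closed forms; in particular, verifying that the ranges of $\lambda$ and the cutoffs defining $\mathbb{O}^{\leqslant}$ and $\mathbb{O}'^{\leqslant}$ are sharp requires checking the boundary cases of $k$ and the short coroot directions excluded in $Q_{sh}$. I would organize the argument by induction on $k$: the base case $k=1$ is checked by a finite direct computation, and the inductive step peels off one copy of the translation $t^{2\alpha_1+\alpha_2}$ by conjugation, reducing to the $k-1$ instance plus the newly exposed length-$1$ and length-$3i$ contributions, which is exactly the increment by one rung in each of the two towers $\mathbb{O}^{\leqslant}$, $\mathbb{O}'^{\leqslant}$ and the enlargement of the translation range $Q_{\bullet}$. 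The asymmetry between parts (1) and (2) in the $s_1$-tower cutoff ($(k-1)(2\alpha_1+\alpha_2)$ versus $k(2\alpha_1+\alpha_2)$) reflects the different first reduction step available to $s_1s_2$ versus $s_2s_1$, and I would verify it by comparing the two initial conjugations explicitly.
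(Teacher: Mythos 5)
Your overall plan --- run the reduction recursion on $T_{\tilde w}$, track the residue terms, and induct on $k$ --- is in outline the paper's own method: the paper carries out precisely this computation for the mirror statement, Proposition \ref{cpa21}, and then obtains Proposition \ref{cpb12} from the diagram automorphism exchanging $s_1\leftrightarrow s_2$ (which swaps $\mathbb{C}_i\leftrightarrow\mathbb{C}'_i$, $\mathbb{O}^{\leqslant}\leftrightarrow\mathbb{O}'^{\leqslant}$ and $\alpha_1+2\alpha_2\leftrightarrow2\alpha_1+\alpha_2$); likewise part (2) follows from part (1) by the single step $T_{t^{k(2\alpha_1+\alpha_2)}s_2s_1}\equiv(v-v^{-1})T_{t^{k(2\alpha_1+\alpha_2)}s_1}+T_{t^{k(2\alpha_1+\alpha_2)}s_1s_2}$. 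However, two of your supporting claims are false. First, the Newton point of $\tilde w=t^{k(2\alpha_1+\alpha_2)}s_1s_2$ is not ``nonzero and regular'': writing $\tilde w=t^{\lambda}w$ with $w=s_1s_2$ an order-$3$ rotation having no nonzero fixed vector, one gets $\tilde w^{3}=t^{\lambda+w\lambda+w^{2}\lambda}=1$, so $\nu_{\tilde w}=0$ and $\tilde w$ is basic; moreover $\mathbb{O}_2$ is not a straight class, since $\ell((s_1s_2)^3)=0\neq3\cdot2$. The correct (and much simpler) reason that the coefficient $1$ sits on $\mathbb{O}_2$ is Lemma \ref{leng2}: $\tilde w$ itself lies in $\mathbb{O}_2$, so the recursion terminates at a minimal length element of that very class with constant term $1$.

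Second, and more seriously, you have the mechanics of the recursion backwards. Conjugation by a simple reflection changes length by $0$ or $\pm2$, never by $1$, so there are no conjugation ``stages where the length drops by only $1$''; the steps that contribute nothing extra are the length-preserving ones (Lemma \ref{Hn}), while every strictly length-decreasing step $\ell(s_i\tilde w s_i)=\ell(\tilde w)-2$ forces the full recursion $f_{\tilde w,\mathbb{O}}=(v-v^{-1})f_{s_i\tilde w,\mathbb{O}}+f_{s_i\tilde w s_i,\mathbb{O}}$ and hence creates a $(v-v^{-1})$-weighted residue $s_i\tilde w$ --- the exact opposite of your assertion that such steps ``produce no recursion terms.'' If one computed according to your description, every coefficient except $f_{\tilde w,\mathbb{O}_2}=1$ would vanish, contradicting the statement being proved. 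Finally, the substantive content of the proof --- identifying the conjugacy classes of the residues $t^{\mu}s_1$, $t^{\mu}s_2$, expanding them further down to the translation classes $\mathbb{O}_{\lambda}$, and verifying that the cutoffs $Q_{(2k-2)\alpha_1+(k-1)\alpha_2}$, $Q_{k(2\alpha_1+\alpha_2)}$, $\mathbb{O}^{\leqslant}_{t^{(k-1)(2\alpha_1+\alpha_2)}s_1}$, $\mathbb{O}'^{\leqslant}_{t^{(2k-1)\alpha_1+k\alpha_2}s_2}$ come out exactly as stated (this is what the chains of ``$\equiv$'' in the proof of Proposition \ref{cpa21} accomplish) --- is precisely what you defer as ``bookkeeping,'' so the proposal as it stands is a plan rather than a proof.
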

Here we give a proof of \ref{cpa21}. Symmetrically, we obtain \ref{cpb12}.
\begin{proof}[Proof of Proposition \ref{cpa21}] For (1), we have
\begin{align*}T_{t^{k(\alpha_1+2\alpha_2)}s_2s_1}\equiv&(v-v^{-1})(T_{t^{k\alpha_1+(2k-1)\alpha_2}s_1}+T_{t^{(k-1)\alpha_1+(2k-2)\alpha_2}s_1})\\
&+(v-v^{-1})T_{t^{(k-1)\alpha_1+(2k-2)\alpha_2}s_2}+T_{t^{(k-1)(\alpha_1+2\alpha_2)}s_2s_1}\\
\cdots&\cdots\\
\equiv&(v-v^{-1})[T_{t^{\alpha_1+\alpha_2}s_1}+\sum_{i=2}^{k}(T_{t^{(i-1)(\alpha_1+2\alpha_2)}s_1}+T_{t^{i\alpha_1+(2i-1)\alpha_2}s_1})]\\
&+(v-v^{-1})\sum_{i=1}^{k-1}T_{t^{i(\alpha_1+2\alpha_2)}s_2}+T_{\mathbb{O}_2}\\
\cdots&\cdots\\
\equiv&T_{\mathbb{O}_2}+(v-v^{-1})^2\sum_{\lambda\in Q_{(k-1)\alpha_1+(2k-2)\alpha_2}}T_{\mathbb{O}_{\lambda}}\\
&+(v-v^{-1})\sum_{\mathbb{O}\in\mathbb{O}^{\leqslant}_{t^{k\alpha_1+(2k-1)\alpha_2}s_1}\cup\mathbb{O}'^{\leqslant}_{t^{(k-1)(\alpha_1+2\alpha_2)}s_2}}T_{\mathbb{O}}.
\end{align*}
Thus (1) is proved.
By $T_{t^{k(\alpha_1+2\alpha_2)}s_1s_2}\equiv(v-v^{-1})T_{t^{k(\alpha_1+2\alpha_2)}s_2}+T_{t^{k(\alpha_1+2\alpha_2)}s_2s_1}$ and (1), (2) is proved.
\end{proof}
\begin{prop}\label{cpa1221}
For $k\in\mathbb{N}_+$, we have$$t^{k\alpha_1+(2k-1)\alpha_2}s_1s_2\tilde{\thicksim}t^{k\alpha_1+(2k-1)\alpha_2}s_2s_1,$$thus $T_{t^{k\alpha_1+(2k-1)\alpha_2}s_1s_2}\equiv T_{t^{k\alpha_1+(2k-1)\alpha_2}s_2s_1}$. If $\tilde{w}=t^{k\alpha_1+(2k-1)\alpha_2}s_1s_2$ or $t^{k\alpha_1+(2k-1)\alpha_2}s_2s_1$ with $k=1$, then $\tilde{w}$ is already of minimal length in $\mathbb{O}_2$. So $f_{\tilde{w},\mathbb{O}_2}=1$ and 0 for other $\mathbb{O}$. When $k\geqslant2$, then \[f_{\tilde{w},\mathbb{O}}=\left\{
                                                             \begin{array}{ll}
                                                               (v-v^{-1})^2, & \mathbb{O}=\mathbb{O}_{\lambda},\ \lambda\in Q_{(k-1)(\alpha_1+2\alpha_2)} \\
                                                               (v-v^{-1}), & \mathbb{O}\in \mathbb{O}^{\leqslant}_{t^{(k-1)(\alpha_1+2\alpha_2)}s_1}\cup\mathbb{O}'^{\leqslant}_{t^{(k-1)(\alpha_1+2\alpha_2)}s_2}\\
                                                               1, & \mathbb{O}=\mathbb{O}_2\\
                                                               0, & otherwise.
                                                             \end{array}
                                                           \right.\]
\end{prop}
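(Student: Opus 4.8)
The plan is to treat the three assertions in the statement separately: the strong conjugacy $t^{\lambda}s_1s_2\,\tilde{\thicksim}\,t^{\lambda}s_2s_1$ (writing $\lambda=k\alpha_1+(2k-1)\alpha_2$ throughout), the base case $k=1$, and the inductive computation for $k\geqslant 2$. Throughout I would work with the standard length formula for $\widetilde{W}$ and the facts $\langle\lambda,\alpha_1^{\vee}\rangle=1$ and $\langle\lambda,\alpha_2^{\vee}\rangle=3k-2$, which give $\ell(t^{\lambda}s_1s_2)=\ell(t^{\lambda}s_2s_1)=6k-4$.

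For the strong conjugacy I would first certify plain $\widetilde{W}$-conjugacy by the explicit conjugator $\tilde{x}=t^{\omega_1-\omega_2}s_1$, where $\omega_1,\omega_2$ are the fundamental coweights: since $s_1(s_1s_2)s_1=s_2s_1$ and $(1-s_2s_1)(\omega_1-\omega_2)=\alpha_1=\lambda-s_1\lambda$, one checks $\tilde{x}\,t^{\lambda}s_1s_2\,\tilde{x}^{-1}=t^{\lambda}s_2s_1$. To upgrade this to $\tilde{\thicksim}$ I would realize the conjugation as a chain of elementary strong conjugations (cyclic shifts): conjugation by the length-$0$ element $\tau\in\Omega$ is automatically elementary strong, and conjugation by a simple reflection $s$ is elementary strong whenever it preserves the length $6k-4$ while $\ell(s\tilde{w})=\ell(\tilde{w})+1$. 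Assembling such a length-preserving chain from $t^{\lambda}s_1s_2$ to $t^{\lambda}s_2s_1$ and invoking Lemma \ref{Hn} then yields $T_{t^{\lambda}s_1s_2}\equiv T_{t^{\lambda}s_2s_1}$.

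For $k=1$ one has $\lambda=\alpha_1+\alpha_2$ and the length formula gives $\ell=2$; by Lemma \ref{leng2} both elements lie in $\mathbb{O}_2$, whose minimal length is $2$. Hence each is already of minimal length in its own class, and the recursive definition of the class polynomials forces $f_{\tilde{w},\mathbb{O}_2}=1$ and $f_{\tilde{w},\mathbb{O}}=0$ otherwise. For $k\geqslant 2$ the element has length $6k-4>2$, so it is not of minimal length in $\mathbb{O}_2$ and the recursion applies. I would perform one reduction block: after a suitable cyclic shift to some $\tilde{w}_1\approx\tilde{w}$ I choose $s_i$ with $\ell(s_i\tilde{w}_1s_i)<\ell(\tilde{w}_1)$, so that $f_{\tilde{w},\mathbb{O}}=(v-v^{-1})f_{s_i\tilde{w}_1,\mathbb{O}}+f_{s_i\tilde{w}_1s_i,\mathbb{O}}$. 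The conjugate term $s_i\tilde{w}_1s_i$ is the $(k-1)$-member $t^{(k-1)\alpha_1+(2k-3)\alpha_2}s_1s_2$ of the same family (the translation dropping by $\theta=\alpha_1+\alpha_2$), which drives the induction on $k$; the remaining term $s_i\tilde{w}_1$ is of the form $t^{\mu}s_j$, whose class polynomial I would read off from the already-computed $s_1$- and $s_2$-type cases together with Propositions \ref{cpa21} and \ref{cpb12}. Combining the base case with the $(v-v^{-1})$-contributions accumulated along the induction produces the constant term $1$ on $\mathbb{O}_2$, the linear contributions on $\mathbb{O}^{\leqslant}_{t^{(k-1)(\alpha_1+2\alpha_2)}s_1}\cup\mathbb{O}'^{\leqslant}_{t^{(k-1)(\alpha_1+2\alpha_2)}s_2}$, and the quadratic $(v-v^{-1})^2$ contributions on $\mathbb{O}_{\lambda}$ for $\lambda\in Q_{(k-1)(\alpha_1+2\alpha_2)}$, the square arising precisely when an $s_j$-type polynomial already carries a $(v-v^{-1})T_{\mathbb{O}_{\lambda}}$ term.

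The hard part will be pinning down the correct reduction path. Because $\lambda$ sits adjacent to the $\alpha_1$-wall (indeed $\langle\lambda,\alpha_1^{\vee}\rangle=1$), a direct check shows that conjugation by each of $s_0,s_1,s_2$ \emph{preserves} the length $6k-4$, so no naive single conjugation reduces the element; one must first travel along a cyclic-shift orbit (mixing $s_0,s_1,s_2$ and $\tau$) to a position from which a simple reflection strictly lowers the length and lands exactly on the $(k-1)$-family member. Alongside this, the delicate bookkeeping is to verify that the lower classes $\mathbb{O}_{\lambda}$, $\mathbb{C}_i$ and $\mathbb{C}'_i$ appear with exactly the claimed powers of $(v-v^{-1})$ and with index ranges matching $Q_{(k-1)(\alpha_1+2\alpha_2)}$, $\mathbb{O}^{\leqslant}_{t^{(k-1)(\alpha_1+2\alpha_2)}s_1}$ and $\mathbb{O}'^{\leqslant}_{t^{(k-1)(\alpha_1+2\alpha_2)}s_2}$, which is where the genuine difficulty of the computation lies.
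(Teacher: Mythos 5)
Your main inductive step contains an arithmetic impossibility that breaks the proposed induction. A single application of the recursion $f_{\tilde{w},\mathbb{O}}=(v-v^{-1})f_{s_i\tilde{w}_1,\mathbb{O}}+f_{s_i\tilde{w}_1s_{i},\mathbb{O}}$ lowers the length by exactly $2$, so the term $s_i\tilde{w}_1s_{i}$ must have length $6k-6$; but the element you designate as this term, the $(k-1)$-member $t^{(k-1)\alpha_1+(2k-3)\alpha_2}s_1s_2$ of the same family, has length $6(k-1)-4=6k-10$. Your own parenthetical points to the correct target: subtracting $\theta=\alpha_1+\alpha_2$ from $k\alpha_1+(2k-1)\alpha_2$ gives $(k-1)(\alpha_1+2\alpha_2)$, so the one-step reduction lands on $t^{(k-1)(\alpha_1+2\alpha_2)}s_1s_2$ (length $6k-6$), which lies in the \emph{other} family --- the one of Proposition \ref{cpa21}(2) --- and not in yours. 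Hence there is no intra-family induction on $k$ of the shape you describe (and no role for Proposition \ref{cpb12} at all); the argument must route through Proposition \ref{cpa21}(2), which is what the paper does: for $\tilde{w}=t^{k\alpha_1+(2k-1)\alpha_2}s_2s_1$ one has
\[
T_{\tilde{w}}\equiv(v-v^{-1})T_{t^{(k-1)(\alpha_1+2\alpha_2)}s_1}+T_{t^{(k-1)(\alpha_1+2\alpha_2)}s_1s_2},
\]
and the index sets $Q_{(k-1)(\alpha_1+2\alpha_2)}$, $\mathbb{O}^{\leqslant}_{t^{(k-1)(\alpha_1+2\alpha_2)}s_1}$, $\mathbb{O}'^{\leqslant}_{t^{(k-1)(\alpha_1+2\alpha_2)}s_2}$ in the answer are inherited from that proposition, not produced by a recursion inside the family of the statement.

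There are two further defects. First, your treatment of $t^{\lambda}s_1s_2\,\tilde{\thicksim}\,t^{\lambda}s_2s_1$ establishes only ordinary conjugacy (the conjugator $t^{\omega_1-\omega_2}s_1$ does work), and then merely asserts that this conjugation can be refined into a chain of elementary strong conjugations; since strong conjugacy is strictly more restrictive than conjugacy, the existence of such a chain is the entire content of the claim and must be exhibited. The paper does it in one stroke: $t^{\lambda}s_2s_1=\tau^{-1}s_0\,t^{\lambda}s_1s_2\,s_0\tau$, and this single conjugation is elementary strong because $\ell(t^{\lambda}s_1s_2\cdot s_0\tau)=\ell(t^{\lambda}s_1s_2s_0)=6k-3=\ell(\tau^{-1}s_0)+\ell(t^{\lambda}s_1s_2)$. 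Second, your length analysis is wrong: it is not true that conjugation by each of $s_0,s_1,s_2$ preserves the length $6k-4$. For the representative $t^{\lambda}s_1s_2$, conjugation by $s_2$ raises the length to $6k-2$ (one computes $s_2t^{\lambda}s_1s_2s_2=t^{k\alpha_1+(1-k)\alpha_2}s_2s_1$ of length $6k-2$); and, more importantly, for the representative $t^{\lambda}s_2s_1$ conjugation by $s_0$ \emph{lowers} the length to $6k-6$, so no long cyclic-shift excursion is needed: the paper first passes to the $s_2s_1$ representative via the strong conjugacy above, then applies the single $s_0$-conjugation followed by an $\Omega$-twist, which is precisely the displayed reduction.
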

\begin{prop}\label{cpb1221}
For $k\in\mathbb{N}_+$, we have $$t^{(2k-1)\alpha_1+k\alpha_2}s_1s_2\tilde{\thicksim}t^{(2k-1)\alpha_1+k\alpha_2}s_2s_1,$$thus $T_{t^{(2k-1)\alpha_1+k\alpha_2}s_1s_2}\equiv T_{t^{(2k-1)\alpha_1+k\alpha_2}s_2s_1}$. If $\tilde{w}=t^{(2k-1)\alpha_1+k\alpha_2}s_1s_2$ or $t^{(2k-1)\alpha_1+k\alpha_2}s_2s_1$ with $k=1$, then $\tilde{w}$ is already of minimal length in $\mathbb{O}_2$. So $f_{\tilde{w},\mathbb{O}_2}=1$ and 0 for other $\mathbb{O}$. When $k\geqslant2$, then \[f_{\tilde{w},\mathbb{O}}=\left\{
                                                             \begin{array}{ll}
                                                               (v-v^{-1})^2, & \mathbb{O}=\mathbb{O}_{\lambda},\ \lambda\in Q_{(2k-2)\alpha_1+(k-1)\alpha_2} \\
                                                               (v-v^{-1}), & \mathbb{O}\in \mathbb{O}^{\leqslant}_{t^{(k-1)(2\alpha_1+\alpha_2)}s_1}\cup\mathbb{O}'^{\leqslant}_{t^{(k-1)(2\alpha_1+\alpha_2)}s_2}\\
                                                               1, & \mathbb{O}=\mathbb{O}_2\\
                                                               0, & otherwise.
                                                             \end{array}
                                                           \right.\]
\end{prop}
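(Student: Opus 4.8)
The plan is to obtain Proposition~\ref{cpb1221} from Proposition~\ref{cpa1221} by the nontrivial diagram symmetry of type $\widetilde{A}_2$, in the same spirit that \ref{cpb12} was read off from \ref{cpa21}. Let $\phi\colon\widetilde{W}\to\widetilde{W}$ be the involution determined on generators by $\phi(s_0)=s_0$, $\phi(s_1)=s_2$, $\phi(s_2)=s_1$ and $\phi(\tau)=\tau^{-1}$; on the lattice it interchanges $\alpha_1$ and $\alpha_2$, so that $\phi(t^{a\alpha_1+b\alpha_2})=t^{b\alpha_1+a\alpha_2}$. One checks that $\phi$ is compatible with the relations $\tau s_i\tau^{-1}=s_{i+1}$, hence is a genuine automorphism of $\widetilde{W}=W_a\rtimes\Omega$, and that it preserves the length function $\ell$. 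Therefore $T_{\tilde{w}}\mapsto T_{\phi(\tilde{w})}$ is an $A$-algebra automorphism of $\widetilde{H}$ that stabilizes $[\widetilde{H},\widetilde{H}]$ and sends a minimal length representative of a conjugacy class $\mathbb{O}$ to one of $\phi(\mathbb{O})$; comparing the two expansions of $\phi(T_{\tilde{w}})$ modulo $[\widetilde{H},\widetilde{H}]$ then yields the master identity $f_{\tilde{w},\mathbb{O}}=f_{\phi(\tilde{w}),\phi(\mathbb{O})}$, which drives everything that follows.

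Since $\phi(t^{k\alpha_1+(2k-1)\alpha_2}s_1s_2)=t^{(2k-1)\alpha_1+k\alpha_2}s_2s_1$ and $\phi$ respects both $\ell$ and conjugation, it preserves the relation $\tilde{\thicksim}$; applying it to the first assertion of \ref{cpa1221} gives $t^{(2k-1)\alpha_1+k\alpha_2}s_1s_2\,\tilde{\thicksim}\,t^{(2k-1)\alpha_1+k\alpha_2}s_2s_1$ and hence $T_{t^{(2k-1)\alpha_1+k\alpha_2}s_1s_2}\equiv T_{t^{(2k-1)\alpha_1+k\alpha_2}s_2s_1}$. For $k=1$ the image of the minimal element $t^{\alpha_1+\alpha_2}s_1s_2\in\mathbb{O}_2$ is again minimal of length $2$ in $\mathbb{O}_2$, because $\phi(\mathbb{O}_2)=\mathbb{O}_2$, which gives the $k=1$ clause.

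For $k\geqslant2$ I would match the four lines of the table under $\phi$. The symmetry fixes $\{Id\}$, $\mathbb{O}_1$ and $\mathbb{O}_2$, sends $\mathbb{O}_{\lambda}$ to $\mathbb{O}_{\phi(\lambda)}$, and interchanges the two families $\mathbb{C}_i\leftrightarrow\mathbb{C}'_i$; the latter is a direct inspection of the three translation patterns defining $\mathbb{C}_i$ and $\mathbb{C}'_i$, using $s_1s_2s_1=s_2s_1s_2$. Because $\alpha_1\leftrightarrow\alpha_2$ permutes the positive roots, $\phi$ preserves the dominance order and the set $Q_{sh}$ (it swaps $k(\alpha_1+2\alpha_2)$ with $k(2\alpha_1+\alpha_2)$ and negates $k(\alpha_1-\alpha_2)$); consequently it carries $Q_{(k-1)(\alpha_1+2\alpha_2)}$ onto $Q_{(k-1)(2\alpha_1+\alpha_2)}=Q_{(2k-2)\alpha_1+(k-1)\alpha_2}$, and it exchanges $\mathbb{O}^{\leqslant}_{t^{(k-1)(\alpha_1+2\alpha_2)}s_1}$ and $\mathbb{O}'^{\leqslant}_{t^{(k-1)(\alpha_1+2\alpha_2)}s_2}$ with $\mathbb{O}'^{\leqslant}_{t^{(k-1)(2\alpha_1+\alpha_2)}s_2}$ and $\mathbb{O}^{\leqslant}_{t^{(k-1)(2\alpha_1+\alpha_2)}s_1}$. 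Feeding these correspondences into the master identity turns the four clauses of \ref{cpa1221} termwise into the four clauses of \ref{cpb1221}, with the coefficients $(v-v^{-1})^2$, $(v-v^{-1})$, $1$, $0$ left untouched.

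The only step that genuinely needs care is this last bookkeeping, in particular the equalities $\phi(\mathbb{C}_i)=\mathbb{C}'_i$ together with the $\phi$-invariance of $Q_{sh}$ and of the truncated families $\mathbb{O}^{\leqslant}_{\bullet}$, since a single sign slip there would misalign the $s_1$- and $s_2$-indexed classes and break the table. As a self-contained alternative one may simply rerun the reduction of \ref{cpa1221} with the roles of $(\alpha_1,s_1)$ and $(\alpha_2,s_2)$ exchanged throughout; I would present the symmetry argument as the proof and keep the parallel reduction only as a check that the two tables agree.
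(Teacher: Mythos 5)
Your proposal is correct, and it takes a genuinely different route from the paper. The paper proves Proposition \ref{cpa1221} by an explicit reduction chain (via Lemma \ref{Hn} and Proposition \ref{cpa21}) and then disposes of \ref{cpb1221} with the remark that its proof is ``quite similar'' --- i.e.\ the intended argument is precisely your fallback option: rerun the same reduction with the roles of $(\alpha_1,s_1)$ and $(\alpha_2,s_2)$ exchanged. What you do instead is turn the paper's informal ``symmetrically'' into an actual lemma: the length-preserving involution $\phi$ of $\widetilde{W}$ (fixing $s_0$, swapping $s_1\leftrightarrow s_2$, with $\phi(\tau)=\tau^{-1}$, which is indeed compatible with $\tau s_i\tau^{-1}=s_{i+1}$) induces an $A$-algebra automorphism of $\widetilde{H}$ stabilizing $[\widetilde{H},\widetilde{H}]$, carries minimal length elements of a class $\mathbb{O}$ to minimal length elements of $\phi(\mathbb{O})$, and hence --- by the uniqueness of the expansion in He--Nie's basis theorem --- gives the transport identity $f_{\tilde{w},\mathbb{O}}=f_{\phi(\tilde{w}),\phi(\mathbb{O})}$. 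Your bookkeeping is also right where it matters: $\phi(\mathbb{O}_2)=\mathbb{O}_2$, $\phi(\mathbb{O}_{\lambda})=\mathbb{O}_{\phi(\lambda)}$, $\phi(\mathbb{C}_i)=\mathbb{C}'_i$ (using $s_1s_2s_1=s_2s_1s_2$, and $\ell(\mathbb{C}_i)=\ell(\mathbb{C}'_i)$ from Lemma \ref{lengi}), $\phi$ preserves dominance order and $Q_{sh}$ (the excluded family $k(\alpha_1-\alpha_2)$ is stable under negation), whence $Q_{(k-1)(\alpha_1+2\alpha_2)}\mapsto Q_{(2k-2)\alpha_1+(k-1)\alpha_2}$ and the two truncated families $\mathbb{O}^{\leqslant}_{\bullet}$, $\mathbb{O}'^{\leqslant}_{\bullet}$ get interchanged exactly as the table requires. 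What your route buys: no duplicated computation, and a reusable principle that simultaneously justifies all the ``symmetric'' pairs in the paper (\ref{cpa21}/\ref{cpb12}, \ref{cpa1221}/\ref{cpb1221}, \ref{cpoddtau}/\ref{cpodd1tau}). What the paper's route buys: it needs no extra machinery beyond the reduction method already in use, at the cost of repeating an essentially identical calculation.
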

The proof of \ref{cpb1221} is quite similar to that of \ref{cpa1221}, and thus we just give a proof of \ref{cpa1221}.
\begin{proof}[Proof of Proposition \ref{cpa1221}]
Since $t^{k\alpha_1+(2k-1)\alpha_2}s_2s_1=\tau^{-1}s_0t^{k\alpha_1+(2k-1)\alpha_2}s_1s_2s_0\tau$, $$t^{k\alpha_1+(2k-1)\alpha_2}s_1s_2\tilde{\thicksim}t^{k\alpha_1+(2k-1)\alpha_2}s_2s_1,$$ and hence $T_{t^{k\alpha_1+(2k-1)\alpha_2}s_1s_2}\equiv T_{t^{k\alpha_1+(2k-1)\alpha_2}s_2s_1}$. When $k=1$, the statement is obvious. It is sufficient to consider $\tilde{w}=t^{k\alpha_1+(2k-1)\alpha_2}s_2s_1$ where $k\geqslant2$. Along with Proposition \ref{cpa21} (2),
\begin{align*}T_{t^{k\alpha_1+(2k-1)\alpha_2}s_2s_1}\equiv&(v-v^{-1})T_{t^{(k-1)(\alpha_1+2\alpha_2)}s_1}+T_{t^{(k-1)(\alpha_1+2\alpha_2)}s_1s_2}\\
\cdots&\cdots\\
\equiv&(v-v^{-1})\sum_{i=1}^{k-1}(T_{t^{i\alpha_1+(2i-1)\alpha_2}s_1}+T_{t^{i(\alpha_1+2\alpha_2)}s_1})\\
&+(v-v^{-1})\sum_{i=1}^{k-1}T_{t^{i(\alpha_1+2\alpha_2)}s_2}+T_{\mathbb{O}_2}\\
\cdots&\cdots\\
\equiv&T_{\mathbb{O}_2}+(v-v^{-1})^2\sum_{\lambda\in Q_{(k-1)\alpha_1+(2k-2)\alpha_2}}T_{\mathbb{O}_{\lambda}}\\
&+(v-v^{-1})\sum_{\mathbb{O}\in\mathbb{O}^{\leqslant}_{t^{(k-1)(\alpha_1+2\alpha_2)}s_1}\cup\mathbb{O}'^{\leqslant}_{t^{(k-1)(\alpha_1+2\alpha_2)}s_2}}T_{\mathbb{O}}.
\end{align*}
The proposition is proved.
\end{proof}
For $\lambda\in Q\cap P_+$, we set $$D_{\lambda}=\{\lambda'\in Q\cap P_+\mid \lambda'<\lambda-\alpha_1,\ \lambda'\neq \lambda-i\alpha_1\ (i\in\mathbb{N}_+)\}\cap Q_{sh},$$ $$D'_{\lambda}=\{\lambda'\in Q\cap P_+\mid \lambda'<\lambda-\alpha_2,\ \lambda'\neq \lambda-i\alpha_2\ (i\in\mathbb{N}_+)\}\cap Q_{sh}.$$ And we set $$E_{\lambda}=\{\lambda'\in Q\cap P_+\mid \lambda'=\lambda-i\alpha_1\ (i\in\mathbb{N}_+)\}\cap Q_{sh},$$ $$E'_{\lambda}=\{\lambda'\in Q\cap P_+\mid \lambda'=\lambda-i\alpha_2\ (i\in\mathbb{N}_+)\}\cap Q_{sh}.$$
\begin{prop}\label{cpalambda1221}
(1) If $\tilde{w}=t^{\lambda}s_1s_2$ with $\lambda=m\alpha_1+n\alpha_2(m,\ n\in\mathbb{Z},\ 2\leqslant m\leqslant n<2m-1)$, then \[f_{\tilde{w},\mathbb{O}}=\left\{
                                                             \begin{array}{ll}
                                                               (v-v^{-1})^2, & \mathbb{O}=\mathbb{O}_{\lambda'},\ \lambda'\in D_{\lambda} \\
                                                               (v-v^{-1}), & \mathbb{O}\in \mathbb{O}^{\leqslant}_{t^{\lambda-\alpha_1-\alpha_2}s_1}\cup \mathbb{O}'^{\leqslant}_{t^{\lambda-\alpha_1}s_2}\\
                                                               1, & \mathbb{O}=\mathbb{O}_2\\
                                                               0, & otherwise;
                                                             \end{array}
                                                           \right.\]
(2) If $\tilde{w}=t^{\lambda}s_2s_1$ with $\lambda=m\alpha_1+n\alpha_2(m,\ n\in\mathbb{Z},\ 2\leqslant m<n<2m-1)$, then \[f_{\tilde{w},\mathbb{O}}=\left\{
                                                             \begin{array}{ll}
                                                               (v-v^{-1})^2, & \mathbb{O}=\mathbb{O}_{\lambda'}\ with\ \lambda'\in D'_{\lambda} \\
                                                               (v-v^{-1}), & \mathbb{O}\in \mathbb{O}^{\leqslant}_{t^{\lambda-\alpha_2}s_1}\cup \mathbb{O}'^{\leqslant}_{t^{\lambda-\alpha_1-\alpha_2}s_2}\\
                                                               1, & \mathbb{O}=\mathbb{O}_2\\
                                                               0, & otherwise.
                                                             \end{array}
                                                           \right.\]
\end{prop}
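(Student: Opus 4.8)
The plan is to argue by induction on the length $\ell(t^{\lambda})=\ell(\lambda)$, equivalently on the dominance order on $Q\cap P_+\cap Q_{sh}$, using the recursive definition of the class polynomials together with the reduction machinery already set up. The base cases are precisely the boundary situations that have been disposed of: when $\lambda$ lies on the ray generated by $\alpha_1+2\alpha_2$ or $2\alpha_1+\alpha_2$ we invoke Propositions \ref{cpa21} and \ref{cpb12}, and when $\lambda$ lies on the adjacent lines $n=2m-1$ or $m=2n-1$ we invoke Propositions \ref{cpa1221} and \ref{cpb1221}; the pure translation classes $\mathbb{O}_{\lambda'}$ that appear are governed by Proposition \ref{cplambda}. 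Throughout, Lemma \ref{Hn} lets us replace any element by a $\tilde{\thicksim}$-equivalent one and reduce to representatives lying in the fixed fundamental domain $\mathbb{I}$.

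For the inductive step with $\tilde{w}=t^{\lambda}s_1s_2$ and $\lambda=m\alpha_1+n\alpha_2$, $2\leqslant m\leqslant n<2m-1$, I would first check that $\tilde{w}$ is not of minimal length in its conjugacy class and exhibit (after at most one $\approx$-move, exactly as in the proof of Lemma \ref{leng2}) an affine simple reflection --- here $s_0$ --- with $\ell(s_0\tilde{w})<\ell(\tilde{w})$. The recursive formula then gives
\[
f_{\tilde{w},\mathbb{O}}=(v-v^{-1})\,f_{s_0\tilde{w},\mathbb{O}}+f_{s_0\tilde{w}s_0,\mathbb{O}}.
\]
The conjugate $s_0\tilde{w}s_0$ is again of the form $t^{\lambda''}w'$ with $w'\in\{s_1s_2,\ s_2s_1\}$ and $\lambda''$ a strictly smaller dominant coweight (a shift of $\lambda$ by $\alpha_1+\alpha_2$ after returning to $\mathbb{I}$), so its class polynomial is known by induction. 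The leftover factor $s_0\tilde{w}$ is a length-$1$-type element of the shape $t^{\mu}s_1$ (or $t^{\mu}s_2$); its contribution is computed from the length-$1$-string expansions, and these are the terms that feed the degree-one coefficients attached to the chains $\mathbb{O}^{\leqslant}$ and $\mathbb{O}'^{\leqslant}$.

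Summing the recursion along the chain $\lambda,\ \lambda-(\alpha_1+\alpha_2),\ \dots$ down to the base case, I would then collect the three types of contributions. The constant terms telescope to a single $T_{\mathbb{O}_2}$. The coefficient-$(v-v^{-1})$ terms accumulate over the $s_1$- and $s_2$-strings; matching the endpoints shows they range over exactly $\mathbb{O}^{\leqslant}_{t^{\lambda-\alpha_1-\alpha_2}s_1}\cup\mathbb{O}'^{\leqslant}_{t^{\lambda-\alpha_1}s_2}$. The coefficient-$(v-v^{-1})^2$ terms arise one generation deeper, when each surviving $t^{\mu}s_1$ factor is itself reduced into translation classes; these accumulate to $\sum_{\lambda'\in D_{\lambda}}T_{\mathbb{O}_{\lambda'}}$, the set $D_{\lambda}$ being cut out precisely because the translations lying on the $\alpha_1$-string $E_{\lambda}$ are absorbed into the degree-one chain rather than being squared. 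Part (2), for $t^{\lambda}s_2s_1$, follows by the same argument with $\alpha_1$ and $\alpha_2$ (equivalently $s_1$ and $s_2$) interchanged, or by a $\tilde{\thicksim}$-conjugation linking the two expressions as in Proposition \ref{cpa1221}.

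The hard part will be the combinatorial bookkeeping in this last step: one must verify that the index sets $D_{\lambda}$, together with the chains $\mathbb{O}^{\leqslant}_{t^{\lambda-\alpha_1-\alpha_2}s_1}$ and $\mathbb{O}'^{\leqslant}_{t^{\lambda-\alpha_1}s_2}$, capture \emph{exactly} the classes that occur, with the stated multiplicities and with no spurious cancellation. The delicate point is the asymmetry between the shift $\alpha_1+\alpha_2$ controlling the $s_1$-chain and the shift $\alpha_1$ controlling the $s_2$-chain, which must be tracked through every reduction; this in turn forces a careful check that each intermediate element genuinely admits a length-decreasing reflection, an assertion that depends on the tight inequalities $2\leqslant m\leqslant n<2m-1$ and can fail as soon as $\lambda$ reaches a wall (which is why those cases were separated out as the base of the induction).
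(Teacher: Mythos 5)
Your strategy for part (1) is essentially the paper's own proof: the paper conjugates by $s_0$, twists by $\tau^{-1}$ to return to $\mathbb{I}$, conjugates again by $s_1$ and $s_0$, and after one full round obtains
\[
T_{t^{\lambda}s_1s_2}\equiv(v-v^{-1})\bigl(T_{t^{(m-1)\alpha_1+n\alpha_2}s_2}+T_{t^{(m-1)\alpha_1+(n-1)\alpha_2}s_1}\bigr)+T_{t^{(m-1)\alpha_1+(n-1)\alpha_2}s_1s_2},
\]
then telescopes down the $(\alpha_1+\alpha_2)$-chain until it hits the wall $n'=2m'-1$, where Proposition \ref{cpa1221} takes over, and finally expands the accumulated degree-one terms into the $\mathbb{O}^{\leqslant}$, $\mathbb{O}'^{\leqslant}$ chains and the $(v-v^{-1})^2$ translation classes. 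Two corrections. Minor: a single $s_0$-conjugation does not produce the shape you claim --- $s_0t^{\lambda}s_1s_2s_0$, after the $\tau^{-1}$-twist, is $t^{(n-m+1)\alpha_1+n\alpha_2}s_2s_1$, whose translation part is not dominant; the descent by $\alpha_1+\alpha_2$ only materializes after the second stage of conjugations, so the recursion must be run in two-stage rounds exactly as in the paper (bookkeeping, not a flaw in strategy). Also, since the chain for (1) always terminates on the wall $n=2m-1$, Propositions \ref{cpa21} and \ref{cpb12} are never actually needed as base cases here.

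The genuine gap is part (2): neither of your two proposed routes works. The involution interchanging $\alpha_1$ and $\alpha_2$ sends $t^{m\alpha_1+n\alpha_2}s_1s_2$ to $t^{n\alpha_1+m\alpha_2}s_2s_1$, whose exponent lies in the region $2\leqslant n'\leqslant m'<2n'-1$ --- that is Proposition \ref{cpblambda1221}(1), not part (2) of the present proposition; the region $2\leqslant m<n<2m-1$ is not stable under this symmetry. Worse, a $\tilde{\thicksim}$-conjugation linking $t^{\lambda}s_1s_2$ and $t^{\lambda}s_2s_1$ cannot exist for interior $\lambda$: by Lemma \ref{Hn} it would force $f_{t^{\lambda}s_1s_2,\mathbb{O}}=f_{t^{\lambda}s_2s_1,\mathbb{O}}$ for every $\mathbb{O}$, contradicting the very statement being proved (the answers in (1) and (2) differ: $D_{\lambda}$ versus $D'_{\lambda}$, and different chains). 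That equivalence is special to the wall $n=2m-1$, which is precisely why Proposition \ref{cpa1221} is confined to it. The paper's route for (2) is different and short: apply the single reduction $T_{t^{\lambda}s_2s_1}\equiv(v-v^{-1})T_{t^{\lambda-\alpha_2}s_1}+T_{t^{\lambda-\alpha_2}s_1s_2}$, observe that $\lambda-\alpha_2$ still satisfies the hypotheses of (1), and quote part (1); the shift by $\alpha_2$ rather than by $\alpha_1+\alpha_2$ is exactly what produces the asymmetric index sets $\mathbb{O}^{\leqslant}_{t^{\lambda-\alpha_2}s_1}\cup\mathbb{O}'^{\leqslant}_{t^{\lambda-\alpha_1-\alpha_2}s_2}$ and $D'_{\lambda}$ in the answer. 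You should replace your derivation of (2) by this one-step argument.
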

\begin{prop}\label{cpblambda1221}
(1) If $\tilde{w}=t^{\lambda}s_2s_1$ with $\lambda=m\alpha_1+n\alpha_2(m,\ n\in\mathbb{Z},\ 2\leqslant n\leqslant m<2n-1)$, then \[f_{\tilde{w},\mathbb{O}}=\left\{
                                                             \begin{array}{ll}
                                                               (v-v^{-1})^2, & \mathbb{O}=\mathbb{O}_{\lambda'},\ \lambda'\in D'_{\lambda} \\
                                                               (v-v^{-1}), & \mathbb{O}\in \mathbb{O}^{\leqslant}_{t^{\lambda-\alpha_2}s_1}\cup \mathbb{O}'^{\leqslant}_{t^{\lambda-\alpha_1-\alpha_2}s_2}\\
                                                               1, & \mathbb{O}=\mathbb{O}_2\\
                                                               0, & otherwise;
                                                             \end{array}
                                                           \right.\]
(2) If $\tilde{w}=t^{\lambda}s_1s_2$ with $\lambda=m\alpha_1+n\alpha_2(m,\ n\in\mathbb{Z},\ 2\leqslant n<m<2n-1)$, then \[f_{\tilde{w},\mathbb{O}}=\left\{
                                                             \begin{array}{ll}
                                                               (v-v^{-1})^2, & \mathbb{O}=\mathbb{O}_{\lambda'},\ \lambda'\in D_{\lambda} \\
                                                               (v-v^{-1}), & \mathbb{O}\in \mathbb{O}^{\leqslant}_{t^{\lambda-\alpha_1-\alpha_2}s_1}\cup \mathbb{O}'^{\leqslant}_{t^{\lambda-\alpha_1}s_2}\\
                                                               1, & \mathbb{O}=\mathbb{O}_2\\
                                                               0, & otherwise.
                                                             \end{array}
                                                           \right.\]
\end{prop}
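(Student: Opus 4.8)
The plan is to deduce Proposition \ref{cpblambda1221} from Proposition \ref{cpalambda1221} by exploiting the order-two diagram symmetry of type $\widetilde{A}_2$, rather than rerunning the reduction algorithm by hand. Since this section treats the split adjoint case, so that $\delta=\mathrm{id}$ and the relevant quotient is $\widetilde{H}/[\widetilde{H},\widetilde{H}]$, let $\theta$ be the automorphism of $\widetilde{W}$ induced by the nontrivial automorphism of the affine Dynkin diagram: it fixes $s_0$ and interchanges $s_1\leftrightarrow s_2$, acts on the lattice by $\alpha_1\leftrightarrow\alpha_2$, and sends $\tau\mapsto\tau^{-1}$ on $\Omega$. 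First I would check that $\theta$ is a length-preserving automorphism permuting $\widetilde{\mathbb{S}}$, so that (exactly as for the automorphism recalled right after the definition of $\widetilde{H}$) the assignment $T_{\tilde{w}}\mapsto T_{\theta(\tilde{w})}$ extends to an $A$-algebra automorphism of $\widetilde{H}$. Being an algebra automorphism it preserves $[\widetilde{H},\widetilde{H}]$ and carries the basis element $T_{\mathbb{O}}$ to $T_{\theta(\mathbb{O})}$; applying it to the defining relation $T_{\tilde{w}}\equiv\sum_{\mathbb{O}}f_{\tilde{w},\mathbb{O}}T_{\mathbb{O}}$ and invoking uniqueness of the class polynomials yields the key identity $f_{\tilde{w},\mathbb{O}}=f_{\theta(\tilde{w}),\theta(\mathbb{O})}$.

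Next I would determine the action of $\theta$ on the conjugacy classes and the index sets appearing in the statement. A short direct check gives $\theta(\mathbb{O}_2)=\mathbb{O}_2$ and $\theta(\mathbb{O}_{\lambda})=\mathbb{O}_{\theta(\lambda)}$, and, using $\theta(s_1s_2s_1)=s_1s_2s_1$ together with the explicit descriptions in Lemma \ref{lengi}, that $\theta$ interchanges $\mathbb{C}_i\leftrightarrow\mathbb{C}'_i$. Because $\theta$ permutes the positive roots and fixes $\alpha_1+\alpha_2$, it preserves $P_+$ and the excluded set defining $Q_{sh}$, hence preserves $Q\cap P_+\cap Q_{sh}$. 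From $\theta(\lambda-\alpha_1)=\theta(\lambda)-\alpha_2$ it then follows that $\theta(D_{\lambda})=D'_{\theta(\lambda)}$ and $\theta(D'_{\lambda})=D_{\theta(\lambda)}$, while length preservation combined with $\mathbb{C}_i\leftrightarrow\mathbb{C}'_i$ gives $\theta(\mathbb{O}^{\leqslant}_{t^{\mu}s_1})=\mathbb{O}'^{\leqslant}_{t^{\theta(\mu)}s_2}$ and $\theta(\mathbb{O}'^{\leqslant}_{t^{\mu}s_2})=\mathbb{O}^{\leqslant}_{t^{\theta(\mu)}s_1}$.

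With these correspondences the conclusion is immediate. For part (1), writing $\lambda=m\alpha_1+n\alpha_2$ with $2\leqslant n\leqslant m<2n-1$, the element $\theta(t^{\lambda}s_2s_1)=t^{n\alpha_1+m\alpha_2}s_1s_2$ meets the hypothesis $2\leqslant m'\leqslant n'<2m'-1$ of Proposition \ref{cpalambda1221}(1) with $(m',n')=(n,m)$. Applying $f_{\tilde{w},\mathbb{O}}=f_{\theta(\tilde{w}),\theta(\mathbb{O})}$ and transporting the coefficients of \ref{cpalambda1221}(1) through $\theta$ (so that $D\to D'$ and $\mathbb{O}^{\leqslant}_{\bullet s_1}\to\mathbb{O}'^{\leqslant}_{\bullet s_2}$, with the value $1$ on $\mathbb{O}_2$ and the vanishing off-support both preserved) reproduces exactly the stated formula. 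Part (2) follows in the same way from \ref{cpalambda1221}(2), the constraint $2\leqslant m<n<2m-1$ turning into $2\leqslant n<m<2n-1$.

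I expect the only genuine work to lie in the middle step, namely pinning down rigorously that $\theta$ carries each of $\mathbb{O}_{\lambda}$, $\mathbb{C}_i$, $D_{\lambda}$, and the truncated families $\mathbb{O}^{\leqslant}$, $\mathbb{O}'^{\leqslant}$ to its mirror image; once these equalities are established the transport of formulas is purely formal. The alternative of giving a self-contained direct proof by the reduction algorithm, as in the proofs of Propositions \ref{cpa21} and \ref{cpa1221}, would instead force one to reproduce the entire chain of length-lowering steps with the correct choice of conjugating reflections at each stage, which is precisely the delicate bookkeeping the symmetry argument lets us bypass.
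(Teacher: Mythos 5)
Your proposal is correct, but it takes a different route from the paper. The paper does not actually write out a proof of this proposition: it declares the computation ``quite similar'' to that of Proposition \ref{cpalambda1221} and proves only the latter, by the explicit reduction algorithm (a chain of congruences $T_{\tilde{w}}\equiv(v-v^{-1})T_{s_i\tilde{w}}+T_{s_i\tilde{w}s_i}$ with carefully chosen conjugating reflections, using Lemma \ref{Hn} and the $\Omega$-twists). In other words, the paper's intended proof is a symmetric rerun of that computation with the roles of $\alpha_1,\alpha_2$ and $s_1,s_2$ exchanged. You instead formalize exactly that symmetry: the diagram involution $\theta$ (fixing $s_0$, swapping $s_1\leftrightarrow s_2$, $\alpha_1\leftrightarrow\alpha_2$, $\tau\mapsto\tau^{-1}$) induces a length-preserving automorphism of $\widetilde{W}$ preserving $\widetilde{\mathbb{S}}$, hence an $A$-algebra automorphism of $\widetilde{H}$ preserving $[\widetilde{H},\widetilde{H}]$, which yields $f_{\tilde{w},\mathbb{O}}=f_{\theta(\tilde{w}),\theta(\mathbb{O})}$, and the bookkeeping reduces to the correspondences $\theta(\mathbb{O}_2)=\mathbb{O}_2$, $\theta(\mathbb{O}_{\lambda})=\mathbb{O}_{\theta(\lambda)}$, $\theta(\mathbb{C}_i)=\mathbb{C}'_i$ (via the braid relation $s_2s_1s_2=s_1s_2s_1$), $\theta(D_{\lambda})=D'_{\theta(\lambda)}$, and $\theta(\mathbb{O}^{\leqslant}_{t^{\mu}s_1})=\mathbb{O}'^{\leqslant}_{t^{\theta(\mu)}s_2}$; I verified these and the matching of the hypotheses $2\leqslant n\leqslant m<2n-1$ and $2\leqslant n<m<2n-1$ with those of Proposition \ref{cpalambda1221}(1),(2) under $(m,n)\mapsto(n,m)$, and the transported formulas do reproduce the statement. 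Two small points you should make explicit to be fully rigorous: first, that $T_{\mathbb{O}}\mapsto T_{\theta(\mathbb{O})}$ in the quotient $\widetilde{H}/[\widetilde{H},\widetilde{H}]$ requires the He--Nie fact (quoted in the paper) that the image of $T_{w_{\mathbb{O}}}$ is independent of the chosen minimal length representative, since $\theta(w_{\mathbb{O}})$ need not be the fixed representative of $\theta(\mathbb{O})$; second, that $\theta$ preserves length on all of $\widetilde{W}$ (not just $W_a$) because it permutes the walls of the base alcove. Your approach buys a genuine proof at the cost of setting up the automorphism once, and it also retroactively justifies the paper's several informal appeals to ``symmetry'' (e.g.\ Propositions \ref{cpb12}, \ref{cpb1221}, \ref{cpc'i}); the paper's implicit approach avoids any structural setup but would require repeating the entire delicate chain of length-lowering steps.
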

To avoid of repetition, we just prove \ref{cpalambda1221}.
\begin{proof}[Proof of Proposition \ref{cpalambda1221}]
For (1), let $\lambda=m\alpha_1+n\alpha_2$
\begin{align*}T_{t^{\lambda}s_1s_2}\equiv&(v-v^{-1})T_{s_0t^{\lambda}s_1s_2}+T_{s_0t^{\lambda}s_1s_2s_0}\\
=&(v-v^{-1})T_{t^{(1-n)\alpha_1+(1-m)\alpha_2}s_1}+T_{t^{(1-n)\alpha_1+(2-m)\alpha_2}s_2s_1}\\
\equiv&(v-v^{-1})T_{\tau^{-1}\cdot t^{(1-n)\alpha_1+(1-m)\alpha_2}s_1}+T_{\tau^{-1}\cdot t^{(1-n)\alpha_1+(2-m)\alpha_2}s_2s_1}\\
=&(v-v^{-1})T_{t^{(n-m+1)\alpha_1+n\alpha_2}s_1s_2s_1}+T_{t^{(n-m+1)\alpha_1+n\alpha_2}s_2s_1}\\
\equiv&(v-v^{-1})T_{s_1t^{(n-m+1)\alpha_1+n\alpha_2}s_1s_2s_1s_1}+(v-v^{-1})T_{s_0t^{(n-m+1)\alpha_1+n\alpha_2}s_2s_1}+T_{s_0t^{(n-m+1)\alpha_1+n\alpha_2}s_2s_1s_0}\\
=&(v-v^{-1})T_{t^{(m-1)\alpha_1+n\alpha_2}s_2}+(v-v^{-1})T_{t^{(1-n)\alpha_1+(m-n)\alpha_2}s_2}+T_{t^{(2-n)\alpha_1+(m-n)\alpha_2}s_1s_2}\\
\equiv&(v-v^{-1})T_{t^{(m-1)\alpha_1+n\alpha_2}s_2}+(v-v^{-1})T_{\tau^{-1}\cdot t^{(1-n)\alpha_1+(m-n)\alpha_2}s_2}+T_{\tau^{-1}\cdot t^{(2-n)\alpha_1+(m-n)\alpha_2}s_1s_2}\\
=&(v-v^{-1})(T_{t^{(m-1)\alpha_1+n\alpha_2}s_2}+T_{t^{(m-1)\alpha_1+(n-1)\alpha_2}s_1})+T_{t^{(m-1)\alpha_1+(n-1)\alpha_2}s_1s_2}\\
\cdots&\cdots\ \ \ \ \ \ \ \ \ \ \ \ \ \ \ \ \ \ \ (Inductively)\\
\equiv&(v-v^{-1})[(T_{t^{(m-1)\alpha_1+n\alpha_2}s_2}+T_{t^{(m-1)\alpha_1+(n-1)\alpha_2}s_1})+(T_{t^{(m-2)\alpha_1+(n-1)\alpha_2}s_2}+T_{t^{(m-2)\alpha_1+(n-2)\alpha_2}s_1})+\cdots\\
&\cdots+(T_{t^{(n-m+1)\alpha_1+2(n-m+1)\alpha_2}s_2}+T_{t^{(n-m+1)\alpha_1+(2(n-m+1)-1)\alpha_2}s_1})]+T_{t^{(n-m+1)\alpha_1+(2(n-m+1)-1)\alpha_2}s_1s_2}\\
\cdots&\cdots\ \ \ \ \ \ \ \ \ \ \ \ \ \ \ \ \ \ \ (\ref{cpa1221})\\
\equiv&T_{\mathbb{O}_2}+(v-v^{-1})^2\sum_{\lambda'\in D_{\lambda}}T_{\mathbb{O}_{\lambda'}}+(v-v^{-1})\sum_{\mathbb{O}\in\mathbb{O}^{\leqslant}_{t^{\lambda-\alpha_1-\alpha_2}s_1}\cup\mathbb{O}'^{\leqslant}_{t^{\lambda-\alpha_1}s_2}}T_{\mathbb{O}}.
\end{align*}
For (2) we combine (1) to obtain
\begin{align*}
T_{t^{\lambda}s_2s_1}\equiv&(v-v^{-1})T_{t^{\lambda-\alpha_2}s_1}+T_{t^{\lambda-\alpha_2}s_1s_2}\\
\cdots&\cdots\\
\equiv&T_{\mathbb{O}_2}+(v-v^{-1})^2\sum_{\lambda'\in D'_{\lambda}}T_{\mathbb{O}_{\lambda'}}+(v-v^{-1})\sum_{\mathbb{O}\in\mathbb{O}^{\leqslant}_{t^{\lambda-\alpha_2}s_1}\cup\mathbb{O}'^{\leqslant}_{t^{\lambda-\alpha_1-\alpha_2}s_2}}T_{\mathbb{O}}.
\end{align*}
\end{proof}
\begin{cor}\label{cpaclambda}
(1) If $\tilde{w}=t^{(2k+1)\alpha_1+k\alpha_2}s_1s_2$, $k\in\mathbb{N}$, then \[f_{\tilde{w},\mathbb{O}}=\left\{
                                                             \begin{array}{ll}
                                                               (v-v^{-1})^2, & \mathbb{O}=\mathbb{O}_{\lambda},\ \lambda\in Q_{2k\alpha_1+k\alpha_2} \\
                                                               (v-v^{-1}), & \mathbb{O}\in \mathbb{O}^{\leqslant}_{t^{2k\alpha_1+k\alpha_2}s_1}\cup \mathbb{O}'^{\leqslant}_{t^{2k\alpha_1+k\alpha_2}s_2}\\
                                                               1, & \mathbb{O}=\mathbb{O}_2\\
                                                               0, & otherwise.
                                                             \end{array}
                                                           \right.\]
(2) If $\tilde{w}=t^{(2k+1)\alpha_1+k\alpha_2}s_2s_1$, $k\in\mathbb{N}$, then \[f_{\tilde{w},\mathbb{O}}=\left\{
                                                             \begin{array}{ll}
                                                               (v-v^{-1})^2, & \mathbb{O}=\mathbb{O}_{\lambda},\ \lambda\in Q_{2k\alpha_1+k\alpha_2}\cup E_{(2k+1)\alpha_1+(k+1)\alpha_2}\\
                                                               (v-v^{-1}), & \mathbb{O}\in \mathbb{O}^{\leqslant}_{t^{(2k+1)\alpha_1+(k+1)\alpha_2}s_1}\cup \mathbb{O}'^{\leqslant}_{t^{2k\alpha_1+k\alpha_2}s_2}\\
                                                               1, & \mathbb{O}=\mathbb{O}_2\\
                                                               0, & otherwise.
                                                             \end{array}
                                                           \right.\]
(3) If $\tilde{w}=t^{m\alpha_1+n\alpha_2}s_1s_2$, $m,\ n\in\mathbb{N}$ and $m-2n>1$, then \[f_{\tilde{w},\mathbb{O}}=\left\{
                                                             \begin{array}{ll}
                                                               (v-v^{-1})^2, & \mathbb{O}=\mathbb{O}_{\lambda},\ \lambda\in D'_{m\alpha_1+(m-n)\alpha_2}\\
                                                               (v-v^{-1}), & \mathbb{O}\in \mathbb{O}^{\leqslant}_{t^{m\alpha_1+(m-n-1)\alpha_2}s_1}\cup \mathbb{O}'^{\leqslant}_{t^{(m-1)\alpha_1+(m-n-1)\alpha_2}s_2}\\
                                                               1, & \mathbb{O}=\mathbb{O}_2\\
                                                               0, & otherwise.
                                                             \end{array}
                                                           \right.\]
(4) If $\tilde{w}=t^{m\alpha_1+n\alpha_2}s_2s_1$, $m,\ n\in\mathbb{N}$ and $m-2n>1$, then \[f_{\tilde{w},\mathbb{O}}=\left\{
                                                             \begin{array}{ll}
                                                               (v-v^{-1})^2, & \mathbb{O}=\mathbb{O}_{\lambda},\ \lambda\in D_{m\alpha_1+(m-n)\alpha_2}\cup E_{m\alpha_1+(m-n)\alpha_2}\\
                                                               (v-v^{-1}), & \mathbb{O}\in \mathbb{O}^{\leqslant}_{t^{m\alpha_1+(m-n)\alpha_2}s_1}\cup \mathbb{O}'^{\leqslant}_{t^{(m-1)\alpha_1+(m-n)\alpha_2}s_2}\\
                                                               1, & \mathbb{O}=\mathbb{O}_2\\
                                                               0, & otherwise.
                                                             \end{array}
                                                           \right.\]
(5) If $\tilde{w}=t^{m\alpha_1-n\alpha_2}s_1s_2$, $m,\ n\in\mathbb{N}_+$ and $m-n>1$, then \[f_{\tilde{w},\mathbb{O}}=\left\{
                                                             \begin{array}{ll}
                                                               (v-v^{-1})^2, & \mathbb{O}=\mathbb{O}_{\lambda},\ \lambda\in D'_{m\alpha_1+(m+n)\alpha_2}\\
                                                               (v-v^{-1}), & \mathbb{O}\in \mathbb{O}^{\leqslant}_{t^{m\alpha_1+(m+n-1)\alpha_2}s_1}\cup\mathbb{O}'^{\leqslant}_{t^{(m-1)\alpha_1+(m+n-1)\alpha_2}s_2}\\
                                                               1, & \mathbb{O}=\mathbb{O}_2\\
                                                               0, & otherwise.
                                                             \end{array}
                                                           \right.\]
(6) If $\tilde{w}=t^{m\alpha_1-n\alpha_2}s_2s_1$, $m,\ n\in\mathbb{N}$ and $m-n>1$, then \[f_{\tilde{w},\mathbb{O}}=\left\{
                                                             \begin{array}{ll}
                                                               (v-v^{-1})^2, & \mathbb{O}=\mathbb{O}_{\lambda},\ \lambda\in D_{m\alpha_1+(m+n)\alpha_2}\cup E_{m\alpha_1+(m+n)\alpha_2}\\
                                                               (v-v^{-1}), & \mathbb{O}\in \mathbb{O}^{\leqslant}_{t^{m\alpha_1+(m+n)\alpha_2}s_1}\cup\mathbb{O}'^{\leqslant}_{t^{(m-1)\alpha_1+(m+n)\alpha_2}s_2}\\
                                                               1, & \mathbb{O}=\mathbb{O}_2\\
                                                               0, & otherwise.
                                                             \end{array}
                                                           \right.\]
\end{cor}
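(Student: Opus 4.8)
The plan is to deduce all six formulas from Propositions \ref{cpalambda1221} and \ref{cpblambda1221} (together with the boundary cases \ref{cpa21}, \ref{cpb12}, \ref{cpa1221}, \ref{cpb1221}) by the same reduction machinery used there: repeated application of the inductive class-polynomial formula, combined with the $\tilde{\thicksim}$-identifications supplied by Lemma \ref{Hn}. The elements treated here sit just outside, or on the boundary of, the regions covered by those propositions, so the strategy is to conjugate each $t^{\lambda}s_1s_2$ or $t^{\lambda}s_2s_1$ into a normal form and then peel off length-lowering contributions one coroot at a time.

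First I would record that the six cases organize into three mirror pairs, (1)--(2), (3)--(4) and (5)--(6), in which the $s_1s_2$- and $s_2s_1$-members are run through the \emph{same} peeling induction and differ only in the terminal reduction step. The bridge between the two members of a pair is a single application of a relation of the shape
$$T_{t^{\mu}s_1s_2}\equiv(v-v^{-1})T_{t^{\mu}s_2}+T_{t^{\mu}s_2s_1},$$
exactly as in the proof of Proposition \ref{cpa21}; the extra summand $(v-v^{-1})T_{t^{\mu}s_2}$ is precisely what promotes the $\alpha_1$-edge coweights, so that the $s_1s_2$-cases (1), (3), (5) produce the interior sets $Q_{\bullet}$, $D'_{\bullet}$ while the $s_2s_1$-cases (2), (4), (6) produce the enlarged sets $Q_{\bullet}\cup E_{\bullet}$ and $D_{\bullet}\cup E_{\bullet}$.

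For the base members (1), (3), (5) I would first move the translation part to a dominant coweight in $Q_{sh}$ by conjugating with the length-$0$ element $\tau$ and with $s_0$, invoking Lemma \ref{Hn} at each identification. For (1) this places $(2k+1)\alpha_1+k\alpha_2$ adjacent to the excluded line $k(2\alpha_1+\alpha_2)$ governed by Proposition \ref{cpb12}; for (3) and (5) the non-dominant coweights $m\alpha_1+n\alpha_2$ and $m\alpha_1-n\alpha_2$ are carried onto their dominant representatives $m\alpha_1+(m-n)\alpha_2$ and $m\alpha_1+(m+n)\alpha_2$, whence the subscripts appearing in the statement. After normalization the induction is identical to that in Proposition \ref{cpalambda1221}: each step strips one $(v-v^{-1})T_{t^{\nu}s_i}$ term, lowers the coweight by a simple coroot, and the recursion halts either at an element covered by \ref{cpa1221}/\ref{cpb1221} or at the minimal class $\mathbb{O}_2$. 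Summing the singly-stripped terms yields the $(v-v^{-1})$-contributions indexed by $\mathbb{O}^{\leqslant}$, $\mathbb{O}'^{\leqslant}$, and the doubly-stripped terms yield the $(v-v^{-1})^2$-contributions indexed by the $D$-, $E$- or $Q$-sets.

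The main obstacle will be the edge bookkeeping: at every step one must decide whether the current translation still lies in $Q_{sh}$, so that its conjugacy class is an $\mathbb{O}_{\lambda'}$ contributing $(v-v^{-1})^2$, or whether it has landed on one of the three excluded lines $k(\alpha_1+2\alpha_2)$, $k(2\alpha_1+\alpha_2)$, $k(\alpha_1-\alpha_2)$, so that its class is of $\mathbb{C}_i$- or $\mathbb{C}'_i$-type and enters only the $\mathbb{O}^{\leqslant}$/$\mathbb{O}'^{\leqslant}$ sums. Keeping track of exactly which coweights of the form $\lambda-i\alpha_j$ get promoted from a $(v-v^{-1})$- to a $(v-v^{-1})^2$-contribution in passing from the $s_1s_2$- to the $s_2s_1$-member is what separates $D'_{\bullet}$ from $D_{\bullet}\cup E_{\bullet}$, and is the one point where the computation must be carried out with care. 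The hypotheses $m-2n>1$ in (3)--(4) and $m-n>1$ in (5)--(6) are exactly the conditions ensuring the induction remains within a single region throughout, so that these edge patterns stabilize and the closed forms above hold uniformly in $k$, $m$, $n$.
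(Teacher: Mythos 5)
Your overall strategy---reduce to the earlier propositions via conjugation, Lemma \ref{Hn}, and the class-polynomial recursion---is the right one, but the central mechanism you build the proof on, the ``bridge'' relation $T_{t^{\mu}s_1s_2}\equiv(v-v^{-1})T_{t^{\mu}s_2}+T_{t^{\mu}s_2s_1}$, is false for every element in this corollary. That relation is the reduction $T_{\tilde{w}}\equiv(v-v^{-1})T_{s_1\tilde{w}}+T_{s_1\tilde{w}s_1}$ for $\tilde{w}=t^{\mu}s_1s_2$, and since $s_1(t^{\mu}s_1s_2)s_1=t^{s_1\mu}s_2s_1$, it keeps the \emph{same} translation part on both sides only when $s_1\mu=\mu$. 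In Proposition \ref{cpa21} this holds because $\mu=k(\alpha_1+2\alpha_2)$ lies on the $\alpha_1$-wall, $\langle\mu,\alpha_1^{\vee}\rangle=0$; for the pairs here it fails, e.g.\ for $\mu=(2k+1)\alpha_1+k\alpha_2$ one has $s_1\mu=-(k+1)\alpha_1+k\alpha_2\neq\mu$. Worse, the direction contradicts the statement you are trying to prove: a length computation gives $\ell(t^{\mu}s_1s_2)=6k+2$ while $\ell(t^{\mu}s_2s_1)=6k+4$, so if your relation held it would yield $f_{t^{\mu}s_1s_2,\mathbb{O}}=(v-v^{-1})f_{t^{\mu}s_2,\mathbb{O}}+f_{t^{\mu}s_2s_1,\mathbb{O}}$, i.e.\ the $s_1s_2$-member would carry the \emph{enlarged} sets, whereas in the corollary it is the $s_2s_1$-cases (2), (4), (6) that acquire the extra $E$-terms. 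So the pairing mechanism collapses, and with it your derivation of (2), (4), (6) from (1), (3), (5).

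The paper's proof is a single observation you miss: conjugate once by $s_2$. Writing $\tilde{w}'=s_2\tilde{w}s_2=t^{s_2\mu}(s_2ws_2)$, the coweight $s_2\mu$ is dominant in all six cases, and either $\ell(\tilde{w}')=\ell(\tilde{w})$, whence $T_{\tilde{w}}\equiv T_{\tilde{w}'}$ by Lemma \ref{Hn} (cases (1), (3), (5)), or $\ell(\tilde{w}')=\ell(\tilde{w})-2$, whence $T_{\tilde{w}}\equiv(v-v^{-1})T_{s_2\tilde{w}}+T_{\tilde{w}'}$ (cases (2), (4), (6)). In the latter cases the extra element $s_2\tilde{w}=t^{s_2\mu}s_1$ lies in a class $\mathbb{C}_i$, and Proposition \ref{cpci} supplies exactly the $(v-v^{-1})^2 E$-contributions and the enlarged $\mathbb{O}^{\leqslant}$-ranges; the terms $T_{\tilde{w}'}$ are verbatim instances of Propositions \ref{cpalambda1221}, \ref{cpblambda1221}, \ref{cpa1221}, \ref{cpb1221}. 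No new peeling induction and no $\tau$/$s_0$ normalization are needed (an $s_0$-conjugation generically changes length and would introduce spurious terms). Finally, your identification of the governing proposition for (1) is off: $s_2(t^{(2k+1)\alpha_1+k\alpha_2}s_1s_2)s_2=t^{(2(k+1)-1)\alpha_1+(k+1)\alpha_2}s_2s_1$ is handled by Proposition \ref{cpb1221} with parameter $k+1$ (the element adjacent to the excluded line), not by Proposition \ref{cpb12} (elements on the line), and indeed case (1) of the corollary is exactly \ref{cpb1221}'s formula after this substitution.
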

\begin{proof}
For any $\tilde{w}$ which lies in the Corollary, if we take $\tilde{w}'=s_2\tilde{w}s_2$, then either $T_{\tilde{w}}\equiv (v-v^{-1})T_{s_2\tilde{w}}+T_{\tilde{w}'}$ or $T_{\tilde{w}}\equiv T_{\tilde{w}'}$. At this time, $\tilde{w}'$ appears in the previous propositions. Thus combining the previous propositions, we obtain the Corollary.
\end{proof}
\begin{prop}\label{cpo1}
(1) If $\tilde{w}\in\mathbb{O}_1$ and $\ell(\tilde{w})=4k-1(k\in\mathbb{N}_+)$. Then\[f_{\tilde{w},\mathbb{O}}=\left\{
                                                             \begin{array}{ll}
                                                               (k-i)(v-v^{-1})^3+(v-v^{-1}), & \mathbb{O}=\mathbb{O}_{i(\alpha_1+\alpha_2)}(1\leqslant i\leqslant k-1)\\
                                                               (k-i)(v-v^{-1})^3, & \mathbb{O}=\mathbb{O}_{\lambda},\ \lambda\in E_{i(\alpha_1+\alpha_2)}\cup E'_{i(\alpha_1+\alpha_2)}(3\leqslant i\leqslant k-1)\\
                                                               (k-i)(v-v^{-1})^2, & \mathbb{O}=\mathbb{C}_i\ or\ \mathbb{C}'_i(1\leqslant i\leqslant k-1)\\
                                                               k(v-v^{-1}), & \mathbb{O}=\mathbb{O}_2\\
                                                               1, & \mathbb{O}=\mathbb{O}_1\\
                                                               0, & otherwise.
                                                             \end{array}
                                                           \right.\]
(2) If $\tilde{w}\in\mathbb{O}_1$ and $\ell(\tilde{w})=4k-3(k\in\mathbb{N}_+)$. Then\[f_{\tilde{w},\mathbb{O}}=\left\{
                                                             \begin{array}{ll}
                                                               (v-v^{-1}), & \mathbb{O}=\mathbb{O}_{(k-1)(\alpha_1+\alpha_2)}\\
                                                               (k-1-i)(v-v^{-1})^3+(v-v^{-1}), & \mathbb{O}=\mathbb{O}_{i(\alpha_1+\alpha_2)}(1\leqslant i\leqslant k-2)\\
                                                               (k-1-i)(v-v^{-1})^3, & \mathbb{O}=\mathbb{O}_{\lambda},\ \lambda\in E_{i(\alpha_1+\alpha_2)}\cup E'_{i(\alpha_1+\alpha_2)}(3\leqslant i\leqslant k-2)\\
                                                               (k-1-i)(v-v^{-1})^2, & \mathbb{O}=\mathbb{C}_i\ or\ \mathbb{C}'_i(1\leqslant i\leqslant k-2)\\
                                                               (k-1)(v-v^{-1}), & \mathbb{O}=\mathbb{O}_2\\
                                                               1, & \mathbb{O}=\mathbb{O}_1\\
                                                               0, & otherwise.
                                                             \end{array}
                                                           \right.\]
\end{prop}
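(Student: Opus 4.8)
The plan is to run the inductive definition of the class polynomials directly along a length-decreasing reduction, feeding in the already-computed polynomials of Propositions \ref{cpa21}, \ref{cpalambda1221} and Corollary \ref{cpaclambda} as the boundary data at each step. First I would use the symmetry furnished by $\Omega=\langle\tau\rangle$ together with Lemma \ref{Hn} to replace an arbitrary $\tilde w\in\mathbb{O}_1$ by a convenient representative of its length. Since $\tau$ cyclically permutes $s_0,s_1,s_2$, it permutes the three families $t^{k\alpha_1}s_1$, $t^{k\alpha_2}s_2$, $t^{k(\alpha_1+\alpha_2)}s_1s_2s_1$ making up $\mathbb{O}_1$ (Lemma \ref{leng1}); one then checks that any two elements of $\mathbb{O}_1$ of equal length are strongly conjugate, so by Lemma \ref{Hn} their images in $\widetilde H/[\widetilde H,\widetilde H]$ coincide and $f_{\tilde w,\mathbb{O}}$ depends only on $\ell(\tilde w)$. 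This legitimizes the statement and lets me fix, for each odd length, a single representative on which to compute.

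For a non-minimal $\tilde w\in\mathbb{O}_1$ I would produce, via the He--Nie reduction, a simple reflection $s_i$ with $\ell(s_i\tilde w s_i)=\ell(\tilde w)-2$, so that the recursion of the class-polynomial definition gives
\[T_{\tilde w}\equiv (v-v^{-1})\,T_{s_i\tilde w}+T_{s_i\tilde w s_i}.\]
Here $s_i\tilde w s_i$ again lies in $\mathbb{O}_1$ but with length $\ell(\tilde w)-2$, whereas $s_i\tilde w$ is an \emph{even}-length element of the shape $t^{\mu}s_js_{j'}$. The key observation is that each such $s_i\tilde w$ is of exactly the type tabulated in Propositions \ref{cpa21}, \ref{cpalambda1221} and Corollary \ref{cpaclambda}, so its class polynomial is already known: it contributes a term $T_{\mathbb{O}_2}$, a family of terms $(v-v^{-1})^2\,T_{\mathbb{O}_{\lambda'}}$ indexed by a set $D_\mu$, and a family of terms $(v-v^{-1})\,T_{\mathbb{C}_i}$, $(v-v^{-1})\,T_{\mathbb{C}'_i}$. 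Multiplying through by the prefactor $(v-v^{-1})$ promotes these to $(v-v^{-1})\,T_{\mathbb{O}_2}$, $(v-v^{-1})^3\,T_{\mathbb{O}_{\lambda'}}$ and $(v-v^{-1})^2\,T_{\mathbb{C}_i}$, matching precisely the three power blocks appearing in the statement.

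I would then iterate the reduction, descending in steps of $2$ from length $4k-1$ (resp.\ $4k-3$) down to the minimal element $s_1s_2s_1$ of length $3$ (resp.\ the minimal element $s_1$ of length $1$), and accumulate the boundary contributions. At each stage the translation part $\mu$ of the boundary element $s_i\tilde w$ shrinks by $\alpha_1+\alpha_2$, so the indexing set $D_\mu$ of classes $\mathbb{O}_{\lambda'}$ shrinks correspondingly. Summing the $(v-v^{-1})^3\,T_{\mathbb{O}_{i(\alpha_1+\alpha_2)}}$ contributions over all stages, the class $\mathbb{O}_{i(\alpha_1+\alpha_2)}$ is picked up by exactly the $k-i$ stages whose translation is large enough, which yields the coefficient $(k-i)(v-v^{-1})^3$; likewise the $\mathbb{C}_i,\mathbb{C}'_i$ and the $E_{i(\alpha_1+\alpha_2)}\cup E'_{i(\alpha_1+\alpha_2)}$ classes inherit multiplicity $k-i$, while the once-per-stage $T_{\mathbb{O}_2}$ and $\mathbb{O}^{\leqslant}$ terms assemble into the overall $k(v-v^{-1})$ on $\mathbb{O}_2$ and the stray linear $(v-v^{-1})$ on the diagonal classes. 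The two parities are treated separately because they bottom out at different minimal elements, which shifts the index ranges and isolates the distinguished term $\mathbb{O}_{(k-1)(\alpha_1+\alpha_2)}$ of part (2).

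The hard part will be the combinatorial bookkeeping in this accumulation: one must verify that across the $k$ reduction stages each target class occurs with exactly the multiplicity $k-i$ dictated by the nested sets $D_\mu$ and $\mathbb{O}^{\leqslant}$, and that the leftover linear pieces collapse to exactly $(v-v^{-1})$ and $k(v-v^{-1})$ rather than to some off-by-one expression. Controlling the length change at every step — ensuring the chosen $s_i$ genuinely drops the length by $2$ and that $s_i\tilde w$ lands in the tabulated even-length form, rather than in an exceptional type near the walls $\lambda\in Q-Q_{sh}$ — is where the argument is genuinely delicate, and is precisely the reason the two length parities must be separated and the ranges of $i$ in the statement are cut off as they are.
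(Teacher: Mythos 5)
Your overall strategy is the paper's own: iterate $T_{\tilde w}\equiv(v-v^{-1})T_{s_i\tilde w}+T_{s_i\tilde w s_i}$ along a length-decreasing chain and substitute the previously computed class polynomials for the side terms $s_i\tilde w$. But your identification of those side terms fails at every other step, and this is exactly where the delicate part of the formula lives, so the proof as planned does not close. Concretely, from the representative $t^{k\alpha_1}s_1$ (length $4k-1$), conjugation by $s_2$ does give a side term $t^{k(\alpha_1+\alpha_2)}s_2s_1$ of shape $t^{\mu}s_js_{j'}$ (covered by Proposition \ref{cpblambda1221}) and lands on $t^{k(\alpha_1+\alpha_2)}s_1s_2s_1$ of length $4k-3$. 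From that element, however, the only length-decreasing conjugation is by $s_0$, and since $s_0=t^{\alpha_1+\alpha_2}s_1s_2s_1$ the side term is $s_0\cdot t^{k(\alpha_1+\alpha_2)}s_1s_2s_1=t^{(1-k)(\alpha_1+\alpha_2)}$: a pure translation, not an element of shape $t^{\mu}s_js_{j'}$, covered not by Propositions \ref{cpa21}, \ref{cpalambda1221} or Corollary \ref{cpaclambda} but by Proposition \ref{cplambda}. The same happens at every element of $\mathbb{O}_1$ of length $\equiv 1\pmod 4$, whichever representative you choose (e.g.\ from $t^{(1-k)\alpha_1}s_1$ the only descent is by $s_1$, with side term the translation $t^{(k-1)\alpha_1}$). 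These translation side terms, put into dominant position, contribute $(v-v^{-1})T_{\mathbb{O}_{(i-1)(\alpha_1+\alpha_2)}}$ once per round, and they are the \emph{only} source of the additive $(v-v^{-1})$ in the coefficients $(k-i)(v-v^{-1})^3+(v-v^{-1})$ of the diagonal classes $\mathbb{O}_{i(\alpha_1+\alpha_2)}$ --- precisely what distinguishes those classes from the classes $\mathbb{O}_{\lambda}$, $\lambda\in E_{i(\alpha_1+\alpha_2)}\cup E'_{i(\alpha_1+\alpha_2)}$ --- as well as of the isolated line $(v-v^{-1})$ on $\mathbb{O}_{(k-1)(\alpha_1+\alpha_2)}$ in part (2). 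Your bookkeeping instead derives these linear terms from ``the once-per-stage $T_{\mathbb{O}_2}$ and $\mathbb{O}^{\leqslant}$ terms'', which is impossible: by definition $\mathbb{O}^{\leqslant}_{t^{\lambda}s_1}$ and $\mathbb{O}'^{\leqslant}_{t^{\lambda}s_2}$ consist solely of the classes $\mathbb{C}_i$ and $\mathbb{C}'_i$, so those contributions land on $\mathbb{O}_2$, $\mathbb{C}_i$, $\mathbb{C}'_i$ and never on any $\mathbb{O}_{\lambda}$. Run as written, your accumulation yields $(k-i)(v-v^{-1})^3$ on the diagonal classes and nothing at all on $\mathbb{O}_{(k-1)(\alpha_1+\alpha_2)}$ in part (2), i.e.\ the wrong answer.

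A second, related error: the descent does not terminate at ``$s_1s_2s_1$ of length $3$'' in case (1). The chain alternates between the two families $t^{i\alpha_1}s_1$ (lengths $4i-1$) and $t^{i(\alpha_1+\alpha_2)}s_1s_2s_1$ (lengths $4i-3$), so both cases pass through all odd lengths and stop only at length $1$ (note $t^{\alpha_1+\alpha_2}s_1s_2s_1=s_0$). Indeed $s_1s_2s_1$ is not of minimal length in $\mathbb{O}_1$, since $\ell(\mathbb{O}_1)=1$ and $T_{s_1s_2s_1}\equiv(v-v^{-1})T_{\mathbb{O}_2}+T_{\mathbb{O}_1}$; declaring it minimal and stopping there would both contradict the inductive definition of $f_{\tilde w,\mathbb{O}}$ and lose one $T_{\mathbb{O}_2}$ contribution, turning the coefficient $k(v-v^{-1})$ of $\mathbb{O}_2$ into $(k-1)(v-v^{-1})$. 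The two parts of the proposition differ only in which family the starting representative lies in, not in where the reduction ends. Once the side terms are correctly identified --- shape $t^{\mu}s_2s_1$ on the even steps (Proposition \ref{cpblambda1221}), translations on the odd steps (Proposition \ref{cplambda}) --- your multiplicity count does collapse to the paper's computation.
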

\begin{proof}
We prove (1) and the proof of (2) is similar. Since any element $\tilde{w}\in\mathbb{O}_1$ with $\ell(\tilde{w})=4k-1$ is $\tilde{\thicksim}$ to an element in $\{\tilde{w}=t^{k\alpha_1}s_1\mid k\in\mathbb{N}_+\}$, it is enough for us to consider $\tilde{w}=t^{k\alpha_1}s_1\ (k\in\mathbb{N}_+)$.
\begin{align*}
T_{\tilde{w}}\equiv&(v-v^{-1})T_{s_2\tilde{w}}+T_{s_2\tilde{w}s_2}\\
=&(v-v^{-1})T_{t^{k(\alpha_1+\alpha_2)}s_2s_1}+T_{t^{k(\alpha_1+\alpha_2)}s_1s_2s_1}\\
\equiv&(v-v^{-1})T_{t^{k(\alpha_1+\alpha_2)}s_2s_1}+(v-v^{-1})T_{s_0t^{k(\alpha_1+\alpha_2)}s_1s_2s_1}+T_{s_0t^{k(\alpha_1+\alpha_2)}s_1s_2s_1s_0}\\
=&(v-v^{-1})T_{t^{k(\alpha_1+\alpha_2)}s_2s_1}+(v-v^{-1})T_{t^{(1-k)(\alpha_1+\alpha_2)}}+T_{t^{(2-k)(\alpha_1+\alpha_2)}s_1s_2s_1}\\
\equiv&(v-v^{-1})T_{t^{k(\alpha_1+\alpha_2)}s_2s_1}+(v-v^{-1})T_{\tau\cdot t^{(1-k)(\alpha_1+\alpha_2)}}+T_{\tau\cdot t^{(2-k)(\alpha_1+\alpha_2)}s_1s_2s_1}\\
=&(v-v^{-1})T_{t^{k(\alpha_1+\alpha_2)}s_2s_1}+(v-v^{-1})T_{t^{(k-1)\alpha_1}}+T_{t^{(k-1)\alpha_1}s_1}\\
\equiv&(v-v^{-1})T_{t^{k(\alpha_1+\alpha_2)}s_2s_1}+(v-v^{-1})T_{s_2t^{(k-1)\alpha_1}s_2}+T_{t^{(k-1)\alpha_1}s_1}\\
=&(v-v^{-1})T_{t^{k(\alpha_1+\alpha_2)}s_2s_1}+(v-v^{-1})T_{\mathbb{O}_{(k-1)(\alpha_1+\alpha_2)}}+T_{t^{(k-1)\alpha_1}s_1}\\
\cdots&\cdots\\
\equiv&(v-v^{-1})\sum_{i=1}^kT_{t^{i(\alpha_1+\alpha_2)}s_2s_1}+(v-v^{-1})\sum_{i=1}^{k-1}T_{\mathbb{O}_{i(\alpha_1+\alpha_2)}}+T_{\mathbb{O}_1}.
\end{align*}
Along with Proposition \ref{cpblambda1221} (1), we get
\begin{align*}
T_{t^{k\alpha_1}s_1}\equiv&\sum_{i=1}^{k-1}[(k-i)(v-v^{-1})^3+(v-v^{-1})]T_{\mathbb{O}_{i(\alpha_1+\alpha_2)}}\\
+&\sum_{i=3}^{k-1}\sum_{\lambda\in E_{i(\alpha_1+\alpha_2)}\cup E'_{i(\alpha_1+\alpha_2)}}(k-i)(v-v^{-1})^3T_{\mathbb{O}_{\lambda}}\\
+&\sum_{i=1}^{k-1}(k-i)(v-v^{-1})^2(T_{\mathbb{C}_i}+T_{\mathbb{C}'_i})+k(v-v^{-1})T_{\mathbb{O}_2}+T_{\mathbb{O}_1}.
\end{align*}(1) is proved.
\end{proof}
\begin{prop}\label{cpci}
(1) If $\tilde{w}\in \mathbb{C}_i(i\in\mathbb{N}_+)$ and $\ell(\mathbb{C}_i)\leqslant\ell(\tilde{w})\leqslant6i+1$. Then \[f_{\tilde{w},\mathbb{O}}=\left\{
                                                             \begin{array}{ll}
                                                               (v-v^{-1}), & \mathbb{O}=\mathbb{O}_{j\alpha_1+i\alpha_2}\ (\lfloor\frac{i}{2}\rfloor+1\leqslant j\leqslant\frac{\ell(\tilde{w})-1}{2}-i)\\
                                                               1, & \mathbb{O}=\mathbb{C}_i\\
                                                               0, & otherwise.
                                                             \end{array}
                                                           \right.\]
(2) If $\tilde{w}\in \mathbb{C}_i(i\in\mathbb{N}_+)$ and $\ell(\tilde{w})=6i-1+4k(k\geqslant1)$. Then \[f_{\tilde{w},\mathbb{O}}=\left\{
                                                             \begin{array}{ll}
                                                               (v-v^{-1}), & \mathbb{O}=\mathbb{O}_{2i\alpha_1+i\alpha_2}\\
                                                               k(v-v^{-1}), & \mathbb{O}=\mathbb{O}_2\\
                                                               k(v-v^{-1})^2, & \mathbb{O}\in \mathbb{O}^{\leqslant}_{t^{2i\alpha_1+i\alpha_2}s_1}\cup\mathbb{O}'^{\leqslant}_{t^{2i\alpha_1+i\alpha_2}s_2}\backslash \mathbb{C}_i\\
                                                               k(v-v^{-1})^3, & \mathbb{O}=\mathbb{O}_{\lambda},\ \lambda\in Q_{2i\alpha_1+i\alpha_2}\backslash E_{2i\alpha_1+i\alpha_2}\\
                                                               k(v-v^{-1})^3+(v-v^{-1}), & \mathbb{O}=\mathbb{O}_{\lambda},\ \lambda\in E_{2i\alpha_1+i\alpha_2}\\
                                                               (k-j)(v-v^{-1})^2, & \mathbb{O}\in \mathbb{C}(t^{(2i+j+1)\alpha_1+(i+j)\alpha_2}s_1)\cup \mathbb{C}'(t^{(2i+j)\alpha_1+(i+j)\alpha_2}s_2)\\
                                                               & (1\leqslant j\leqslant k-1)\\
                                                               (k-j)(v-v^{-1})^3, & \mathbb{O}=\mathbb{O}_{\lambda}\ \  \lambda\in E_{(2i+j)\alpha_1+(i+j)\alpha_2}\cup E'_{(2i+j)\alpha_1+(i+j)\alpha_2}\\
                                                               &(1\leqslant j\leqslant k-1)\\
                                                               (k-j)(v-v^{-1})^3+(v-v^{-1}), & \mathbb{O}=\mathbb{O}_{(2i+j)\alpha_1+(i+j)\alpha_2}\ \ (1\leqslant j\leqslant k-1)\\
                                                               k(v-v^{-1})^2+1, & \mathbb{O}=\mathbb{C}_i\\
                                                               0, & otherwise.
                                                             \end{array}
                                                           \right.\]
(3) If $\tilde{w}\in \mathbb{C}_i(i\in\mathbb{N}_+)$ and $\ell(\tilde{w})=6i+1+4k(k\geqslant1)$. Then \[f_{\tilde{w},\mathbb{O}}=\left\{
                                                             \begin{array}{ll}
                                                               (v-v^{-1}), & \mathbb{O}=\mathbb{O}_{2i\alpha_1+i\alpha_2}\\
                                                               k(v-v^{-1}), & \mathbb{O}=\mathbb{O}_2\\
                                                               k(v-v^{-1})^2, & \mathbb{O}\in \mathbb{O}^{\leqslant}_{t^{2i\alpha_1+i\alpha_2}s_1}\cup\mathbb{O}'^{\leqslant}_{t^{2i\alpha_1+i\alpha_2}s_2}\backslash \mathbb{C}_i\\
                                                               k(v-v^{-1})^3, & \mathbb{O}=\mathbb{O}_{\lambda},\ \lambda\in Q_{2i\alpha_1+i\alpha_2}\backslash E_{2i\alpha_1+i\alpha_2}\\
                                                               k(v-v^{-1})^3+(v-v^{-1}), & \mathbb{O}=\mathbb{O}_{\lambda},\ \lambda\in E_{2i\alpha_1+i\alpha_2}\\
                                                               (k-j)(v-v^{-1})^2, & \mathbb{O}\in \mathbb{C}(t^{(2i+j+1)\alpha_1+(i+j)\alpha_2}s_1)\cup \mathbb{C}'(t^{(2i+j)\alpha_1+(i+j)\alpha_2}s_2)\\
                                                               & (1\leqslant j\leqslant k-1)\\
                                                               (k-j)(v-v^{-1})^3, & \mathbb{O}=\mathbb{O}_{\lambda}\ \  \lambda\in E_{(2i+j)\alpha_1+(i+j)\alpha_2}\cup E'_{(2i+j)\alpha_1+(i+j)\alpha_2}\\
                                                               &(1\leqslant j\leqslant k-1)\\
                                                               (k-j)(v-v^{-1})^3+(v-v^{-1}), & \mathbb{O}=\mathbb{O}_{(2i+j)\alpha_1+(i+j)\alpha_2}\ \ (1\leqslant j\leqslant k-1)\\
                                                               k(v-v^{-1})^2+1, & \mathbb{O}=\mathbb{C}_i\\
                                                               (v-v^{-1}), & \mathbb{O}=\mathbb{O}_{(2i+k)\alpha_1+(i+k)\alpha_2}\\
                                                               0, & otherwise.
                                                             \end{array}
                                                           \right.\]
\end{prop}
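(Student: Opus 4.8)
The plan is to run the inductive reduction procedure for class polynomials set up after the definition of $f_{\tilde{w},\mathbb{O}}$, using Lemma \ref{Hn} to normalize representatives and Propositions \ref{cplambda}--\ref{cpo1} to evaluate the lower-length terms that split off at each step. Since $\mathbb{C}_i$ is a single $\widetilde{W}$-conjugacy class (Lemma \ref{lengi}) whose elements form the one-parameter family $t^{m\alpha_1+i\alpha_2}s_1$, $t^{-i\alpha_1+m\alpha_2}s_2$, $t^{m\alpha_1+(m-i)\alpha_2}s_1s_2s_1$ with $m\in\mathbb{Z}$, and since these three flavors are permuted by conjugation by $\tau$, the relation $\tilde{\thicksim}$ together with Lemma \ref{Hn} lets me reduce every case to the single representative $\tilde{w}=t^{m\alpha_1+i\alpha_2}s_1$ of the prescribed length. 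I will carry out the computation only for this element. First I compute $\ell(t^{m\alpha_1+i\alpha_2}s_1)$ as a function of $m$ and locate the minimal-length member of $\mathbb{C}_i$; this pins down which range of $m$ realizes each of the three length regimes $\ell(\tilde{w})\leqslant 6i+1$, $\ell(\tilde{w})=6i-1+4k$, and $\ell(\tilde{w})=6i+1+4k$ of the statement.

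For the chosen representative I then apply the recursion $T_{\tilde{w}}\equiv(v-v^{-1})T_{s\tilde{w}}+T_{s\tilde{w}s}$ with the simple reflection $s$ chosen so that $\ell(s\tilde{w}s)<\ell(\tilde{w})$. The key structural fact, which I verify by direct root computations in type $A_2$ (e.g. $s_2 t^{m\alpha_1+i\alpha_2}s_1=t^{m\alpha_1+(m-i)\alpha_2}s_2s_1$), is that at each step the off-diagonal term $T_{s\tilde{w}}$ is either a pure translation $t^{\lambda}$ (evaluated by Proposition \ref{cplambda}), or a length-two type element $t^{\mu}s_1s_2$ / $t^{\mu}s_2s_1$ (evaluated by Propositions \ref{cpa21}--\ref{cpblambda1221} and Corollary \ref{cpaclambda}), or a shorter member of $\mathbb{C}_i$ or $\mathbb{C}'_i$ handled by part (1); meanwhile the diagonal term $T_{s\tilde{w}s}$ remains in $\mathbb{C}_i$ but with strictly smaller $m$, feeding the induction. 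For part (1) the family collapses quickly: iterating the reduction from $\tilde{w}$ down to the minimal-length representative of $\mathbb{C}_i$ produces at each step one split-off $\mathbb{O}_{j\alpha_1+i\alpha_2}$ with coefficient $(v-v^{-1})$, while the surviving term is $T_{\mathbb{C}_i}$ with coefficient $1$; summing over the admissible $j$, whose range $\lfloor i/2\rfloor+1\leqslant j\leqslant(\ell(\tilde{w})-1)/2-i$ is dictated by the length, yields the stated answer.

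For parts (2) and (3) I induct on $k$, taking the $\ell(\tilde{w})\leqslant 6i+1$ instances of part (1) as the base (note $6i-1$ and $6i+1$ fall in that range) and advancing $k\mapsto k+1$ by a fixed block of reductions that raises the length by $4$. At each such block the split-off terms are exactly the $\mathbb{O}_2$, $\mathbb{C}(t^{(2i+j+1)\alpha_1+(i+j)\alpha_2}s_1)$, $\mathbb{C}'(t^{(2i+j)\alpha_1+(i+j)\alpha_2}s_2)$, and $\mathbb{O}_{\lambda}$ contributions listed in the statement, whose own class polynomials are supplied by Propositions \ref{cpa21}, \ref{cpalambda1221}, \ref{cpblambda1221}, Corollary \ref{cpaclambda}, and Proposition \ref{cpo1}; accumulating these contributions over the $k$ blocks turns the repeated single increments into the coefficients $(k-j)(v-v^{-1})^2$, $(k-j)(v-v^{-1})^3$, $(k-j)(v-v^{-1})^3+(v-v^{-1})$, and $k(v-v^{-1})^2+1$ appearing in the statement, the last of these recording the class $\mathbb{C}_i$ itself.

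I expect the main obstacle to be the bookkeeping rather than any single algebraic identity. Concretely: at each reduction step one must know precisely which simple reflection strictly lowers the length, which depends on the chamber containing the translation part, i.e. on the signs of the pairings of $m\alpha_1+i\alpha_2$ with the coroots; it is exactly these sign changes that force the separation into the three length regimes and into the subcases indexed by $\lfloor i/2\rfloor$ and by the boundary vector $2i\alpha_1+i\alpha_2$. Moreover the index ranges of the auxiliary sets $Q_{\lambda}$, $D_{\lambda}$, $E_{\lambda}$, $E'_{\lambda}$ must be matched exactly so that the telescoping of the $j$-blocks produces the stated multiplicities with no off-by-one error, in particular correctly isolating the boundary class $\mathbb{O}_{2i\alpha_1+i\alpha_2}$ and the extra term $\mathbb{O}_{(2i+k)\alpha_1+(i+k)\alpha_2}$ with coefficient $(v-v^{-1})$ that distinguishes part (3) from part (2).
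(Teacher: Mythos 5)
Your proposal is correct and follows essentially the same route as the paper: normalize representatives inside $\mathbb{C}_i$ via strong conjugation and Lemma \ref{Hn}, then run the class-polynomial recursion step by step, evaluating the split-off translation and $t^{\mu}s_2s_1$ terms with Propositions \ref{cplambda} and \ref{cpblambda1221}, with parts (2)--(3) obtained by iterating length-4 blocks from the part (1) base cases. The only cosmetic difference is that the paper keeps two families of representatives ($t^{k\alpha_1+i\alpha_2}s_1$ for $k\geqslant\lfloor\frac{i}{2}\rfloor+1$ and $t^{k\alpha_1+(k-i)\alpha_2}s_1s_2s_1$ for $k\geqslant 2i+1$) and then deduces (3) as a one-step consequence of (2), whereas you insist on the single $s_1$-flavor family, which for the lengths $6i+1+4k$ forces representatives $t^{m\alpha_1+i\alpha_2}s_1$ with $m<0$; one conjugation (by $s_1$, not $s_2$, in that chamber) sends such a representative to the part (2) representative plus a split-off translation class, so your induction collapses to the paper's argument.
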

\begin{prop}\label{cpc'i}
(1) If $\tilde{w}\in \mathbb{C}'_i(i\in\mathbb{N}_+)$ and $\ell(\mathbb{C}'_i)\leqslant\ell(\tilde{w})\leqslant6i+1$. Then \[f_{\tilde{w},\mathbb{O}}=\left\{
                                                             \begin{array}{ll}
                                                               (v-v^{-1}), & \mathbb{O}=\mathbb{O}_{i\alpha_1+(\lfloor\frac{i}{2}\rfloor+j)\alpha_2}\ (1\leqslant j\leqslant\frac{\ell(\tilde{w})-\ell(\mathbb{C}'_i)}{2})\\
                                                               1, & \mathbb{O}=\mathbb{C}'_i\\
                                                               0, & otherwise.
                                                             \end{array}
                                                           \right.\]
(2) If $\tilde{w}\in \mathbb{C}'_i(i\in\mathbb{N}_+)$ and $\ell(\tilde{w})=6i-1+4k(k\geqslant1)$. Then \[f_{\tilde{w},\mathbb{O}}=\left\{
                                                             \begin{array}{ll}
                                                               (v-v^{-1}), & \mathbb{O}=\mathbb{O}_{i\alpha_1+2i\alpha_2}\\
                                                               k(v-v^{-1}), & \mathbb{O}=\mathbb{O}_2\\
                                                               k(v-v^{-1})^2, & \mathbb{O}\in \mathbb{O}^{\leqslant}_{t^{i(\alpha_1+2\alpha_2)}s_1}\cup\mathbb{O}'^{\leqslant}_{t^{i(\alpha_1+2\alpha_2)}s_2}\backslash \mathbb{C}'_i\\
                                                               k(v-v^{-1})^3, & \mathbb{O}=\mathbb{O}_{\lambda},\ \lambda\in Q_{i\alpha_1+2i\alpha_2}\backslash E'_{i\alpha_1+2i\alpha_2}\\
                                                               k(v-v^{-1})^3+(v-v^{-1}), & \mathbb{O}=\mathbb{O}_{\lambda},\ \lambda\in E'_{i\alpha_1+2i\alpha_2}\\
                                                               (k-j)(v-v^{-1})^2, & \mathbb{O}\in \mathbb{C}(t^{(i+j)\alpha_1+(2i+j)\alpha_2}s_1)\cup \mathbb{C}'(t^{(i+j)\alpha_1+(2i+j+1)\alpha_2}s_2)\\
                                                               & (1\leqslant j\leqslant k-1)\\
                                                               (k-j)(v-v^{-1})^3, & \mathbb{O}=\mathbb{O}_{\lambda},\ \lambda\in E_{(i+j)\alpha_1+(2i+j)\alpha_2}\cup E'_{(i+j)\alpha_1+(2i+j)\alpha_2}\\
                                                               & (1\leqslant j\leqslant k-1)\\
                                                               (k-j)(v-v^{-1})^3+(v-v^{-1}), & \mathbb{O}=\mathbb{O}_{(i+j)\alpha_1+(2i+j)\alpha_2}\ (1\leqslant j\leqslant k-1)\\
                                                               k(v-v^{-1})^2+1, & \mathbb{O}=\mathbb{C}'_i\\
                                                               0, & otherwise.
                                                             \end{array}
                                                           \right.\]
(3) If $\tilde{w}\in \mathbb{C}'_i(i\in\mathbb{N}_+)$ and $\ell(\tilde{w})=6i+1+4k(k\geqslant1)$. Then \[f_{\tilde{w},\mathbb{O}}=\left\{
                                                             \begin{array}{ll}
                                                               (v-v^{-1}), & \mathbb{O}=\mathbb{O}_{i\alpha_1+2i\alpha_2}\\
                                                               k(v-v^{-1}), & \mathbb{O}=\mathbb{O}_2\\
                                                               k(v-v^{-1})^2, & \mathbb{O}\in \mathbb{O}^{\leqslant}_{t^{i(\alpha_1+2\alpha_2)}s_1}\cup\mathbb{O}'^{\leqslant}_{t^{i(\alpha_1+2\alpha_2)}s_2}\backslash \mathbb{C}'_i\\
                                                               k(v-v^{-1})^3, & \mathbb{O}=\mathbb{O}_{\lambda},\ \lambda\in Q_{i\alpha_1+2i\alpha_2}\backslash E'_{i\alpha_1+2i\alpha_2}\\
                                                               k(v-v^{-1})^3+(v-v^{-1}), & \mathbb{O}=\mathbb{O}_{\lambda},\ \lambda\in E'_{i\alpha_1+2i\alpha_2}\\
                                                               (k-j)(v-v^{-1})^2, & \mathbb{O}\in \mathbb{C}(t^{(i+j)\alpha_1+(2i+j)\alpha_2}s_1)\cup \mathbb{C}'(t^{(i+j)\alpha_1+(2i+j+1)\alpha_2}s_2)\\
                                                               & (1\leqslant j\leqslant k-1)\\
                                                               (k-j)(v-v^{-1})^3, & \mathbb{O}=\mathbb{O}_{\lambda},\ \lambda\in E_{(i+j)\alpha_1+(2i+j)\alpha_2}\cup E'_{(i+j)\alpha_1+(2i+j)\alpha_2}\\
                                                               & (1\leqslant j\leqslant k-1)\\
                                                               (k-j)(v-v^{-1})^3+(v-v^{-1}), & \mathbb{O}=\mathbb{O}_{(i+j)\alpha_1+(2i+j)\alpha_2}\ (1\leqslant j\leqslant k-1)\\
                                                               k(v-v^{-1})^2+1, & \mathbb{O}=\mathbb{C}'_i\\
                                                               (v-v^{-1}), & \mathbb{O}=\mathbb{O}_{(i+k)\alpha_1+(2i+k)\alpha_2}\\
                                                               0, & otherwise.
                                                             \end{array}
                                                           \right.\]
\end{prop}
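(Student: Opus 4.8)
The plan is to deduce Proposition \ref{cpc'i} from Proposition \ref{cpci} by exploiting the diagram symmetry of $\widetilde{A}_2$, rather than redoing the reduction computation from scratch. First I would introduce the automorphism $\theta$ of the Coxeter system $(W_a,\widetilde{\mathbb{S}})$ that fixes $s_0$ and interchanges $s_1$ and $s_2$; on the lattice it acts by $\alpha_1\leftrightarrow\alpha_2$, i.e. $\theta(t^{m\alpha_1+n\alpha_2})=t^{n\alpha_1+m\alpha_2}$, and it is compatible with $\Omega$ via $\theta(\tau)=\tau^{-1}$ (one checks $\theta\tau\theta^{-1}$ sends $s_0\mapsto s_2$, which is exactly the action of $\tau^{-1}$, and similarly on $s_1,s_2$). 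Since $\theta$ permutes $\widetilde{\mathbb{S}}$ it preserves the length function, hence extends to an $A$-algebra automorphism of $\widetilde{H}$ with $\theta(T_{\tilde{w}})=T_{\theta(\tilde{w})}$ and $\theta(v)=v$; being an algebra automorphism it preserves the commutator submodule $[\widetilde{H},\widetilde{H}]$ and therefore descends to $\widetilde{H}/[\widetilde{H},\widetilde{H}]$, sending $T_{\mathbb{O}}$ to $T_{\theta(\mathbb{O})}$. Comparing the expansion $T_{\tilde{w}}\equiv\sum_{\mathbb{O}}f_{\tilde{w},\mathbb{O}}T_{\mathbb{O}}$ with its image under $\theta$ then yields the key identity $f_{\theta(\tilde{w}),\,\theta(\mathbb{O})}=f_{\tilde{w},\mathbb{O}}$.

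Next I would compute the action of $\theta$ on the relevant conjugacy classes. Using the explicit descriptions in Lemmas \ref{lengi} and \ref{lenglambda} one checks directly that $\theta(\mathbb{C}_i)=\mathbb{C}'_i$ (for instance $\theta$ carries the representative $t^{k\alpha_1+i\alpha_2}s_1$ to $t^{i\alpha_1+k\alpha_2}s_2$, using $s_1s_2s_1=s_2s_1s_2$), that $\theta$ fixes $\{Id\}$, $\mathbb{O}_1$ and $\mathbb{O}_2$, and that $\theta(\mathbb{O}_{m\alpha_1+n\alpha_2})=\mathbb{O}_{n\alpha_1+m\alpha_2}$ (as $\theta$ permutes $\Phi^+$ it preserves $P_+\cap Q$, and $Q_{sh}$ is $\theta$-stable). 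At the level of the auxiliary index sets this gives $\theta(E_{\lambda})=E'_{\theta(\lambda)}$, the interchanges $\mathbb{O}^{\leqslant}_{t^{\mu}s_1}\leftrightarrow\mathbb{O}'^{\leqslant}_{t^{\theta(\mu)}s_2}$ and $\mathbb{C}(t^{\mu}s_1)\leftrightarrow\mathbb{C}'(t^{\theta(\mu)}s_2)$, and carries the distinguished weight $2i\alpha_1+i\alpha_2$ to $i\alpha_1+2i\alpha_2$. Each of these is immediate from the defining formulas once $\alpha_1$ and $\alpha_2$ are swapped.

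With these correspondences in hand, I would apply $\theta$ term by term to each of the three cases of Proposition \ref{cpci}. Because $\theta$ preserves length, an element $\tilde{w}\in\mathbb{C}_i$ of length $\ell$ is carried to $\theta(\tilde{w})\in\mathbb{C}'_i$ of the same length $\ell$, and $\ell(\mathbb{C}'_i)=\ell(\mathbb{C}_i)$, so the three regimes $\ell(\mathbb{C}_i)\leqslant\ell\leqslant6i+1$, $\ell=6i-1+4k$ and $\ell=6i+1+4k$ match up one-to-one. Applying $f_{\theta(\tilde{w}),\theta(\mathbb{O})}=f_{\tilde{w},\mathbb{O}}$ to every row of the tables then produces exactly the tables of Proposition \ref{cpc'i}, with the $(v-v^{-1})$-polynomials unchanged and each class label replaced by its $\theta$-image.

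The step I expect to be delicate is verifying that the index ranges in case (1) transform correctly, since they involve the floor $\lfloor i/2\rfloor$ and the value $\ell(\mathbb{C}'_i)$, which depends on the parity of $i$. Concretely, one must confirm that the range $\lfloor i/2\rfloor+1\leqslant j\leqslant\tfrac{\ell-1}{2}-i$ indexing $\mathbb{O}_{j\alpha_1+i\alpha_2}$ in Proposition \ref{cpci}(1) is carried by $\theta$ onto the range $1\leqslant j\leqslant\tfrac{\ell-\ell(\mathbb{C}'_i)}{2}$ indexing $\mathbb{O}_{i\alpha_1+(\lfloor i/2\rfloor+j)\alpha_2}$ in Proposition \ref{cpc'i}(1); substituting $\ell(\mathbb{C}'_i)=3i$ for $i$ odd and $3i+1$ for $i$ even shows both upper limits equal $\tfrac{\ell-2i-1}{2}$, so the matching holds in both parities. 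Once this boundary bookkeeping is checked, the remaining rows follow mechanically, and the genuinely hard computational content (the iterated reduction $T_{\tilde{w}}\equiv(v-v^{-1})T_{s\tilde{w}}+T_{s\tilde{w}s}$ together with Lemma \ref{Hn}) remains encapsulated in Proposition \ref{cpci} itself.
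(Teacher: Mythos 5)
Your argument is correct, and it reaches Proposition \ref{cpc'i} by a genuinely different route from the paper's. The paper proves Proposition \ref{cpci} by the iterated reduction $T_{\tilde{w}}\equiv(v-v^{-1})T_{s\tilde{w}}+T_{s\tilde{w}s}$ and then disposes of Proposition \ref{cpc'i} in one sentence (``the proofs are similar, we leave the other one''), i.e.\ the reader is expected to repeat the entire computation with $\alpha_1,\alpha_2$ and $s_1,s_2$ interchanged. You instead turn that informal symmetry into a one-time functoriality lemma: the diagram automorphism $\theta$ fixing $s_0$ extends to $\widetilde{W}$ with $\theta(\tau)=\tau^{-1}$ (this compatibility check matters, since the classes in question are $\widetilde{W}$-conjugacy classes, so $\theta$ must be an automorphism of all of $\widetilde{W}$, not merely of $W_a$); it preserves length, hence induces an $A$-algebra automorphism of $\widetilde{H}$ preserving $[\widetilde{H},\widetilde{H}]$, which yields $f_{\theta(\tilde{w}),\theta(\mathbb{O})}=f_{\tilde{w},\mathbb{O}}$; and your dictionary $\theta(\mathbb{C}_i)=\mathbb{C}'_i$, $\theta(\mathbb{O}_{m\alpha_1+n\alpha_2})=\mathbb{O}_{n\alpha_1+m\alpha_2}$, $\theta(E_{\lambda})=E'_{\theta(\lambda)}$, $\theta(\mathbb{O}^{\leqslant}_{t^{\mu}s_1})=\mathbb{O}'^{\leqslant}_{t^{\theta(\mu)}s_2}$, $\theta(\mathbb{C}(t^{\mu}s_1))=\mathbb{C}'(t^{\theta(\mu)}s_2)$ then transports \ref{cpci} onto \ref{cpc'i} row by row; your parity computation showing that both upper limits in case (1) equal $\frac{\ell(\tilde{w})-2i-1}{2}$ settles the only delicate bookkeeping. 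One point you should state explicitly: identifying $\theta(T_{\mathbb{O}})$ with $T_{\theta(\mathbb{O})}$ in the cocenter uses the He--Nie theorem (quoted in the paper) that the image of $T_{w_{\mathbb{O}}}$ is independent of the chosen minimal length representative, since $\theta(w_{\mathbb{O}})$ need not be the fixed representative of $\theta(\mathbb{O})$. What your approach buys: the symmetric half of the section becomes a corollary of a single lemma, with no repetition of the subtle reduction paths, and the same lemma rigorously underwrites the paper's other omitted ``symmetric'' proofs (\ref{cpb12}, \ref{cpblambda1221}, \ref{cpodd1tau}). What the paper's approach buys is only the absence of this setup, at the cost of leaving the symmetry unformalized.
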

Also, the proofs of \ref{cpci} and \ref{cpc'i} are similar. We choose \ref{cpci} to prove and leave the other one.
\begin{proof}[Proof of Proposition \ref{cpci}]
Since (3) is an easy consequence of (2), we prove (1) and (2). Any element $\tilde{w}\in\mathbb{C}_i$ with $i\in\mathbb{N}_+$ is $\tilde{\thicksim}$ to an element in $\{t^{k\alpha_1+(k-i)\alpha_2}s_1s_2s_1,\ t^{k\alpha_1+i\alpha_2}s_1\mid k\geqslant\lfloor\frac{i}{2}\rfloor+1\}\subset\mathbb{C}_i$. For $\lfloor\frac{i}{2}\rfloor+1\leqslant k\leqslant2i$, $t^{k\alpha_1+i\alpha_2}s_1=s_2t^{k\alpha_1+(k-i)\alpha_2}s_1s_2s_1s_2$ and $\ell(t^{k\alpha_1+i\alpha_2}s_1)=\ell(t^{k\alpha_1+(k-i)\alpha_2}s_1s_2s_1)$. It is sufficient for us to consider $\tilde{w}\in\{t^{k\alpha_1+i\alpha_2}s_1\mid k\geqslant\lfloor\frac{i}{2}\rfloor+1\}\sqcup\{t^{k\alpha_1+(k-i)\alpha_2}s_1s_2s_1\mid k\geqslant 2i+1\}$.\\
(1) For $\tilde{w}\in \mathbb{C}_i$ and $\ell(\mathbb{C}_i)\leqslant\ell(\tilde{w})\leqslant6i-1$. We just consider $\tilde{w}=t^{k\alpha_1+i\alpha_2}s_1$ with $\lfloor\frac{i}{2}\rfloor+1\leqslant k\leqslant2i$.
\begin{align*}
T_{\tilde{w}}\equiv& (v-v^{-1})T_{\mathbb{O}_{(k-1)\alpha_1+i\alpha_2}}+T_{t^{(k-1)\alpha_1+i\alpha_2}s_1}\\
&\cdots\\
\equiv&(v-v^{-1})\sum_{j=\lfloor\frac{i}{2}\rfloor+1}^{k-1}T_{\mathbb{O}_{j\alpha_1+i\alpha_2}}+T_{\mathbb{C}_i}.
\end{align*}
If $\ell(\tilde{w})=6i+1$, we can take $\tilde{w}=t^{(2i+1)\alpha_1+(i+1)\alpha_2}s_1s_2s_1$. Then \[T_{t^{(2i+1)\alpha_1+(i+1)\alpha_2}s_1s_2s_1}\equiv(v-v^{-1})\sum_{\lambda\in E_{2i\alpha_1+i\alpha_2}\cup\{2i\alpha_1+i\alpha_2\}}T_{\mathbb{O}_{\lambda}}+T_{\mathbb{C}_i}.\]
(2) We consider the case $\tilde{w}=t^{(2i+k)\alpha_1+i\alpha_2}s_1$, with $k\geqslant1$
\begin{align*}
T_{\tilde{w}}\equiv& (v-v^{-1})T_{t^{(2i+k)\alpha_1+(k+i)\alpha_2}s_2s_1}+T_{t^{(2i+k)\alpha_1+(k+i)\alpha_2}s_1s_2s_1}\\
\equiv& (v-v^{-1})T_{t^{(2i+k)\alpha_1+(k+i)\alpha_2}s_2s_1}+(v-v^{-1})T_{\mathbb{O}_{(2i+k-1)\alpha_1+(k+i-1)\alpha_2}}+T_{t^{(2i+k-1)\alpha_1+i\alpha_2}s_1}\\
\cdots&\cdots\\
\equiv&(v-v^{-1})\sum_{j=1}^{k}T_{t^{(2i+j)\alpha_1+(j+i)\alpha_2}s_2s_1}+(v-v^{-1})\sum_{j=0}^{k-1}T_{\mathbb{O}_{(2i+j)\alpha_1+(j+i)\alpha_2}}\\
+&(v-v^{-1})\sum_{j=\lfloor\frac{i}{2}\rfloor+1}^{2i-1}T_{\mathbb{O}_{j\alpha_1+i\alpha_2}}+T_{\mathbb{C}_i}.
\end{align*}
Together with Proposition \ref{cpblambda1221} (1), then (2) is proved.
\end{proof}
\subsection{Class polynomials for elements in $W_a\tau$}
We set $$\mathbb{I}\tau=\{t^{m\alpha_1+n\alpha_2}w\tau\mid m\in\mathbb{N}_+,\ n\in\mathbb{Z},\ 1-m\leqslant n\leqslant2m-1,\ w\in W\}\sqcup\{\tau,\ s_2\tau\}$$$$\sqcup\{t^{k\alpha_1+2k\alpha_2}\tau,\ t^{k\alpha_1+2k\alpha_2}s_2\tau,\ t^{k\alpha_1+2k\alpha_2}s_2s_1\tau,\ t^{k(\alpha_1-\alpha_2)}\tau,\ t^{k(\alpha_1-\alpha_2)}s_1\tau,\ t^{k(\alpha_1-\alpha_2)}s_2\tau\mid k\in\mathbb{N}_+\}.$$ Since $W_a\tau=\mathbb{I}\tau\cup\tau\cdot(\mathbb{I}\tau)\cup\tau^2\cdot(\mathbb{I}\tau)$ and any element $\tilde{w}\in W_a\tau$ is $\tilde{\thicksim}$ to an element in $\mathbb{I}\tau$, it is sufficient to consider elements in $\mathbb{I}\tau$. Now we are going to classify all $\widetilde{W}$-conjugacy classes in $W_a\tau$.
\begin{lem}\label{lengtau0}
Let $\mathbb{O}_{id,\tau}$ be the set of $\widetilde{W}$-conjugacy class in $W_a\tau$ with minimal length 0. Then\[\mathbb{O}_{id,\tau}=\{t^{m\alpha_1+n\alpha_2}\tau, \ t^{m\alpha_1+n\alpha_2}s_1s_2\tau\mid m,\ n\in\mathbb{Z}\}.\]
\end{lem}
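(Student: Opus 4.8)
The plan is to observe that the length-$0$ elements of $\widetilde{W}$ are exactly $\Omega=\{1,\tau,\tau^2\}$, so the unique length-$0$ element lying in the coset $W_a\tau$ is $\tau$ itself. Consequently the asserted class is forced to be the single $\widetilde{W}$-conjugacy class of $\tau$, and it suffices to compute that class and check that it coincides with the displayed set $\mathbb{O}_{id,\tau}$. As in the proof of Lemma \ref{leng2}, I would split the work into two halves, of the same ``invariance $+$ transitivity'' shape: first show that $\mathbb{O}_{id,\tau}$ is stable under $\widetilde{W}$-conjugation, so that the class of $\tau$ is contained in $\mathbb{O}_{id,\tau}$; then show conversely that every element of $\mathbb{O}_{id,\tau}$ is $\widetilde{W}$-conjugate to $\tau$. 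The difference from Lemma \ref{leng2} is that here transitivity is cleaner to obtain directly from the translation subgroup than by a full length induction.

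For stability it is enough to conjugate a general element $t^{\mu}\tau$ or $t^{\mu}s_1s_2\tau$ (with $\mu\in Q$) by each of the generators $s_0,s_1,s_2,\tau$. The computation rests on three inputs: the rotation relations $\tau s_i\tau^{-1}=s_{i+1}$, the standard expression $s_0=t^{\alpha_1+\alpha_2}s_1s_2s_1$ for the affine reflection, and the key coincidence that $s_1s_2$ and $\tau$ induce the \emph{same} linear map on $P$ (both send $\alpha_1\mapsto\alpha_2$ and $\alpha_2\mapsto-\alpha_1-\alpha_2$). For instance one gets $s_1(t^{\mu}\tau)s_1=t^{s_1(\mu)}s_1s_2\tau$ and $s_2(t^{\mu}\tau)s_2=t^{s_2(\mu)+\alpha_1}s_1s_2\tau$, while conjugating an $s_1s_2$-type element back by the same reflection returns a $t^{\nu}\tau$-type element (e.g. $s_1(t^{\mu}s_1s_2\tau)s_1=t^{s_1(\mu)}\tau$); conjugation by $s_0$ and by $\tau$ is handled identically, using that $s_1s_2s_1=w_0$ cancels the extra Weyl factors coming from $s_0$. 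In every case the finite part stays inside $\{1,s_1s_2\}$ and the translation part remains in $Q$, which is exactly the content of stability.

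For the reverse inclusion I would exploit the translation subgroup. Conjugating $t^{\mu}\tau$ by $t^{\lambda}$ ($\lambda\in P$) yields $t^{\mu+(1-\tau)(\lambda)}\tau$, so the translation part may be shifted by any element of $(1-\tau)P$. Since the induced action of $\tau$ on $P$ is a rotation of order $3$, one has $1+\tau+\tau^2=0$ and $\det(1-\tau)=3=[P:Q]$; combined with the fact that $\tau$ acts trivially on $P/Q$ (an order-$3$ automorphism of $\mathbb{Z}/3$ being trivial), this forces $(1-\tau)P=Q$ exactly. Thus conjugating $\tau$ by translations already produces every $t^{\nu}\tau$ with $\nu\in Q$, and a single conjugation by $s_2$ produces $t^{\alpha_1}s_1s_2\tau$, after which translations again sweep out all $t^{\nu}s_1s_2\tau$ with $\nu\in Q$ (here one uses $(1-\tau^2)P=Q$, proved the same way, since the linear part of $s_1s_2\tau$ is $\tau^2$). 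Hence the class of $\tau$ is all of $\mathbb{O}_{id,\tau}$, and since $\tau$ has length $0$ this is precisely the $\widetilde{W}$-conjugacy class in $W_a\tau$ of minimal length $0$.

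The main obstacle is entirely in the stability step: one must verify that conjugation never creates an element whose finite part is one of the forbidden values $s_1,\ s_2,\ s_2s_1$ or $s_1s_2s_1$. What makes this go through, and what I would emphasize, is the coincidence that $s_1s_2$ and $\tau$ act identically on $P$ (concretely $\tau=t^{\omega_1}s_1s_2$): it forces each conjugation by a simple reflection to merely toggle the finite part between $1$ and $s_1s_2$ rather than lengthen it, and it is the same coincidence that makes the translation parts land back in $Q$ once the $s_0=t^{\alpha_1+\alpha_2}s_1s_2s_1$ substitutions cancel the $w_0$-factors.
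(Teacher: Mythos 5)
Your proof is correct, but its second half takes a genuinely different route from the paper's. The paper never writes out a proof of this lemma: it proves the companion Lemma \ref{lengtaui} (for the classes $\mathbb{O}_{i,\tau}$) and declares the proofs of Lemmas \ref{lengtau0} and \ref{lenglambdatau} to be quite similar, and that model argument has two steps: (i) stability of the candidate set under conjugation by $s_0$, $s_1$, $s_2$ and $\tau$, which is exactly your first half (your organizing identity $\tau=t^{\omega_1}s_1s_2$, with $\omega_1$ the fundamental coweight dual to $\alpha_1$, is consistent with the paper's convention $\tau s_0\tau^{-1}=s_1$, $\tau s_1\tau^{-1}=s_2$, $\tau s_2\tau^{-1}=s_0$, and your sample formulas such as $s_2(t^{\mu}\tau)s_2=t^{s_2(\mu)+\alpha_1}s_1s_2\tau$ check out); and (ii) transitivity by induction on length, conjugating by well-chosen simple reflections to drop the length by $2$ at each step until the minimal-length element is reached. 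You replace (ii) by a lattice computation: conjugation by $t^{\lambda}$ shifts the translation part by $(1-\tau)\lambda$, and $(1-\tau)P=Q$ because $(1-\tau)P\subseteq Q$ (the linear part $s_1s_2$ of $\tau$ acts trivially on $P/Q$) while $[P:(1-\tau)P]=\det(1-\tau)=3=[P:Q]$; one further conjugation by $s_2$, together with the same argument applied to $(1-\tau^2)P=Q$, then sweeps out the $s_1s_2\tau$-family. Your version is shorter and avoids the case analysis, but note what makes it work: $1-\tau$ must be invertible over $\mathbb{Q}$ (the linear part of $\tau$ is elliptic, i.e. $\tau$ is superbasic), and the class must be ``full'' in the sense that every translation part in $Q$ occurs. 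The paper's length induction is what generalizes: the thin classes $\mathbb{O}_{\lambda,\tau}$ and $\mathbb{O}_{i,\tau}$ have translation parts confined to finitely many lines or finite sets and cannot be reached by translation conjugation alone, so the paper's uniform scheme covers all three classification lemmas, while your argument buys brevity and a structural explanation specific to this one.
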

\begin{lem}\label{lenglambdatau}
For any $\lambda\in P_+\bigcap Q$ and $\lambda\neq 2m\alpha_1+m\alpha_2\ (m\in\mathbb{N})$ i.e. $\lambda=m\alpha_1+n\alpha_2$ where $1\leqslant m\leqslant n\leqslant2m$ or $1\leqslant n<m\leqslant2n-1$, we set $\mathbb{O}_{\lambda,\tau}=\widetilde{W}\cdot(t^{\lambda}s_2s_1\tau).$ If $n=2m$ or $m=2n-1$ where $m,\ n\in\mathbb{N}_+$, then$$\mathbb{O}_{\lambda,\tau}=\{t^{m\alpha_1+n\alpha_2}s_2s_1\tau,\ t^{(1-n)\alpha_1+(m+1-n)\alpha_2}s_2s_1\tau,\ t^{(n-m)\alpha_1+(1-m)\alpha_2}s_2s_1\tau\}.$$If $2\leqslant m\leqslant n\leqslant2m-1$ or $1\leqslant n<m<2n-1$, then $$\mathbb{O}_{\lambda,\tau}=\{t^{m\alpha_1+n\alpha_2}s_2s_1\tau,\ t^{(1-n)\alpha_1+(m+1-n)\alpha_2}s_2s_1\tau,\ t^{(n-m)\alpha_1+(1-m)\alpha_2}s_2s_1\tau\}$$ $$\sqcup\{t^{(n-m)\alpha_1+n\alpha_2}s_2s_1\tau,\ t^{(1-n)\alpha_1+(1-m)\alpha_2}s_2s_1\tau,\ t^{m\alpha_1+(m+1-n)\alpha_2}s_2s_1\tau\}$$and $\ell(\mathbb{O}_{\lambda,\tau})=\ell(t^{\lambda}s_2s_1\tau)$.
\end{lem}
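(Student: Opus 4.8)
The plan is to recognize that, despite appearances, the element $t^{\lambda}s_2s_1\tau$ is a \emph{pure translation}, so that classifying its conjugacy class amounts to computing a single Weyl-group orbit. First I would pin down $\tau$ inside $\widetilde{W}=P\rtimes W$. Writing $\tau=t^{\xi}u$ with $u\in W$ its linear part, the relations $\tau s_1\tau^{-1}=s_2$ and $\tau s_2\tau^{-1}=s_0=t^{\alpha_1+\alpha_2}s_1s_2s_1$ force $u=s_1s_2$ and $\langle\xi,\alpha_1\rangle=1,\ \langle\xi,\alpha_2\rangle=0$, i.e. $\xi=\omega_1=\tfrac13(2\alpha_1+\alpha_2)$; the identity $\tau^3=1$ is then automatic since $1+u+u^2=0$ on the reflection representation. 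The decisive point is that $s_2s_1$ is exactly the inverse of the linear part of $\tau$, so $s_2s_1\tau$ has trivial linear part and is the translation
\[
s_2s_1\tau=t^{s_2s_1(\omega_1)}=t^{-\omega_2},\qquad \omega_2=\tfrac13(\alpha_1+2\alpha_2).
\]
Hence every element named in the lemma is a disguised translation, and in particular $t^{\lambda}s_2s_1\tau=t^{\lambda-\omega_2}$.

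Next I would use that the $\widetilde{W}$-conjugacy class of a pure translation is its $W$-orbit: for $x=t^{\eta}w\in\widetilde{W}$ one has $x\,t^{\mu}\,x^{-1}=t^{w(\mu)}$, so
\[
\mathbb{O}_{\lambda,\tau}=\widetilde{W}\cdot t^{\lambda-\omega_2}=\{\,t^{w(\lambda-\omega_2)}\mid w\in W\,\}.
\]
Writing $\lambda=m\alpha_1+n\alpha_2$ I would record the three relevant pairings $\langle\lambda-\omega_2,\alpha_1\rangle=2m-n$, $\langle\lambda-\omega_2,\alpha_2\rangle=2n-m-1$ and $\langle\lambda-\omega_2,\alpha_1+\alpha_2\rangle=m+n-1$, then apply the six elements of $W$ to $\lambda-\omega_2$ and re-express each resulting translation in the normal form $t^{\mu_i}s_2s_1\tau$ via $\mu_i=w(\lambda-\omega_2)+\omega_2$. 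For instance $s_1$ yields $\mu=(n-m)\alpha_1+n\alpha_2$ and $s_2$ yields $\mu=m\alpha_1+(m+1-n)\alpha_2$, which reproduces entries of the stated list; the remaining products $s_1s_2,\ s_2s_1,\ s_1s_2s_1$ give the other four labels. This is the routine orbit bookkeeping that produces the six translations in the lemma.

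The size of the orbit is controlled by the stabilizer of $\lambda-\omega_2$. In the ranges of the six-element case (with $2\leqslant m\leqslant n\leqslant 2m-1$ or $1\leqslant n<m<2n-1$) all three pairings above are nonzero, so $\lambda-\omega_2$ is regular and the orbit has six distinct points (the third wall $m+n=1$ never occurring for a dominant $\lambda\neq0$). The two degenerate conditions $n=2m$ and $m=2n-1$ are precisely $\langle\lambda-\omega_2,\alpha_1\rangle=0$ and $\langle\lambda-\omega_2,\alpha_2\rangle=0$, where the stabilizer is a single reflection; one then checks directly that the six labels collapse in pairs, e.g. $n=2m$ forces $\mu_1=\mu_4,\ \mu_2=\mu_5,\ \mu_3=\mu_6$, leaving exactly the three elements written in the lemma. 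Finally, the length statement $\ell(\mathbb{O}_{\lambda,\tau})=\ell(t^{\lambda}s_2s_1\tau)$ is immediate from this description, since $\ell(t^{\mu})=\sum_{\alpha\in\Phi^{+}}\lvert\langle\mu,\alpha\rangle\rvert$ is invariant under replacing $\mu$ by $w(\mu)$; thus every element of the orbit has the same length and each representative is automatically of minimal length in its class.

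The genuinely delicate step is the initial normalization giving $s_2s_1\tau=t^{-\omega_2}$: one must get the linear part $u=s_1s_2$, the coweight $\xi=\omega_1$, and the resulting $P/Q$-coset bookkeeping exactly right, since the whole reduction hinges on $s_2s_1$ cancelling the linear part of $\tau$ (by contrast $t^{\mu}\tau$ keeps the linear part $s_1s_2$ and is \emph{not} a translation, which is why those elements fall in the separate class $\mathbb{O}_{id,\tau}$ of Lemma~\ref{lengtau0}). Everything downstream of that identity is the same kind of explicit orbit computation used in the translation case of Lemma~\ref{lenglambda}, so no further conceptual difficulty arises.
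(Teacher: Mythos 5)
Your proof is correct, but it takes a genuinely different route from the paper's. The paper never writes out a separate argument for Lemma \ref{lenglambdatau}: it proves Lemma \ref{lengtaui} in detail --- first checking by direct computation that the candidate set is stable under conjugation by each simple reflection, then running an induction on length to conjugate every element down to the minimal representative --- and declares the proof of Lemma \ref{lenglambdatau} ``quite similar''. Your argument instead exploits a structural feature special to this family: after the normalization $\tau=t^{\omega_1}s_1s_2$ with $\omega_1=\tfrac13(2\alpha_1+\alpha_2)$ (which is indeed forced by $\tau s_1\tau^{-1}=s_2$ and $\tau s_2\tau^{-1}=s_0$, and which I have verified), one gets $s_2s_1\tau=t^{-\omega_2}$ with $\omega_2=\tfrac13(\alpha_1+2\alpha_2)$, so $t^{\lambda}s_2s_1\tau=t^{\lambda-\omega_2}$ is a pure translation and its $\widetilde{W}$-conjugacy class is exactly the $W$-orbit $\{t^{w(\lambda-\omega_2)}\mid w\in W\}$; conjugation by $\Omega$ contributes nothing new since $\tau$ acts on translations through its linear part $s_1s_2\in W$. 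Your pairings $\langle\lambda-\omega_2,\alpha_1\rangle=2m-n$, $\langle\lambda-\omega_2,\alpha_2\rangle=2n-m-1$, $\langle\lambda-\omega_2,\alpha_1+\alpha_2\rangle=m+n-1$ are correct; the six translations, rewritten as $t^{w(\lambda-\omega_2)+\omega_2}s_2s_1\tau$, reproduce precisely the six labels of the lemma; the wall cases $n=2m$ and $m=2n-1$ are exactly where $s_1$, respectively $s_2$, stabilizes $\lambda-\omega_2$, collapsing the orbit to the three listed elements; and the length claim is immediate from the $W$-invariance of $\ell(t^{\mu})=\sum_{\alpha\in\Phi^{+}}\lvert\langle\mu,\alpha\rangle\rvert$. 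What your approach buys: no induction, an a priori reason the class is finite, automatic equality of lengths across the whole class (so minimality needs no argument), and a transparent stabilizer explanation of the six-versus-three dichotomy. What the paper's approach buys: uniformity --- the same closure-plus-induction template also handles Lemmas \ref{lengtau0} and \ref{lengtaui}, whose elements have nontrivial linear parts (order three, respectively reflections) and hence are not translations, so your trick does not transfer to them.
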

\begin{lem}\label{lengtaui}
For $i\in\mathbb{Z}$, let $\mathbb{O}_{i,\tau}=\widetilde{W}\cdot(t^{(\lfloor\frac{i}{2}\rfloor+1)\alpha_1+i\alpha_2}s_1s_2s_1\tau)$. Then $\mathbb{O}_{i,\tau}$ is the set of $\widetilde{W}$-conjugacy class in $W_a\tau$ with \[\ell(\mathbb{O}_{i,\tau})=\ell(t^{(\lfloor\frac{i}{2}\rfloor+1)\alpha_1+i\alpha_2}s_1s_2s_1\tau).\] Moreover, $\mathbb{O}_{i,\tau}=\{t^{k\alpha_1+i\alpha_2}s_1s_2s_1\tau,\ t^{(k+i-1)\alpha_1+k\alpha_2}s_2\tau,\ t^{(1-i)\alpha_1+k\alpha_2}s_1\tau\mid k\in\mathbb{Z}\}$.
\end{lem}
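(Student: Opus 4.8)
The plan is to follow the template of the proof of Lemma~\ref{leng2}. Write $\mathbb{O}'_{i,\tau}$ for the set on the right-hand side, the union of the three $k$-indexed families. I must show two things: that $\mathbb{O}'_{i,\tau}$ is stable under $\widetilde{W}$-conjugation, so that the conjugacy class of the chosen representative $x_0=t^{(\lfloor i/2\rfloor+1)\alpha_1+i\alpha_2}s_1s_2s_1\tau$ is contained in it; and conversely that every element of $\mathbb{O}'_{i,\tau}$ is conjugate to $x_0$ with $x_0$ of minimal length. Together these give $\mathbb{O}_{i,\tau}=\mathbb{O}'_{i,\tau}$ and $\ell(\mathbb{O}_{i,\tau})=\ell(x_0)$. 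Throughout I use that $\tau$ stabilizes the base alcove, so $\ell(t^\lambda w\tau)=\ell(t^\lambda w)$, together with $\tau s_j\tau^{-1}=s_{j+1}$ (indices mod $3$) and $s_0=t^{\alpha_1+\alpha_2}s_1s_2s_1$.

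For stability it suffices to conjugate a generic member of each family by each of $s_0,s_1,s_2,\tau$ and check that the result is again on the list. The guiding observation is that $\tau$-conjugation cyclically permutes the three families: since $\tau$ acts on the translation lattice by $\alpha_1\mapsto\alpha_2\mapsto-(\alpha_1+\alpha_2)$, a short computation gives
\[\tau(t^\mu s_1s_2s_1\tau)\tau^{-1}=t^{\tau\mu+\alpha_1}s_1\tau,\quad\tau(t^\mu s_1\tau)\tau^{-1}=t^{\tau\mu}s_2\tau,\quad\tau(t^\mu s_2\tau)\tau^{-1}=t^{\tau\mu+\alpha_1+\alpha_2}s_1s_2s_1\tau.\]
Conjugation by $s_1$ and $s_2$ then shifts the integer parameter within or between families; for example $s_1(t^\mu s_1s_2s_1\tau)s_1=t^{s_1\mu}s_1s_2s_1\tau$ via the braid relation $s_2s_1s_2=s_1s_2s_1$, and $s_2(t^\mu s_1s_2s_1\tau)s_2=t^{s_2\mu-\alpha_1}s_2\tau$ after simplifying $s_2(s_1s_2s_1)s_0=t^{-\alpha_1}s_2$. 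Reading off each image shows it lies in $\mathbb{O}'_{i,\tau}$, so $\mathbb{O}_{i,\tau}\subseteq\mathbb{O}'_{i,\tau}$.

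For the reverse inclusion and minimality I induct on $\ell$. By the $\tau$-cycling above, every element of $\mathbb{O}'_{i,\tau}$ is conjugate to a family-$1$ element $t^{k\alpha_1+i\alpha_2}s_1s_2s_1\tau$, so it is enough to treat these. By the length formula in type $A_2$,
\[\ell\bigl(t^{k\alpha_1+i\alpha_2}s_1s_2s_1\bigr)=|2k-i-1|+|2i-k-1|+|k+i-1|,\]
a piecewise-linear function of $k$ whose minimum is attained at $k=\lfloor i/2\rfloor+1$. When $k$ is not a minimizer, conjugating by a suitable element of $\{s_0,s_1,s_2\}$ (possibly moving between families, as in the displayed identities) strictly decreases $\ell$ while staying inside $\mathbb{O}'_{i,\tau}$; by the inductive hypothesis the resulting shorter element is conjugate to $x_0$. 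Hence $\mathbb{O}'_{i,\tau}\subseteq\widetilde{W}\cdot x_0$, the element $x_0$ is of minimal length, and the lemma follows.

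The main obstacle is the bookkeeping in the induction: for each family and each range of the parameter (and each parity of $i$) one must pick the precise conjugating element that lowers the length, and verify these moves never leave $\mathbb{O}'_{i,\tau}$. The floor $\lfloor i/2\rfloor$ reflects a genuine even/odd split, so the base cases --- checking that $x_0$ is minimal and that no family contains an element of strictly smaller length --- must be handled separately for even and odd $i$ using the explicit length formula above. This is the delicate, if routine, part of the argument.
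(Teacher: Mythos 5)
Your proposal follows essentially the same route as the paper's proof: show the explicit three-family set is stable under conjugation by $s_0,\ s_1,\ s_2,\ \tau$, use the $\tau$-cycling of the three families to reduce everything to the elements $t^{k\alpha_1+i\alpha_2}s_1s_2s_1\tau$, and then induct on length by choosing parity-dependent simple conjugations that strictly lower the length until the minimal representative $x_0$ is reached. The only caveat is that for even $i$ the length function on family $1$ has two minimizers ($k=\lfloor\frac{i}{2}\rfloor$ as well as $k=\lfloor\frac{i}{2}\rfloor+1$), so the second one must be joined to $x_0$ by a length-preserving conjugation passing through families $2$ and $3$ --- a residual case that the paper's own case analysis likewise leaves implicit.
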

Since the approach of proving the above lemmas \ref{lengtau0}, \ref{lenglambdatau} and \ref{lengtaui} are quite similar, here we give a proof of \ref{lengtaui} as an example.
\begin{proof}[Proof of Lemma \ref{lengtaui}]
(1) We set $$A=\{t^{k\alpha_1+i\alpha_2}s_1s_2s_1\tau,\ t^{(k+i-1)\alpha_1+k\alpha_2}s_2\tau,\ t^{(1-i)\alpha_1+k\alpha_2}s_1\tau\mid k\in\mathbb{Z}\},$$ then $\widetilde{W}\cdot A\subset A$. Since $$A=\{t^{k\alpha_1+i\alpha_2}s_1s_2s_1\tau\mid k\in\mathbb{Z}\}\sqcup\{t^{(1-i)\alpha_1+k\alpha_2}s_1\tau\mid k\in\mathbb{Z}\}\sqcup\{t^{(k+i-1)\alpha_1+k\alpha_2}s_2\tau\mid k\in\mathbb{Z}\}$$$$=\{t^{k\alpha_1+i\alpha_2}s_1s_2s_1\tau\mid k\in\mathbb{Z}\}\sqcup\tau\cdot\{t^{k\alpha_1+i\alpha_2}s_1s_2s_1\tau\mid k\in\mathbb{Z}\}\sqcup\tau^{-1}\cdot \{t^{k\alpha_1+i\alpha_2}s_1s_2s_1\tau\mid k\in\mathbb{Z}\}$$$$=\{t^{(k+i-1)\alpha_1+k\alpha_2}s_2\tau\mid k\in\mathbb{Z}\}\sqcup\tau\cdot\{t^{(k+i-1)\alpha_1+k\alpha_2}s_2\tau\mid k\in\mathbb{Z}\}\sqcup\tau^{-1}\cdot \{t^{(k+i-1)\alpha_1+k\alpha_2}s_2\tau\mid k\in\mathbb{Z}\}$$$$=\{t^{(1-i)\alpha_1+k\alpha_2}s_1\tau\mid k\in\mathbb{Z}\}\sqcup\tau\cdot\{t^{(1-i)\alpha_1+k\alpha_2}s_1\tau\mid k\in\mathbb{Z}\}\sqcup\tau^{-1}\cdot \{t^{(1-i)\alpha_1+k\alpha_2}s_1\tau\mid k\in\mathbb{Z}\},$$$s_0\cdot(t^{k\alpha_1+i\alpha_2}s_1s_2s_1\tau)=t^{(1-i)\alpha_1+(1-k)\alpha_2}s_1\tau,\ s_1\cdot(t^{k\alpha_1+i\alpha_2}s_1s_2s_1\tau)=t^{(i-k)\alpha_1+i\alpha_2}s_1s_2s_1\tau,\ s_2\cdot(t^{k\alpha_1+i\alpha_2}s_1s_2s_1\tau)=t^{(k-1)\alpha_1+(k-i)\alpha_2}s_2\tau$, $s_0\cdot(t^{(k+i-1)\alpha_1+k\alpha_2}s_2\tau)=t^{(1-k)\alpha_1+(2-i-k)\alpha_2}s_2\tau,\ s_1\cdot(t^{(k+i-1)\alpha_1+k\alpha_2}s_2\tau)=t^{(1-i)\alpha_1+k\alpha_2}s_1\tau,\ s_2\cdot(t^{(k+i-1)\alpha_1+k\alpha_2}s_2\tau)=t^{(k+i)\alpha_1+i\alpha_2}s_1s_2s_1\tau$, $s_0\cdot(t^{(1-i)\alpha_1+k\alpha_2}s_1\tau)=t^{(1-k)\alpha_1+i\alpha_2}s_1s_2s_1\tau,\ s_1\cdot(t^{(1-i)\alpha_1+k\alpha_2}s_1\tau)=t^{(k+i-1)\alpha_1+k\alpha_2}s_2\tau,\ s_2\cdot(t^{(1-i)\alpha_1+k\alpha_2}s_1\tau)=t^{(1-i)\alpha_1-(k+i)\alpha_2}s_1\tau$, thus (1) is proved.\\
(2) For any $\tilde{w}\in\widetilde{W}$, by direct calculation we have $$\ell(t^{(\lfloor\frac{i}{2}\rfloor+1)\alpha_1+i\alpha_2}s_1s_2s_1\tau)\leqslant\ell(\tilde{w}t^{(\lfloor\frac{i}{2}\rfloor+1)\alpha_1+i\alpha_2}s_1s_2s_1\tau\tilde{w}^{-1}).$$
For any $\tilde{w}\in A$, $\tilde{w}$ is $\widetilde{W}$-conjugate to $t^{(\lfloor\frac{i}{2}\rfloor+1)\alpha_1+i\alpha_2}s_1s_2s_1\tau$. We use induction on length. If for all $\tilde{w}'\in A$ with $\ell(\tilde{w}')<\ell(\tilde{w})$, then $\tilde{w}'$ is $\widetilde{W}$-conjugate to $t^{(\lfloor\frac{i}{2}\rfloor+1)\alpha_1+i\alpha_2}s_1s_2s_1\tau$. From the proof of (1), it is sufficient for us to consider that $\tilde{w}\in \{t^{k\alpha_1+i\alpha_2}s_1s_2s_1\tau\mid k\in\mathbb{Z}\}$. Now we take $\tilde{w}=t^{k\alpha_1+i\alpha_2}s_1s_2s_1\tau$. (a) For $i>0$ and $i$ is odd. If $k\geqslant2i$, then we take $\tilde{w}_1=s_0\cdot(\tilde{w})$. Then $\ell(\tilde{w}_1)=\ell(\tilde{w})-2$, thus by induction $\tilde{w}$ is $\widetilde{W}$-conjugate to $t^{(\lfloor\frac{i}{2}\rfloor+1)\alpha_1+i\alpha_2}s_1s_2s_1\tau$. If $\lfloor\frac{i}{2}\rfloor+2\leqslant k\leqslant2i-1$, then we take $\tilde{w}_1=s_2s_0\cdot(\tilde{w})$. Then $\ell(\tilde{w}_1)=\ell(\tilde{w})-2$, thus by induction $\tilde{w}$ is $\widetilde{W}$-conjugate to $t^{(\lfloor\frac{i}{2}\rfloor+1)\alpha_1+i\alpha_2}s_1s_2s_1\tau$. If $k=\lfloor\frac{i}{2}\rfloor+1$, then $\tilde{w}$ is already of minimal length. If $k\leqslant\lfloor\frac{i}{2}\rfloor$, then we take $\tilde{w}_1=s_1\cdot(\tilde{w})$. Then $\ell(\tilde{w}_1)=\ell(\tilde{w})-2$, thus by induction $\tilde{w}$ is $\widetilde{W}$-conjugate to $t^{(\lfloor\frac{i}{2}\rfloor+1)\alpha_1+i\alpha_2}s_1s_2s_1\tau$. (b) For $i>0$ and $i$ is even. If $k\geqslant2i$ or $\lfloor\frac{i}{2}\rfloor+2\leqslant k\leqslant2i-1$ or $k\leqslant\lfloor\frac{i}{2}\rfloor-1$ then $\tilde{w}_1$ is as before, and thus $\tilde{w}$ is $\widetilde{W}$-conjugate to $t^{(\lfloor\frac{i}{2}\rfloor+1)\alpha_1+i\alpha_2}s_1s_2s_1\tau$. If $i\leqslant0$, the argument is similar we omit it here.
By (1) and (2), the lemma is proved.
\end{proof}
\begin{thm}
The $\widetilde{W}$-conjugacy classes in $W_a\tau$ are:
$\mathbb{O}_{id,\tau},\ \mathbb{O}_{\lambda,\tau}$, and $\mathbb{O}_{i,\tau}$ with $i\in\mathbb{Z}$, $\lambda\in P_+\bigcap Q$ and $\lambda\neq 2m\alpha_1+m\alpha_2\ (m\in\mathbb{N})$.
\end{thm}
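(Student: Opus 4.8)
The plan is to follow exactly the scheme used for the classification of $\widetilde{W}$-conjugacy classes in $W_a$: the three preceding lemmas already isolate the candidate classes, and what remains is to glue them into a partition of $W_a\tau$. Indeed, Lemmas \ref{lengtau0}, \ref{lenglambdatau} and \ref{lengtaui} assert that each of $\mathbb{O}_{id,\tau}$, $\mathbb{O}_{\lambda,\tau}$ (for $\lambda\in P_+\cap Q$, $\lambda\neq 2m\alpha_1+m\alpha_2$) and $\mathbb{O}_{i,\tau}$ (for $i\in\mathbb{Z}$) is a single $\widetilde{W}$-conjugacy class, and each lemma supplies the explicit element list of that class. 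So it suffices to establish the disjoint decomposition
\[W_a\tau=\mathbb{O}_{id,\tau}\ \sqcup\ \Big(\bigsqcup_{\lambda}\mathbb{O}_{\lambda,\tau}\Big)\ \sqcup\ \Big(\bigsqcup_{i\in\mathbb{Z}}\mathbb{O}_{i,\tau}\Big),\]
after which the theorem follows precisely as in the $W_a$ case. As a first reduction, since $W_a\tau=\mathbb{I}\tau\cup\tau\cdot(\mathbb{I}\tau)\cup\tau^2\cdot(\mathbb{I}\tau)$ and every element of $W_a\tau$ is $\tilde{\thicksim}$ to an element of $\mathbb{I}\tau$ (so that Lemma \ref{Hn} applies), it is enough to verify the decomposition on representatives drawn from $\mathbb{I}\tau$.

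The separation of the three families is governed by one conjugation invariant, the image of an element under the projection $\widetilde{W}\to W=\widetilde{W}/P$ (equivalently, the coarse data recorded by the Newton map of \S\ref{DD}). Writing a typical element as $t^{\mu}w\tau$ with $\mu\in Q$ and $w\in W$, its image in $W$ is $w\,\bar{\tau}$, where $\bar{\tau}=s_1s_2$ is a Coxeter element of $W\cong S_3$. As $w$ ranges over $W$, the $W$-conjugacy class of $w\,\bar{\tau}$ is either a $3$-cycle (elliptic), the identity, or a reflection, and I expect these three cases to correspond respectively to the elements gathered in $\mathbb{O}_{id,\tau}$ (those of the form $t^{\mu}\tau$ or $t^{\mu}s_1s_2\tau$), to the elements $t^{\mu}s_2s_1\tau$, and to the elements with $w\in\{s_1,s_2,s_1s_2s_1\}$ that constitute the classes $\mathbb{O}_{i,\tau}$. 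Since this invariant is constant on each conjugacy class, no two of the three families can overlap, and the problem reduces to classifying within each family.

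Within the elliptic family everything collapses to the single class $\mathbb{O}_{id,\tau}$ by Lemma \ref{lengtau0} (its minimal length is $0$, reflecting $\bar{\nu}=0$). In the identity case I would use the clean identity $s_2s_1\tau=t^{-\varpi_2}$ (where $\varpi_2$ is the fundamental coweight dual to $\alpha_2$): the representative $t^{\lambda}s_2s_1\tau$ is then the pure translation $t^{\lambda-\varpi_2}$, so its $\widetilde{W}$-conjugacy class is simply the $W$-orbit of $\lambda-\varpi_2$, classified by the dominant representative. The substitution $\mu'=\lambda-\varpi_2$ matches these dominant translations with $\lambda\in P_+\cap Q$, and I expect the excluded ray $\lambda=2m\alpha_1+m\alpha_2$ to be exactly the intersection of $P_+\cap Q$ with the wall $\langle\,\cdot\,,\alpha_2\rangle=0$, which is precisely where $\lambda-\varpi_2$ leaves the dominant chamber and its orbit coincides with that of another $\lambda'$ already on the list—so those $\lambda$ must be omitted to avoid double counting. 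The reflection family then yields the classes $\mathbb{O}_{i,\tau}$, $i\in\mathbb{Z}$, by Lemma \ref{lengtaui}, and within each of the three families disjointness and exhaustiveness follow from the explicit element lists in the lemmas.

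The hard part will be precisely this last bookkeeping: confirming that the explicit sets produced by Lemmas \ref{lengtau0}, \ref{lenglambdatau} and \ref{lengtaui} tile $W_a\tau$ without gap or overlap. The delicate point is the translation family, where one must make the reparametrization $\lambda\mapsto\lambda-\varpi_2$ rigorous and verify that the excluded ray $2m\alpha_1+m\alpha_2$ accounts for exactly the redundancy created by passing to dominant representatives (and, symmetrically, that the $\alpha_1$-wall elements are \emph{not} redundant because the shift is by $-\varpi_2$ and breaks the $\alpha_1\leftrightarrow\alpha_2$ symmetry). Once this is pinned down, the elliptic and reflection families are cleanly separated by the finite-part invariant, and the displayed decomposition—hence the theorem—is immediate.
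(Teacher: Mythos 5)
Your proposal is correct and is essentially the paper's own argument: the paper proves this theorem simply by citing Lemmas \ref{lengtau0}, \ref{lenglambdatau} and \ref{lengtaui}, the implicit point being that the explicit element lists in those lemmas visibly partition $W_a\tau$ (exactly as in the earlier theorem for $W_a$), which is precisely the decomposition you set out to verify. Your supplementary bookkeeping---separating the three families by the $W$-conjugacy class of the image under $\widetilde{W}\rightarrow W$, and rewriting $t^{\lambda}s_2s_1\tau=t^{\lambda-\varpi_2}$ to explain why exactly the ray $\lambda=2m\alpha_1+m\alpha_2$ must be excluded---is a correct way of making the paper's ``directly'' explicit; the one inaccuracy is your parenthetical claim that this finite-part invariant is equivalent to the Newton-map data of \S\ref{DD}, which is false (for instance $\mathbb{O}_{2m,\tau}$ and $\mathbb{O}_{m\alpha_1+2m\alpha_2,\tau}$ are distinct classes with the same Kottwitz invariant and the same Newton point $(m-\frac{1}{3})(\alpha_1+2\alpha_2)$), but nothing in your argument actually relies on that equivalence.
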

\begin{proof}
Follows directly from Lemmas \ref{lengtau0}, \ref{lenglambdatau} and \ref{lengtaui}.
\end{proof}
\begin{prop}\label{cplambdatau}
For any $\tilde{w}\in\mathbb{O}_{\lambda, \tau}$ where $\lambda\in P_+\bigcap Q$ and $\lambda\neq 2m\alpha_1+m\alpha_2\ (m\in\mathbb{N})$. Then \[f_{\tilde{w},\mathbb{O}}=\left\{
                                                             \begin{array}{ll}
                                                               1, & \mathbb{O}=\mathbb{O}_{\lambda, \tau}.\\
                                                               0, & otherwise.
                                                             \end{array}
                                                           \right.\]
\end{prop}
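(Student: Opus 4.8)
The plan is to reproduce, in the twisted component $W_a\tau$, the argument already used for Proposition \ref{cplambda}: the class $\mathbb{O}_{\lambda,\tau}$ turns out to consist \emph{entirely} of minimal length elements, so every $\tilde{w}\in\mathbb{O}_{\lambda,\tau}$ lands at the base case of the inductive definition of the class polynomial and no reduction is required.

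First I would feed the hypotheses $\lambda\in P_+\cap Q$ and $\lambda\neq 2m\alpha_1+m\alpha_2$ into Lemma \ref{lenglambdatau}. That lemma exhibits $\mathbb{O}_{\lambda,\tau}$ as a \emph{finite} set---three elements when $\lambda$ lies on a wall ($n=2m$ or $m=2n-1$) and six elements otherwise---and, crucially, asserts $\ell(\mathbb{O}_{\lambda,\tau})=\ell(t^{\lambda}s_2s_1\tau)$. Since every listed representative has this same length, each of them attains the minimal length in the conjugacy class.

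With this in hand the conclusion is immediate. By the base case of the construction of $f_{\tilde{w},\mathbb{O}}$, a minimal length element $\tilde{w}$ has $f_{\tilde{w},\mathbb{O}}=1$ if $\tilde{w}\in\mathbb{O}$ and $0$ otherwise; as $\tilde{w}\in\mathbb{O}_{\lambda,\tau}$ and the conjugacy classes of $\widetilde{W}$ are disjoint, this yields exactly the asserted formula for every $\tilde{w}\in\mathbb{O}_{\lambda,\tau}$. Equivalently, and in line with the proof of Proposition \ref{cplambda}, one checks that the length-preserving conjugations tabulated in Lemma \ref{lenglambdatau} make any two representatives strongly conjugate ($\tilde{w}\tilde{\thicksim}\tilde{w}'$), whence $T_{\tilde{w}}\equiv T_{\tilde{w}'}\equiv T_{\mathbb{O}_{\lambda,\tau}}$ by Lemma \ref{Hn}.

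The one place demanding genuine work is therefore entirely absorbed into Lemma \ref{lenglambdatau}, namely verifying that \emph{no} conjugate of $t^{\lambda}s_2s_1\tau$ has length strictly below $\ell(t^{\lambda}s_2s_1\tau)$. This is precisely where the exclusion $\lambda\neq 2m\alpha_1+m\alpha_2$ enters: on that locus $t^{\lambda}s_2s_1\tau$ would admit a length reduction, fails to be of minimal length, and is consequently excluded from the statement. Granting Lemma \ref{lenglambdatau}, the proposition carries no further casework and follows directly from the definition of the class polynomial.
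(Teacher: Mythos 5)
Your core argument is correct and is essentially the paper's own proof: the paper disposes of this proposition in one line (``following the definition of class polynomials''), and the content you make explicit---that by Lemma \ref{lenglambdatau} the class $\mathbb{O}_{\lambda,\tau}$ is a finite set all of whose elements attain the minimal length $\ell(t^{\lambda}s_2s_1\tau)$, so every $\tilde{w}\in\mathbb{O}_{\lambda,\tau}$ falls under the base case of the inductive definition of $f_{\tilde{w},\mathbb{O}}$---is exactly what makes that one line work. (Strictly speaking, the lemma only asserts the value of the \emph{minimal} length; that all three, resp.\ six, listed representatives share this length is an additional small check, e.g.\ for $\lambda=2\alpha_1+2\alpha_2$ all six elements have length $6$, but it is routine and clearly part of the lemma's intended content.)

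One factual correction to your last paragraph: the exclusion $\lambda\neq 2m\alpha_1+m\alpha_2$ is \emph{not} there because $t^{\lambda}s_2s_1\tau$ would fail to be of minimal length on that locus. For such $\lambda$ the element $t^{\lambda}s_2s_1\tau$ is still a minimal length element of its $\widetilde{W}$-conjugacy class; the point is that this class is one that has already been labeled by a \emph{different} dominant coweight, so admitting these $\lambda$ would make the parametrization redundant. Concretely, $t^{2\alpha_1+\alpha_2}s_2s_1\tau$ (the case $m=1$) appears in Lemma \ref{lenglambdatau} as the element $t^{m\alpha_1+(m+1-n)\alpha_2}s_2s_1\tau$ of $\mathbb{O}_{2\alpha_1+2\alpha_2,\tau}$, and all six elements of that class have length $6$; no length reduction is available. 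This slip does not damage your proof of the proposition as stated, since the statement only concerns non-excluded $\lambda$, but the justification you give for the hypothesis is false as written.
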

\begin{proof}
Following the definition of class polynomials, the proposition is obvious.
\end{proof}
For $\lambda=m\alpha_1+n\alpha_2\in P_+\cap Q_{sh}$, we set $$E_{m\alpha_1+n\alpha_2,\tau}=\{\lambda'=k\alpha_1+n\alpha_2\mid\lfloor\frac{n}{2}\rfloor+1\leqslant k\leqslant m-1\},$$ $$E'_{m\alpha_1+n\alpha_2,\tau}=\{\lambda'=m\alpha_1+k\alpha_2\mid\lfloor\frac{m+1}{2}\rfloor+1\leqslant k\leqslant n\}.$$
\begin{prop}
(1) Let  $\lambda=m\alpha_1+n\alpha_2\in P_+\cap Q_{sh}$, and $\tilde{w}=t^{\lambda}s_1s_2s_1\tau$. Then \[f_{\tilde{w},\mathbb{O}}=\left\{
                                                             \begin{array}{ll}
                                                               (v-v^{-1}), & \mathbb{O}=\mathbb{O}_{\lambda', \tau},\ with\ \lambda'\in E_{m\alpha_1+n\alpha_2,\tau}\\
                                                               1, & \mathbb{O}=\mathbb{O}_{n, \tau}\\
                                                               0, & otherwise
                                                             \end{array}
                                                           \right.\]
(2) Let  $\lambda=m\alpha_1+n\alpha_2\in P_+\cap Q_{sh}$ with $n\neq2m$, and $\tilde{w}=t^{\lambda}s_1\tau$. Then \[f_{\tilde{w},\mathbb{O}}=\left\{
                                                             \begin{array}{ll}
                                                               (v-v^{-1}), & \mathbb{O}=\mathbb{O}_{\lambda', \tau},\ with\ \lambda'\in E'_{m\alpha_1+n\alpha_2,\tau}\\
                                                               1, & \mathbb{O}=\mathbb{O}_{1-m, \tau}\\
                                                               0, & otherwise
                                                             \end{array}
                                                           \right.\]
\end{prop}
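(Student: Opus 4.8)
The plan is to compute both polynomials directly from the inductive definition of $f_{\tilde w,\mathbb O}$, running a \emph{downward induction on a single translation coordinate} and reading off a telescoping sum. The structural fact that makes this work is that conjugating an element $w_a\tau\in W_a\tau$ by a simple reflection $s_i$ moves the $W_a$-part by the twisted rule $w_a\mapsto s_iw_as_{i+1}$ (indices mod $3$), because $\tau s_i=s_{i+1}\tau$; this keeps us inside the coset and is precisely the mechanism behind the length estimates already recorded in the proof of Lemma \ref{lengtaui}. By that lemma, the element in (1) lies in $\mathbb O_{n,\tau}$ and the element in (2) lies in $\mathbb O_{1-m,\tau}$, which accounts for the single ``$1$'' entry in each formula; the generic translation classes $\mathbb O_{\lambda',\tau}$ that carry the coefficient $(v-v^{-1})$ are, by Proposition \ref{cplambdatau}, \emph{trivial} in the sense that any of their elements has class polynomial $\delta_{\mathbb O,\mathbb O_{\lambda',\tau}}$. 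This is the key tool: whenever a reduction step throws off an element lying in such a class, it contributes exactly one factor $(v-v^{-1})$ and nothing more.

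For (1) I fix $n$ and induct downward on $m$, the base case being $m=\lfloor n/2\rfloor+1$, where $t^{m\alpha_1+n\alpha_2}s_1s_2s_1\tau$ is already the minimal length representative of $\mathbb O_{n,\tau}$ and $E_{m\alpha_1+n\alpha_2,\tau}=\emptyset$, so the assertion is just the normalisation of $f$. The inductive engine is the single identity
\[
T_{t^{k\alpha_1+n\alpha_2}s_1s_2s_1\tau}\equiv (v-v^{-1})\,T_{\mathbb O_{(k-1)\alpha_1+n\alpha_2,\tau}}+T_{t^{(k-1)\alpha_1+n\alpha_2}s_1s_2s_1\tau},\qquad \lfloor n/2\rfloor+2\le k\le m.
\]
Here the degree-one term is an $s_2s_1\tau$-shaped element of the generic class $\mathbb O_{(k-1)\alpha_1+n\alpha_2,\tau}$, handled by Proposition \ref{cplambdatau}, while the degree-zero term is (up to $\tilde\approx$, hence up to Lemma \ref{Hn}) the next member of the chain, to which the induction hypothesis applies. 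Summing over $k=m,m-1,\dots,\lfloor n/2\rfloor+2$ makes the indices $(k-1)\alpha_1+n\alpha_2$ fill out exactly $E_{m\alpha_1+n\alpha_2,\tau}$ and leaves the final $T_{\mathbb O_{n,\tau}}$ with coefficient $1$.

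Part (2) is the mirror image, fixing $m$ and inducting downward on $n$ through the $s_1\tau$-shaped elements $t^{m\alpha_1+k\alpha_2}s_1\tau\in\mathbb O_{1-m,\tau}$, with single-step identity
\[
T_{t^{m\alpha_1+k\alpha_2}s_1\tau}\equiv (v-v^{-1})\,T_{\mathbb O_{m\alpha_1+k\alpha_2,\tau}}+T_{t^{m\alpha_1+(k-1)\alpha_2}s_1\tau}.
\]
Note the asymmetry with (1): here the generic class has the \emph{same} translation as the element being reduced, so the very first step ($k=n$) already contributes $(v-v^{-1})T_{\mathbb O_{m\alpha_1+n\alpha_2,\tau}}=(v-v^{-1})T_{\mathbb O_{\lambda,\tau}}$, which is why $E'_{m\alpha_1+n\alpha_2,\tau}$ runs all the way up to $k=n$ rather than stopping at $n-1$. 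Iterating down to $k=\lfloor (m+1)/2\rfloor+1$ and landing on the minimal length representative of $\mathbb O_{1-m,\tau}$ produces the asserted list; the hypothesis $n\ne 2m$ is used exactly to keep $\lambda$, and every intermediate index $m\alpha_1+k\alpha_2$, off the excluded lines, so that each class encountered is of the six-element type required by Lemma \ref{lenglambdatau} and Proposition \ref{cplambdatau}.

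The main obstacle is entirely in verifying the two single-step identities, which is bookkeeping rather than conceptual work. A single simple reflection does not implement a clean decrement of the coordinate: for instance left multiplication by $s_0$ sends $t^{k\alpha_1+n\alpha_2}s_1s_2s_1\tau$ into the degenerate class $\mathbb O_{id,\tau}$, which must \emph{not} appear in the final answer. Establishing each identity therefore requires a short, carefully ordered sequence of conjugations together with $\tau$-moves (as in the $\lfloor i/2\rfloor+2\le k\le 2i-1$ case of Lemma \ref{lengtaui}), chosen so that (a) the length genuinely drops by two, (b) the intermediate singly-reduced element lands in a generic translation class and the spurious $\mathbb O_{id,\tau}$ contributions cancel, and (c) no second power of $(v-v^{-1})$ is ever generated. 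Controlling (b) and (c) simultaneously is precisely what forces the floor functions $\lfloor n/2\rfloor$ and $\lfloor (m+1)/2\rfloor$ in $E$ and $E'$, and the parity case analysis needed there, supported by the explicit element identities from the proof of Lemma \ref{lengtaui}, is where the real effort lies.
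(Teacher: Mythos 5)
Your proposal is correct and follows essentially the same route as the paper: the paper's proof is exactly your telescoping downward induction, with your one-step identities realized by explicit chains (a length-preserving conjugation by $s_0$, one reduction by a simple reflection, then a $\tau^{\mp 1}$-conjugation back to standard form), and with Proposition \ref{cplambdatau} converting the split-off $s_2s_1\tau$-type terms into the coefficient $(v-v^{-1})$ on the classes indexed by $E_{m\alpha_1+n\alpha_2,\tau}$ resp. $E'_{m\alpha_1+n\alpha_2,\tau}$, terminating at the same minimal-length base cases. The only difference is presentational: the conjugation bookkeeping you defer is precisely what the paper writes out.
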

\begin{proof}
(1): If $\lambda=k\alpha_1+(2k-1)\alpha_2$ or $\lambda=(k+1)\alpha_1+2k\alpha_2$ where $k\in\mathbb{N}_+$, then $\tilde{w}=t^{\lambda}s_1s_2s_1\tau$ is already of minimal length in its $\widetilde{W}$-conjugacy class. Thus it is sufficient to consider those $\lambda=m\alpha_1+n\alpha_2\in P_+\cap Q_{sh}$ with $n\leqslant2m-3$, in this case we have
\begin{align*}
T_{t^{m\alpha_1+n\alpha_2}s_1s_2s_1\tau}\equiv&T_{s_0t^{m\alpha_1+n\alpha_2}s_1s_2s_1\tau s_0}=T_{t^{(1-n)\alpha_1+(1-m)\alpha_2}s_1\tau}\\
\equiv&(v-v^{-1})T_{s_2t^{(1-n)\alpha_1+(1-m)\alpha_2}s_1\tau}+T_{s_2t^{(1-n)\alpha_1+(1-m)\alpha_2}s_1\tau s_2}\\
\equiv&(v-v^{-1})T_{\tau^{-1}\cdot t^{(1-n)\alpha_1+(m-n)\alpha_2}s_2s_1\tau}+T_{\tau^{-1}\cdot t^{(1-n)\alpha_1+(m-n-1)\alpha_2}s_1\tau}\\
\equiv&(v-v^{-1})T_{t^{(m-1)\alpha_1+n\alpha_2}s_2s_1\tau}+T_{t^{(m-1)\alpha_1+n\alpha_2}s_1s_2s_1\tau}\\
\cdots&\cdots\\
\equiv&(v-v^{-1})\sum_{i=\lfloor\frac{n}{2}\rfloor+1}^{m-1}T_{\mathbb{O}_{i\alpha_1+n\alpha_2,\tau}}+T_{\mathbb{O}_{n, \tau}}.
\end{align*}
(2): If $\lambda=(2k-1)\alpha_1+k\alpha_2$, then $\tilde{w}=t^{\lambda}s_1\tau$ is a minimal length element in its $\widetilde{W}$-conjugacy class. Thus it is sufficient to consider $\lambda=m\alpha_1+n\alpha_2\in P_+\cap Q_{sh}$ with $m\leqslant2n-2$, thus by a similar argument we have
\begin{align*}
T_{t^{m\alpha_1+n\alpha_2}s_1\tau}\equiv&T_{s_0t^{m\alpha_1+n\alpha_2}s_1\tau s_0}=T_{t^{(1-n)\alpha_1+(1-m)\alpha_2}s_1s_2s_1\tau}\\
\cdots&\cdots\\
\equiv&(v-v^{-1})\sum_{i=\lfloor\frac{m+1}{2}\rfloor+1}^{n}T_{\mathbb{O}_{m\alpha_1+i\alpha_2,\tau}}+T_{\mathbb{O}_{1-m, \tau}}.
\end{align*}Thus (2) is proved.
\end{proof}
Note: $\tilde{w}=t^{2k\alpha_1+k\alpha_2}s_1\tau$ for $k\in\mathbb{N}_+$ is a minimal length element of $\mathbb{O}_{1-2k, \tau}$.
\begin{prop}\label{cpatau}
If $\tilde{w}=t^{k\alpha_1+2k\alpha_2}\tau$ with $k\in\mathbb{N}$. Then \[f_{\tilde{w},\mathbb{O}}=\left\{
                                                             \begin{array}{ll}
                                                               (v-v^{-1})^2, & \mathbb{O}=\mathbb{O}_{\lambda, \tau},\ where \ \lambda\in\sqcup_{j=2}^{k}E'_{j\alpha_1+(2j-1)\alpha_2,\tau}\\
                                                               (v-v^{-1}), & \mathbb{O}=\mathbb{O}_{i, \tau},\ where\ -k+1\leqslant i\leqslant2k\\
                                                               1, & \mathbb{O}=\mathbb{O}_{id, \tau}\\
                                                               0, & otherwise
                                                             \end{array}
                                                           \right.\]
\end{prop}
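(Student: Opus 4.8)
The plan is to run the inductive construction of class polynomials from the definition, organised as a two-stage descent, exactly in the spirit of the proofs of Propositions \ref{cpa21} and \ref{cpa1221} and of the preceding proposition computing $f_{\tilde{w},\mathbb{O}}$ for $t^{\lambda}s_1\tau$ and $t^{\lambda}s_1s_2s_1\tau$. First I would dispose of the base case $k=0$: here $\tilde{w}=\tau$ is a length-$0$ element, hence already minimal in $\mathbb{O}_{id,\tau}$ (Lemma \ref{lengtau0}), so $f_{\tau,\mathbb{O}_{id,\tau}}=1$ and $f_{\tau,\mathbb{O}}=0$ otherwise; both the $E'$-sum and the index range $-k+1\leqslant i\leqslant 2k$ are then empty and the formula holds. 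For $k\geqslant 1$ the element $t^{k(\alpha_1+2\alpha_2)}\tau$ is not of minimal length in $\mathbb{O}_{id,\tau}$ (whose minimal length is $0$), so the recursion applies.

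In Stage 1 (descent in $k$) I would, after possibly replacing $\tilde{w}$ by a strongly conjugate representative (Lemma \ref{Hn}) to reach a reducible form, apply the defining recursion $T_{\tilde{w}}\equiv (v-v^{-1})T_{s_i\tilde{w}}+T_{s_i\tilde{w}s_i}$, using $\tau s_i\tau^{-1}=s_{i+1}$ to track the $\tau$-twist under conjugation. The goal is to produce a relation expressing $T_{t^{k(\alpha_1+2\alpha_2)}\tau}$ through $T_{t^{(k-1)(\alpha_1+2\alpha_2)}\tau}$ together with finitely many $(v-v^{-1})$-weighted terms of the forms $T_{t^{\mu}s_1s_2s_1\tau}$ and $T_{t^{\lambda}s_1\tau}$, that is,
\[
T_{t^{k(\alpha_1+2\alpha_2)}\tau}\equiv T_{t^{(k-1)(\alpha_1+2\alpha_2)}\tau}+(v-v^{-1})\Bigl(\sum_{\mu}T_{t^{\mu}s_1s_2s_1\tau}+\sum_{\lambda}T_{t^{\lambda}s_1\tau}\Bigr).
\]
Iterating from $k$ down to $0$ and using $T_{\tau}=T_{\mathbb{O}_{id,\tau}}$ collapses the translation part, leaving $T_{\mathbb{O}_{id,\tau}}$ with coefficient $1$ plus a single $(v-v^{-1})$-weighted sum of the peeled-off terms; I expect the $s_1\tau$-terms to be exactly the $t^{\lambda_j}s_1\tau$ with $\lambda_j=j\alpha_1+(2j-1)\alpha_2$ for $2\leqslant j\leqslant k$, matching the base points of the $E'$-sets in the assertion.

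In Stage 2 I expand the peeled-off terms. Each $t^{\mu}s_1s_2s_1\tau$ that is already of minimal length contributes $T_{\mathbb{O}_{i,\tau}}$ directly, with $i$ equal to its $\alpha_2$-coordinate (Lemma \ref{lengtaui}); these produce the classes $\mathbb{O}_{i,\tau}$ with $0\leqslant i\leqslant 2k$. The remaining non-minimal $t^{\lambda_j}s_1\tau$-terms I expand by part (2) of the preceding proposition, $T_{t^{\lambda}s_1\tau}\equiv(v-v^{-1})\sum_{\lambda'\in E'_{\lambda,\tau}}T_{\mathbb{O}_{\lambda',\tau}}+T_{\mathbb{O}_{1-m,\tau}}$. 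Multiplying by the outer factor $(v-v^{-1})$ from Stage 1 turns the $E'$-contributions into the $(v-v^{-1})^2$-terms indexed by $\sqcup_{j=2}^{k}E'_{j\alpha_1+(2j-1)\alpha_2,\tau}$, while the $T_{\mathbb{O}_{1-j,\tau}}$ supply the classes $\mathbb{O}_{i,\tau}$ with $-k+1\leqslant i\leqslant -1$. Collecting all coefficient-$(v-v^{-1})$ contributions from both stages then gives one $\mathbb{O}_{i,\tau}$ for each $-k+1\leqslant i\leqslant 2k$, and the lone $T_{\mathbb{O}_{id,\tau}}$ gives coefficient $1$, as claimed.

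The hard part is the index bookkeeping in Stage 1: choosing at each descent step the simple reflection that genuinely lowers the length, which forces a case analysis on the parity of the $\alpha_2$-coordinate and on whether a representative has already reached minimal length (exactly the phenomenon controlled by Lemma \ref{lengtaui}), and then verifying that the resulting families $\{\mu\}$ and $\{\lambda_j\}$ assemble into precisely the disjoint union $\sqcup_{j=2}^{k}E'_{j\alpha_1+(2j-1)\alpha_2,\tau}$ and the full interval $-k+1\leqslant i\leqslant 2k$ with neither repetition nor omission. One must also check that the coefficient-$1$ contributions to each $\mathbb{O}_{i,\tau}$ arising from minimal $t^{\mu}s_1s_2s_1\tau$-terms and the coefficient-$1$ contributions $T_{\mathbb{O}_{1-j,\tau}}$ from Stage 2 never land on the same class, so that every $\mathbb{O}_{i,\tau}$ in range acquires coefficient exactly $(v-v^{-1})$.
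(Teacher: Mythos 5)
Your proposal follows essentially the same route as the paper's own proof: a descent in $k$ that peels off, at each step, $(v-v^{-1})$-weighted minimal-length representatives of the classes $\mathbb{O}_{2k,\tau}$, $\mathbb{O}_{2k-1,\tau}$ together with the non-minimal elements $t^{j\alpha_1+(2j-1)\alpha_2}s_1\tau$ ($2\leqslant j\leqslant k$), followed by expanding those leftover $s_1\tau$-terms via part (2) of the preceding proposition to produce the $(v-v^{-1})^2$-terms over $\sqcup_{j=2}^{k}E'_{j\alpha_1+(2j-1)\alpha_2,\tau}$ and the classes $\mathbb{O}_{1-j,\tau}$ filling out the range $-k+1\leqslant i\leqslant -1$. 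The only cosmetic difference is that the paper anchors the induction at the directly computed case $k=1$ rather than iterating all the way to $T_{\tau}$, which your bookkeeping would reproduce since $t^{\alpha_1+\alpha_2}s_1\tau$ is already minimal in $\mathbb{O}_{0,\tau}$.
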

\begin{proof}
If $k=0$, there is nothing to prove. If $k=1$, then $$T_{t^{\alpha_1+2\alpha_2}\tau}\equiv(v-v^{-1})(T_{\mathbb{O}_{2,\tau}}+T_{\mathbb{O}_{1,\tau}}+T_{\mathbb{O}_{0,\tau}})+T_{\mathbb{O}_{id,\tau}}.$$If $k\geqslant2$, then
\begin{align*}T_{t^{k\alpha_1+2k\alpha_2}\tau}\equiv&(v-v^{-1})T_{t^{k\alpha_1+(1-k)\alpha_2}s_2\tau}+T_{t^{k\alpha_1+(1-k)\alpha_2}\tau}\\
\equiv&(v-v^{-1})T_{s_2t^{k\alpha_1+(1-k)\alpha_2}s_2\tau s_2}+(v-v^{-1})T_{s_0t^{k\alpha_1+(1-k)\alpha_2}\tau}+T_{s_0t^{k\alpha_1+(1-k)\alpha_2}\tau s_0}\\
\equiv&(v-v^{-1})T_{\mathbb{O}_{2k, \tau}}+(v-v^{-1})T_{t^{k\alpha_1+(1-k)\alpha_2}s_1s_2s_1\tau}+T_{t^{k\alpha_1+(1-k)\alpha_2}s_1s_2\tau}\\
\equiv&(v-v^{-1})T_{\mathbb{O}_{2k, \tau}}+(v-v^{-1})T_{\tau\cdot(t^{k\alpha_1+(1-k)\alpha_2}s_1s_2s_1\tau)}+T_{\tau\cdot(t^{k\alpha_1+(1-k)\alpha_2}s_1s_2\tau)}\\
\equiv&(v-v^{-1})T_{\mathbb{O}_{2k, \tau}}+(v-v^{-1})T_{t^{k\alpha_1+(2k-1)\alpha_2}s_1\tau}\\
&+(v-v^{-1})T_{s_0t^{k\alpha_1+(2k-1)\alpha_2}s_1s_2\tau}+T_{s_0t^{k\alpha_1+(2k-1)\alpha_2}s_1s_2\tau s_0}\\
\equiv&(v-v^{-1})T_{\mathbb{O}_{2k, \tau}}+(v-v^{-1})T_{t^{k\alpha_1+(2k-1)\alpha_2}s_1\tau}\\
&+(v-v^{-1})T_{\tau^{-1}\cdot(t^{(2-2k)\alpha_1+(1-k)\alpha_2}s_1\tau)}+T_{\tau^{-1}\cdot(t^{(2-2k)\alpha_1+(1-k)\alpha_2}\tau)}\\
=&(v-v^{-1})(T_{\mathbb{O}_{2k, \tau}}+T_{\mathbb{O}_{2k-1, \tau}})+(v-v^{-1})T_{t^{k\alpha_1+(2k-1)\alpha_2}s_1\tau}+T_{t^{(k-1)\alpha_1+2(k-1)\alpha_2}\tau}\\
\cdots&\cdots\\
\equiv&(v-v^{-1})(T_{\mathbb{O}_{2k, \tau}}+T_{\mathbb{O}_{2k-1, \tau}}+T_{\mathbb{O}_{2k-2, \tau}}+T_{\mathbb{O}_{2k-3, \tau}}+\cdots+T_{\mathbb{O}_{4, \tau}}+T_{\mathbb{O}_{3, \tau}})\\
&+(v-v^{-1})(T_{t^{k\alpha_1+(2k-1)\alpha_2}s_1\tau}+T_{t^{(k-1)\alpha_1+(2k-3)\alpha_2}s_1\tau}+\cdots+T_{t^{2\alpha_1+3\alpha_2}s_1\tau})+T_{t^{\alpha_1+2\alpha_2}\tau}\\
\equiv&(v-v^{-1})^2\sum_{\lambda\in\sqcup_{j=2}^{k}E'_{j\alpha_1+(2j-1)\alpha_2,\tau}}T_{\mathbb{O}_{\lambda,\tau}}+(v-v^{-1})\sum_{i=-k+1}^{2k}T_{\mathbb{O}_{i,\tau}}+T_{\mathbb{O}_{id,\tau}}.
\end{align*}The proposition is proved.
\end{proof}
\begin{prop}\label{cpatau12}
If $\tilde{w}=t^{k\alpha_1+(2k-1)\alpha_2}s_1s_2\tau$ with $k\in\mathbb{N}_+$. Then \[f_{\tilde{w},\mathbb{O}}=\left\{
                                                             \begin{array}{ll}
                                                               (v-v^{-1})^2, & \mathbb{O}=\mathbb{O}_{\lambda, \tau},\ where \ \lambda\in\sqcup_{j=2}^{k-1}E'_{j\alpha_1+(2j-1)\alpha_2,\tau}\\
                                                               (v-v^{-1}), & \mathbb{O}=\mathbb{O}_{i, \tau},\ where\ -k+2\leqslant i\leqslant2k-1\\
                                                               1, & \mathbb{O}=\mathbb{O}_{id, \tau}\\
                                                               0, & otherwise
                                                             \end{array}
                                                           \right.\]
\end{prop}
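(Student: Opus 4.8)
The plan is to avoid a long reduction chain and instead extract a single congruence that reduces the computation to the already-established Proposition \ref{cpatau}. Concretely, writing $\tilde w = t^{k\alpha_1+(2k-1)\alpha_2}s_1s_2\tau$, I claim
\[
T_{t^{k\alpha_1+(2k-1)\alpha_2}s_1s_2\tau}\equiv (v-v^{-1})\,T_{\mathbb{O}_{2k-1,\tau}}+T_{t^{(k-1)\alpha_1+2(k-1)\alpha_2}\tau}.
\]
Granting this, the proposition follows immediately: applying Proposition \ref{cpatau} with $k$ replaced by $k-1$ evaluates the term $T_{t^{(k-1)\alpha_1+2(k-1)\alpha_2}\tau}$, producing the $(v-v^{-1})^2$-terms indexed by $\lambda\in\sqcup_{j=2}^{k-1}E'_{j\alpha_1+(2j-1)\alpha_2,\tau}$, the $(v-v^{-1})$-terms $T_{\mathbb{O}_{i,\tau}}$ for $-k+2\leqslant i\leqslant 2k-2$, and $T_{\mathbb{O}_{id,\tau}}$. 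The extra summand $(v-v^{-1})T_{\mathbb{O}_{2k-1,\tau}}$ then precisely enlarges the range of the index $i$ to include $i=2k-1$, yielding exactly the asserted formula. Note that this argument also covers the boundary case $k=1$, where Proposition \ref{cpatau} with parameter $0$ returns just $T_{\mathbb{O}_{id,\tau}}$.

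To establish the congruence I would apply the reduction formula directly along the affine simple reflection $s_0$. First one checks the length inequalities $\ell(s_0\tilde w)=\ell(\tilde w)-1$ and $\ell(s_0\tilde w s_0)=\ell(s_0\tilde w)-1$, which give $T_{\tilde w}\equiv (v-v^{-1})T_{s_0\tilde w}+T_{s_0\tilde w s_0}$ modulo $[\widetilde H,\widetilde H]$. A direct computation in $\widetilde W$ then identifies the diagonal term, $s_0\tilde w s_0=t^{(k-1)\alpha_1+2(k-1)\alpha_2}\tau$, so $T_{s_0\tilde w s_0}\equiv T_{t^{(k-1)\alpha_1+2(k-1)\alpha_2}\tau}$. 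For the off-diagonal term, conjugating $s_0\tilde w$ by the length-zero element $\tau^{-1}\in\Omega$ (harmless modulo $[\widetilde H,\widetilde H]$, since $s_0\tilde w\,\tilde{\thicksim}\,\tau^{-1}(s_0\tilde w)\tau$ by Lemma \ref{Hn}) brings it to the form $t^{(2-2k)\alpha_1+(1-k)\alpha_2}s_1\tau$; by the classification Lemma \ref{lengtaui}, the case $i=2k-1$ gives $t^{(1-i)\alpha_1+k'\alpha_2}s_1\tau\in\mathbb{O}_{i,\tau}$, so this element lies in $\mathbb{O}_{2k-1,\tau}$ and $T_{s_0\tilde w}\equiv T_{\mathbb{O}_{2k-1,\tau}}$. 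Combining the two identifications yields the claimed congruence.

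In effect this isolates exactly the step in the proof of Proposition \ref{cpatau} at which the element $t^{k\alpha_1+(2k-1)\alpha_2}s_1s_2\tau$ is encountered and reduced, so no new machinery is needed. The step I expect to require the most care is the length bookkeeping: one must verify that $s_0$ is genuinely length-decreasing on both sides (so that the reduction formula applies as a single-step reduction rather than after a preliminary $\approx$-conjugation) and that the intervening $\tau^{-1}$-conjugation is length-preserving (so that Lemma \ref{Hn} applies). Once these lengths, together with the group identities $s_0\tilde w s_0=t^{(k-1)\alpha_1+2(k-1)\alpha_2}\tau$ and $s_0\tilde w\,\tilde{\thicksim}\,t^{(2-2k)\alpha_1+(1-k)\alpha_2}s_1\tau$, are confirmed, the congruence and hence the proposition follow; the remaining task of matching index sets and checking that no spurious classes appear is routine given the classification Lemmas \ref{lengtau0}, \ref{lenglambdatau} and \ref{lengtaui}.
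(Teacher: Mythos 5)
Your proposal is correct, and it is leaner than what the paper intends: the paper omits the proof of this proposition, saying one should re-run a full reduction chain ``just like that of \ref{cpatau}'', whereas you perform a single $s_0$-reduction and then quote Proposition \ref{cpatau} with parameter $k-1$ as a black box. The facts you need all check out: $\ell(\tilde{w})=6k-4$, $\ell(s_0\tilde{w})=6k-5$ and $\ell(s_0\tilde{w}s_0)=6k-6$, so the reduction $T_{\tilde{w}}\equiv(v-v^{-1})T_{s_0\tilde{w}}+T_{s_0\tilde{w}s_0}$ is legitimate as a single step; moreover $\ell(\mathbb{O}_{2k-1,\tau})=\ell(t^{k\alpha_1+(2k-1)\alpha_2}s_1s_2s_1\tau)=6k-5$ by Lemma \ref{lengtaui}, so $s_0\tilde{w}$ is a \emph{minimal length} element of $\mathbb{O}_{2k-1,\tau}$ (this minimality, not mere membership, is what licenses $T_{s_0\tilde{w}}\equiv T_{\mathbb{O}_{2k-1,\tau}}$ via Lemma \ref{Hn}); and adjoining $(v-v^{-1})T_{\mathbb{O}_{2k-1,\tau}}$ to the output of \ref{cpatau} at $k-1$ extends the range $-k+2\leqslant i\leqslant 2k-2$ to $-k+2\leqslant i\leqslant 2k-1$, giving exactly the stated formula, including the boundary case $k=1$. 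In effect you have isolated the inner step of the paper's own proof of \ref{cpatau}, where $t^{k\alpha_1+(2k-1)\alpha_2}s_1s_2\tau$ appears and is reduced by $s_0$; your observation that this one step plus \ref{cpatau} suffices is the real economy, and it avoids the tedious repetition the paper chose to suppress.

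One bookkeeping slip, harmless but worth fixing: you attached the $\tau^{-1}$-conjugation to the wrong term. The exact identities in $\widetilde{W}$ are $s_0\tilde{w}=t^{(2-2k)\alpha_1+(1-k)\alpha_2}s_1\tau$ (no conjugation needed; it lies in $\mathbb{O}_{2k-1,\tau}$ by Lemma \ref{lengtaui}, and $\tau^{-1}$-conjugation turns it into the standard minimal representative $t^{k\alpha_1+(2k-1)\alpha_2}s_1s_2s_1\tau$), while $s_0\tilde{w}s_0=t^{(2-2k)\alpha_1+(1-k)\alpha_2}\tau$, and it is this diagonal term that requires the length-preserving $\tau^{-1}$-conjugation to become $t^{(k-1)\alpha_1+2(k-1)\alpha_2}\tau$. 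Since both identifications are only invoked modulo $[\widetilde{H},\widetilde{H}]$, where conjugation by the length-zero element $\tau$ is an elementary strong conjugation and Lemma \ref{Hn} applies, the swap does not damage the argument; but as literal group identities your two displayed equalities fail for $k\geqslant 2$, so the write-up should be corrected accordingly.
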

\begin{prop}\label{cpaatau}
If $\tilde{w}=t^{k\alpha_1+(2k-1)\alpha_2}\tau$ with $k\in\mathbb{N}_+$. Then \[f_{\tilde{w},\mathbb{O}}=\left\{
                                                             \begin{array}{ll}
                                                               (v-v^{-1})^2, & \mathbb{O}=\mathbb{O}_{\lambda, \tau},\ where \ \lambda\in\sqcup_{j=2}^{k}E'_{j\alpha_1+(2j-1)\alpha_2,\tau}\\
                                                               (v-v^{-1}), & \mathbb{O}=\mathbb{O}_{i, \tau},\ where\ -k+1\leqslant i\leqslant2k-1\\
                                                               1, & \mathbb{O}=\mathbb{O}_{id, \tau}\\
                                                               0, & otherwise
                                                             \end{array}
                                                           \right.\]
\end{prop}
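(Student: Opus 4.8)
The plan is to compute $f_{\tilde w,\mathbb{O}}$ for $\tilde w=t^{k\alpha_1+(2k-1)\alpha_2}\tau$ by the inductive reduction procedure for class polynomials, following the same template as the proof of Proposition \ref{cpatau}, to which this statement is closely parallel. I would argue by induction on $k$. For the base case $k=1$ one has $\tilde w=t^{\alpha_1+\alpha_2}\tau$, and a direct reduction gives $T_{\tilde w}\equiv(v-v^{-1})(T_{\mathbb{O}_{0,\tau}}+T_{\mathbb{O}_{1,\tau}})+T_{\mathbb{O}_{id,\tau}}$, which is the asserted formula, the $E'$-union $\sqcup_{j=2}^{1}$ being empty. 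For the inductive step I would assume the statement for $k-1$ and reduce $\tilde w$ down to $t^{(k-1)\alpha_1+(2k-3)\alpha_2}\tau=t^{(k-1)\alpha_1+(2(k-1)-1)\alpha_2}\tau$, which is exactly the $(k-1)$-case.

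The engine of the computation is the one-step rule $T_{\tilde w}\equiv(v-v^{-1})T_{s_i\tilde w}+T_{s_i\tilde w s_i}$, valid whenever $\ell(s_i\tilde w s_i)<\ell(\tilde w)$, combined freely with Lemma \ref{Hn} (so that I may replace any element by a $\tilde{\thicksim}$-equivalent one, in particular shift by powers of $\tau$) and with the classification of conjugacy classes in $W_a\tau$ from Lemmas \ref{lengtau0}, \ref{lenglambdatau} and \ref{lengtaui}. A length count gives $\ell(t^{k\alpha_1+(2k-1)\alpha_2}\tau)=6k-2$, exactly two less than $\ell(t^{k\alpha_1+2k\alpha_2}\tau)=6k$, so I expect the reduction to track that of Proposition \ref{cpatau} with the single topmost term suppressed. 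I would conjugate first by $s_2$ and then by $s_0$, after each step using a $\tau$-shift to return to the normalized family $\mathbb{I}\tau$; this peels off, at successive stages, minimal-length representatives of the classes $\mathbb{O}_{i,\tau}$ (each with coefficient $v-v^{-1}$) together with elements of the form $t^{\ast}s_1\tau$. Feeding the latter into Proposition \ref{cpatau12} produces the $(v-v^{-1})^2$ contributions supported on the families $E'_{j\alpha_1+(2j-1)\alpha_2,\tau}$, while the residual translation part descends to the $(k-1)$-case and closes the induction.

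As an organizing principle and a consistency check, I note that the claimed answer is exactly the expansion of Proposition \ref{cpatau} with the single term $(v-v^{-1})T_{\mathbb{O}_{2k,\tau}}$ deleted: the index range contracts from $-k+1\le i\le 2k$ to $-k+1\le i\le 2k-1$, and everything else is identical. This reflects the fact that lowering the $\alpha_2$-coordinate of the translation by one removes precisely the highest class $\mathbb{O}_{2k,\tau}$ from the support. I would use this both to predict the shape of the answer and to verify, at the end, that the accumulated coefficients match; note, however, that since $t^{k\alpha_1+2k\alpha_2}\tau$ and $t^{k\alpha_1+(2k-1)\alpha_2}\tau$ have different lengths, this deletion is a consequence of the full reduction and not a single conjugation step, so it serves only as a check rather than a shortcut.

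The main obstacle is not any single identity but the choice of reduction path and the bookkeeping that accompanies it. At each conjugation I must be sure the chosen simple reflection genuinely lowers the length, which requires the explicit minimal-length representatives recorded in Lemma \ref{lengtaui}; a careless choice leads to an element that is not shorter and stalls the induction. Equally delicate is the tally of how often each class $\mathbb{O}_{i,\tau}$ and each family $E'_{j\alpha_1+(2j-1)\alpha_2,\tau}$ is encountered along the chain, since it is this count that must reproduce exactly the ranges $-k+1\le i\le 2k-1$ and $\sqcup_{j=2}^{k}E'_{j\alpha_1+(2j-1)\alpha_2,\tau}$ in the statement. I expect this combinatorial reconciliation, rather than any conceptual difficulty, to be where the real work lies.
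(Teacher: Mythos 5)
Your approach is essentially the paper's own: the paper omits this proof, declaring it ``just like that of'' Proposition \ref{cpatau}, and your plan --- induction on $k$, the one-step relation $T_{\tilde{w}}\equiv(v-v^{-1})T_{s_i\tilde{w}}+T_{s_i\tilde{w}s_i}$ combined with $\tau$-shifts via Lemma \ref{Hn}, the base case $T_{t^{\alpha_1+\alpha_2}\tau}\equiv(v-v^{-1})(T_{\mathbb{O}_{0,\tau}}+T_{\mathbb{O}_{1,\tau}})+T_{\mathbb{O}_{id,\tau}}$, and a per-round peeling of class representatives together with one $t^{*}s_1\tau$ term before descending to the $(k-1)$-case --- is exactly that template, and the resulting tallies do reproduce the asserted ranges $-k+1\leqslant i\leqslant 2k-1$ and $\sqcup_{j=2}^{k}E'_{j\alpha_1+(2j-1)\alpha_2,\tau}$.

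The one concrete error is your cross-reference for expanding the terms $t^{*}s_1\tau$. These are handled by the unlabeled proposition immediately preceding Proposition \ref{cpatau}, whose part (2) gives $T_{t^{m\alpha_1+n\alpha_2}s_1\tau}\equiv(v-v^{-1})\sum_{\lambda'\in E'_{m\alpha_1+n\alpha_2,\tau}}T_{\mathbb{O}_{\lambda',\tau}}+T_{\mathbb{O}_{1-m,\tau}}$ for $n\neq2m$; multiplied by your outer factor $(v-v^{-1})$, this yields precisely the $(v-v^{-1})^2$-contributions on $E'_{j\alpha_1+(2j-1)\alpha_2,\tau}$ and the $(v-v^{-1})$-contribution on $\mathbb{O}_{1-j,\tau}$ that your bookkeeping needs. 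Proposition \ref{cpatau12}, which you cite instead, concerns $t^{k\alpha_1+(2k-1)\alpha_2}s_1s_2\tau$: its expansion already carries a unit term on $\mathbb{O}_{id,\tau}$ and $(v-v^{-1})$-terms on a whole range of classes $\mathbb{O}_{i,\tau}$, so feeding a $(v-v^{-1})T_{t^{*}s_1\tau}$ term through it would inject a spurious $(v-v^{-1})$ into $f_{\tilde{w},\mathbb{O}_{id,\tau}}$ (the statement demands exactly $1$) and spurious $(v-v^{-1})^2$-terms on the $\mathbb{O}_{i,\tau}$. Since the contributions you actually describe are the correct ones, this is a repairable mis-citation rather than a structural flaw; with the reference fixed, your induction closes.
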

The proofs of \ref{cpatau12} and \ref{cpaatau} are just like that of \ref{cpatau}. To avoid of tedious repetitions, we omit them.
\begin{rmk}
For $k\in\mathbb{N}_+$, since $t^{(k+1)\alpha_1+2k\alpha_2}s_1s_2\tau=s_2(\tau^{-1}\cdot t^{k\alpha_1+2k\alpha_2}\tau)s_2$, we have $t^{k\alpha_1+2k\alpha_2}\tau\tilde{\thicksim} t^{(k+1)\alpha_1+2k\alpha_2}s_1s_2\tau$ and thus $T_{t^{k\alpha_1+2k\alpha_2}\tau}\equiv T_{t^{(k+1)\alpha_1+2k\alpha_2}s_1s_2\tau}$.
\end{rmk}
\begin{prop}\label{cpbtau12}
If $\tilde{w}=t^{2k\alpha_1+k\alpha_2}s_1s_2\tau$ where $k\in\mathbb{N}_+$. Then \[f_{\tilde{w},\mathbb{O}}=\left\{
                                                             \begin{array}{ll}
                                                               (v-v^{-1})^2, & \mathbb{O}=\mathbb{O}_{\lambda, \tau},\ where \ \lambda\in\sqcup_{j=2}^{k}E_{(2j-1)\alpha_1+j\alpha_2,\tau}\\
                                                               (v-v^{-1}), & \mathbb{O}=\mathbb{O}_{i, \tau},\ where\ 2-2k\leqslant i\leqslant k\\
                                                               1, & \mathbb{O}=\mathbb{O}_{id, \tau}\\
                                                               0, & otherwise
                                                             \end{array}
                                                           \right.\]
\end{prop}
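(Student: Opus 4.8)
The plan is to compute $T_{\tilde w}$ for $\tilde w=t^{2k\alpha_1+k\alpha_2}s_1s_2\tau$ by the inductive reduction procedure for class polynomials, following the template of the proof of Proposition \ref{cpatau} but with the roles of $\alpha_1,\alpha_2$ (equivalently $s_1,s_2$) interchanged; indeed this statement is the ``b-type'' mirror of the ``a-type'' Proposition \ref{cpatau12}, with the family $E'_{j\alpha_1+(2j-1)\alpha_2,\tau}$ replaced by $E_{(2j-1)\alpha_1+j\alpha_2,\tau}$ and the range of straight classes $\mathbb{O}_{i,\tau}$ reflected accordingly. I would argue by induction on $k$. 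The base case $k=1$ is a short direct reduction of $T_{t^{2\alpha_1+\alpha_2}s_1s_2\tau}$, which collapses to $(v-v^{-1})(T_{\mathbb{O}_{1,\tau}}+T_{\mathbb{O}_{0,\tau}})+T_{\mathbb{O}_{id,\tau}}$, matching the asserted formula, since the $(v-v^{-1})^2$-part is then empty.

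For the inductive step I would apply a single reduction $T_{\tilde w}\equiv(v-v^{-1})T_{s_i\tilde w}+T_{s_i\tilde w s_i}$ at a well-chosen simple reflection, and then use Lemma \ref{Hn} together with the $\tilde{\thicksim}$-moves (conjugation by $\tau$ and by the various $s_j$, exactly as in the Remark above) to rewrite each resulting term either as an already-computed element or as the $(k-1)$-element $t^{2(k-1)\alpha_1+(k-1)\alpha_2}s_1s_2\tau$. The goal is to reach a congruence of the shape
\begin{align*}
T_{t^{2k\alpha_1+k\alpha_2}s_1s_2\tau}\equiv&(v-v^{-1})(T_{\mathbb{O}_{k,\tau}}+T_{\mathbb{O}_{3-2k,\tau}}+T_{\mathbb{O}_{2-2k,\tau}})\\
&+(v-v^{-1})^2\sum_{\lambda\in E_{(2k-1)\alpha_1+k\alpha_2,\tau}}T_{\mathbb{O}_{\lambda,\tau}}+T_{t^{2(k-1)\alpha_1+(k-1)\alpha_2}s_1s_2\tau},
\end{align*}
where the quadratic contribution is produced by feeding one intermediate $s_1\tau$-type term into the earlier propositions of this subsection computing $t^{\lambda}s_1s_2s_1\tau$ and $t^{\lambda}s_1\tau$. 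Applying the induction hypothesis to the last summand and collecting coefficients then yields $f_{\tilde w,\mathbb{O}}$: the straight-class range widens from $[4-2k,k-1]$ to $[2-2k,k]$, and the $E$-family acquires precisely its $j=k$ piece.

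As a conceptual check on the mirrored form, the statement is forced by the length-preserving involution $\Theta=\iota\circ\Psi$, where $\Psi$ is the $A$-algebra automorphism of $\widetilde{H}$ induced by the diagram automorphism $s_1\leftrightarrow s_2$, $\tau\mapsto\tau^{-1}$ (i.e.\ $\alpha_1\leftrightarrow\alpha_2$) and $\iota$ is the anti-automorphism $T_{\tilde x}\mapsto T_{\tilde x^{-1}}$; both preserve $[\widetilde{H},\widetilde{H}]$ and fix $v$, so their composite preserves $W_a\tau$ and satisfies $f_{\tilde x,\mathbb{O}}=f_{\Theta(\tilde x),\Theta(\mathbb{O})}$. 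A short computation gives $\Theta(t^{2k\alpha_1+k\alpha_2}s_1s_2\tau)=t^{(1-k)\alpha_1+k\alpha_2}s_1s_2\tau$, which is $\tilde{\thicksim}$ to the a-type element of Proposition \ref{cpatau12}; transporting that proposition here, however, still requires recording the induced relabelling of the classes $\mathbb{O}_{i,\tau}$ and $\mathbb{O}_{\lambda,\tau}$, so the symmetry is a guide rather than a shortcut. I expect the main obstacle to be exactly this bookkeeping: selecting, at each stage of the reduction, a path along which the length strictly drops and whose intermediate elements already have known class polynomials, and then pinning down the boundary data --- the precise endpoints $2-2k$ and $k$ of the straight-class range, and the appearance of exactly the $j=k$ term of the $E$-family --- where off-by-one errors are easiest to commit.
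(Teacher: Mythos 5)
Your proposal is correct and follows essentially the same route as the paper: the paper's own proof treats $k=1$ directly and then, for $k\geqslant 2$, runs exactly the reduction chain you describe (a $\tilde{\thicksim}$-move by $s_0$ and $\tau$, one genuine reduction per round), producing at each step precisely your three new straight classes $\mathbb{O}_{2-2k,\tau},\ \mathbb{O}_{3-2k,\tau},\ \mathbb{O}_{k,\tau}$ together with the term $T_{t^{(2k-1)\alpha_1+k\alpha_2}s_1s_2s_1\tau}$ --- whose expansion via the earlier proposition on $t^{\lambda}s_1s_2s_1\tau$ yields your $(v-v^{-1})^2\sum_{\lambda\in E_{(2k-1)\alpha_1+k\alpha_2,\tau}}$ piece --- plus the $(k-1)$-element $t^{2(k-1)\alpha_1+(k-1)\alpha_2}s_1s_2\tau$. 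The only difference is organizational (you phrase the chain as induction on $k$, the paper iterates it to the bottom and collects all terms at once), and your symmetry remark is an unused plausibility check.
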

\begin{prop}\label{cpbtau}
If $\tilde{w}=t^{2k\alpha_1+k\alpha_2}\tau$ where $k\in\mathbb{N}_+$. Then \[f_{\tilde{w},\mathbb{O}}=\left\{
                                                             \begin{array}{ll}
                                                               (v-v^{-1})^2, & \mathbb{O}=\mathbb{O}_{\lambda, \tau},\ where \ \lambda\in\sqcup_{j=2}^{k}E_{(2j-1)\alpha_1+j\alpha_2,\tau}\\
                                                               (v-v^{-1}), & \mathbb{O}=\mathbb{O}_{i, \tau},\ where\ 1-2k\leqslant i\leqslant k\\
                                                               1, & \mathbb{O}=\mathbb{O}_{id, \tau}\\
                                                               0, & otherwise
                                                             \end{array}
                                                           \right.\]
\end{prop}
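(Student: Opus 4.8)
The plan is to mirror the computation carried out for Proposition~\ref{cpatau}, interchanging the roles of $\alpha_1$ and $\alpha_2$ throughout, but running the whole reduction inside the component $W_a\tau$. First, by Lemma~\ref{Hn} together with the decomposition $W_a\tau=\mathbb{I}\tau\cup\tau\cdot(\mathbb{I}\tau)\cup\tau^2\cdot(\mathbb{I}\tau)$, it suffices to treat the distinguished representative $\tilde{w}=t^{2k\alpha_1+k\alpha_2}\tau$. I stress that one cannot simply quote \ref{cpatau}: the diagram automorphism swapping $\alpha_1\leftrightarrow\alpha_2$ and $s_1\leftrightarrow s_2$ sends $\tau\mapsto\tau^2$, so it carries the statement of \ref{cpatau} into $W_a\tau^2$ rather than $W_a\tau$. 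A genuinely parallel (but independent) reduction is therefore needed, and this is exactly why the sets $E'_{j\alpha_1+(2j-1)\alpha_2,\tau}$ of \ref{cpatau} are here replaced by $E_{(2j-1)\alpha_1+j\alpha_2,\tau}$.

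I would argue by induction on $k$. The base case $k=1$ is a short direct reduction giving $T_{t^{2\alpha_1+\alpha_2}\tau}\equiv(v-v^{-1})(T_{\mathbb{O}_{1,\tau}}+T_{\mathbb{O}_{0,\tau}}+T_{\mathbb{O}_{-1,\tau}})+T_{\mathbb{O}_{id,\tau}}$, which matches the claimed range $1-2k\leqslant i\leqslant k$ at $k=1$ with the empty $E$-contribution. For the inductive step $k\geqslant 2$, I apply the reduction $T_{\tilde{w}}\equiv(v-v^{-1})T_{s_i\tilde{w}}+T_{s_i\tilde{w}s_i}$ along a sequence of simple reflections chosen from $s_1$, $s_0$ and the rotation $\tau s_1\tau^{-1}=s_2$, so that each conjugation strictly lowers the length. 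The goal is to peel $\tilde{w}$ into three kinds of contributions: a family of length-$0$ classes $\mathbb{O}_{i,\tau}$, identified through Lemma~\ref{lengtaui}; a family of $t^{(2j-1)\alpha_1+j\alpha_2}s_1\tau$-type elements, whose class polynomials were computed in the proposition stated just before \ref{cpatau} (the $t^{\lambda}s_1\tau$ case) and which supply the $(v-v^{-1})$ factors that combine with the outer $(v-v^{-1})$ into the $(v-v^{-1})^2$ coefficients on the $E$-sets; and a residual term $T_{t^{2(k-1)\alpha_1+(k-1)\alpha_2}\tau}$, which is precisely the $(k-1)$-instance and feeds the induction.

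Collecting these, the length-$0$ contributions telescope to $(v-v^{-1})\sum_{i=1-2k}^{k}T_{\mathbb{O}_{i,\tau}}$, the expansions of the $s_1\tau$-terms accumulate into $(v-v^{-1})^2\sum_{\lambda}T_{\mathbb{O}_{\lambda,\tau}}$ over $\lambda\in\sqcup_{j=2}^{k}E_{(2j-1)\alpha_1+j\alpha_2,\tau}$, the single surviving $T_{\mathbb{O}_{id,\tau}}$ carries coefficient $1$, and every other class receives $0$. As a consistency check, one can instead reduce $\tilde{w}$ by a single step toward the element $t^{2k\alpha_1+k\alpha_2}s_1s_2\tau$ of \ref{cpbtau12} and verify that this one extra reduction contributes exactly the boundary class $\mathbb{O}_{1-2k,\tau}$ that is absent from \ref{cpbtau12}; this accounts for the sole difference (the enlarged index range $1-2k\leqslant i\leqslant k$ versus $2-2k\leqslant i\leqslant k$) between the two formulas.

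I expect the main obstacle to be bookkeeping rather than conceptual. The delicate points are choosing the reduction path so that every intermediate element is genuinely shorter and is correctly matched to its $\widetilde{W}$-conjugacy class via Lemmas~\ref{lengtaui} and \ref{lenglambdatau}, and then checking that the resulting telescoping produces exactly the index range $1-2k\leqslant i\leqslant k$ together with the union $\sqcup_{j=2}^{k}E_{(2j-1)\alpha_1+j\alpha_2,\tau}$, with no class double-counted or omitted. As already noted in \ref{cpatau}, the genuine subtlety lies entirely in selecting the correct sequence of conjugating reflections.
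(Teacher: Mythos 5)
Your overall skeleton -- inductive length reduction inside $W_a\tau$, modelled on the computations for Propositions \ref{cpatau} and \ref{cpbtau12} -- is exactly the paper's intended route: the paper omits the proof of \ref{cpbtau}, declaring it ``similar'' to the proof it writes out for \ref{cpbtau12}, and your preliminary caveat is correct (the diagram automorphism $\alpha_1\leftrightarrow\alpha_2$, $s_1\leftrightarrow s_2$ sends $\tau$ to $\tau^2$, so \ref{cpatau} cannot simply be quoted). However, the step of your induction that is supposed to generate the main terms is wrong. You attribute the $(v-v^{-1})^2$ coefficients on the classes $\mathbb{O}_{\lambda,\tau}$, $\lambda\in\sqcup_{j=2}^{k}E_{(2j-1)\alpha_1+j\alpha_2,\tau}$, to intermediate elements of type $t^{(2j-1)\alpha_1+j\alpha_2}s_1\tau$ expanded via the $t^{\lambda}s_1\tau$ part of the proposition preceding \ref{cpatau}. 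This cannot work for two reasons. First, that part of the proposition produces the primed sets $E'_{m\alpha_1+n\alpha_2,\tau}$, which fix the $\alpha_1$-coordinate and vary the $\alpha_2$-coordinate, whereas the target sets $E_{(2j-1)\alpha_1+j\alpha_2,\tau}$ fix the $\alpha_2$-coordinate; the two families never match. Second, and decisively, for $\lambda=(2j-1)\alpha_1+j\alpha_2$ one has $E'_{(2j-1)\alpha_1+j\alpha_2,\tau}=\emptyset$ and $t^{\lambda}s_1\tau$ is a \emph{minimal length} element of $\mathbb{O}_{2-2j,\tau}$ (by Lemma \ref{lengtaui}, since $t^{(1-i)\alpha_1+k\alpha_2}s_1\tau\in\mathbb{O}_{i,\tau}$), so its image in $\widetilde{H}/[\widetilde{H},\widetilde{H}]$ is the single basis element $T_{\mathbb{O}_{2-2j,\tau}}$: it supplies no $E$-set contribution whatsoever. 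The elements that actually produce these terms are $t^{(2j-1)\alpha_1+j\alpha_2}s_1s_2s_1\tau$, expanded via part (1) (the $t^{\lambda}s_1s_2s_1\tau$ case) of that proposition, exactly as in the paper's proof of \ref{cpbtau12}; these simultaneously account for the classes $\mathbb{O}_{i,\tau}$ with $1\leqslant i\leqslant k$, while the $s_1\tau$- and $s_2\tau$-branch elements account only for $\mathbb{O}_{i,\tau}$ with $i\leqslant 0$. (Also, the $\mathbb{O}_{i,\tau}$ are not ``length-$0$ classes''; only $\mathbb{O}_{id,\tau}$ has minimal length $0$.) As written, your bookkeeping does not close to the claimed formula.

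Ironically, what you present as a mere consistency check at the end is in fact a complete and correct proof, and a shorter one than redoing the induction. Conjugation by $s_0$ is the unique length-decreasing conjugation of $\tilde{w}=t^{2k\alpha_1+k\alpha_2}\tau$, and it gives $T_{\tilde{w}}\equiv(v-v^{-1})T_{s_0\tilde{w}}+T_{s_0\tilde{w}s_0}$ with $s_0\tilde{w}=t^{(1-k)\alpha_1+(1-2k)\alpha_2}s_1s_2s_1\tau$ and $s_0\tilde{w}s_0=t^{(1-k)\alpha_1+(1-2k)\alpha_2}s_1s_2\tau$. The first term is precisely the canonical minimal length representative of $\mathbb{O}_{1-2k,\tau}$ from Lemma \ref{lengtaui} (take $i=1-2k$, so $\lfloor i/2\rfloor+1=1-k$), hence contributes $(v-v^{-1})T_{\mathbb{O}_{1-2k,\tau}}$. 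The second term is carried by $\tau$-conjugation (a length-zero, hence strong, conjugation, so Lemma \ref{Hn} applies) to $t^{2k\alpha_1+k\alpha_2}s_1s_2\tau$, because $\tau t^{\mu}\tau^{-1}=t^{s_1s_2\mu}$ and $\tau s_1s_2\tau^{-1}=s_2s_0=t^{\alpha_1}s_1s_2$ give $\tau\bigl(t^{(1-k)\alpha_1+(1-2k)\alpha_2}s_1s_2\tau\bigr)\tau^{-1}=t^{2k\alpha_1+k\alpha_2}s_1s_2\tau$. Now Proposition \ref{cpbtau12} finishes the computation: its formula plus the one extra class $\mathbb{O}_{1-2k,\tau}$ is exactly the statement of \ref{cpbtau}, including the enlarged index range $1-2k\leqslant i\leqslant k$ versus $2-2k\leqslant i\leqslant k$. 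I would discard the induction entirely and make this two-line reduction to \ref{cpbtau12} the proof; it is cleaner than the paper's ``repeat the argument of \ref{cpbtau12}'' approach and makes the relation between the two propositions transparent.
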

\begin{prop}\label{cpbbtau12}
If $\tilde{w}=t^{(2k-1)\alpha_1+k\alpha_2}s_1s_2\tau$ where $k\in\mathbb{N}_{\geqslant2}$. Then \[f_{\tilde{w},\mathbb{O}}=\left\{
                                                             \begin{array}{ll}
                                                               (v-v^{-1})^2, & \mathbb{O}=\mathbb{O}_{\lambda, \tau},\ where \ \lambda\in\sqcup_{j=2}^{k}E_{(2j-1)\alpha_1+j\alpha_2,\tau}\\
                                                               (v-v^{-1}), & \mathbb{O}=\mathbb{O}_{i, \tau},\ where\ 3-2k\leqslant i\leqslant k\\
                                                               1, & \mathbb{O}=\mathbb{O}_{id, \tau}\\
                                                               0, & otherwise
                                                             \end{array}
                                                           \right.\]
\end{prop}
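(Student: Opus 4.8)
The plan is to compute $f_{\tilde w,\mathbb{O}}$ for $\tilde w=t^{(2k-1)\alpha_1+k\alpha_2}s_1s_2\tau$ by the inductive reduction procedure that defines the class polynomials, exactly as in the proof of Proposition~\ref{cpatau}, using Lemma~\ref{Hn} to pass freely between $\tilde{\thicksim}$-equivalent elements. This statement is the $2\alpha_1+\alpha_2$-direction companion of Propositions~\ref{cpbtau} and \ref{cpbtau12}, and under the exchange $\alpha_1\leftrightarrow\alpha_2$ it mirrors the $\alpha_1+2\alpha_2$-direction result \ref{cpatau12}; so I expect the bookkeeping to run in close parallel with those proofs. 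Since $\delta=\mathrm{id}$ throughout this section, every reduction step takes the form $T_{\tilde w'}\equiv (v-v^{-1})T_{s_i\tilde w'}+T_{s_i\tilde w' s_i}$ whenever $s_i$ is a double descent, i.e. $\ell(s_i\tilde w' s_i)<\ell(\tilde w')$. I would argue by induction on $k$, the base case $k=2$ being a short direct reduction, noting that by the classification Lemma~\ref{lengtaui} our $\tilde w$ is not of minimal length once $k\geq 2$.

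First I would produce a double descent $s_i$ for $\tilde w$, form the two resulting terms, and then apply a $\tau$- or $s_0$-conjugation—an $\tilde{\thicksim}$-move, legitimized by Lemma~\ref{Hn}—to return each term to one of the normal forms $t^{\mu}\tau,\ t^{\mu}s_1\tau,\ t^{\mu}s_2\tau,\ t^{\mu}s_1s_2\tau,\ t^{\mu}s_1s_2s_1\tau$ recorded in $\mathbb{I}\tau$. Iterating, the translation part is driven steadily toward the boundary/antidominant region; each pass peels off a summand $(v-v^{-1})T_{\mathbb{O}_{i,\tau}}$, whose index $i$ I would identify by matching the intermediate element against the explicit orbits of Lemma~\ref{lengtaui}, and the chain terminates at a minimal-length representative of $\mathbb{O}_{id,\tau}$ (contributing the single $T_{\mathbb{O}_{id,\tau}}$ with coefficient $1$). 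The collected indices should fill out exactly the interval $3-2k\leq i\leq k$; this is the sole place where the answer departs from Proposition~\ref{cpbtau12} (lower bound $2-2k$), the discrepancy reflecting that the $\alpha_1$-coefficient here is one smaller, so that one fewer reduction of this type occurs.

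The quadratic contributions, namely $(v-v^{-1})^2$ on the classes $\mathbb{O}_{\lambda,\tau}$ with $\lambda\in\sqcup_{j=2}^{k}E_{(2j-1)\alpha_1+j\alpha_2,\tau}$, are not generated directly by the chain above; they arise when I expand the residual terms of the form $T_{t^{\lambda}s_1s_2s_1\tau}$ that appear with coefficient $(v-v^{-1})$ along the way. For this I would invoke the earlier proposition computing $f_{t^{\lambda}s_1s_2s_1\tau,\mathbb{O}}$, whose $(v-v^{-1})$-part is supported precisely on the classes $\mathbb{O}_{\lambda',\tau}$ with $\lambda'\in E_{\lambda,\tau}$ and which also leaves a $T_{\mathbb{O}_{n,\tau}}$; multiplying by the accumulated factor $(v-v^{-1})$ yields the stated $(v-v^{-1})^2$ coefficients, while the $\mathbb{O}_{n,\tau}$-terms merge into the linear part already accounted for. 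Assembling the linear and quadratic pieces together with the surviving $T_{\mathbb{O}_{id,\tau}}$ should reproduce the four-line formula; comparison with \ref{cpbtau12} then serves as a consistency check.

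The main obstacle, as the author stresses in the opening of this section, is not any individual identity but \emph{choosing} a reduction path along which each step is a genuine double descent in the non-Coxeter group $\widetilde W$, where naive conjugations (for instance by $s_1$) fail to lower the length and would spuriously produce the out-of-range class $\mathbb{O}_{2-2k,\tau}$. Verifying the length inequalities at each step, and then correctly pinning down the $\widetilde W$-conjugacy class of every intermediate element via Lemmas~\ref{lengtau0}, \ref{lenglambdatau} and \ref{lengtaui}, is the delicate core; the tightest points are the endpoints of the $i$-range (showing $i=3-2k$ is attained while $i=2-2k$ is not) and the floor thresholds $\lfloor\tfrac{n}{2}\rfloor$ implicit in $E_{(2j-1)\alpha_1+j\alpha_2,\tau}$, which must line up uniformly for all $j$ from $2$ to $k$. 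As an independent route I would also try to deduce the formula from \ref{cpatau12} by combining the diagram automorphism $s_1\leftrightarrow s_2,\ \tau\mapsto\tau^{-1}$ with the anti-automorphism $T_{\tilde w}\mapsto T_{\tilde w^{-1}}$ of $\widetilde H$—both preserve $[\widetilde H,\widetilde H]$ and their composite maps $W_a\tau$ to itself—at the cost of tracking the induced relabeling of the classes $\mathbb{O}_{i,\tau}$ and $\mathbb{O}_{\lambda,\tau}$.
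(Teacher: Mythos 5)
Your main argument is correct and is precisely the proof the paper has in mind: the paper executes this exact scheme in full for Proposition \ref{cpbtau12} (a conjugation by $s_0$ justified by Lemma \ref{Hn}, then iterated double-descent reductions interleaved with $\tau$-conjugations, which peel off minimal-length classes $\mathbb{O}_{i,\tau}$ with coefficient $(v-v^{-1})$ together with non-minimal residual terms, the latter expanded via the unlabeled proposition following Proposition \ref{cplambdatau} into the $(v-v^{-1})^2$-terms on $\sqcup_{j}E_{(2j-1)\alpha_1+j\alpha_2,\tau}$ and the remaining linear terms), and then declares the proof of \ref{cpbbtau12} ``similar'' and omits it. Your plan, including the explanation of why the lower endpoint of the linear range is $3-2k$ rather than $2-2k$, is exactly that omitted proof; the only slip is that $\tilde w$ lies in $\mathbb{O}_{id,\tau}$ by Lemma \ref{lengtau0} (not Lemma \ref{lengtaui}), whence non-minimality is immediate since $\ell(\mathbb{O}_{id,\tau})=0$.

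One substantive warning: the ``independent route'' in your last sentence cannot work, so do not rely on it even as a consistency check. The composite $\psi$ of the diagram automorphism ($s_1\leftrightarrow s_2$, $\alpha_1\leftrightarrow\alpha_2$, $\tau\mapsto\tau^{-1}$) with inversion is an anti-automorphism of $\widetilde{H}$; it preserves $[\widetilde{H},\widetilde{H}]$, preserves lengths, and permutes the basis $\{T_{\mathbb{O}}\}$, hence $f_{\tilde w,\mathbb{O}}=f_{\psi(\tilde w),\psi(\mathbb{O})}$ and the multiset of nonzero coefficients of a class polynomial is a $\psi$-invariant. But Propositions \ref{cpatau12} and \ref{cpbbtau12} have \emph{different} multisets: at $k=2$ the element $t^{2\alpha_1+3\alpha_2}s_1s_2\tau$ of \ref{cpatau12} has four classes with coefficient $(v-v^{-1})$ and none with $(v-v^{-1})^2$, whereas $t^{3\alpha_1+2\alpha_2}s_1s_2\tau$ has four linear classes and one quadratic class, namely $\mathbb{O}_{2\alpha_1+2\alpha_2,\tau}$. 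So no relabeling of classes can carry one statement to the other. Concretely, using $\tau s_1s_2=t^{\alpha_1}s_1s_2\tau$ and $\tau t^{\nu}\tau^{-1}=t^{A\nu}$ with $A\alpha_1=\alpha_2$, $A\alpha_2=-\alpha_1-\alpha_2$, one checks that $\psi$ fixes every element $t^{k\alpha_1+(2k-1)\alpha_2}s_1s_2\tau$ of \ref{cpatau12} and sends $t^{(2k-1)\alpha_1+k\alpha_2}s_1s_2\tau$ to $t^{(2-k)\alpha_1+k\alpha_2}s_1s_2\tau$, which lies outside every family computed in this section. The asymmetry between the $\alpha_1+2\alpha_2$- and $2\alpha_1+\alpha_2$-directions is genuine (it enters through $s_0=t^{\alpha_1+\alpha_2}s_{\alpha_1+\alpha_2}$ and the chosen $\tau$), which is why the paper computes the two sides by separate reduction chains rather than by a symmetry.
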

We prove \ref{cpbtau12}, and since the proofs of \ref{cpbtau} and \ref{cpbbtau12} are similar, we omit them.
\begin{proof}[Proof of Proposition \ref{cpbtau12}]If $k=1$, then $T_{t^{2\alpha_1+\alpha_2}s_1s_2\tau}\equiv(v-v^{-1})(T_{\mathbb{O}_{0, \tau}}+T_{\mathbb{O}_{1, \tau}})+T_{\mathbb{O}_{id, \tau}}$. If for $k\geqslant2$, then
\begin{align*}
T_{t^{2k\alpha_1+k\alpha_2}s_1s_2\tau}\equiv&T_{s_0t^{2k\alpha_1+k\alpha_2}s_1s_2\tau s_0}=T_{t^{(1-k)\alpha_1+(1-2k)\alpha_2}\tau}\\
\equiv&(v-v^{-1})T_{t^{(1-k)\alpha_1+k\alpha_2}s_2\tau}+T_{t^{(2-k)\alpha_1+k\alpha_2}s_1s_2\tau}\\
\equiv&(v-v^{-1})T_{\tau^{-1}\cdot t^{(1-k)\alpha_1+k\alpha_2}s_2\tau}+T_{\tau^{-1}\cdot t^{(2-k)\alpha_1+k\alpha_2}s_1s_2\tau}\\
\equiv&(v-v^{-1})T_{t^{(2k-1)\alpha_1+(k-1)\alpha_2}s_1\tau}+(v-v^{-1})T_{s_2t^{(2k-1)\alpha_1+(k-1)\alpha_2}s_1s_2\tau}+T_{s_2t^{(2k-1)\alpha_1+(k-1)\alpha_2}s_1s_2\tau s_2}\\
\cdots&\cdots\\
\equiv&(v-v^{-1})(T_{t^{(2k-1)\alpha_1+(k-1)\alpha_2}s_1\tau}+T_{t^{(2k-1)\alpha_1+k\alpha_2}s_1s_2s_1\tau}+T_{t^{(2k-2)\alpha_1+(k-1)\alpha_2}s_1\tau})+T_{t^{(2k-2)\alpha_1+(k-1)\alpha_2}s_1s_2\tau}
\end{align*}
\begin{align*}
\cdots&\cdots\\
\equiv&(v-v^{-1})\sum_{i=-2(k-1)}^{0}T_{\mathbb{O}_{i,\tau}}+(v-v^{-1})\sum_{i=1}^{k}T_{t^{(2i-1)\alpha_1+i\alpha_2}s_1s_2s_1\tau}+T_{\mathbb{O}_{id,\tau}}\\
\equiv&(v-v^{-1})^2\sum_{\lambda\in\sqcup_{j=2}^{k}E_{(2j-1)\alpha_1+j\alpha_2,\tau}}T_{\mathbb{O}_{\lambda,\tau}}+(v-v^{-1})\sum_{i=-2(k-1)}^{k}T_{\mathbb{O}_{i,\tau}}+T_{\mathbb{O}_{id,\tau}}.
\end{align*}
\end{proof}
\begin{rmk}
For $k\in\mathbb{N}_{\geqslant2}$, since $t^{(2k-1)\alpha_1+k\alpha_2}\tau=s_1(\tau\cdot(t^{2k\alpha_1+k\alpha_2}s_1s_2\tau))s_1$ and $\ell(t^{2k\alpha_1+k\alpha_2}s_1s_2\tau)=\ell(t^{(2k-1)\alpha_1+k\alpha_2}\tau)$, thus $T_{t^{2k\alpha_1+k\alpha_2}s_1s_2\tau}\equiv T_{t^{(2k-1)\alpha_1+k\alpha_2}\tau}$.
\end{rmk}
\begin{prop}\label{cpalambdatau}
Let $\tilde{w}=t^{m\alpha_1+n\alpha_2}\tau$ where $m,\ n\in\mathbb{N}_+$, $m\alpha_1+n\alpha_2\in P_+\cap Q_{sh}$, $m\neq2n-1$ and $n\neq2m-1$. Then \[f_{\tilde{w},\mathbb{O}}=\left\{
                                                             \begin{array}{ll}
                                                               (v-v^{-1})^2, & \mathbb{O}=\mathbb{O}_{\lambda, \tau},\ where \ \lambda\in\sqcup_{j=2}^{\lfloor\frac{n+1}{2}\rfloor}E'_{j\alpha_1+(2j-1)\alpha_2,\tau}\sqcup\sqcup_{j=\lfloor\frac{n+1}{2}\rfloor+1}^{m}E'_{j\alpha_1+n\alpha_2,\tau}\\
                                                               (v-v^{-1}), & \mathbb{O}=\mathbb{O}_{i, \tau},\ where\ 1-m\leqslant i\leqslant n\\
                                                               1, & \mathbb{O}=\mathbb{O}_{id, \tau}\\
                                                               0, & otherwise
                                                             \end{array}
                                                           \right.\]
\end{prop}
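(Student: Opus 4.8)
The plan is to prove the formula by descending induction on $m$ with $n$ held fixed, reducing each step to the already-computed boundary cases. Fix $n\in\mathbb{N}_+$. Every $\lambda=m\alpha_1+n\alpha_2$ allowed by the hypotheses is dominant and short, and $\ell(t^{\lambda}\tau)=\ell(t^{\lambda})=2(m+n)$. The dominance condition $\langle\lambda,\alpha_1^{\vee}\rangle=2m-n\geqslant0$ forces $m\geqslant\lfloor\frac{n+1}{2}\rfloor$, so the smallest admissible value is $m_0:=\lfloor\frac{n+1}{2}\rfloor$. At $m=m_0$ one has $n=2m_0$ when $n$ is even and $n=2m_0-1$ when $n$ is odd, i.e. $\lambda=m_0\alpha_1+2m_0\alpha_2$ or $\lambda=m_0\alpha_1+(2m_0-1)\alpha_2$; these are exactly the elements treated in Propositions \ref{cpatau} and \ref{cpaatau}, which serve as the base of the induction. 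Both boundary formulas have empty second $E'$-union, so they agree with the asserted formula once its second union is vacated at $m=m_0$.

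For the inductive step I would run the reduction procedure on $\tilde{w}=t^{m\alpha_1+n\alpha_2}\tau$ (with $m_0<m$) following the same pattern of $s_0$-conjugations and $\tau$-rotations used in the proof of Proposition \ref{cpatau}. The outcome of this chain should be the single clean congruence
\begin{align*}
T_{t^{m\alpha_1+n\alpha_2}\tau}\equiv(v-v^{-1})\,T_{t^{m\alpha_1+n\alpha_2}s_1\tau}+T_{t^{(m-1)\alpha_1+n\alpha_2}\tau}.
\end{align*}
Here $m>m_0$ guarantees $n\neq 2m$, so the class polynomial of $t^{m\alpha_1+n\alpha_2}s_1\tau$ is supplied by the proposition computing $f_{t^{\lambda}s_1\tau,\mathbb{O}}$ above: it equals $T_{\mathbb{O}_{1-m,\tau}}+(v-v^{-1})\sum_{\lambda'\in E'_{m\alpha_1+n\alpha_2,\tau}}T_{\mathbb{O}_{\lambda',\tau}}$. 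Substituting, the middle term contributes precisely $(v-v^{-1})T_{\mathbb{O}_{1-m,\tau}}+(v-v^{-1})^2\sum_{\lambda'\in E'_{m\alpha_1+n\alpha_2,\tau}}T_{\mathbb{O}_{\lambda',\tau}}$.

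Comparing with the inductive value of $T_{t^{(m-1)\alpha_1+n\alpha_2}\tau}$, passing from $m-1$ to $m$ extends the range of the $\mathbb{O}_{i,\tau}$-terms by the single new class $\mathbb{O}_{1-m,\tau}$, so that $1-m\leqslant i\leqslant n$, and appends the block $E'_{m\alpha_1+n\alpha_2,\tau}$ to the second union, which is exactly the claimed increment. Telescoping from $m_0$ up to $m$, the first union $\sqcup_{j=2}^{\lfloor\frac{n+1}{2}\rfloor}E'_{j\alpha_1+(2j-1)\alpha_2,\tau}$, the $\mathbb{O}_{i,\tau}$-terms with $1-m_0\leqslant i\leqslant n$, and the term $T_{\mathbb{O}_{id,\tau}}$ all come from the base case, while the steps $j=m_0+1,\dots,m$ contribute the flat blocks $E'_{j\alpha_1+n\alpha_2,\tau}$ and the new indices $i=1-j$. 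This reproduces the stated formula. One checks that for $m_0<m'\leqslant m$ every intermediate $m'\alpha_1+n\alpha_2$ remains in $P_+\cap Q_{sh}$: the excluded lines $k(\alpha_1+2\alpha_2)$ and $k(2\alpha_1+\alpha_2)$ are met only at $m'=m_0$ (the base) or at $m'=2n$ (ruled out since $m\neq 2n$), and the hypotheses $m\neq 2n-1$, $n\neq 2m-1$ keep the starting point off the boundary lines handled by Propositions \ref{cpaatau} and \ref{cpbbtau12}.

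The main obstacle is the reduction chain itself: establishing the displayed congruence requires tracking the element through several $s_0$- and $s_2$-conjugations interleaved with $\tau$-rotations and verifying that the peeled-off piece lands \emph{exactly} on $t^{m\alpha_1+n\alpha_2}s_1\tau$ rather than on a neighbouring element sitting on a wall, where a different reduction would apply. The parity of $n$ governs which simple reflection first decreases the length and hence the precise intermediate elements, so the computation splits into the even and odd cases; the delicate point is that both cases must feed into the same base formula at $m=m_0$, which is why the transition index $\lfloor\frac{n+1}{2}\rfloor$ appears simultaneously as the top of the diagonal union $E'_{j\alpha_1+(2j-1)\alpha_2,\tau}$ and the bottom of the flat union $E'_{j\alpha_1+n\alpha_2,\tau}$.
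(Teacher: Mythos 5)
Your proposal is correct and follows essentially the same route as the paper: the paper's proof is exactly the iterated application of your key congruence $T_{t^{m\alpha_1+n\alpha_2}\tau}\equiv(v-v^{-1})T_{t^{m\alpha_1+n\alpha_2}s_1\tau}+T_{t^{(m-1)\alpha_1+n\alpha_2}\tau}$ (obtained there by an $s_0$-conjugation followed by an $s_2$-conjugation and a $\tau$-rotation), terminating at the same boundary cases (Propositions \ref{cpatau} and \ref{cpaatau}) and substituting the same formula for the $t^{\lambda}s_1\tau$ terms. Your packaging as an induction on $m$ with a telescoping sum, together with the domain checks on the intermediate $m'$, is only a presentational variant of the paper's inline expansion split by the parity of $n$.
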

\begin{prop}\label{cpalambdatau12}
Let $\tilde{w}=t^{m\alpha_1+n\alpha_2}s_1s_2\tau$ where $m,\ n\in\mathbb{N}_+$, $\lambda\in P_+\cap Q_{sh}$, $m\neq2n-1$ and $n\neq2m-1$, $n\neq2m-2$. Then \[f_{\tilde{w},\mathbb{O}}=\left\{
                                                             \begin{array}{ll}
                                                               (v-v^{-1})^2, & \mathbb{O}=\mathbb{O}_{\lambda, \tau},\ where \ \lambda\in\sqcup_{j=2}^{\lfloor\frac{m+1}{2}\rfloor}E_{(2j-1)\alpha_1+j\alpha_2,\tau}\sqcup\sqcup_{j=\lfloor\frac{m+1}{2}\rfloor+1}^{n}E_{m\alpha_1+j\alpha_2,\tau}\\
                                                               (v-v^{-1}), & \mathbb{O}=\mathbb{O}_{i, \tau},\ where\ 2-m\leqslant i\leqslant n\\
                                                               1, & \mathbb{O}=\mathbb{O}_{id, \tau}\\
                                                               0, & otherwise
                                                             \end{array}
                                                           \right.\]
\end{prop}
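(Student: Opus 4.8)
The plan is to expand $T_{\tilde w}$ modulo $[\widetilde H,\widetilde H]$ by the inductive reduction procedure used in the proofs of Propositions \ref{cpatau} and \ref{cpbtau12}, and then to read off $f_{\tilde w,\mathbb O}$ from the coefficients in the basis $\{T_{\mathbb O}\}$. We are in the split case ($\delta$ trivial), so the two tools are the elementary reduction $T_{\tilde w}\equiv (v-v^{-1})T_{s\tilde w}+T_{s\tilde w s}$, valid whenever $\ell(s\tilde w)<\ell(\tilde w)$, together with the freedom to replace $\tilde w$ by any element $\tilde{\thicksim}$-equivalent to it (Lemma \ref{Hn}), in particular by a cyclic $\tau$-shift. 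This proposition is the companion of \ref{cpalambdatau}: replacing the trailing factor $\tau$ there by $s_1s_2\tau$ interchanges the roles of $\alpha_1$ and $\alpha_2$ and of the sets $E_{\ast,\tau}$ and $E'_{\ast,\tau}$, so the combinatorial shape of the answer is essentially forced by symmetry and only the precise index ranges must be pinned down.

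First I would set up the induction. The admissible hypotheses $m\neq2n-1$, $n\neq2m-1$, $n\neq2m-2$ precisely excise the loci already handled by Propositions \ref{cpatau12} and \ref{cpbbtau12} and by the remark identifying $t^{k\alpha_1+2k\alpha_2}\tau$ with $t^{(k+1)\alpha_1+2k\alpha_2}s_1s_2\tau$; these, together with the minimal-length representatives, furnish the base of the induction. I would then induct on $\ell(\tilde w)$, organizing the cascade by the $\alpha_2$-coefficient $n$. For the inductive step I would conjugate by $s_0$ to push the translation into the dominant cone, apply a $\tau$-shift, and peel off one application of the reduction formula. Exactly as in the proof of \ref{cpbtau12}, each cycle produces a term $(v-v^{-1})T_{\mathbb O_{i,\tau}}$, a term $(v-v^{-1})T_{\tilde u}$ with $\tilde u$ of $s_1s_2s_1\tau$-type (an element lying on, or truncated to, the wall $m=2n-1$), and a strictly shorter head $T_{t^{m'\alpha_1+n'\alpha_2}s_1s_2\tau}$ to which the inductive hypothesis applies.

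The $s_1s_2s_1\tau$-type terms are then expanded by part (1) of the proposition following \ref{cplambdatau}, which says that $T_{t^{\mu}s_1s_2s_1\tau}$ contributes $(v-v^{-1})$ times a sum of $T_{\mathbb O_{\lambda',\tau}}$ with $\lambda'$ in an $E$-set, plus a single $T_{\mathbb O_{i,\tau}}$; the outer factor $(v-v^{-1})$ upgrades these to the $(v-v^{-1})^2$ terms of the claim, while the extra $T_{\mathbb O_{i,\tau}}$ enlarges the range of the linear terms. Summing the geometric cascade then yields the asserted expansion, and every class not listed receives coefficient $0$ since none other occurs.

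The main obstacle is the bookkeeping of the two index ranges. One must check that the classes $\mathbb O_{i,\tau}$ produced directly and those produced by the $s_1s_2s_1\tau$-expansions interlock to fill out exactly $2-m\leqslant i\leqslant n$, with neither gap nor double count. More delicate is the union of $E$-sets: the anchor of the level-$j$ set is $(2j-1)\alpha_1+j\alpha_2$ as long as $2j-1\leqslant m$, i.e. $j\leqslant\lfloor\frac{m+1}{2}\rfloor$, and switches to $m\alpha_1+j\alpha_2$ once $2j-1>m$. This bend records whether the intermediate translation reaches the wall $m=2n-1$ before it is truncated by the original coefficient $m$, and verifying it correctly against the description of the classes $\mathbb O_{j,\tau}$ in Lemma \ref{lengtaui} and the definitions of $E_{\ast,\tau}$ is where the genuine care lies.
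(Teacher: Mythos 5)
Your proposal is correct and takes essentially the same route as the paper: the paper omits this proof, declaring it ``similar'' to that of Proposition \ref{cpalambdatau}, and your reduction cascade ($s_0$-conjugation, $\tau$-shift, one elementary reduction per cycle, with the resulting $s_1s_2s_1\tau$-type terms expanded via part (1) of the proposition following \ref{cplambdatau}) is precisely that omitted argument, mirroring the written proofs of Propositions \ref{cpbtau12} and \ref{cpalambdatau} with the roles of the $E$- and $E'$-sets exchanged. The index bookkeeping you single out --- the bend at $j=\lfloor\frac{m+1}{2}\rfloor$ between the wall anchors $(2j-1)\alpha_1+j\alpha_2$ and the truncated anchors $m\alpha_1+j\alpha_2$, and the interlocking of the linear range $2-m\leqslant i\leqslant n$ --- is exactly the content the paper elides with its ``$\cdots$'' steps.
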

We prove \ref{cpalambdatau}, and the proof of \ref{cpalambdatau12} is similar which will be omitted.
\begin{proof}[Proof of Proposition \ref{cpalambdatau}]If $n$ is odd, then
\begin{align*}
T_{t^{m\alpha_1+n\alpha_2}\tau}\equiv&(v-v^{-1})T_{t^{(1-n)\alpha_1+(1-m)\alpha_2}s_1s_2s_1\tau}+T_{t^{(1-n)\alpha_1+(1-m)\alpha_2}s_1s_2\tau}\\
\equiv&(v-v^{-1})T_{\tau^{-1}\cdot(s_2t^{(1-n)\alpha_1+(1-m)\alpha_2}s_1s_2s_1\tau s_2)}+T_{\tau^{-1}\cdot(s_2t^{(1-n)\alpha_1+(1-m)\alpha_2}s_1s_2\tau s_2)}\\
=&(v-v^{-1})T_{t^{m\alpha_1+n\alpha_2}s_1\tau}+T_{t^{(m-1)\alpha_1+n\alpha_2}\tau}\\
\cdots&\cdots\\
\equiv&(v-v^{-1})\sum_{j=\frac{n+1}{2}+1}^{m}T_{t^{j\alpha_1+n\alpha_2}s_1\tau}+T_{t^{\frac{n+1}{2}\alpha_1+n\alpha_2}\tau}\\
\equiv&(v-v^{-1})\sum_{j=1-m}^{-\frac{n+1}{2}}T_{\mathbb{O}_{j,\tau}}+(v-v^{-1})^2\sum_{\lambda\in\sqcup_{j=\frac{n+1}{2}+1}^{m}E'_{j\alpha_1+n\alpha_2,\tau}}T_{\mathbb{O}_{\lambda,\tau}}\\
&+(v-v^{-1})\sum_{j=1-\frac{n+1}{2}}^{n}T_{\mathbb{O}_{j,\tau}}+(v-v^{-1})^2\sum_{\lambda\in\sqcup_{j=2}^{\frac{n+1}{2}}E'_{j\alpha_1+(2j-1)\alpha_2,\tau}}T_{\mathbb{O}_{\lambda,\tau}}+T_{\mathbb{O}_{id,\tau}}\\
=&(v-v^{-1})^2\sum_{\lambda\in\sqcup_{j=2}^{\frac{n+1}{2}}E'_{j\alpha_1+(2j-1)\alpha_2,\tau}\sqcup\sqcup_{j=\frac{n+1}{2}+1}^{m}E'_{j\alpha_1+n\alpha_2,\tau}}T_{\mathbb{O}_{\lambda,\tau}}+(v-v^{-1})\sum_{i=1-m}^{n}T_{\mathbb{O}_{i,\tau}}+T_{\mathbb{O}_{id,\tau}}.
\end{align*}If $n$ is even, then by a similar argument
\begin{align*}
T_{t^{m\alpha_1+n\alpha_2}\tau}\equiv&(v-v^{-1})\sum_{i=\frac{n}{2}+1}^{m}T_{t^{i\alpha_1+n\alpha_2}s_1\tau}+T_{t^{\frac{n}{2}\alpha_1+n\alpha_2}\tau}\\
\equiv&(v-v^{-1})^2\sum_{\lambda\in\sqcup_{j=2}^{\frac{n}{2}}E'_{j\alpha_1+(2j-1)\alpha_2,\tau}\sqcup\sqcup_{j=\frac{n}{2}+1}^{m}E'_{j\alpha_1+n\alpha_2,\tau}}T_{\mathbb{O}_{\lambda,\tau}}+(v-v^{-1})\sum_{i=1-m}^{n}T_{\mathbb{O}_{i,\tau}}+T_{\mathbb{O}_{id,\tau}}.
\end{align*}Combine them together, the proposition is proved.
\end{proof}
\begin{rmk}
If $\tilde{w}=t^{m\alpha_1+n\alpha_2}\tau$ where $n\geqslant1$, $m\geqslant2n+1$ or $n\leqslant0$, $m+n\geqslant1$, and we set $\tilde{w}_1=s_2\tilde{w}s_2=t^{(m+1)\alpha_1+(m-n)\alpha_2}s_1s_2\tau$. Then $\ell(\tilde{w}_1)=\ell(\tilde{w})$, $\tilde{w}\tilde{\thicksim}\tilde{w}_1$ and hence $T_{\tilde{w}}\equiv T_{\tilde{w}_1}$. We know that $\tilde{w}_1$ is contained in the situation we considered above.
\end{rmk}
\begin{cor}\label{cor1}
(1) If $\tilde{w}=t^{(2n+1)\alpha_1+n\alpha_2}\tau$ where $n\in\mathbb{N}_+$. Then \[f_{\tilde{w},\mathbb{O}}=\left\{
                                                             \begin{array}{ll}
                                                               (v-v^{-1})^2, & \mathbb{O}=\mathbb{O}_{\lambda, \tau},\ where \ \lambda\in\sqcup_{j=2}^{n+1}E_{(2j-1)\alpha_1+j\alpha_2,\tau}\\
                                                               (v-v^{-1}), & \mathbb{O}=\mathbb{O}_{i, \tau},\ where\ -2n\leqslant i\leqslant n+1\\
                                                               1, & \mathbb{O}=\mathbb{O}_{id, \tau}\\
                                                               0, & otherwise.
                                                             \end{array}
                                                           \right.\]
(2) If $\tilde{w}=t^{(2n+2)\alpha_1+n\alpha_2}\tau$ where $n\in\mathbb{N}_+$. Then \[f_{\tilde{w},\mathbb{O}}=\left\{
                                                             \begin{array}{ll}
                                                               (v-v^{-1})^2, & \mathbb{O}=\mathbb{O}_{\lambda, \tau},\ where \ \lambda\in\sqcup_{j=2}^{n+2}E_{(2j-1)\alpha_1+j\alpha_2,\tau}\\
                                                               (v-v^{-1}), & \mathbb{O}=\mathbb{O}_{i, \tau},\ where\ -1-2n\leqslant i\leqslant n+2\\
                                                               1, & \mathbb{O}=\mathbb{O}_{id, \tau}\\
                                                               0, & otherwise.
                                                             \end{array}
                                                           \right.\]
(3) If $\tilde{w}=t^{m\alpha_1+n\alpha_2}\tau$ where $n\geqslant1$, $m\geqslant2n+3$ or $n\leqslant0$, $m+n\geqslant1$. Then\[f_{\tilde{w},\mathbb{O}}=\left\{
                                                             \begin{array}{ll}
                                                               (v-v^{-1})^2, & \mathbb{O}=\mathbb{O}_{\lambda, \tau},\ where \ \lambda\in\sqcup_{j=2}^{\lfloor\frac{m+2}{2}\rfloor}E_{(2j-1)\alpha_1+j\alpha_2,\tau}\sqcup\sqcup_{j=\lfloor\frac{m+2}{2}\rfloor+1}^{m-n}E_{(m+1)\alpha_1+j\alpha_2,\tau}\\
                                                               (v-v^{-1}), & \mathbb{O}=\mathbb{O}_{i, \tau},\ where\ 1-m\leqslant i\leqslant m-n\\
                                                               1, & \mathbb{O}=\mathbb{O}_{id, \tau}\\
                                                               0, & otherwise.
                                                             \end{array}
                                                           \right.\]
\end{cor}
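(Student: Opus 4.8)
The plan is to obtain all three formulas from the single conjugation identity recorded in the Remark immediately preceding the statement, which reduces each $\tilde{w}=t^{m\alpha_1+n\alpha_2}\tau$ to an element of the form $t^{m'\alpha_1+n'\alpha_2}s_1s_2\tau$ whose class polynomial is already known. In every case under consideration we have $n\geqslant1$ with $m\geqslant2n+1$, or $n\leqslant0$ with $m+n\geqslant1$, so the Remark applies and yields $\tilde{w}\tilde{\thicksim}\tilde{w}_1=s_2\tilde{w}s_2=t^{(m+1)\alpha_1+(m-n)\alpha_2}s_1s_2\tau$ with $\ell(\tilde{w}_1)=\ell(\tilde{w})$; Lemma \ref{Hn} then gives $T_{\tilde{w}}\equiv T_{\tilde{w}_1}$, hence $f_{\tilde{w},\mathbb{O}}=f_{\tilde{w}_1,\mathbb{O}}$ for every $\mathbb{O}$. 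It therefore remains only to read off the right-hand side from the appropriate earlier proposition after the substitution $m'=m+1$, $n'=m-n$.

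First I would treat parts (1) and (2), which sit on the two special diagonals. In part (1) the value $m=2n+1$ gives $m'=2n+2=2(n+1)$ and $n'=n+1$, so $\tilde{w}_1=t^{2(n+1)\alpha_1+(n+1)\alpha_2}s_1s_2\tau$ is exactly the element of Proposition \ref{cpbtau12} with $k=n+1$; substituting $k=n+1$ there sends the range $2-2k\leqslant i\leqslant k$ to $-2n\leqslant i\leqslant n+1$ and the union $\sqcup_{j=2}^{k}$ to $\sqcup_{j=2}^{n+1}$, reproducing the stated coefficients. In part (2) the value $m=2n+2$ gives $m'=2n+3=2(n+2)-1$ and $n'=n+2$, so $\tilde{w}_1=t^{(2(n+2)-1)\alpha_1+(n+2)\alpha_2}s_1s_2\tau$ is the element of Proposition \ref{cpbbtau12} with $k=n+2$ (and $n\geqslant1$ guarantees $k\geqslant3\geqslant2$); substituting $k=n+2$ turns $3-2k\leqslant i\leqslant k$ into $-1-2n\leqslant i\leqslant n+2$, giving the claimed answer.

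For part (3) I would invoke Proposition \ref{cpalambdatau12} with $m'=m+1$, $n'=m-n$. One first records that dominance of $\lambda'=(m+1)\alpha_1+(m-n)\alpha_2$ is equivalent to $m\geqslant2n+1$, which is part of the hypothesis, while the relations $n'=2m'-1$ and $n'=2m'-2$ would force $m+n\leqslant-1$ and so never occur; moreover $m'=2n'-1$ is equivalent to $m=2n+2$ and $m'=2n'$ to $m=2n+1$, both excluded in the generic range $m\geqslant2n+3$. Substituting $m'=m+1$, $n'=m-n$ then gives $\lfloor\frac{m'+1}{2}\rfloor=\lfloor\frac{m+2}{2}\rfloor$, turns the two $E$-unions into $\sqcup_{j=2}^{\lfloor\frac{m+2}{2}\rfloor}E_{(2j-1)\alpha_1+j\alpha_2,\tau}\sqcup\sqcup_{j=\lfloor\frac{m+2}{2}\rfloor+1}^{m-n}E_{(m+1)\alpha_1+j\alpha_2,\tau}$, and converts $2-m'\leqslant i\leqslant n'$ into $1-m\leqslant i\leqslant m-n$, matching the statement verbatim.

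The main obstacle is not the substitution algebra but the boundary bookkeeping in the $n\leqslant0$ branch of part (3), where the conjugate $\tilde{w}_1$ can land exactly on one of the diagonals excluded from Proposition \ref{cpalambdatau12}. For instance $n=0,\ m=2$ produces $\tilde{w}_1=t^{3\alpha_1+2\alpha_2}s_1s_2\tau$, which satisfies $m'=2n'-1$ and is therefore governed by Proposition \ref{cpbbtau12} (with $k=2$) rather than \ref{cpalambdatau12}; similarly $n=0,\ m=1$ yields $\tilde{w}_1=t^{2\alpha_1+\alpha_2}s_1s_2\tau$, handled by Proposition \ref{cpbtau12} with $k=1$. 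I would dispose of these finitely many degenerate configurations one at a time, each time checking that the proposition which does apply reproduces the uniform formula of part (3) once the index ranges collapse and the empty $E$-unions are taken into account. Arranging this case split so that the three printed formulas remain valid across all boundary values of $(m,n)$ is the one genuinely delicate point; everything else is an immediate appeal to the Remark together with Propositions \ref{cpbtau12}, \ref{cpbbtau12}, and \ref{cpalambdatau12}.
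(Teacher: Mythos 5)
Your proposal is correct and follows essentially the same route as the paper: conjugating by $s_2$ via the preceding Remark to pass from $t^{m\alpha_1+n\alpha_2}\tau$ to $t^{(m+1)\alpha_1+(m-n)\alpha_2}s_1s_2\tau$, and then reading off the class polynomials from Propositions \ref{cpbtau12}, \ref{cpbbtau12} and \ref{cpalambdatau12}. You are in fact more thorough than the paper, which only writes out part (1) explicitly (together with Corollary \ref{cor2}(4)); in particular your identification and verification of the boundary cases $(m,n)=(1,0)$ and $(2,0)$ in part (3), where Proposition \ref{cpalambdatau12} does not apply and one must fall back on \ref{cpbtau12} or \ref{cpbbtau12}, supplies detail the paper leaves to the reader.
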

\begin{rmk}
If $\tilde{w}=t^{m\alpha_1+n\alpha_2}s_1s_2\tau$ where $n\geqslant1$, $m\geqslant2n+1$ or $n\leqslant0$, $m+n\geqslant2$, then $T_{t^{m\alpha_1+n\alpha_2}s_1s_2\tau}\equiv(v-v^{-1})T_{t^{m\alpha_1+(m-n)\alpha_2}s_1s_2s_1\tau}+T_{t^{(m-1)\alpha_1+(m-n-1)\alpha_2}\tau}$.
\end{rmk}
\begin{cor}\label{cor2}
(1) If $\tilde{w}=t^{(2k+1)\alpha_1+k\alpha_2}s_1s_2\tau$ where $k\in\mathbb{N}_+$, then\[f_{\tilde{w},\mathbb{O}}=\left\{
                                                             \begin{array}{ll}
                                                               (v-v^{-1})^2, & \mathbb{O}=\mathbb{O}_{\lambda, \tau},\ where \ \lambda\in\sqcup_{j=2}^{k+1}E_{(2j-1)\alpha_1+j\alpha_2,\tau}\\
                                                               (v-v^{-1}), & \mathbb{O}=\mathbb{O}_{i, \tau},\ where\ 1-2k\leqslant i\leqslant k+1\\
                                                               1, & \mathbb{O}=\mathbb{O}_{id, \tau}\\
                                                               0, & otherwise.
                                                             \end{array}
                                                           \right.\]
(2) If $\tilde{w}=t^{(2k+2)\alpha_1+k\alpha_2}s_1s_2\tau$ where $k\in\mathbb{N}_+$, then\[f_{\tilde{w},\mathbb{O}}=\left\{
                                                             \begin{array}{ll}
                                                               (v-v^{-1})^2, & \mathbb{O}=\mathbb{O}_{\lambda, \tau},\ where \ \lambda\in\sqcup_{j=2}^{k+1}E_{(2j-1)\alpha_1+j\alpha_2,\tau}\sqcup E_{(2k+2)\alpha_1+(k+2)\alpha_2,\tau}\\
                                                               (v-v^{-1}), & \mathbb{O}=\mathbb{O}_{i, \tau},\ where\ -2k\leqslant i\leqslant k+2\\
                                                               1, & \mathbb{O}=\mathbb{O}_{id, \tau}\\
                                                               0, & otherwise.
                                                             \end{array}
                                                           \right.\]
(3) If $\tilde{w}=t^{(k+2)\alpha_1-k\alpha_2}s_1s_2\tau$ where $k\in\mathbb{N}$, then\[f_{\tilde{w},\mathbb{O}}=\left\{
                                                             \begin{array}{ll}
                                                               (v-v^{-1})^2, & \mathbb{O}=\mathbb{O}_{\lambda, \tau},\ where \ \lambda\in\sqcup_{j=2}^{k+1}E'_{j\alpha_1+(2j-1)\alpha_2,\tau}\\
                                                               (v-v^{-1}), & \mathbb{O}=\mathbb{O}_{i, \tau},\ where\ -k\leqslant i\leqslant 2k+2\\
                                                               1, & \mathbb{O}=\mathbb{O}_{id, \tau}\\
                                                               0, & otherwise.
                                                             \end{array}
                                                           \right.\]
(4) If $\tilde{w}=t^{m\alpha_1+n\alpha_2}s_1s_2\tau$ where $n\in\mathbb{N}_+$, $m\geqslant2n+3$ or $n\in\mathbb{Z}_{\leqslant0}$, $m+n\geqslant3$ then\[f_{\tilde{w},\mathbb{O}}=\left\{
                                                             \begin{array}{ll}
                                                               (v-v^{-1})^2, & \mathbb{O}=\mathbb{O}_{\lambda, \tau},\ where\\
                                                               &\lambda\in\sqcup_{j=2}^{\lfloor\frac{m-n}{2}\rfloor}E'_{j\alpha_1+(2j-1)\alpha_2,\tau}\sqcup\sqcup_{j=\lfloor\frac{m-n}{2}\rfloor+1}^{m-1}E'_{j\alpha_1+(m-n-1)\alpha_2,\tau}\sqcup E_{m\alpha_1+(m-n)\alpha_2,\tau}\\
                                                               (v-v^{-1}), & \mathbb{O}=\mathbb{O}_{i, \tau},\ where\ 2-m\leqslant i\leqslant m-n\\
                                                               1, & \mathbb{O}=\mathbb{O}_{id, \tau}\\
                                                               0, & otherwise.
                                                             \end{array}
                                                           \right.\]
\end{cor}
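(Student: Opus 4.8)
The plan is to reduce all four cases to previously computed class polynomials by means of the reduction
\[T_{t^{m\alpha_1+n\alpha_2}s_1s_2\tau}\equiv(v-v^{-1})T_{t^{m\alpha_1+(m-n)\alpha_2}s_1s_2s_1\tau}+T_{t^{(m-1)\alpha_1+(m-n-1)\alpha_2}\tau}\]
recorded in the remark just above, whose hypotheses ($n\geqslant1,\ m\geqslant2n+1$, or $n\leqslant0,\ m+n\geqslant2$) are met in every case: in (1) one has $m=2k+1=2n+1$, in (2) $m=2k+2>2n+1$, in (3) $n=-k\leqslant0$ with $m+n=2$, and in (4) the inequalities are assumed outright. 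Passing to class polynomials, this becomes $f_{\tilde w,\mathbb O}=(v-v^{-1})f_{t^{m\alpha_1+(m-n)\alpha_2}s_1s_2s_1\tau,\mathbb O}+f_{t^{(m-1)\alpha_1+(m-n-1)\alpha_2}\tau,\mathbb O}$, so it suffices to substitute the two summands and add.

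For the first summand I would feed $\lambda=m\alpha_1+(m-n)\alpha_2$ into the formula for $f_{t^{\lambda}s_1s_2s_1\tau,\mathbb O}$ established earlier. This $\lambda$ is dominant and short in each case, so the formula applies directly: when $\lambda$ lies on neither of the two distinguished rays $j\alpha_1+(2j-1)\alpha_2$, $(j+1)\alpha_1+2j\alpha_2$ (cases (1), (2), (4)) it contributes, after multiplication by the leading $(v-v^{-1})$, a term $(v-v^{-1})^2$ on each $\mathbb O_{\lambda',\tau}$ with $\lambda'\in E_{m\alpha_1+(m-n)\alpha_2,\tau}$ together with $(v-v^{-1})T_{\mathbb O_{m-n,\tau}}$; when $\lambda$ does lie on such a ray (case (3), where $\lambda=(k+2)\alpha_1+2(k+1)\alpha_2$) it is already of minimal length and contributes only $(v-v^{-1})T_{\mathbb O_{m-n,\tau}}$.

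For the second summand I would recognize $\mu=(m-1)\alpha_1+(m-n-1)\alpha_2$ among the $\tau$-type elements already handled: in (1) it is $t^{2k\alpha_1+k\alpha_2}\tau$, given by Proposition \ref{cpbtau}; in (2) it is $t^{(2(k+1)-1)\alpha_1+(k+1)\alpha_2}\tau$, for which the straightening remark after Proposition \ref{cpbbtau12} gives $T_{t^{(2(k+1)-1)\alpha_1+(k+1)\alpha_2}\tau}\equiv T_{t^{2(k+1)\alpha_1+(k+1)\alpha_2}s_1s_2\tau}$, so Proposition \ref{cpbtau12} with $k$ replaced by $k+1$ applies; in (3) it is $t^{(k+1)\alpha_1+(2(k+1)-1)\alpha_2}\tau$, given by Proposition \ref{cpaatau} with $k\mapsto k+1$; and in (4) it is a generic dominant element falling under Proposition \ref{cpalambdatau}.

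Adding the two contributions then produces the asserted tables, and the only real work is the index bookkeeping. The $(v-v^{-1})^2$-families glue cleanly: the block $E_{m\alpha_1+(m-n)\alpha_2,\tau}$ supplied by the first summand is exactly the missing top block that extends the ranges coming from the second summand (e.g. it upgrades $\sqcup_{j=2}^{k}$ to $\sqcup_{j=2}^{k+1}$ in (1), appends the final $E$-block in (2), and appends the trailing $E$-block to the two $E'$-families of Proposition \ref{cpalambdatau} in (4)), while in (3) the first summand adds no such block. Likewise the single index $i=m-n$ from the first summand extends the $\mathbb O_{i,\tau}$-interval of the second summand upward by one step, and the coefficient $1$ on $\mathbb O_{id,\tau}$ is carried by the second summand alone. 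I expect the main obstacle to be precisely this reconciliation: one must check that the $E$- and $E'$-families are disjoint and abut without overlap, that the two $i$-intervals meet end to end, and that the floor functions in the definitions of $E_{\bullet,\tau}$, $E'_{\bullet,\tau}$ produce no off-by-one errors across the parity split between (1)/(3) and (2), which is where careless indexing would go wrong.
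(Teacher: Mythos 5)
Your proposal is correct and follows essentially the same route as the paper: the paper's own proof of part (4) uses exactly the reduction $T_{t^{m\alpha_1+n\alpha_2}s_1s_2\tau}\equiv(v-v^{-1})T_{t^{m\alpha_1+(m-n)\alpha_2}s_1s_2s_1\tau}+T_{t^{(m-1)\alpha_1+(m-n-1)\alpha_2}\tau}$ from the preceding remark and then substitutes the previously established class polynomials, leaving (1)--(3) as ``similar.'' Your treatment of (1)--(3) fills in those omitted cases with the same decomposition, correctly identifying Propositions \ref{cpbtau}, \ref{cpbtau12} (via the straightening remark), \ref{cpaatau} and the $t^{\lambda}s_1s_2s_1\tau$ formula for the two summands, and the index bookkeeping you flag does indeed come out as claimed.
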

\begin{proof}[Proofs of Corollary \ref{cor1} and \ref{cor2}]We choose \ref{cor1} (1) and \ref{cor2} (4) to prove. For \ref{cor1} (1), it follows directly from\begin{align*}T_{t^{(2n+1)\alpha_1+n\alpha_2}\tau}\equiv&T_{t^{(2n+2)\alpha_1+(n+1)\alpha_2}s_1s_2\tau}\\
\equiv&(v-v^{-1})^2\sum_{\lambda\in\sqcup_{j=2}^{n+1}E_{(2j-1)\alpha_1+j\alpha_2,\tau}}T_{\mathbb{O}_{\lambda, \tau}}+(v-v^{-1})\sum_{i=-2n}^{n+1}T_{\mathbb{O}_{i, \tau}}+T_{\mathbb{O}_{id, \tau}}.
\end{align*}
And for \ref{cor2} (4), we obtain the class polynomials form\begin{align*}T_{t^{m\alpha_1+n\alpha_2}s_1s_2\tau}\equiv&(v-v^{-1})T_{t^{m\alpha_1+(m-n)\alpha_2}s_1s_2s_1\tau}+T_{t^{(m-1)\alpha_1+(m-n-1)\alpha_2}\tau}\\
\equiv&(v-v^{-1})^2\sum_{\lambda\in\sqcup_{j=2}^{\lfloor\frac{m-n}{2}\rfloor}E'_{j\alpha_1+(2j-1)\alpha_2,\tau}\sqcup\sqcup_{j=\lfloor\frac{m-n}{2}\rfloor+1}^{m-1}E'_{j\alpha_1+(m-n-1)\alpha_2,\tau}\sqcup E_{m\alpha_1+(m-n)\alpha_2,\tau}}T_{\mathbb{O}_{\lambda, \tau}}\\
&+(v-v^{-1})\sum_{i=2-m}^{m-n}T_{\mathbb{O}_{i, \tau}}+T_{\mathbb{O}_{id, \tau}}.
\end{align*}
\end{proof}
\begin{prop}\label{cpoddtau}
For $i\in\mathbb{N}_+$ and $\tilde{w}\in\mathbb{O}_{i,\tau}$.
(1) If $\ell(\mathbb{O}_{i,\tau})\leqslant\ell(\tilde{w})\leqslant6i-5$. Then\[f_{\tilde{w},\mathbb{O}}=\left\{
                                                             \begin{array}{ll}
                                                               (v-v^{-1}), & \mathbb{O}=\mathbb{O}_{\lambda, \tau},\ where\ \lambda\in E_{(\lfloor\frac{i}{2}\rfloor+1+\frac{\ell(\tilde{w})-\ell(\mathbb{O}_{i,\tau})}{2})\alpha_1+i\alpha_2,\tau}\\
                                                               1, & \mathbb{O}=\mathbb{O}_{i, \tau}\\
                                                               0, & otherwise
                                                             \end{array}
                                                           \right.\]
(2) If $\ell(\tilde{w})=6i-3+4k$ where $k\in\mathbb{N}$. Then\[f_{\tilde{w},\mathbb{O}}=\left\{
                                                             \begin{array}{ll}
                                                               k(v-v^{-1})^3, & \mathbb{O}=\mathbb{O}_{\lambda, \tau},\ where\ \lambda\in\sqcup_{j=2}^{i-1}E_{(2j-1)\alpha_1+j\alpha_2,\tau}\\
                                                               k(v-v^{-1})^3+(v-v^{-1}), & \mathbb{O}=\mathbb{O}_{\lambda, \tau},\ where\ \lambda\in E_{(2i-1)\alpha_1+i\alpha_2,\tau}\\
                                                               (k-j)(v-v^{-1})^3, & \mathbb{O}=\mathbb{O}_{\lambda, \tau},\ where\ 1\leqslant j\leqslant k-1,\ \  \lambda\in E_{(2i-1+j)\alpha_1+(i+j)\alpha_2,\tau}\\
                                                               (k-1-j)(v-v^{-1})^3, & \mathbb{O}=\mathbb{O}_{\lambda, \tau},\ where\ 2\leqslant j\leqslant k-2,\ \ \lambda\in E'_{(2i+j)\alpha_1+(i+j)\alpha_2,\tau}\\
                                                               (k-j)(v-v^{-1})^3+(v-v^{-1}), & \mathbb{O}=\mathbb{O}_{(2i-1+j)\alpha_1+(i+j)\alpha_2, \tau},\ where\ 1\leqslant j\leqslant k\\
                                                               k(v-v^{-1})^2, & \mathbb{O}=\mathbb{O}_{l, \tau},\ where\ 2-2i\leqslant l\leqslant i-1\\
                                                               k(v-v^{-1})^2+1, & \mathbb{O}=\mathbb{O}_{i, \tau}\\
                                                               (k-j)(v-v^{-1})^2, & \mathbb{O}=\mathbb{O}_{2-2i-j, \tau}\ or\ \mathbb{O}_{i+j, \tau},\ where\ 1\leqslant j\leqslant k-1\\
                                                               k(v-v^{-1}), & \mathbb{O}=\mathbb{O}_{id,\tau}\\
                                                               (v-v^{-1}), & \mathbb{O}=\mathbb{O}_{(2i-1)\alpha_1+i\alpha_2,\tau}\\
                                                               0, & otherwise
                                                             \end{array}
                                                           \right.\]
(3) If $\ell(\tilde{w})=6i-1+4k$ where $k\in\mathbb{N}$. Then\[f_{\tilde{w},\mathbb{O}}=\left\{
                                                             \begin{array}{ll}
                                                               (k+1)(v-v^{-1})^3, & \mathbb{O}=\mathbb{O}_{\lambda, \tau},\ where\ \lambda\in\sqcup_{j=2}^{i-1}E_{(2j-1)\alpha_1+j\alpha_2,\tau}\\
                                                               (k+1)(v-v^{-1})^3+(v-v^{-1}), & \mathbb{O}=\mathbb{O}_{\lambda, \tau},\ where\ \lambda\in E_{(2i-1)\alpha_1+i\alpha_2,\tau}\\
                                                               (k+1-j)(v-v^{-1})^3, & \mathbb{O}=\mathbb{O}_{\lambda, \tau},\ where\ 1\leqslant j\leqslant k,\ \  \lambda\in E_{(2i-1+j)\alpha_1+(i+j)\alpha_2,\tau}\\
                                                               (k-j)(v-v^{-1})^3, & \mathbb{O}=\mathbb{O}_{\lambda, \tau},\ where\ 2\leqslant j\leqslant k-1,\ \ \lambda\in E'_{(2i+j)\alpha_1+(i+j)\alpha_2,\tau}\\
                                                               (k+1-j)(v-v^{-1})^3+(v-v^{-1}), & \mathbb{O}=\mathbb{O}_{(2i-1+j)\alpha_1+(i+j)\alpha_2, \tau},\ where\ 1\leqslant j\leqslant k\\
                                                               (k+1)(v-v^{-1})^2, & \mathbb{O}=\mathbb{O}_{l, \tau},\ where\ 2-2i\leqslant l\leqslant i-1\\
                                                               (k+1)(v-v^{-1})^2+1, & \mathbb{O}=\mathbb{O}_{i, \tau}\\
                                                               (k+1-j)(v-v^{-1})^2, & \mathbb{O}=\mathbb{O}_{2-2i-j, \tau}\ or\ \mathbb{O}_{i+j, \tau},\ where\ 1\leqslant j\leqslant k\\
                                                               (k+1)(v-v^{-1}), & \mathbb{O}=\mathbb{O}_{id,\tau}\\
                                                               (v-v^{-1}), & \mathbb{O}=\mathbb{O}_{(2i-1)\alpha_1+i\alpha_2,\tau}\\
                                                               0, & otherwise
                                                             \end{array}
                                                           \right.\]
\end{prop}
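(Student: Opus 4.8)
The plan is to mirror the strategy used for $\mathbb{C}_i$ in the proof of Proposition \ref{cpci}, now transplanted to the coset $W_a\tau$. By Lemma \ref{lengtaui} every element of $\mathbb{O}_{i,\tau}$ is $\tilde{\thicksim}$ to one of the three standard forms $t^{k\alpha_1+i\alpha_2}s_1s_2s_1\tau$, $t^{(1-i)\alpha_1+k\alpha_2}s_1\tau$, or $t^{(k+i-1)\alpha_1+k\alpha_2}s_2\tau$, and by Lemma \ref{Hn} the image $T_{\tilde{w}}$ depends only on the $\tilde{\thicksim}$-class. So first I would fix, for each admissible length, the cleanest representative (typically $t^{k\alpha_1+i\alpha_2}s_1s_2s_1\tau$ with $k$ determined by $\ell(\tilde{w})$) and then run the inductive reduction $T_{\tilde{w}}\equiv(v-v^{-1})T_{s_j\tilde{w}}+T_{s_j\tilde{w}s_{\delta(j)}}$, choosing at each stage a simple reflection $s_j$ that strictly shortens the element. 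Since (3) should be a routine consequence of (2) via one further reduction, the substance lies in establishing (1) and (2).

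For part (1), where $\ell(\tilde{w})\leqslant6i-5$, I expect the reduction to telescope in the simplest possible way: each step peels off exactly one term $(v-v^{-1})T_{\mathbb{O}_{\lambda,\tau}}$ with $\lambda=k'\alpha_1+i\alpha_2$ and sends the remaining element to the next lower representative of the same shape, until the minimal element $t^{(\lfloor i/2\rfloor+1)\alpha_1+i\alpha_2}s_1s_2s_1\tau$ of $\mathbb{O}_{i,\tau}$ is reached. Summing these steps gives precisely $(v-v^{-1})\sum_{\lambda\in E}T_{\mathbb{O}_{\lambda,\tau}}+T_{\mathbb{O}_{i,\tau}}$, the index set $E=E_{(\lfloor i/2\rfloor+1+\frac{\ell(\tilde{w})-\ell(\mathbb{O}_{i,\tau})}{2})\alpha_1+i\alpha_2,\tau}$ recording exactly how many peeling steps occur. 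The key point here is that below the threshold length no auxiliary classes $\mathbb{O}_{id,\tau}$ or $\mathbb{O}_{l,\tau}$ are produced, exactly as in Proposition \ref{cpci}(1).

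For part (2) the length exceeds the threshold, so after the initial peeling the reduction reaches intermediate elements of the forms $t^{m\alpha_1+n\alpha_2}\tau$ and $t^{m\alpha_1+n\alpha_2}s_1s_2\tau$ whose class polynomials are already known from Propositions \ref{cpbtau12}, \ref{cpbbtau12}, \ref{cpalambdatau}, \ref{cpalambdatau12} and Corollaries \ref{cor1}, \ref{cor2}. Substituting those expansions—which carry $(v-v^{-1})^2$-coefficients—into a reduction that contributes a further factor $(v-v^{-1})$ is what generates the $(v-v^{-1})^3$ and $(v-v^{-1})^2$ terms, while the linear factors $k$, $k-j$, $k+1-j$ count how many times each such element is revisited as the reduction loop is traversed $k$ (resp. $k+1$) times. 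Part (3) I would then obtain from part (2) by a single extra reduction step, just as Proposition \ref{cpci}(3) follows from \ref{cpci}(2); this accounts both for the uniform shift $k\mapsto k+1$ in the coefficients and for the extra isolated term $(v-v^{-1})T_{\mathbb{O}_{(2i+k)\cdots}}$ appearing in the top length class.

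The hard part will be the bookkeeping in parts (2) and (3). The stated answer has nine overlapping branches distinguishing the families $E$ and $E'$, the classes $\mathbb{O}_{l,\tau}$ for $2-2i\leqslant l\leqslant i-1$ against the symmetric families $\mathbb{O}_{2-2i-j,\tau}$ and $\mathbb{O}_{i+j,\tau}$, and the base class $\mathbb{O}_{i,\tau}$ itself collecting $k(v-v^{-1})^2+1$. The delicacy is twofold: one must choose the reduction path so that every intermediate element falls under an already-computed proposition, since a wrong choice of $s_j$ leads to elements outside the tabulated list; and one must verify that contributions arriving along different branches of the reduction tree add up to exactly the displayed coefficients—in particular that the diagonal classes $\mathbb{O}_{(2i-1+j)\alpha_1+(i+j)\alpha_2,\tau}$ acquire the extra $+(v-v^{-1})$ summand beyond the generic $(v-v^{-1})^3$, that the isolated class $\mathbb{O}_{(2i-1)\alpha_1+i\alpha_2,\tau}$ contributes a lone $(v-v^{-1})$, and that no class outside the list ever appears, so that the final ``$0$, otherwise'' clause is genuinely justified.
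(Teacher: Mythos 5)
Your proposal follows essentially the same route as the paper's proof: reduce to standard representatives of $\mathbb{O}_{i,\tau}$ via Lemmas \ref{lengtaui} and \ref{Hn}, telescope the reduction for part (1), expand part (2) by substituting the previously computed class polynomials for elements of the form $t^{m\alpha_1+n\alpha_2}s_1s_2\tau$, and deduce (3) from (2). The only material difference is that the paper takes its above-threshold representatives in the form $t^{(2i-1+k)\alpha_1+(i+k)\alpha_2}s_2\tau$ (treating $k\leqslant3$ separately by hand), rather than the $t^{k\alpha_1+i\alpha_2}s_1s_2s_1\tau$ form you default to, but this is a choice of reduction path within the same method, not a different argument.
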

\begin{prop}\label{cpodd1tau}
For $i\in\mathbb{N}$ and $\tilde{w}\in\mathbb{O}_{-i,\tau}$.
(1) If $\ell(\mathbb{O}_{-i,\tau})\leqslant\ell(\tilde{w})\leqslant6i+3$. Then\[f_{\tilde{w},\mathbb{O}}=\left\{
                                                             \begin{array}{ll}
                                                               (v-v^{-1}), & \mathbb{O}=\mathbb{O}_{\lambda, \tau},\ where\ \lambda\in E'_{(i+1)\alpha_1+(\lfloor\frac{i+1}{2}\rfloor+\frac{\ell(\tilde{w})-\ell(\mathbb{O}_{-i,\tau})}{2})\alpha_2,\tau}\\
                                                               1, & \mathbb{O}=\mathbb{O}_{-i, \tau}\\
                                                               0, & otherwise.
                                                             \end{array}
                                                           \right.\]
(2) If $\ell(\tilde{w})=6i+5+4k$ where $k\in\mathbb{N}$. Then\[f_{\tilde{w},\mathbb{O}}=\left\{
                                                             \begin{array}{ll}
                                                               k(v-v^{-1})^3, & \mathbb{O}=\mathbb{O}_{\lambda, \tau},\ where\ \lambda\in\sqcup_{j=2}^{i}E'_{j\alpha_1+(2j-1)\alpha_2,\tau}\\
                                                               k(v-v^{-1})^3+(v-v^{-1}), & \mathbb{O}=\mathbb{O}_{\lambda, \tau},\ where\ \lambda\in E'_{(i+1)\alpha_1+(2i+1)\alpha_2,\tau}\\
                                                               (k-j)(v-v^{-1})^3, & \mathbb{O}=\mathbb{O}_{\lambda, \tau},\ where\ 1\leqslant j\leqslant k-1,\ \lambda\in E'_{(i+1+j)\alpha_1+(2i+1+j)\alpha_2,\tau}\\
                                                               (k-j)(v-v^{-1})^3, & \mathbb{O}=\mathbb{O}_{\lambda, \tau},\ where\ 2\leqslant j\leqslant k-1,\ \lambda\in E_{(i+1+j)\alpha_1+(2i+2+j)\alpha_2,\tau}\\
                                                               (k-j)(v-v^{-1})^3+(v-v^{-1}), & \mathbb{O}=\mathbb{O}_{(i+1+j)\alpha_1+(2i+2+j)\alpha_2, \tau},\ where\ 1\leqslant j\leqslant k\\
                                                               k(v-v^{-1})^2, & \mathbb{O}=\mathbb{O}_{l, \tau},\ where\ 1-i\leqslant l\leqslant 2i+2\\
                                                               k(v-v^{-1})^2+1, & \mathbb{O}=\mathbb{O}_{-i, \tau}\\
                                                               (k-j)(v-v^{-1})^2, & \mathbb{O}=\mathbb{O}_{-i-j, \tau}\ or\ \mathbb{O}_{2i+2+j, \tau},\ where\ 1\leqslant j\leqslant k-1\\
                                                               k(v-v^{-1}), & \mathbb{O}=\mathbb{O}_{id,\tau}\\
                                                               (v-v^{-1}), & \mathbb{O}=\mathbb{O}_{(i+1)\alpha_1+(2i+2)\alpha_2,\tau}\\
                                                               0, & otherwise.
                                                             \end{array}
                                                           \right.\]
(3) If $\ell(\tilde{w})=6i+7+4k$ where $k\in\mathbb{N}$. Then\[f_{\tilde{w},\mathbb{O}}=\left\{
                                                             \begin{array}{ll}
                                                               (k+1)(v-v^{-1})^3, & \mathbb{O}=\mathbb{O}_{\lambda, \tau},\ where\ \lambda\in\sqcup_{j=2}^{i}E'_{j\alpha_1+(2j-1)\alpha_2,\tau}\\
                                                               (k+1)(v-v^{-1})^3+(v-v^{-1}), & \mathbb{O}=\mathbb{O}_{\lambda, \tau},\ where\ \lambda\in E'_{(i+1)\alpha_1+(2i+1)\alpha_2,\tau}\\
                                                               (k+1-j)(v-v^{-1})^3, & \mathbb{O}=\mathbb{O}_{\lambda, \tau},\ where\ 1\leqslant j\leqslant k,\ \lambda\in E'_{(i+1+j)\alpha_1+(2i+1+j)\alpha_2,\tau}\\
                                                               (k+1-j)(v-v^{-1})^3, & \mathbb{O}=\mathbb{O}_{\lambda, \tau},\ where\ 2\leqslant j\leqslant k,\ \lambda\in E_{(i+1+j)\alpha_1+(2i+2+j)\alpha_2,\tau}\\
                                                               (k+1-j)(v-v^{-1})^3+(v-v^{-1}), & \mathbb{O}=\mathbb{O}_{(i+1+j)\alpha_1+(2i+2+j)\alpha_2, \tau},\ where\ 1\leqslant j\leqslant k\\
                                                               (k+1)(v-v^{-1})^2, & \mathbb{O}=\mathbb{O}_{l, \tau},\ where\ 1-i\leqslant l\leqslant 2i+2\\
                                                               (k+1)(v-v^{-1})^2+1, & \mathbb{O}=\mathbb{O}_{-i, \tau}\\
                                                               (k+1-j)(v-v^{-1})^2, & \mathbb{O}=\mathbb{O}_{-i-j, \tau}\ or\ \mathbb{O}_{2i+2+j, \tau},\ where\ 1\leqslant j\leqslant k\\
                                                               (k+1)(v-v^{-1}), & \mathbb{O}=\mathbb{O}_{id,\tau}\\
                                                               (v-v^{-1}), & \mathbb{O}=\mathbb{O}_{(i+1)\alpha_1+(2i+2)\alpha_2,\tau}\\
                                                               0, & otherwise.
                                                             \end{array}
                                                           \right.\]
\end{prop}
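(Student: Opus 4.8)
The plan is to imitate the reduction arguments already used for the other elements of $W_a\tau$, notably those in the proofs of Proposition \ref{cpatau}, Proposition \ref{cpbtau12} and Proposition \ref{cpalambdatau}, and to induct on $\ell(\tilde{w})$ for $\tilde{w}\in\mathbb{O}_{-i,\tau}$. By Lemma \ref{lengtaui} the class $\mathbb{O}_{-i,\tau}$ is the union of three one-parameter families (the $s_1s_2s_1\tau$-type, the $s_2\tau$-type and the $s_1\tau$-type representatives), cyclically permuted by $\tau$. Using Lemma \ref{Hn} I would therefore first replace an arbitrary $\tilde{w}\in\mathbb{O}_{-i,\tau}$ by a single convenient representative of the same length, reducing the whole computation to one family indexed by $\ell(\tilde{w})$.

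On this representative I would run the defining recursion $f_{\tilde{w},\mathbb{O}}=(v-v^{-1})f_{s_i\tilde{w}_1,\mathbb{O}}+f_{s_i\tilde{w}_1s_{\delta(i)},\mathbb{O}}$, conjugating by the simple reflection that lowers the length by two at each step. As in the earlier proofs, each step splits off one strictly shorter element of $\mathbb{O}_{-i,\tau}$ (re-entering the induction) together with boundary terms $T_{t^{\mu}s_1\tau}$, $T_{t^{\mu}s_1s_2s_1\tau}$ and minimal classes $T_{\mathbb{O}_{\lambda,\tau}}$, $T_{\mathbb{O}_{l,\tau}}$. These are precisely the quantities already computed in the propositions evaluating $t^{\lambda}s_1\tau$ and $t^{\lambda}s_1s_2s_1\tau$, in Proposition \ref{cplambdatau}, and in the short-range base case (part (1) of the present statement). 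Substituting those values turns the recursion into an accounting of accumulated coefficients. The three ranges of $\ell(\tilde{w})$ in the statement, namely the short range $\ell(\mathbb{O}_{-i,\tau})\leqslant\ell(\tilde{w})\leqslant 6i+3$ and then $\ell(\tilde{w})=6i+5+4k$ and $6i+7+4k$, reflect the residue of the length modulo the period $4$, while the linear coefficients $k$, $k+1$ and $k-j$ simply count how many times a given shorter class is produced along the chosen reduction path.

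A useful parallel check is the diagram symmetry of type $\widetilde{A}_2$ interchanging $\alpha_1\leftrightarrow\alpha_2$ (equivalently $s_1\leftrightarrow s_2$). Comparing the two statements, Proposition \ref{cpodd1tau} is obtained from Proposition \ref{cpoddtau} exactly by swapping the roles of the sets $E_{(2j-1)\alpha_1+j\alpha_2,\tau}$ and $E'_{j\alpha_1+(2j-1)\alpha_2,\tau}$ together with the corresponding index bounds, which is what this symmetry does to the data. If the underlying automorphism can be arranged to carry $\mathbb{O}_{i,\tau}$ to $\mathbb{O}_{-i,\tau}$ compatibly with $\tau$, then \ref{cpodd1tau} would follow from \ref{cpoddtau} by transport of structure; this needs care, however, since the bare flip sends $\tau$ to $\tau^{-1}$ and must be corrected so as to stay inside the coset $W_a\tau$. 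Even if the symmetry is not used to prove the result outright, it provides a strong consistency check on the direct computation.

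The hard part will be the combinatorial bookkeeping in the direct reduction. Unlike the straight-element cases, the recursion here branches into contributions at three heights, the $(v-v^{-1})$, $(v-v^{-1})^2$ and $(v-v^{-1})^3$ terms, so one must choose the reduction path carefully (as the introduction to this section stresses) to ensure that the pieces split off at successive steps reassemble into the stated coefficients over the precise index ranges, with neither omission nor double counting. Verifying that the base case in part (1) correctly seeds this accumulation, and that the boundaries between the sets $E_{\bullet,\tau}$, $E'_{\bullet,\tau}$, the straight classes $\mathbb{O}_{\lambda,\tau}$ and the integer-indexed classes $\mathbb{O}_{l,\tau}$ match up at the endpoints of the length ranges, is where the genuine effort lies.
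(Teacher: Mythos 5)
Your plan coincides with the paper's own route: the paper proves the mirror statement (Proposition \ref{cpoddtau}) by exactly the steps you outline — fixing representatives of the class via Lemma \ref{lengtaui} and Lemma \ref{Hn}, running the length-reducing recursion, and substituting the previously computed class polynomials — and then dispatches Proposition \ref{cpodd1tau} with the one-line remark that the symmetric computation gives the result, which is precisely the direct computation you propose to carry out. Your caution about the diagram flip is well placed: since it sends $\tau$ to $\tau^{-1}$ it exchanges the cosets $W_a\tau$ and $W_a\tau^2$, and the natural correction obtained by composing with inversion in fact fixes each class $\mathbb{O}_{i,\tau}$ rather than carrying it to $\mathbb{O}_{-i,\tau}$, so no transport-of-structure shortcut is available and the mirrored direct computation is what the proof genuinely requires.
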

We calculate class polynomials for Proposition \ref{cpoddtau}. Symmetrically, we obtain them for Proposition \ref{cpodd1tau}.
\begin{proof}[Proof of Proposition \ref{cpoddtau}]Since (3) is easily deduced from (2), it is sufficient for us to prove (1) and (2). We set $A=\{t^{k\alpha_1+i\alpha_2}s_1s_2s_1\tau\mid k\geqslant\lfloor\frac{i}{2}\rfloor+1\}\sqcup\{t^{(2i-1+k)\alpha_1+(i+k)\alpha_2}s_2\tau\mid k\in\mathbb{N}\}$. Then we can check directly that any element in $\mathbb{O}_{i,\tau}$ is $\tilde{\thicksim}$ to a unique element in $A$. Hence it is sufficient to consider those elements in $A$. For (1) It is sufficient to consider that $\tilde{w}=t^{(\lfloor\frac{i}{2}\rfloor+1+\frac{\ell(\tilde{w})-\ell(\mathbb{O}_{i,\tau})}{2})\alpha_1+i\alpha_2}s_1s_2s_1\tau$, then
\[T_{\tilde{w}}\equiv(v-v^{-1})\sum_{E_{(\lfloor\frac{i}{2}\rfloor+1+\frac{\ell(\tilde{w})-\ell(\mathbb{O}_{i,\tau})}{2})\alpha_1+i\alpha_2,\tau}}+T_{\mathbb{O}_{id,\tau}}.
\]For (2), it is sufficient to consider that $\tilde{w}=t^{(2i-1+k)\alpha_1+(i+k)\alpha_2}s_2\tau$ where $k\in\mathbb{N}$. If $k\leqslant3$, we calculate the class polynomials one by one, now if $k\geqslant4$, then
\begin{align*}T_{t^{(2i-1+k)\alpha_1+(i+k)\alpha_2}s_2\tau}\equiv&(v-v^{-1})T_{t^{(2i-1+k)\alpha_1+(i+k)\alpha_2}s_2s_1\tau}+(v-v^{-1})T_{t^{(2i-1+k)\alpha_1+(i+k-1)\alpha_2}s_1s_2\tau}+T_{t^{(2i-2+k)\alpha_1+(i+k-1)\alpha_2}s_2\tau}\\
\cdots&\cdots\\
\equiv&(v-v^{-1})\sum_{j=3}^{k}T_{\mathbb{O}_{(2i-1+j)\alpha_1+(i+j)\alpha_2,\tau}}+(v-v^{-1})\sum_{j=2}^{k-1}T_{t^{(2i+j)\alpha_1+(i+j)\alpha_2}s_1s_2\tau}+T_{t^{(2i+1)\alpha_1+(i+2)\alpha_2}s_2\tau}\\
\end{align*}We go further to obtain the class polynomials from the righthand side which is actually
\[k(v-v^{-1})^3\sum_{\lambda\in\sqcup_{j=2}^{i-1}E_{(2j-1)\alpha_1+j\alpha_2,\tau}}T_{\mathbb{O}_{\lambda, \tau}}+(k(v-v^{-1})^3+(v-v^{-1}))\sum_{\lambda\in E_{(2i-1)\alpha_1+i\alpha_2,\tau}}T_{\mathbb{O}_{\lambda, \tau}}\]
\[+\sum_{j=1}^{k-1}(k-j)(v-v^{-1})^3\sum_{\lambda\in E_{(2j-1+j)\alpha_1+(i+j)\alpha_2,\tau}}T_{\mathbb{O}_{\lambda, \tau}}+\sum_{j=2}^{k-2}(k-1-j)(v-v^{-1})^3\sum_{\lambda\in E'_{(2i+j)\alpha_1+(i+j)\alpha_2,\tau}}T_{\mathbb{O}_{\lambda, \tau}}\]
\[+\sum_{j=1}^{k}((k-j)(v-v^{-1})^3+(v-v^{-1}))T_{\mathbb{O}_{(2i-1+j)\alpha_1+(i+j)\alpha_2, \tau}}+\sum_{j=2-2i}^{i-1}k(v-v^{-1})^2T_{\mathbb{O}_{j, \tau}}+(k(v-v^{-1})^2+1)T_{\mathbb{O}_{i, \tau}}\]\[+(v-v^{-1})T_{\mathbb{O}_{(2i-1)\alpha_1+i\alpha_2,\tau}}+\sum_{j=1}^{k-1}(k-j)(v-v^{-1})^2(T_{\mathbb{O}_{2-2i-j, \tau}}+T_{\mathbb{O}_{i+j, \tau}})+k(v-v^{-1})T_{\mathbb{O}_{id,\tau}}.\]

\end{proof}
\subsection{The quasi-split case}
At first we classify the $\delta$-conjugacy classes in $\widetilde{W}$. In this section we set $\cdot$ to be the usual $\widetilde{W}$-conjugation (i.e. $x\cdot y=xyx^{-1}$) and $\cdot_{\delta}$ be the $\delta$-conjugation (i.e., $x\cdot y=xy\delta(x)^{-1}$). Let\begin{align*}\mathbb{O}_{0,\delta}&=\{t^{k(\alpha_1+2\alpha_2)}s_2s_1,\ t^{k(2\alpha_1+\alpha_2)}s_1s_2,\ t^{k(\alpha_1-\alpha_2)}\mid\ k\in\mathbb{Z}\}\\
\sqcup&\{(\tau\cdot t^{k(\alpha_1+2\alpha_2)}s_2s_1)\tau^2,\ (\tau\cdot t^{k(2\alpha_1+\alpha_2)}s_1s_2)\tau^2,\ (\tau\cdot t^{k(\alpha_1-\alpha_2)})\tau^2\ \mid\ k\in\mathbb{Z}\}\\
\sqcup&\{(\tau^2\cdot t^{k(\alpha_1+2\alpha_2)}s_2s_1)\tau,\ (\tau^2\cdot t^{k(2\alpha_1+\alpha_2)}s_1s_2)\tau,\ (\tau^2\cdot t^{k(\alpha_1-\alpha_2)})\tau\ \mid\ k\in\mathbb{Z}\};
\end{align*}
\begin{align*}\mathbb{O}_{1,\delta}=&\{t^{\lambda}s_1,\ t^{\lambda}s_2\ \mid\ \lambda\in Q\}\sqcup\{(\tau\cdot t^{\lambda}s_1)\tau^2,\ (\tau\cdot t^{\lambda}s_2)\tau^2\ \mid\ \lambda\in Q\}\\
\sqcup&\{(\tau^2\cdot t^{\lambda}s_1)\tau,\ (\tau^2\cdot t^{\lambda}s_2)\tau\ \mid\ \lambda\in Q\};
\end{align*}
\begin{align*}\mathbb{O}'_{1,\delta}=&\{t^{k\alpha_1+(2i+1)\alpha_2}s_1s_2s_1,\ t^{(2k+1)\alpha_1+2i\alpha_2}s_1s_2s_1\ \mid\ k,\ i\in\mathbb{Z}\}\\
\sqcup&\{(\tau\cdot t^{k\alpha_1+(2i+1)\alpha_2}s_1s_2s_1)\tau^2,\ (\tau\cdot t^{(2k+1)\alpha_1+2i\alpha_2}s_1s_2s_1)\tau^2\ \mid\ k,\ i\in\mathbb{Z}\}\\
\sqcup&\{(\tau^2\cdot t^{k\alpha_1+(2i+1)\alpha_2}s_1s_2s_1)\tau,\ (\tau^2\cdot t^{(2k+1)\alpha_1+2i\alpha_2}s_1s_2s_1)\tau\ \mid\ k,\ i\in\mathbb{Z}\};
\end{align*}
\begin{align*}\mathbb{O}_{3,\delta}=&\{t^{2k\alpha_1+2i\alpha_2}s_1s_2s_1\  \mid\ k,\ i\in\mathbb{Z}\}\sqcup\{(\tau\cdot t^{2k\alpha_1+2i\alpha_2}s_1s_2s_1)\tau^2\ \mid\ k,\ i\in\mathbb{Z}\}\\
\sqcup&\{(\tau^2\cdot t^{2k\alpha_1+2i\alpha_2}s_1s_2s_1)\tau\ \mid\ k,\ i\in\mathbb{Z}\};
\end{align*}
For $m\in\mathbb{N}_+$, we set \[\epsilon(m)=\left\{
                                                             \begin{array}{ll}
                                                             1, &\ m\ \ \ \ \text{odd}\\
                                                             0, &\ m\ \ \ \ \text{even}.
                                                             \end{array}
                                                           \right.\]

\begin{align*}\mathbb{O}_{2m,\delta}=&\{t^{(k-\lfloor\frac{m}{2}\rfloor)\alpha_1+(2k+\epsilon(m))\alpha_2}s_2s_1,\ t^{(k+\lfloor\frac{m+1}{2}\rfloor)\alpha_1+(2k+\epsilon(m))\alpha_2}s_2s_1\mid\ k\in\mathbb{Z}\}\\
\sqcup&\{t^{(2k+\epsilon(m))\alpha_1+(k+\lfloor\frac{m+1}{2}\rfloor)\alpha_2}s_1s_2,\ t^{(2k+\epsilon(m))\alpha_1+(k-\lfloor\frac{m}{2}\rfloor)\alpha_2}s_1s_2\mid\ k\in\mathbb{Z}\}\\
\sqcup&\{t^{k\alpha_1+(m-k)\alpha_2},\ t^{(k-m)\alpha_1-k\alpha_2}\mid\ k\in\mathbb{Z}\}\\
\sqcup&\{(\tau\cdot t^{(k-\lfloor\frac{m}{2}\rfloor)\alpha_1+(2k+\epsilon(m))\alpha_2}s_2s_1)\tau^2,\ (\tau\cdot t^{(k+\lfloor\frac{m+1}{2}\rfloor)\alpha_1+(2k+\epsilon(m))\alpha_2}s_2s_1)\tau^2\mid\ k\in\mathbb{Z}\}\\
\sqcup&\{(\tau\cdot t^{(2k+\epsilon(m))\alpha_1+(k+\lfloor\frac{m+1}{2}\rfloor)\alpha_2}s_1s_2)\tau^2,\ (\tau\cdot t^{(2k+\epsilon(m))\alpha_1+(k-\lfloor\frac{m}{2}\rfloor)\alpha_2}s_1s_2)\tau^2\mid\ k\in\mathbb{Z}\}\\
\sqcup&\{(\tau\cdot t^{k\alpha_1+(m-k)\alpha_2})\tau^2,\ (\tau\cdot t^{(k-m)\alpha_1-k\alpha_2})\tau^2\mid\ k\in\mathbb{Z}\}\\
\sqcup&\{(\tau^2\cdot t^{(k-\lfloor\frac{m}{2}\rfloor)\alpha_1+(2k+\epsilon(m))\alpha_2}s_2s_1)\tau,\ (\tau^2\cdot t^{(k+\lfloor\frac{m+1}{2}\rfloor)\alpha_1+(2k+\epsilon(m))\alpha_2}s_2s_1)\tau\mid\ k\in\mathbb{Z}\}\\
\sqcup&\{(\tau^2\cdot t^{(2k+\epsilon(m))\alpha_1+(k+\lfloor\frac{m+1}{2}\rfloor)\alpha_2}s_1s_2)\tau,\ (\tau^2\cdot t^{(2k+\epsilon(m))\alpha_1+(k-\lfloor\frac{m}{2}\rfloor)\alpha_2}s_1s_2)\tau\mid\ k\in\mathbb{Z}\}\\
\sqcup&\{(\tau^2\cdot t^{k\alpha_1+(m-k)\alpha_2})\tau,\ (\tau^2\cdot t^{(k-m)\alpha_1-k\alpha_2})\tau\mid\ k\in\mathbb{Z}\}.
\end{align*}
\begin{rmk}\label{delta}
Let $\tilde{w}\in\widetilde{W}$ then $\tilde{w}=w_a\tau'$ where $w_a\in W_a$ and $\tau'\in\Omega$. Thus $\tilde{w}\tilde{\thicksim}w_a$. In the following, to consider $\tilde{w}\in\widetilde{W}$, it is sufficient to consider $w_a\in W_a$.
\end{rmk}
\begin{thm}\label{conjdelta}
$\mathbb{O}_{0,\delta},\ \mathbb{O}_{1,\delta},\ \mathbb{O}'_{1,\delta},\ \mathbb{O}_{3,\delta},\ \mathbb{O}_{2m,\delta}$ where $m\in\mathbb{N}_+$ form all the $\delta$-conjugacy classes of $\widetilde{W}$. Moreover, $$\ell(\mathbb{O}_{0,\delta})=0,\ \ \ell(\mathbb{O}_{3,\delta})=3,$$\[\ell(\mathbb{O}_{1,\delta})=\ell(\mathbb{O}'_{1,\delta})=1,\]\[\ell(\mathbb{O}_{2m,\delta})=2m.\]
\end{thm}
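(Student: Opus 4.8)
The plan is to mimic the strategy used for the untwisted classes in Lemmas \ref{leng1}--\ref{lengi}, organised around the reduction to $W_a$ afforded by Remark \ref{delta}. Here $\delta$ is the nontrivial (order two) automorphism of $\widetilde{W}$ fixing $s_0$ and interchanging $s_1\leftrightarrow s_2$ (equivalently $\alpha_1\leftrightarrow\alpha_2$), with $\delta(\tau)=\tau^{-1}$. First I would record that $\delta$-conjugation by $\tau^{i}$ sends $w_a\tau^{j}\in W_a\tau^{j}$ to $\tau^{i}(w_a\tau^{j})\delta(\tau^{i})^{-1}=\tau^{i}w_a\tau^{i+j}$, which lies in $W_a$ as soon as $2i+j\equiv 0\pmod 3$; this both justifies Remark \ref{delta} and explains why each listed set appears as three $\tau$-$\delta$-conjugate blocks, namely a $W_a$-block together with its images $\tau\cdot_\delta(-)$ and $\tau^2\cdot_\delta(-)$ under the two expressions $(\tau\cdot(-))\tau^2$ and $(\tau^2\cdot(-))\tau$. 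Consequently it suffices to analyse the $W_a$-parts of $\mathbb{O}_{0,\delta},\mathbb{O}_{1,\delta},\mathbb{O}'_{1,\delta},\mathbb{O}_{3,\delta},\mathbb{O}_{2m,\delta}$. For $\delta$-stability (that each set is a union of $\delta$-conjugacy classes), since $W_a$ is generated by $s_0,s_1,s_2$ and the $\tau$-blocks are controlled by the previous point, it is enough to check closure under the elementary moves $s_i\cdot_\delta(t^{\mu}w)=s_i t^{\mu}w\,s_{\delta(i)}$ for $i\in\{0,1,2\}$; these are the same explicit translation-and-reflection computations as in the proof of Lemma \ref{leng2}, now carried out with the swap $\delta(s_1)=s_2$.

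Next I would establish exhaustiveness and disjointness simultaneously by partitioning $W_a=\{t^{\mu}w\mid \mu\in Q,\ w\in W\}$ according to the finite part $w\in\{1,s_1,s_2,s_1s_2,s_2s_1,s_1s_2s_1\}$ and reading off, for each $\mu=a\alpha_1+b\alpha_2$, which set $t^{\mu}w$ belongs to. The decisive point is a parity/coset bookkeeping in $Q=\mathbb{Z}\alpha_1\oplus\mathbb{Z}\alpha_2$: translations ($w=1$) with $a+b=0$ go to $\mathbb{O}_{0,\delta}$ and all others to $\mathbb{O}_{2m,\delta}$ with $m=|a+b|$; the reflections $w=s_1,s_2$ go to $\mathbb{O}_{1,\delta}$; for $w=s_1s_2s_1$ the element goes to $\mathbb{O}_{3,\delta}$ when $(a,b)$ is even-even and to $\mathbb{O}'_{1,\delta}$ otherwise; and $w=s_1s_2,s_2s_1$ are split between $\mathbb{O}_{0,\delta}$ and $\mathbb{O}_{2m,\delta}$. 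Disjointness is then immediate from the explicit parametrisations, and this step already shows the sets are pairwise distinct; in particular it separates $\mathbb{O}_{1,\delta}$ from $\mathbb{O}'_{1,\delta}$ by the finite part and $\mathbb{O}'_{1,\delta}$ from $\mathbb{O}_{3,\delta}$ by the parity of the translation.

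Finally I would show each set is a single $\delta$-conjugacy class and compute $\ell(\mathbb{O})$. Using the He--Nie reduction quoted before Lemma \ref{Hn} (every element $\to_\delta$ a minimal length element), I would reduce every element of each set, by induction on length exactly as in Lemma \ref{leng2}, to one explicit representative: $1$ for $\mathbb{O}_{0,\delta}$, $s_1$ for $\mathbb{O}_{1,\delta}$, $s_0=t^{\alpha_1+\alpha_2}s_1s_2s_1$ for $\mathbb{O}'_{1,\delta}$, $s_1s_2s_1$ for $\mathbb{O}_{3,\delta}$, and a length-$2m$ element (e.g.\ $t^{-\lfloor m/2\rfloor\alpha_1+\epsilon(m)\alpha_2}s_2s_1$) for $\mathbb{O}_{2m,\delta}$. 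For the lengths, $\mathbb{O}_{0,\delta}$ and $\mathbb{O}_{2m,\delta}$ are $\delta$-straight, with Newton vectors $\bar\nu=0$ and $\bar\nu=\tfrac{m}{2}(\alpha_1+\alpha_2)$ respectively (computing $\tilde w\delta(\tilde w)=t^{m(\alpha_1+\alpha_2)}$ on $t^{k\alpha_1+(m-k)\alpha_2}$), so $\ell(\mathbb{O})=\langle\bar\nu,2\rho\rangle$ gives $0$ and $\tfrac{m}{2}\langle\alpha_1+\alpha_2,2\rho\rangle=2m$; the remaining three classes all have $\bar\nu=0$ and are \emph{not} straight, so I would instead verify directly that $s_1$, $s_0$ and $s_1s_2s_1$ admit no length-decreasing elementary $\delta$-conjugation, giving $\ell=1,1,3$.

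The main obstacle is the family $\mathbb{O}_{2m,\delta}$: checking its $\delta$-stability, proving it is a single class through the reduction, and fitting its parametrisation (with the floors and the parity $\epsilon(m)$) into the exhaustiveness count all require careful bookkeeping. The conceptually delicate sub-point is that $\mathbb{O}_{0,\delta},\mathbb{O}_{1,\delta},\mathbb{O}'_{1,\delta},\mathbb{O}_{3,\delta}$ share the same image $(\bar\nu,\kappa)=(0,0)$ in $B(\widetilde{W},\delta)$, so they cannot be separated by Newton and Kottwitz data; the separation is achieved instead by $\delta$-stability together with the finite-part and translation-parity invariants that emerge in the exhaustiveness step.
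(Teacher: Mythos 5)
Your proposal is correct and follows essentially the same route as the paper: the paper's proof consists precisely of the disjoint-union bookkeeping you carry out (which the paper simply asserts from the definitions), the reduction to $W_a$ via the $\tau$-block structure (Remark \ref{delta}), and Lemma \ref{oi}'s two-part check of $\delta$-stability plus inductive length-reduction to explicit minimal representatives, exactly as you outline. The only variation is minor: you pin down $\ell(\mathbb{O}_{0,\delta})=0$ and $\ell(\mathbb{O}_{2m,\delta})=2m$ via straightness and the Newton point $\frac{m}{2}(\alpha_1+\alpha_2)$ (the same value the paper records in \S\ref{ADLVnonbasic}), whereas the paper extracts all minimal lengths from the same combinatorial reduction that proves each set is a single class.
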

\begin{proof}
By the definitions of $\mathbb{O}_{0,\delta},\ \mathbb{O}_{1,\delta},\ \mathbb{O}'_{1,\delta},\ \mathbb{O}_{3,\delta},\ \mathbb{O}_{2m,\delta}$ for $m\in\mathbb{N}_+$, we have $\widetilde{W}=\mathbb{O}_{0,\delta}\sqcup\mathbb{O}_{1,\delta}\sqcup\mathbb{O}'_{1,\delta}\sqcup\mathbb{O}_{3,\delta}\sqcup\sqcup_{m\in\mathbb{N}_+}\mathbb{O}_{2m,\delta}$. And the theorem is proved together with Lemma \ref{oi}.
\end{proof}
\begin{lem}\label{oi}
(1) $\mathbb{O}_{0,\delta}$ is the $\delta$-conjugacy class of $\widetilde{W}$ with minimal length 0;\\
(2) $\mathbb{O}_{1,\delta}$ and $\mathbb{O}'_{1,\delta}$ are the $\delta$-conjugacy classes of $\widetilde{W}$ with minimal length 1;\\
(3) $\mathbb{O}_{3,\delta}$ is the $\delta$-conjugacy class of $\widetilde{W}$ with minimal length 3;\\
(4) If $m\in\mathbb{N}_+$, then $\mathbb{O}_{2m,\delta}$ is the $\delta$-conjugacy class of $\widetilde{W}$ with minimal length $2m$.
\end{lem}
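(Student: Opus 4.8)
The plan is to push everything inside $W_a$ and then read off the minimal lengths from the Newton map. Recall that $\delta$ fixes $s_0$ and interchanges $s_1,s_2$, so $\delta(\tau)=\tau^{-1}=\tau^2$ and $\ell(\tau)=0$. Hence $\delta$-conjugation by $\tau$, namely $\tau\cdot_{\delta}x=\tau x\delta(\tau)^{-1}=\tau x\tau$, preserves length and carries the coset $W_a\tau^{j}$ to $W_a\tau^{j-1}$; this is the content of Remark \ref{delta}, and it shows that every $\delta$-conjugacy class meets $W_a$ with no change of minimal length. Because only conjugation by $W_a$ sends the coset $W_a$ to itself, the intersection of the class of $x\in W_a$ with $W_a$ is exactly the single orbit $\{yx\delta(y)^{-1}\mid y\in W_a\}$, and the other two thirds of the class are its images under $\tau\cdot_{\delta}(-)$ and $\tau^{2}\cdot_{\delta}(-)$. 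This is precisely the threefold shape in which each $\mathbb{O}_{\bullet,\delta}$ is written, so it suffices to treat $S_{\bullet}:=\mathbb{O}_{\bullet,\delta}\cap W_a$ (the first displayed family) as a subset of $W_a$, to show it is a single orbit under the maps $x\mapsto s_{i}xs_{\delta(i)}$ ($i=0,1,2$, with $\delta(0)=0,\ \delta(1)=2,\ \delta(2)=1$), and to compute its minimal length.

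First I would verify closure: for each $s_i$ and each of the finitely many element-shapes occurring in $S_{\bullet}$ I compute $s_{i}xs_{\delta(i)}$ and check that it again lies in $S_{\bullet}$, using the same explicit $\widetilde{W}$-action computations as in the proof of Lemma \ref{lengtaui}; this shows $S_{\bullet}$ is a union of $\delta$-orbits in $W_a$. I would then fix the distinguished representative --- the identity for $\mathbb{O}_{0,\delta}$, the element $s_1$ for $\mathbb{O}_{1,\delta}$, the length-$1$ element $s_0=t^{\alpha_1+\alpha_2}s_1s_2s_1$ for $\mathbb{O}'_{1,\delta}$, $s_1s_2s_1$ for $\mathbb{O}_{3,\delta}$, and an element of length $2m$ for $\mathbb{O}_{2m,\delta}$ --- and, by induction on length using the He--Nie reduction $\to_{\delta}$ of \cite{HN}, show every element of $S_{\bullet}$ is $\delta$-conjugate to it. Concretely, a non-minimal $\tilde{w}\in S_{\bullet}$ admits $\tilde{w}_1\approx_{\delta}\tilde{w}$ and $s_i$ with $\ell(s_i\tilde{w}_1s_{\delta(i)})<\ell(\tilde{w}_1)$; closure keeps $s_i\tilde{w}_1s_{\delta(i)}$ inside $S_{\bullet}$, and the inductive hypothesis closes the loop. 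This makes each $\mathbb{O}_{\bullet,\delta}$ a single $\delta$-conjugacy class.

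For the minimal lengths I would split according to straightness. Via the Newton map of \S\ref{DD}, the classes $\mathbb{O}_{0,\delta}$ and $\mathbb{O}_{2m,\delta}$ have $\bar{\nu}=0$ and $\bar{\nu}=\tfrac{m}{2}(\alpha_1+\alpha_2)$ respectively (for a translation representative $t^{a\alpha_1+(m-a)\alpha_2}$ one has $t^{a\alpha_1+(m-a)\alpha_2}\,\delta(t^{a\alpha_1+(m-a)\alpha_2})=t^{m(\alpha_1+\alpha_2)}$), so $\langle\bar{\nu},2\rho\rangle$ equals $0$ and $2m$. Since $\langle\bar{\nu}_{\tilde{w}},2\rho\rangle\le\ell(\tilde{w})$ for every $\tilde{w}$ (the Newton--length bound, see \cite{He2}), these are lower bounds for the minimal length, and the representatives of length $0$ and $2m$ realize them. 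The three non-straight classes $\mathbb{O}_{1,\delta},\mathbb{O}'_{1,\delta},\mathbb{O}_{3,\delta}$ all have $\bar{\nu}=0$, so this bound is useless and the lower bound must come from the reduction itself. For $\mathbb{O}_{3,\delta}$ this is especially clean: every element of $S_{3}=\{t^{2k\alpha_1+2i\alpha_2}s_1s_2s_1\}$ has all pairings $\langle 2k\alpha_1+2i\alpha_2,\alpha\rangle$ even, so each summand $|\langle 2k\alpha_1+2i\alpha_2,\alpha\rangle-1|$ of the length is $\ge 1$, forcing $\ell\ge 3$ with equality only at $k=i=0$. For $\mathbb{O}_{1,\delta}$ and $\mathbb{O}'_{1,\delta}$ the $W_a$-parts contain no length-$0$ element (the only such is the identity), so their minimal length is exactly $1$.

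The step I expect to be the genuine obstacle is separating $\mathbb{O}_{1,\delta}$ from $\mathbb{O}'_{1,\delta}$: both have minimal length $1$ and both have invariant $(\bar{\nu},\kappa)=(0,0)$, so the Newton--Kottwitz datum is blind to the difference. Here I would invoke the finer He--Nie fact that any two minimal-length elements of one class are related by $\tilde{\thicksim}_{\delta}$. The only length-$1$ elements of $W_a$ are $s_0,s_1,s_2$, and a direct computation of $s_ixs_{\delta(i)}$ gives $\approx_{\delta}$-components $\{s_1,s_2\}$ and $\{s_0\}$ (for instance $s_1\cdot_{\delta}s_1=s_2$, while $s_0\cdot_{\delta}s_0=s_0$ and $s_1\cdot_{\delta}s_0,\ s_2\cdot_{\delta}s_0$ have length $3$); since $\approx_{\delta}$ preserves the coset $W_a$, passing to $\tilde{\thicksim}_{\delta}$ (which allows $\tau$-twists into $W_a\tau$, $W_a\tau^{2}$) adds nothing inside $W_a$, so $s_0\not\tilde{\thicksim}_{\delta}s_1$ and the two classes are genuinely distinct. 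Combined with the disjointness of the displayed families, which is immediate from their definitions, this yields the uniqueness assertions of statements~(1), (3), (4) and the two-class assertion of statement~(2). The remaining work is bookkeeping, chiefly carrying the closure and reduction computations uniformly across the infinite family $\mathbb{O}_{2m,\delta}$; the quantities $\epsilon(m),\lfloor m/2\rfloor,\lfloor (m+1)/2\rfloor$ are there exactly to keep the representatives coherent across the two parities of $m$.
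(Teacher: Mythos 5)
Your skeleton is the same as the paper's: reduce to the $W_a$-part by $\tau$-twisting (Remark \ref{delta}), check that each displayed set is closed under $\delta$-conjugation by the generators $s_0,s_1,s_2,\tau$, and then induct on length to show each set is a single class. But your inductive step contains a genuine circularity. You invoke the He--Nie reduction --- ``a non-minimal $\tilde{w}\in S_{\bullet}$ admits $\tilde{w}_1\approx_{\delta}\tilde{w}$ and $s_i$ with $\ell(s_i\tilde{w}_1s_{\delta(i)})<\ell(\tilde{w}_1)$'' --- whose hypothesis is that $\tilde{w}$ is \emph{not of minimal length in its own $\delta$-conjugacy class}. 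At that stage you do not yet know what the class of $\tilde{w}$ is, let alone its minimal length; that is exactly what the lemma asserts. Your induction therefore says nothing about a hypothetical element of $S_{\bullet}$ of length greater than $\ell(x_0)$ that is minimal in its own class, so it cannot exclude the scenario that $S_{\bullet}$ splits into several $\delta$-conjugacy classes with different minimal lengths --- closure plus the (conditional) He--Nie reduction are both consistent with that. The paper closes precisely this hole by hand: for every shape of element it \emph{exhibits} a length-decreasing twisted conjugation, e.g.\ in case (1), $\tilde{w}_1=s_0\tilde{w}s_0$ for $\tilde{w}=t^{k(\alpha_1+2\alpha_2)}s_2s_1$ or $t^{k(2\alpha_1+\alpha_2)}s_1s_2$ with $k\in\mathbb{N}_+$, $\tilde{w}_1=s_2\tilde{w}s_1$ for $\tilde{w}=t^{k(\alpha_1-\alpha_2)}$ or $t^{-k(\alpha_1+2\alpha_2)}s_2s_1$, and $\tilde{w}_1=s_1\tilde{w}s_2$ for $\tilde{w}=t^{k(\alpha_2-\alpha_1)}$ or $t^{-k(2\alpha_1+\alpha_2)}s_1s_2$, each with $\ell(\tilde{w}_1)=\ell(\tilde{w})-2$. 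Since you already set up exactly these conjugation computations for your closure check, the repair is to record the length comparison there and run the induction on those explicit reductions, not on \cite{HN}.

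Two smaller points. The step you single out as ``the genuine obstacle'' --- separating $\mathbb{O}_{1,\delta}$ from $\mathbb{O}'_{1,\delta}$ --- is not an obstacle at all: once each displayed set is closed under $\delta$-conjugation and is a single class, distinctness is automatic because the sets are disjoint (an element of one that is $\delta$-conjugate to an element of the other would lie in both). Your He--Nie separation argument is correct in substance, but note that the relation it requires is the cyclic-shift-with-twist relation $\tilde{\approx}_{\delta}$ (which is what your simple-reflection computation actually checks), not the strong-conjugacy relation $\tilde{\thicksim}_{\delta}$ you name; in any case it is redundant. Likewise, the Newton-point bound $\langle\bar{\nu}_{\tilde{w}},2\rho\rangle\leqslant\ell(\tilde{w})$ for $\mathbb{O}_{0,\delta}$ and $\mathbb{O}_{2m,\delta}$ is a nice independent consistency check, but once the explicit description is established the minimal lengths are simply read off the listed elements, which is all the paper does.
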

\begin{proof}
We prove (1) and others are similar. We check easily that for any $\tilde{w}\in\widetilde{W}$, then $\tilde{w}\cdot_{\delta}\mathbb{O}_{0,\delta}\subset\mathbb{O}_{0,\delta}$. Then we need to show that for any element $\tilde{w}\in\mathbb{O}_{0,\delta}$, $\tilde{w}$ is $\delta$-conjugate to $id$. By \ref{delta}, it is sufficient to consider that $\tilde{w}\in\{t^{k(\alpha_1+2\alpha_2)}s_2s_1,\ t^{k(2\alpha_1+\alpha_2)}s_1s_2,\ t^{k(\alpha_1-\alpha_2)}\mid\ k\in\mathbb{Z}\}$ and we use induction on the length. It $\tilde{w}=id$, there is nothing to prove. If $\tilde{w}'\in\{t^{k(\alpha_1+2\alpha_2)}s_2s_1,\ t^{k(2\alpha_1+\alpha_2)}s_1s_2,\ t^{k(\alpha_1-\alpha_2)}\mid\ k\in\mathbb{Z}\}$, and $\ell(\tilde{w}')<\ell(\tilde{w})$, then $\tilde{w}'$ is $\delta$-conjugate to $id$. If $\tilde{w}=t^{k(\alpha_1+2\alpha_2)}s_2s_1$ or $t^{k(2\alpha_1+\alpha_2)}s_1s_2$ where $k\in\mathbb{N}_+$, and we set $\tilde{w}_1=s_0\tilde{w}s_0$, then $\ell(\tilde{w}_1)=\ell(\tilde{w})-2$, thus $\tilde{w}_1$ is $\delta$-conjugate to $id$, so is $\tilde{w}$. If $\tilde{w}=t^{k(\alpha_1-\alpha_2)}$ or $t^{-k(\alpha_1+2\alpha_2)}s_2s_1$ where $k\in\mathbb{N}_+$, and we set $\tilde{w}_1=s_2\tilde{w}s_1$ then $\ell(\tilde{w}_1)=\ell(\tilde{w})-2$, thus $\tilde{w}_1$ is $\delta$-conjugate to $id$, so is $\tilde{w}$. If $\tilde{w}=t^{k(\alpha_2-\alpha_1)}$ or $t^{-k(2\alpha_1+\alpha_2)}s_1s_2$ where $k\in\mathbb{N}_+$, and we set $\tilde{w}_1=s_1\tilde{w}s_2$ then $\ell(\tilde{w}_1)=\ell(\tilde{w})-2$, thus $\tilde{w}_1$ is $\delta$-conjugate to $id$, so is $\tilde{w}$. Hence (1) is proved.
\end{proof}
\begin{thm}\label{TT}
Let $\tilde{w},\ \tilde{w}'\in\mathbb{O}_{i,\delta}$ where $i=0,\ 1$ or $2m$ where $m\in N_+$. If $\ell(\tilde{w})=\ell(\tilde{w}')$, then $T_{\tilde{w}}\equiv T_{\tilde{w}'}$.
\end{thm}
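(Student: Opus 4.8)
The plan is to reduce the statement to the He--Nie criterion, Lemma~\ref{Hn} (in its $\delta$-twisted form, proved in \cite{HN}): if $\tilde{w}\tilde{\thicksim}_{\delta}\tilde{w}'$ then $T_{\tilde{w}}\equiv T_{\tilde{w}'}$ modulo $[\widetilde{H},\widetilde{H}]_{\delta}$. Hence it suffices to prove the purely combinatorial assertion that, for each of the classes $\mathbb{O}_{0,\delta}$, $\mathbb{O}_{1,\delta}$, $\mathbb{O}_{2m,\delta}$ and each value of the length, any element of that length is strongly $\delta$-conjugate to one fixed representative of the same length. The elementary moves available are the length-preserving $\delta$-conjugations $\tilde{u}\mapsto s_j\tilde{u}s_{\delta(j)}$ with $s_j\in\widetilde{\mathbb{S}}$, that is $s_0(\cdot)s_0$, $s_1(\cdot)s_2$ and $s_2(\cdot)s_1$, together with the length-zero conjugations by $\Omega$.

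The first reduction handles the three $\Omega$-cosets. In the description of the classes in Theorem~\ref{conjdelta}, the blocks $(\tau\cdot a)\tau^{2}$ and $(\tau^{2}\cdot a)\tau$ are precisely the $\delta$-conjugates $\tau\,a\,\delta(\tau)^{-1}$ and $\tau^{2}a\,\delta(\tau^{2})^{-1}$ of the plain block by the length-zero elements $\tau,\tau^{2}\in\Omega$; conjugation by a length-zero element is an elementary strong $\delta$-conjugation, so each element is $\tilde{\thicksim}_{\delta}$, and of equal length, to the corresponding plain-block element (the same device as in Remark~\ref{delta}). Thus I may assume throughout that $\tilde{w},\tilde{w}'$ lie in $\mathbb{O}_{i,\delta}\cap W_a$, whose description is an explicit union of one-parameter families indexed by $k\in\mathbb{Z}$ with finite part $1$, $s_1s_2$ or $s_2s_1$.

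The core is then an induction on the length, carried out separately in each class. The base case is the minimal length $\ell(\mathbb{O}_{i,\delta})\in\{0,1,2m\}$ of Lemma~\ref{oi}, where the He--Nie theorem already guarantees that all minimal-length representatives are strongly $\delta$-conjugate, so they share the single image $T_{\mathbb{O}_{i,\delta}}$. For a larger length $\ell$ I would start from an arbitrary length-$\ell$ element of the plain block and apply one of the three twisted reflections to shift its translation parameter $k$ toward a chosen normal form while keeping the length equal to $\ell$; iterating these moves links every length-$\ell$ element to the same representative, hence they all coincide in the cocenter. Verifying that a prescribed $s_j$ preserves the length and raises it on one side is a direct computation with the length formula for $t^{\mu}w$, performed exactly as in the proofs of Lemmas~\ref{leng2} and \ref{lenglambda} and of Proposition~\ref{cpo1}, where the analogous untwisted families were reduced to single representatives.

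The main obstacle is bookkeeping rather than principle: for $\mathbb{O}_{2m,\delta}$ one must track, across the three finite-part types and the two parities encoded by $\epsilon(m)$ and the floors $\lfloor m/2\rfloor$, $\lfloor(m+1)/2\rfloor$, which twisted reflection realizes a length-preserving elementary strong $\delta$-conjugation at each step, and confirm that the resulting chain is connected for every $\ell$; the two-dimensionality of the translation lattice makes this the delicate part. It is also worth recording why the theorem excludes $\mathbb{O}'_{1,\delta}$ and $\mathbb{O}_{3,\delta}$: their elements have finite part the $\delta$-fixed longest element $w_0=s_1s_2s_1$, and for such classes same-length elements need not be strongly $\delta$-conjugate, so one expects $T_{\tilde{w}}\equiv T_{\tilde{w}'}$ to require a separate, refined treatment there.
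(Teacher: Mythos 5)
Your reduction to Lemma~\ref{Hn} rests on a combinatorial claim that is actually false: same-length elements of the classes $\mathbb{O}_{0,\delta}$, $\mathbb{O}_{1,\delta}$, $\mathbb{O}_{2m,\delta}$ need \emph{not} be strongly $\delta$-conjugate, so no amount of bookkeeping with length-preserving elementary moves can link them all to one fixed representative. The smallest counterexample occurs already in $\mathbb{O}_{0,\delta}$ at length $2$: take $\tilde{w}=s_2s_1$ and $\tilde{w}'=s_1s_2=\delta(\tilde{w})$. Solving $\tilde{x}(s_2s_1)\delta(\tilde{x})^{-1}=s_1s_2$ in $\widetilde{W}=P\rtimes W$ (the Kottwitz invariant forces $\tilde{x}\in W_a$), one finds that the only conjugators are $\tilde{x}=t^{k\alpha_2}s_1s_2$ and $\tilde{x}=t^{k\alpha_2}s_1s_2s_1$ with $k\in\mathbb{Z}$; for every one of these, both additivity conditions fail (e.g.\ for $\tilde{x}=t^{k\alpha_2}s_1s_2$ one gets $\ell(\tilde{x}\tilde{w})=\ell(t^{k\alpha_2})=4|k|$ while $\ell(\tilde{x})+\ell(\tilde{w})=4|k|+2$ for $k\neq 0$, and similarly on the other side). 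Moreover, since elementary strong $\delta$-conjugation preserves both the length and the $\delta$-conjugacy class, any chain from $s_2s_1$ to $s_1s_2$ would have to pass through the six length-$2$ elements of $\mathbb{O}_{0,\delta}$, namely the $\Omega$-orbits of $s_2s_1$ and of $s_1s_2$; checking the (finitely many, by the $\tau$- and inversion-symmetries) representative cross pairs shows no elementary strong conjugation ever crosses between the two orbits. Hence $s_2s_1\,\tilde{\not\thicksim}_{\delta}\,s_1s_2$, yet Theorem~\ref{TT} asserts $T_{s_2s_1}\equiv T_{s_1s_2}$. The same phenomenon recurs at length $4$ in $\mathbb{O}_{2,\delta}$ with $t^{\alpha_1}$ and $t^{\alpha_2}=\delta(t^{\alpha_1})$: the length-preserving elementary moves split the twelve length-$4$ elements into two components swapped by $\delta$, and no longer conjugator satisfies length additivity. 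In short, congruence in the cocenter is strictly coarser than strong $\delta$-conjugacy, precisely on pairs related by the diagram automorphism.

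This is also exactly where the paper's proof diverges from your plan. The paper never claims strong conjugacy between same-length elements; it argues inside the Hecke algebra. Two ingredients replace your missing step. First, the automorphism $T_{\tilde{w}}\mapsto T_{\delta(\tilde{w})}$ of $\widetilde{H}$ preserves $[\widetilde{H},\widetilde{H}]_{\delta}$ and fixes every basis element $T_{\mathbb{O}}$ (each $\delta$-conjugacy class is $\delta$-stable and all its minimal length elements share one image), so $T_{\tilde{w}}\equiv T_{\delta(\tilde{w})}$ for \emph{all} $\tilde{w}$ --- this is the ``by symmetry'' step used for $\mathbb{O}_{0,\delta}$, and it is what handles the pairs above. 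Second, an induction on length using $T_{\tilde{w}}\equiv(v-v^{-1})T_{s_i\tilde{w}}+T_{s_i\tilde{w}\delta(s_i)}$, where the middle term is pinned down by cross-class information: for $\tilde{w}\in\mathbb{O}_{1,\delta}$ the element $s_i\tilde{w}$ is shown to be a \emph{minimal length} element of some $\mathbb{O}_{2k,\delta}$ (so its image is canonical), while for $\tilde{w}\in\mathbb{O}_{0,\delta}$ or $\mathbb{O}_{2m,\delta}$ the element $s_i\tilde{w}$ lies in $\mathbb{O}_{1,\delta}$, already treated. Your approach could likely be repaired by weakening your claim to ``any two same-length elements are strongly $\delta$-conjugate up to applying $\delta$'' and adding the cocenter $\delta$-invariance as a second ingredient, but as written it sets out to prove a false statement. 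A final correction: $\mathbb{O}'_{1,\delta}$ and $\mathbb{O}_{3,\delta}$ are excluded not because strong conjugacy fails there (it fails inside the included classes too, as above) but because the \emph{conclusion} of the theorem fails there --- the paper exhibits same-length elements of $\mathbb{O}'_{1,\delta}$ with genuinely different class polynomials.
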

\begin{proof}
We use induction on the length $\ell$. We first prove that $\tilde{w},\ \tilde{w}'\in\mathbb{O}_{1,\delta}$. If $\ell(\tilde{w})=\ell(\tilde{w}')=1$, obviously. Now if $\ell(\tilde{w})=\ell(\tilde{w}')=2k+1$ where $k\in\mathbb{N}_+$ and we assume for any $\tilde{w}_1,\ \tilde{w}'_1\in\mathbb{O}_{1,\delta}$ and $\ell(\tilde{w}_1)=\ell(\tilde{w}'_1)<2k+1$ then $T_{\tilde{w}_1}\equiv T_{\tilde{w}'_1}$. It is sufficient to show that if $\tilde{w}_1=s_i\tilde{w}\delta(s_i)$ and $\ell(\tilde{w}_1)=\ell(\tilde{w})-2$, then $s_i\tilde{w}$ is a minimal length element in $\mathbb{O}_{2k,\delta}$. If $k=3j+3$, then $\tilde{w}=t^{(j+2)\alpha_1-j\alpha_2+i(\alpha_1+2\alpha_2)}s_1$ where $0\leqslant i\leqslant j$. If $k=3j+4$, then $\tilde{w}=t^{(j+2)\alpha_1-(j+1)\alpha_2+i(\alpha_1+2\alpha_2)}s_1$ where $0\leqslant i\leqslant j+1$. If $k=3j+5$, then $\tilde{w}=t^{(j+4)\alpha_1-(j+1)\alpha_2+i(\alpha_1+2\alpha_2)}s_1$ where $0\leqslant i\leqslant j+1$. Thus $s_0\tilde{w}\thicksim\tilde{w}s_0$ and $\tilde{w}s_0$ is a minimal length element of $\mathbb{O}_{2k,\delta}$. Similar argument for other cases. For $\tilde{w}$ and $\tilde{w}'$ are contained in $\mathbb{O}_{0,\delta}$. For $k\in\mathbb{N}_+$, and if $\ell(\tilde{w})=6k-4$, then $\tilde{w}=t^{(1-k)(\alpha_1+2\alpha_2)}s_2s_1$ or $\tilde{w}=t^{(1-k)(2\alpha_1+\alpha_2)}s_1s_2$. If $\ell(\tilde{w})=6k-2$, then $\tilde{w}=t^{k(\alpha_1+2\alpha_2)}s_2s_1$ or $\tilde{w}=t^{k(2\alpha_1+\alpha_2)}s_1s_2$. If $\ell(\tilde{w})=6k$, then $\tilde{w}=t^{k(\alpha_1-\alpha_2)}$ or $\tilde{w}=t^{k(\alpha_2-\alpha_1)}$. By symmetry, we know $T_{t^{(1-k)(\alpha_1+2\alpha_2)}s_2s_1}\equiv T_{t^{(1-k)(2\alpha_1+\alpha_2)}s_1s_2}$, $T_{t^{k(\alpha_1+2\alpha_2)}s_2s_1}\equiv T_{t^{k(2\alpha_1+\alpha_2)}s_1s_2}$ or $T_{t^{k(\alpha_1-\alpha_2)}}\equiv T_{t^{k(\alpha_2-\alpha_1)}}$. Thus for $\mathbb{O}_{0,\delta}$ is proved. If $\tilde{w}\in\mathbb{O}_{2m,\delta}$ and $\ell(\tilde{w})=2m+2k$ for some $k\in\mathbb{N}_+$. If $\tilde{w}=t^{\lambda}s_2s_1$, then we have $\tilde{w}\xrightarrow[]{s_0}_{\delta}s_0\tilde{w}s_0$ or $\tilde{w}\xrightarrow[]{s_2}_{\delta}s_2\tilde{w}s_1$. In either case, $T_{\tilde{w}}\equiv(v-v^{-1})T_{s_i\tilde{w}}+T_{s_i\tilde{w}\delta(s_i)}$ where $i=0$ or $2$, $s_i\tilde{w}\in\mathbb{O}_{1,\delta}$. Similar argument for $\tilde{w}'$ where $\ell(\tilde{w})=\ell(\tilde{w}')$. We have some $i$ such that $T_{\tilde{w}'}\equiv(v-v^{-1})T_{s_i'\tilde{w}'}+T_{s_i'\tilde{w}'\delta(s_i')}$ and $s_i'\tilde{w}'\in\mathbb{O}_{1,\delta}$. Then by the proof for $\mathbb{O}_{1,\delta}$ and induction we have $T_{\tilde{w}}\equiv T_{\tilde{w}'}$.
\end{proof}
\begin{prop}\label{cpdelta1}
If $\tilde{w}\in\mathbb{O}_{1,\delta}$ and $\ell(\tilde{w})=2k+1$ where $k\in\mathbb{N}$. Then\[f_{\tilde{w},\mathbb{O}}=\left\{
                                                             \begin{array}{ll}
                                                               (v-v^{-1}), & \mathbb{O}=\mathbb{O}_{2j,\delta}\ (1\leqslant j\leqslant k)\\
                                                               1, & \mathbb{O}=\mathbb{O}_{1,\delta}\\
                                                               0, & otherwise.
                                                             \end{array}
                                                           \right.\]
\end{prop}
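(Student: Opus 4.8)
The plan is to argue by induction on $k$, using Theorem \ref{TT} to reduce to a single convenient representative of each length and using only the one basic reduction relation in $\widetilde{H}/[\widetilde{H},\widetilde{H}]_{\delta}$. First I would record that relation precisely. Suppose $s_i\in\widetilde{\mathbb{S}}$ satisfies $\ell(s_i\tilde{w})=\ell(\tilde{w})-1$ and $\ell(s_i\tilde{w}\delta(s_i))=\ell(\tilde{w})-2$. Writing $\tilde{w}=s_i\cdot(s_i\tilde{w})$ gives $T_{\tilde{w}}=T_{s_i}T_{s_i\tilde{w}}$, and the $\delta$-commutator congruence together with $\delta(T_{s_i})=T_{s_{\delta(i)}}$ yields $T_{\tilde{w}}\equiv T_{s_i\tilde{w}}T_{s_{\delta(i)}}$; finally, applying $T_s^2=(v-v^{-1})T_s+1$ (from $(T_s-v)(T_s+v^{-1})=0$) to the length-lowering product on the right gives
$$T_{\tilde{w}}\equiv (v-v^{-1})T_{s_i\tilde{w}}+T_{s_i\tilde{w}\delta(s_i)}\mod[\widetilde{H},\widetilde{H}]_{\delta}.$$
This is the only algebraic input beyond the conjugacy-class bookkeeping.

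For the base case $k=0$, an element $\tilde{w}\in\mathbb{O}_{1,\delta}$ with $\ell(\tilde{w})=1$ is of minimal length in its class by Lemma \ref{oi}, so by definition of the class polynomials $f_{\tilde{w},\mathbb{O}_{1,\delta}}=1$ and $f_{\tilde{w},\mathbb{O}}=0$ otherwise; the range $1\leqslant j\leqslant 0$ being empty, this agrees with the stated formula.

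For the inductive step I would assume $k\geqslant1$ and that the formula holds for all smaller values. By Theorem \ref{TT} it suffices to compute $f_{\tilde{w},\mathbb{O}}$ for one representative $\tilde{w}\in\mathbb{O}_{1,\delta}$ of length $2k+1$, for which I would take the explicit representatives $t^{\lambda}s_1$ appearing in the proof of Theorem \ref{TT} (the precise $\lambda$ depending on the residue of $k$ modulo $3$). The key fact, already extracted there, is that one may choose $s_i\in\widetilde{\mathbb{S}}$ so that $s_i\tilde{w}$ is a minimal length element of $\mathbb{O}_{2k,\delta}$, whence $T_{s_i\tilde{w}}=T_{\mathbb{O}_{2k,\delta}}$, while $s_i\tilde{w}\delta(s_i)\in\mathbb{O}_{1,\delta}$ has length $2k-1=2(k-1)+1$. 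Substituting into the reduction relation and then applying the induction hypothesis to $s_i\tilde{w}\delta(s_i)$ gives
$$T_{\tilde{w}}\equiv (v-v^{-1})T_{\mathbb{O}_{2k,\delta}}+\Big(T_{\mathbb{O}_{1,\delta}}+(v-v^{-1})\sum_{j=1}^{k-1}T_{\mathbb{O}_{2j,\delta}}\Big)=T_{\mathbb{O}_{1,\delta}}+(v-v^{-1})\sum_{j=1}^{k}T_{\mathbb{O}_{2j,\delta}},$$
which is exactly the asserted list of class polynomials.

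The main obstacle will be the combinatorial verification underlying that key fact: one must check, by explicit computation in the coweight lattice $Q$, that the chosen $s_i$ simultaneously lowers the length by one on the left and carries $\tilde{w}$ to a minimal-length element of $\mathbb{O}_{2k,\delta}$, while the two-sided product $s_i\tilde{w}\delta(s_i)$ again lands inside $\mathbb{O}_{1,\delta}$ with length dropping by two. This is precisely where the division into cases $k\equiv 0,1,2\pmod 3$ enters, mirroring the argument in the proof of Theorem \ref{TT}; once that length-and-membership bookkeeping is settled, the remainder is a mechanical telescoping of the induction.
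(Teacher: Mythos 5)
Your proposal is correct and takes essentially the same route as the paper: the paper's own proof is just the one-line remark that the result ``follows directly from the proof of Lemma \ref{oi} and Theorem \ref{TT},'' and your argument makes that explicit by combining the standard reduction relation $T_{\tilde{w}}\equiv(v-v^{-1})T_{s_i\tilde{w}}+T_{s_i\tilde{w}\delta(s_i)}$ with the key fact, extracted in the proof of Theorem \ref{TT} via the mod-$3$ case analysis of representatives, that $s_i\tilde{w}$ is a minimal length element of $\mathbb{O}_{2k,\delta}$ while $s_i\tilde{w}\delta(s_i)\in\mathbb{O}_{1,\delta}$ has length $2k-1$, then telescoping by induction. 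The bookkeeping (base case $k=0$ from Lemma \ref{oi}, reduction to a single representative via Theorem \ref{TT}, and the final identification of coefficients via the He--Nie basis theorem) is exactly what the paper intends.
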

\begin{proof}
This follows directly from the proof of Lemma \ref{oi} and Theorem \ref{TT}.
\end{proof}
\begin{prop}\label{cpdelta0}
If $\tilde{w}\in\mathbb{O}_{0,\delta}$ and $\ell(\tilde{w})=2k$ where $k\in\mathbb{N}$. Then\[f_{\tilde{w},\mathbb{O}}=\left\{
                                                             \begin{array}{ll}
                                                               (k-j)(v-v^{-1})^2, & \mathbb{O}=\mathbb{O}_{2j,\delta}\ (1\leqslant j\leqslant k-1)\\
                                                               k(v-v^{-1}), & \mathbb{O}=\mathbb{O}_{1,\delta}\\
                                                               1, & \mathbb{O}=\mathbb{O}_{0,\delta}\\
                                                               0, & otherwise.
                                                             \end{array}
                                                           \right.\]
\end{prop}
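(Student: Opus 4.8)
The plan is to induct on $k$, that is, on the length $\ell(\tilde{w})=2k$, using Theorem~\ref{TT} to pass to a single convenient representative of each even length and then applying the recursive definition of the class polynomials together with Proposition~\ref{cpdelta1}. For the base case $k=0$ the element $\tilde{w}$ is of minimal length in $\mathbb{O}_{0,\delta}$, so by definition $f_{\tilde{w},\mathbb{O}}=1$ for $\mathbb{O}=\mathbb{O}_{0,\delta}$ and $0$ otherwise; this matches the asserted formula, since the range $1\leqslant j\leqslant k-1$ is then empty and the coefficient $k(v-v^{-1})$ of $\mathbb{O}_{1,\delta}$ vanishes.

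For the inductive step I assume $k\geqslant1$ and that the formula holds for the length $2(k-1)$ elements of $\mathbb{O}_{0,\delta}$. By Theorem~\ref{TT} it suffices to treat one representative $\tilde{w}$ of length $2k$, and I take the explicit representatives produced in the proof of Lemma~\ref{oi} (such as $t^{m(\alpha_1+2\alpha_2)}s_2s_1$ or $t^{m(\alpha_1-\alpha_2)}$, according to the length). That proof furnishes a simple reflection $s_i$ with $\ell(s_i\tilde{w}\delta(s_i))=\ell(\tilde{w})-2$; carrying the reduction through shows that $\ell(s_i\tilde{w})=2k-1$ and that the intermediate element $s_i\tilde{w}$ has the normal form $t^{\mu}s_1$ or $t^{\mu}s_2$ with $\mu\in Q$, so that $s_i\tilde{w}\in\mathbb{O}_{1,\delta}$, while $s_i\tilde{w}\delta(s_i)$, being $\delta$-conjugate to $\tilde{w}$, stays in $\mathbb{O}_{0,\delta}$. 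The recursive definition then yields
\[
f_{\tilde{w},\mathbb{O}}=(v-v^{-1})\,f_{s_i\tilde{w},\mathbb{O}}+f_{s_i\tilde{w}\delta(s_i),\mathbb{O}}.
\]

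Now I substitute. Since $s_i\tilde{w}\in\mathbb{O}_{1,\delta}$ has length $2(k-1)+1$, Proposition~\ref{cpdelta1} gives $f_{s_i\tilde{w},\mathbb{O}_{1,\delta}}=1$, $f_{s_i\tilde{w},\mathbb{O}_{2j,\delta}}=(v-v^{-1})$ for $1\leqslant j\leqslant k-1$, and $0$ otherwise; and since $s_i\tilde{w}\delta(s_i)\in\mathbb{O}_{0,\delta}$ has length $2(k-1)$, the induction hypothesis applies to the second summand. Collecting the two contributions class by class gives $f_{\tilde{w},\mathbb{O}_{0,\delta}}=1$, $f_{\tilde{w},\mathbb{O}_{1,\delta}}=(v-v^{-1})+(k-1)(v-v^{-1})=k(v-v^{-1})$, and, for $1\leqslant j\leqslant k-1$,
\[
f_{\tilde{w},\mathbb{O}_{2j,\delta}}=(v-v^{-1})^2+(k-1-j)(v-v^{-1})^2=(k-j)(v-v^{-1})^2,
\]
where at $j=k-1$ the induction term is empty and the value is $(v-v^{-1})^2=(k-j)(v-v^{-1})^2$; all remaining coefficients vanish. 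This is precisely the claimed formula.

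The hard part will be the verification, built into the inductive step, that the intermediate $s_i\tilde{w}$ always lies in $\mathbb{O}_{1,\delta}$ and never in the other odd-length classes $\mathbb{O}'_{1,\delta}$ or $\mathbb{O}_{3,\delta}$; this is exactly what prevents any $\mathbb{O}'_{1,\delta}$- or $\mathbb{O}_{3,\delta}$-term from entering the answer. Confirming it amounts to writing $s_i\tilde{w}$ in the form $t^{\mu}w$ for each family of representatives of Lemma~\ref{oi} and checking that the Weyl part $w$ is a single simple reflection rather than $s_1s_2s_1$ --- equivalently, that the length bookkeeping $\ell(s_i\tilde{w})=\ell(\tilde{w})-1$ holds --- after which Theorem~\ref{conjdelta} pins down the class of $s_i\tilde{w}$ uniquely as $\mathbb{O}_{1,\delta}$.
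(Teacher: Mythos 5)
Your proof is correct and follows essentially the same route as the paper: a single length-reducing reflection $s_i$ writes $T_{\tilde{w}}\equiv(v-v^{-1})T_{s_i\tilde{w}}+T_{s_i\tilde{w}\delta(s_i)}$ with $s_i\tilde{w}\in\mathbb{O}_{1,\delta}$ of length $2k-1$ and $s_i\tilde{w}\delta(s_i)\in\mathbb{O}_{0,\delta}$ of length $2k-2$, after which Proposition~\ref{cpdelta1} and induction give the formula. The point you flag as the hard part --- that $s_i\tilde{w}$ lands in $\mathbb{O}_{1,\delta}$ rather than $\mathbb{O}'_{1,\delta}$ or $\mathbb{O}_{3,\delta}$ --- is exactly the assertion the paper states without further justification, and your suggested verification (Weyl part of $s_i\tilde{w}$ is a simple reflection for the representatives of Lemma~\ref{oi}) is the right way to check it.
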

\begin{proof}
For $\tilde{w}\in\mathbb{O}_{0,\delta}$ and $\ell(\tilde{w})=2k$ where $k\in\mathbb{N}$. If $k=0$, it is obvious. For $k>0$, there exist $s_i$ with $i=0,\ 1$ or $2$ such that\[T_{\tilde{w}}\equiv(v-v^{-1})T_{s_i\tilde{w}}+T_{s_i\tilde{w}\delta(s_i)}.\]Here $s_i\tilde{w}\in\mathbb{O}_{1,\delta}$ with $\ell(s_i\tilde{w})=2k-1$ and $\ell(s_i\tilde{w}\delta(s_i))=2k-2$. Use Proposition \ref{cpdelta1} and calculate inductively we have
\begin{align*}T_{\tilde{w}}\equiv&\sum_{j=1}^{k-1}(k-j)(v-v^{-1})^2T_{\mathbb{O}_{2j,\delta}}+k(v-v^{-1})T_{\mathbb{O}_{1,\delta}}+T_{\mathbb{O}_{0,\delta}}.
\end{align*}
\end{proof}
\begin{prop}\label{cpdelta2m}
Let $m\in\mathbb{N}_+$, if $\tilde{w}\in\mathbb{O}_{2m,\delta}$ and $\ell(\tilde{w})=2m+2k$ where $k\in\mathbb{N}$. Then\[f_{\tilde{w},\mathbb{O}}=\left\{
                                                             \begin{array}{ll}
                                                               (k-j)(v-v^{-1})^2, & \mathbb{O}=\mathbb{O}_{2m+2j,\delta}\ (1\leqslant j\leqslant k-1)\\
                                                               k(v-v^{-1})^2+1, & \mathbb{O}=\mathbb{O}_{2m,\delta}\\
                                                               k(v-v^{-1})^2, & \mathbb{O}=\mathbb{O}_{2m-2j,\delta}\ (1\leqslant j\leqslant m-1)\\
                                                               k(v-v^{-1}), & \mathbb{O}=\mathbb{O}_{1,\delta}\\
                                                               0, & otherwise.
                                                             \end{array}
                                                           \right.\]
\end{prop}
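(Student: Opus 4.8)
The plan is to argue by induction on $k$, running exactly parallel to the proofs of Propositions \ref{cpdelta1} and \ref{cpdelta0} and using the reduction step already extracted inside the proof of Theorem \ref{TT}. First, by Theorem \ref{TT} the image of $T_{\tilde w}$ modulo $[\widetilde H,\widetilde H]_\delta$ depends only on $\ell(\tilde w)$ among the elements of $\mathbb{O}_{2m,\delta}$, so $f_{\tilde w,\mathbb{O}}$ is determined by the length alone and I may replace $\tilde w$ by any convenient representative of length $2m+2k$, for instance one of the form $t^\lambda s_2s_1$ used in the proof of Theorem \ref{TT}. For the base case $k=0$ the element $\tilde w$ has length $2m=\ell(\mathbb{O}_{2m,\delta})$, hence is of minimal length in its $\delta$-conjugacy class; by the definition of class polynomials $f_{\tilde w,\mathbb{O}}=1$ for $\mathbb{O}=\mathbb{O}_{2m,\delta}$ and $0$ otherwise, which is precisely the stated formula specialized at $k=0$ (every coefficient carrying a factor $k$ vanishes, and $k(v-v^{-1})^2+1=1$).

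For the inductive step ($k\ge 1$) I would invoke the reduction established in the proof of Theorem \ref{TT}: there is a simple reflection $s_i$ (with $i=0$ or $2$) satisfying $\ell(s_i\tilde w\delta(s_i))=\ell(\tilde w)-2$, which yields
\[
T_{\tilde w}\equiv (v-v^{-1})\,T_{s_i\tilde w}+T_{s_i\tilde w\delta(s_i)},
\]
where $s_i\tilde w\in\mathbb{O}_{1,\delta}$ has length $2(m+k-1)+1$ and $s_i\tilde w\delta(s_i)\in\mathbb{O}_{2m,\delta}$ (being $\delta$-conjugate to $\tilde w$) has length $2m+2(k-1)$. To the first summand I apply Proposition \ref{cpdelta1} with $K=m+k-1$, which contributes $(v-v^{-1})^2$ to each $\mathbb{O}_{2p,\delta}$ for $1\le p\le m+k-1$ and $(v-v^{-1})$ to $\mathbb{O}_{1,\delta}$; to the second summand I apply the induction hypothesis at level $k-1$. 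Adding the two expansions and collecting coefficients class by class should then reproduce the claimed formula.

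The main work, and essentially the only place where care is needed, is the bookkeeping of the three overlapping index ranges after summation. Writing each even class as $\mathbb{O}_{2p,\delta}$, the uniform contribution $(v-v^{-1})^2$ coming from Proposition \ref{cpdelta1} lands on $1\le p\le m+k-1$, while the inductive term distributes $(k-1)(v-v^{-1})^2$ on $1\le p\le m-1$, the value $(k-1)(v-v^{-1})^2+1$ at $p=m$, and the triangular coefficients $(k-1-j)(v-v^{-1})^2$ at $p=m+j$ for $1\le j\le k-2$. I expect these to combine to $k(v-v^{-1})^2$ on $1\le p\le m-1$ (the range $\mathbb{O}_{2m-2j,\delta}$), to $k(v-v^{-1})^2+1$ at $p=m$, and to $(k-j)(v-v^{-1})^2$ at $p=m+j$ for $1\le j\le k-1$ (the range $\mathbb{O}_{2m+2j,\delta}$, with the top index $p=m+k-1$ receiving only the Proposition \ref{cpdelta1} term), while the $\mathbb{O}_{1,\delta}$ coefficients add to $k(v-v^{-1})$.

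Finally, since by Proposition \ref{cpdelta1} neither summand ever produces the classes $\mathbb{O}_{0,\delta}$, $\mathbb{O}'_{1,\delta}$ or $\mathbb{O}_{3,\delta}$, all remaining class polynomials vanish, giving the ``otherwise'' case and completing the induction. The expected obstacle is purely the index-matching just described; conceptually the argument is a direct bootstrap from the odd-length computation in $\mathbb{O}_{1,\delta}$, since the structural input (existence of the length-lowering reflection sending $\tilde w$ into $\mathbb{O}_{1,\delta}$) is supplied by Theorem \ref{TT}.
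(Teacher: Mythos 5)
Your proposal is correct and follows essentially the same route as the paper: the paper proves Proposition \ref{cpdelta0} by exactly this reduction $T_{\tilde w}\equiv(v-v^{-1})T_{s_i\tilde w}+T_{s_i\tilde w\delta(s_i)}$ with $s_i\tilde w\in\mathbb{O}_{1,\delta}$, followed by Proposition \ref{cpdelta1} and induction, and then states that the proof of \ref{cpdelta2m} is "quite similar" and omits it. Your index bookkeeping (including the top class $\mathbb{O}_{2m+2(k-1),\delta}$ receiving only the $(v-v^{-1})^2$ term from Proposition \ref{cpdelta1}, and the base case $k=0$ being the minimal-length case) checks out and in fact supplies the details the paper leaves to the reader.
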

\begin{proof}
The proof is quite similar to that of Proposition \ref{cpdelta0} and we will omit it.
\end{proof}
If $\tilde{w},\ \tilde{w}'\in\mathbb{O}'_{1,\delta}$ lie in the critical strip and $\ell(\tilde{w})=\ell(\tilde{w}')$, then $T_{\tilde{w}}\equiv T_{\tilde{w}'}$. Moreover, if $\tilde{w}\in\mathbb{O}'_{1,\delta}$ with $\ell(\tilde{w})=2k+1$ where $k\in\mathbb{N}$, then\[f_{\tilde{w},\mathbb{O}}=\left\{
                                                             \begin{array}{ll}
                                                               (v-v^{-1}), & \mathbb{O}=\mathbb{O}_{2j,\delta},\ 1\leqslant j\leqslant k\\
                                                               1, & \mathbb{O}=\mathbb{O}'_{1,\delta}\\
                                                               0, & otherwise.
                                                             \end{array}
                                                           \right.\]
If $\tilde{w}\in\mathbb{O}'_{1,\delta}$ lies in the shrunkun Weyl chambers, we do not have a uniform formula for those class polynomials. For example, if $\tilde{w}=t^{2\alpha_1+\alpha_2}s_1s_2s_1$, then\[f_{\tilde{w},\mathbb{O}}=\left\{
                                                             \begin{array}{ll}
                                                               (v-v^{-1})^3+2(v-v^{-1}), & \mathbb{O}=\mathbb{O}_{2,\delta}\\
                                                               (v-v^{-1})^2, & \mathbb{O}=\mathbb{O}_{1,\delta}\\
                                                               1, & \mathbb{O}=\mathbb{O}'_{1,\delta}\\
                                                               0, & otherwise.
                                                             \end{array}
                                                           \right.\]
If $\ell(\tilde{w})=7$ and $\tilde{w}=t^{\alpha_1-\alpha_2}s_1s_2s_1$, then\[f_{\tilde{w},\mathbb{O}}=\left\{
                                                             \begin{array}{ll}
                                                               (v-v^{-1}), & \mathbb{O}=\mathbb{O}_{6,\delta}\\
                                                               (v-v^{-1})^3+2(v-v^{-1}), & \mathbb{O}=\mathbb{O}_{2,\delta}\\
                                                               (v-v^{-1})^2, & \mathbb{O}=\mathbb{O}_{1,\delta}\\
                                                               1, & \mathbb{O}=\mathbb{O}'_{1,\delta}\\
                                                               0, & otherwise.
                                                             \end{array}
                                                           \right.\]
If $\ell(\tilde{w})=7$ and $\tilde{w}=t^{-\alpha_1-\alpha_2}s_1s_2s_1$, then\[f_{\tilde{w},\mathbb{O}}=\left\{
                                                             \begin{array}{ll}
                                                                (v-v^{-1})^3+(v-v^{-1}), & \mathbb{O}=\mathbb{O}_{4,\delta}\\
                                                               (v-v^{-1})^3+2(v-v^{-1}), & \mathbb{O}=\mathbb{O}_{2,\delta}\\
                                                               2(v-v^{-1})^2, & \mathbb{O}=\mathbb{O}_{1,\delta}\\
                                                               1, & \mathbb{O}=\mathbb{O}'_{1,\delta}\\
                                                               0, & otherwise.
                                                             \end{array}
                                                           \right.\]
 Inductively, if $f_{\tilde{w},\mathbb{O}}\neq0$, we deduce that $\deg f_{\tilde{w},\mathbb{O}_{1,\delta}}=2$ and $\deg f_{\tilde{w},\mathbb{O}_{2m,\delta}}=3$ or $1$ for certain $m$. Moreover, $f_{\tilde{w},\mathbb{O}'_{1,\delta}}=1$.

Similar argument for $\tilde{w}\in\mathbb{O}_{3,\delta}$.

\section{Applications}\label{Appl}
\subsection{Affine Deligne-Lusztig varieties of basic elements}\label{ADLVbasic}
First we assume that $b\in PGL_3(L)$ and $\tilde{w}\in\widetilde{W}$.
\begin{thm}\label{empty}
\begin{enumerate}
\item If $b=1$, then the affine Deligne-Lusztig variety $X_{\tilde{w}}(b)\neq\emptyset$ if and only if $\tilde{w}$ satisfies one of the following conditions:
\begin{enumerate}
\item $\tilde{w}=id$
\item $\tilde{w}\in\mathbb{O}_1$ or $\tilde{w}\in\mathbb{O}_2$
\item $\tilde{w}\in\mathbb{C}_i$ or $\mathbb{C}'_i$, where $i\in\mathbb{N}_+$ and $\ell(\tilde{w})\geqslant6i+3$.
\end{enumerate}
\item If $b=\tau$, then the affine Deligne-Lusztig variety $X_{\tilde{w}}(b)\neq\emptyset$ if and only if $\tilde{w}$ satisfies one of the following conditions:
\begin{enumerate}
\item $\tilde{w}\in\mathbb{O}_{id,\tau}$
\item $\tilde{w}\in\mathbb{O}_{i,\tau}$, where $i\in\mathbb{N}_+$ and $\ell(\tilde{w})\geqslant6i-1$
\item $\tilde{w}\in\mathbb{O}_{1-i,\tau}$, where $\ell(\tilde{w})\geqslant6i+1$.
\end{enumerate}
\end{enumerate}
\end{thm}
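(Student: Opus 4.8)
The plan is to deduce both statements from He's ``Dimension$=$Degree'' theorem (Theorem \ref{DimDeg}) together with the class polynomials already computed in \S\ref{split1} and \S\ref{Class}. Since the degree of the zero polynomial is $-\infty$, the formula in Theorem \ref{DimDeg} shows that $X_{\tilde{w}}(b)$ is nonempty if and only if there is at least one $\delta$-conjugacy class $\mathbb{O}$ with $f(\mathbb{O})=f(b)$ for which $f_{\tilde{w},\mathbb{O}}\neq0$. Here $G=PGL_3$ is split, so $\delta=\mathrm{id}$ and the relevant classes are the ordinary $\widetilde{W}$-conjugacy classes classified above. Thus I would split the proof into two tasks: first, for each basic $b$ identify the finite set $S_b=\{\mathbb{O}\mid f(\mathbb{O})=f(b)\}$; second, read off from the tables of Propositions which $\tilde{w}$ admit a nonzero class polynomial against some member of $S_b$.

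For $b=1$ we have $\bar{\nu}_b=0$ and $\kappa(b)=0$, so $S_1$ consists of the conjugacy classes lying in $W_a$ (the $\kappa=0$ component) whose Newton point vanishes. I would compute the Newton map on each class from \S\ref{split1}: a representative $t^{k\alpha_1}s_1$ of $\mathbb{O}_1$ satisfies $(t^{k\alpha_1}s_1)^2=1$, and a representative $t^{\lambda}s_1s_2$ of $\mathbb{O}_2$ satisfies $(t^{\lambda}s_1s_2)^3=t^{\lambda+s_1s_2\lambda+(s_1s_2)^2\lambda}=1$ because $s_1s_2$ is a rotation of order $3$ with no nonzero fixed vector; hence $\bar{\nu}=0$ on $\{Id\}$, $\mathbb{O}_1$, $\mathbb{O}_2$. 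By contrast $\mathbb{O}_{\lambda}$ has Newton point $\bar{\lambda}\neq0$, while for $\mathbb{C}_i$ (resp. $\mathbb{C}'_i$) the representative $t^{k\alpha_1+i\alpha_2}s_1$ gives $(t^{k\alpha_1+i\alpha_2}s_1)^2=t^{i(\alpha_1+2\alpha_2)}$, so $\bar{\nu}=\tfrac{i}{2}(\alpha_1+2\alpha_2)\neq0$ for $i\geqslant1$. Therefore $S_1=\{\{Id\},\mathbb{O}_1,\mathbb{O}_2\}$.

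For $b=\tau$ we have $\bar{\nu}_b=0$ (as $\tau^3=1$) and $\kappa(b)=\kappa(\tau)$, so $S_{\tau}$ consists of the conjugacy classes inside $W_a\tau$ with vanishing Newton point. The key computation is for $\mathbb{O}_{i,\tau}$: writing $\tau=t^{\chi}s_1s_2$ and using the representative $t^{k\alpha_1+i\alpha_2}s_1s_2s_1\tau$, its linear part is the single reflection $s_1$, so its square is a pure translation $t^{\mu'+s_1\mu'}$ with $\mu'=k\alpha_1+i\alpha_2+w_0(\chi)$; projecting $\mu'$ onto the $s_1$-fixed line yields $\bar{\nu}=\tfrac{3i-c}{6}(\alpha_1+2\alpha_2)$ for a fixed constant $c\in\{1,2\}$ (depending only on the normalization of $\tau$), which is nonzero for every integer $i$. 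Hence no $\mathbb{O}_{i,\tau}$ lies in $S_{\tau}$, and since $\mathbb{O}_{\lambda,\tau}$ likewise has nonzero Newton point, we get $S_{\tau}=\{\mathbb{O}_{id,\tau}\}$.

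With $S_b$ in hand the conclusion is pure bookkeeping against the class-polynomial tables. For $b=1$: $f_{id,\{Id\}}=1$, together with $f_{\tilde{w},\mathbb{O}_1}=1$ on $\mathbb{O}_1$ and $f_{\tilde{w},\mathbb{O}_2}=1$ on $\mathbb{O}_2$, gives cases (a) and (b); Proposition \ref{cplambda} shows every class polynomial of $\tilde{w}\in\mathbb{O}_{\lambda}$ against members of $S_1$ vanishes; and Propositions \ref{cpci}, \ref{cpc'i} show that for $\tilde{w}\in\mathbb{C}_i$ or $\mathbb{C}'_i$ the coefficient of $\mathbb{O}_2$ equals $k(v-v^{-1})$ with $k\geqslant1$ precisely when $\ell(\tilde{w})\geqslant6i+3$, yielding case (c) and emptiness below that threshold. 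For $b=\tau$ one reads off the coefficient of $\mathbb{O}_{id,\tau}$ from Propositions \ref{cpoddtau} and \ref{cpodd1tau}: it is nonzero exactly for $\ell(\tilde{w})\geqslant6i-1$ when $\tilde{w}\in\mathbb{O}_{i,\tau}$ with $i\geqslant1$, and for $\ell(\tilde{w})\geqslant6i+1$ when $\tilde{w}\in\mathbb{O}_{1-i,\tau}$, matching (a), (b), (c). The main obstacle is the Newton-vector bookkeeping that pins down $S_b$ — in particular the verification that $\mathbb{O}_{i,\tau}$ \emph{never} has vanishing Newton point, so that $S_{\tau}$ reduces to the single basic class — since once $S_b$ is determined, the nonvanishing thresholds follow immediately from the polynomials already computed.
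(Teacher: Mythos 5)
Your proposal is correct and follows essentially the same route as the paper: the paper's proof is exactly ``apply the Dimension$=$Degree theorem and check the nonzero class polynomials of \S\ref{Class},'' which is what you do. The only difference is that you spell out the step the paper leaves implicit — computing Newton points to pin down the set $S_b$ of conjugacy classes with $f(\mathbb{O})=f(b)$ (namely $\{Id\},\ \mathbb{O}_1,\ \mathbb{O}_2$ for $b=1$ and $\mathbb{O}_{id,\tau}$ for $b=\tau$) — and your computations there, including the threshold bookkeeping $\ell\geqslant 6i+3$, $\ell\geqslant 6i-1$, $\ell\geqslant 6i+1$ against Propositions \ref{cpci}, \ref{cpc'i}, \ref{cpoddtau}, \ref{cpodd1tau}, are accurate.
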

\begin{proof}
Following the ``Dimension$=$Degree" Theorem \ref{DimDeg}, we go back to \S \ref{Class} to check those nonzero class polynomials.
\end{proof}
Once the affine Deligne-Lusztig variety $X_{\tilde{w}}(b)\neq\emptyset$, we do have a neat dimension formula.
\begin{thm}\label{dimen}
\begin{enumerate}
\item If $b=1$ and $X_{\tilde{w}}(b)\neq\emptyset$, then
\[\dim X_{\tilde{w}}(b)=\left\{
 \begin{array}{ll}
 0, & \tilde{w}=Id\\
 1, & \tilde{w}\in\mathbb{O}_1\ and\ \ell(\tilde{w})=1\\
 \frac{\ell(\tilde{w})}{2}+1, & \tilde{w}\in\mathbb{O}_2\\
 \frac{\ell(\tilde{w})+3}{2}, & \tilde{w}\in\mathbb{O}_1\ with\ \ell(\tilde{w})>1,\ or\\
 & \tilde{w}\in\mathbb{C}_{i}\ or\ \mathbb{C}'_{i}\ for\ i\in\mathbb{N}_+.
  \end{array}
  \right.\]
\item If $b=\tau$ and $X_{\tilde{w}}(b)\neq\emptyset$, then\[\dim X_{\tilde{w}}(b)=\left\{
 \begin{array}{ll}
 \frac{\ell(\tilde{w})}{2}, & \tilde{w}\in\mathbb{O}_{id,\tau}\\
 \frac{\ell(\tilde{w})+1}{2}, & \tilde{w}\in\mathbb{O}_{i,\tau}\ where\ i\in\mathbb{Z}.
  \end{array}
  \right.\]
\end{enumerate}
\end{thm}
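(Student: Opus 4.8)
The plan is to deduce both dimension formulas directly from the ``Dimension$=$Degree'' theorem (Theorem \ref{DimDeg}). Both $b=1$ and $b=\tau$ are \emph{basic}: since $1=1^{n}$ and $\tau^{3}=1$, each has Newton vector $\bar{\nu}_b=0$, so the correction term $\langle\bar{\nu}_b,2\rho\rangle$ vanishes and the formula collapses to
\[\dim X_{\tilde{w}}(b)=\max_{\mathbb{O}}\tfrac{1}{2}\bigl(\ell(\tilde{w})+\ell(\mathbb{O})+\deg f_{\tilde{w},\mathbb{O}}\bigr),\]
where $\mathbb{O}$ ranges only over the conjugacy classes with $f(\mathbb{O})=f(b)$. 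Since every $f_{\tilde{w},\mathbb{O}}$ has already been computed in \S\ref{Class}, the whole problem reduces to (i) pinning down which $\mathbb{O}$ satisfy $f(\mathbb{O})=f(b)$, and (ii) extracting, among those admissible classes, the one maximizing $\ell(\mathbb{O})+\deg f_{\tilde{w},\mathbb{O}}$.

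The first step is the selection of admissible classes. Because the Kottwitz map $\kappa$ is constant on each coset $W_a\tau^{j}$, the condition $f(\mathbb{O})=f(b)$ amounts to requiring $\mathbb{O}\subset W_a$ with $\bar{\nu}_{\mathbb{O}}=0$ for $b=1$, and $\mathbb{O}\subset W_a\tau$ with $\bar{\nu}_{\mathbb{O}}=0$ for $b=\tau$. I would compute these Newton vectors on the representatives exhibited in the classification lemmas. A pure translation $t^{\lambda}\in\mathbb{O}_{\lambda}$ has $\bar{\nu}_{\mathbb{O}_{\lambda}}=\bar{\lambda}\neq0$; squaring the representative $t^{k\alpha_1+i\alpha_2}s_1$ of $\mathbb{C}_i$ gives $t^{i\alpha_1+2i\alpha_2}$, so $\bar{\nu}_{\mathbb{C}_i}=\tfrac{1}{2}(i\alpha_1+2i\alpha_2)\neq0$; whereas $t^{k\alpha_1}s_1\in\mathbb{O}_1$ squares to the identity and $t^{\lambda}s_1s_2\in\mathbb{O}_2$ cubes to the identity. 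Hence the only basic classes in $W_a$ are $\{Id\}$, $\mathbb{O}_1$, $\mathbb{O}_2$, with $\ell(\mathbb{O})=0,1,2$; an analogous computation in $W_a\tau$ shows that $\mathbb{O}_{id,\tau}$ is the \emph{unique} basic class there, the families $\mathbb{O}_{\lambda,\tau}$ and $\mathbb{O}_{i,\tau}$ all having nonzero Newton vector.

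The second step is a bounded case analysis feeding the tabulated class polynomials into the reduced formula. For $b=1$ one restricts the maximum to $\mathbb{O}\in\{\{Id\},\mathbb{O}_1,\mathbb{O}_2\}$. Reading off Propositions \ref{cpa21}, \ref{cpo1}, and \ref{cpci}: for $\tilde{w}\in\mathbb{O}_2$ only $f_{\tilde{w},\mathbb{O}_2}=1$ survives, giving $\tfrac{1}{2}(\ell(\tilde{w})+2+0)=\tfrac{\ell(\tilde{w})}{2}+1$; for $\tilde{w}\in\mathbb{O}_1$ with $\ell(\tilde{w})>1$, and likewise for $\tilde{w}\in\mathbb{C}_i\cup\mathbb{C}'_i$ in the nonempty range, the surviving contribution of largest degree is $f_{\tilde{w},\mathbb{O}_2}$ of degree $1$, yielding $\tfrac{1}{2}(\ell(\tilde{w})+2+1)=\tfrac{\ell(\tilde{w})+3}{2}$; the values $0$ and $1$ for $\tilde{w}=Id$ and for $\tilde{w}\in\mathbb{O}_1$ of length $1$ are immediate. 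For $b=\tau$ one restricts to $\mathbb{O}=\mathbb{O}_{id,\tau}$ and reads $\deg f_{\tilde{w},\mathbb{O}_{id,\tau}}$ from Propositions \ref{cpatau} and \ref{cpoddtau}: it is $0$ when $\tilde{w}\in\mathbb{O}_{id,\tau}$, giving $\tfrac{\ell(\tilde{w})}{2}$, and $1$ when $\tilde{w}\in\mathbb{O}_{i,\tau}$, giving $\tfrac{\ell(\tilde{w})+1}{2}$.

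The genuinely delicate point is not the arithmetic but controlling the interaction between the length constraint and the admissibility constraint. The class polynomials of \S\ref{Class} acquire their high degrees ($2$ and $3$, even with growing coefficients) precisely on the \emph{non-basic} classes $\mathbb{O}_{\lambda}$, $\mathbb{C}_i$, $\mathbb{O}_{\lambda,\tau}$, $\mathbb{O}_{i,\tau}$; the constraint $f(\mathbb{O})=f(b)$ silently discards all of these, leaving only degree-$\leqslant 1$ survivors. The main work is therefore to verify, uniformly across the infinite length-families, that every high-degree contribution indeed attaches to a non-basic class and that within the admissible classes the maximizing $\mathbb{O}$ is the one named above in every length regime; here the emptiness thresholds of Theorem \ref{empty} are exactly the lengths below which even the surviving class polynomial vanishes. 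Organizing this bookkeeping so that no length window is missed is the principal obstacle.
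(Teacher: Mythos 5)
Your proposal is correct and takes essentially the same route as the paper: the paper's proof likewise invokes the ``Dimension$=$Degree'' theorem, restricts the maximum to the basic classes ($\{Id\},\ \mathbb{O}_1,\ \mathbb{O}_2$ for $b=1$, and symmetrically $\mathbb{O}_{id,\tau}$ for $b=\tau$), and reads off the degrees of the class polynomials computed in \S\ref{Class}. Your write-up merely makes explicit the Newton-vector computations identifying which conjugacy classes satisfy $f(\mathbb{O})=f(b)$, which the paper leaves implicit.
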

\begin{proof}
By the ``Dimension$=$Degree" theorem, if $b=1$, then $$\dim X_{\tilde{w}}(1)= \frac{1}{2}\max\{\ell(\tilde{w})+\deg f_{\tilde{w},Id},\ \ell(\tilde{w})+1+\deg f_{\tilde{w},\mathbb{O}_1},\ \ell(\tilde{w})+2+\deg f_{\tilde{w},\mathbb{O}_2}\}.$$We check the degree of those class polynomials in \S \ref{split1}, and the theorem is proved.
\end{proof}
\begin{rmk}
Symmetrically, we have a similar description of the emptiness/nonemptiness pattern and dimension formula of $X_{\tilde{w}}(b)$ for $\tilde{w}$, $b=\tau^2$.
\end{rmk}
In the following, the proofs of theorems of the emptiness/nonemptiness pattern and dimension formula are similar to that of Theorem \ref{empty} and \ref{dimen}, and we will omit them.
Now we assume that $b\in U_3(L)$.
\begin{thm}
If $b$ is basic, then $X_{\tilde{w}}(b)\neq\emptyset$ if and only if $\tilde{w}\not \in\bigsqcup_{m\in\mathbb{N}_+}\mathbb{O}^{min}_{2m,\delta}$, where $\mathbb{O}^{min}_{2m,\delta}$ is the set of minimal length elements of $\mathbb{O}_{2m,\delta}$.
\end{thm}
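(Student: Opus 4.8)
The plan is to deduce everything from the ``Dimension$=$Degree'' Theorem \ref{DimDeg}. Together with the convention that $\dim\emptyset=-\infty$ and $\deg 0=-\infty$, that theorem says that $X_{\tilde w}(b)\neq\emptyset$ if and only if there exists a $\delta$-conjugacy class $\mathbb{O}$ with $f(\mathbb{O})=f(b)$ and $f_{\tilde w,\mathbb{O}}\neq 0$. So the whole problem reduces to two tasks: (i) describe the set of classes $\mathbb{O}$ sharing the invariant $f(b)$ of a basic element, and (ii) read off, from the class polynomials already computed in \S\ref{Class}, for which $\tilde w$ at least one such $\mathbb{O}$ occurs with nonzero coefficient.

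First I would pin down $f(b)$. Since $U_3$ is semisimple, a basic $b$ has central Newton point, so $\bar\nu_b=0$ and in particular $\langle\bar\nu_b,2\rho\rangle=0$; moreover the Kottwitz invariant is forced because $(P/Q)_\delta$ is trivial for the twisted $\widetilde A_2$ datum. Hence $f(b)=(0,0)$, and the relevant classes are exactly those with central Newton point. I would then compute $\bar\nu_{\mathbb{O}}$ for each class in Theorem \ref{conjdelta}: the classes $\mathbb{O}_{0,\delta},\ \mathbb{O}_{1,\delta},\ \mathbb{O}'_{1,\delta},\ \mathbb{O}_{3,\delta}$ each contain a representative of finite $\delta$-order — for instance $\mathrm{id}$, $s_1$, $t^{\alpha_2}s_1s_2s_1$, $s_1s_2s_1$ — and thus have $\bar\nu=0$, whereas each $\mathbb{O}_{2m,\delta}$ with $m\geq 1$ contains a \emph{pure translation} $t^{k\alpha_1+(m-k)\alpha_2}$ with nonzero dominant part, so $\bar\nu_{\mathbb{O}_{2m,\delta}}\neq 0$. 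This is the conceptual heart of the argument: the classes $\mathbb{O}_{2m,\delta}$ are precisely the non-basic ones, and no other class is non-basic.

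With this in hand the proof splits according to the class of $\tilde w$, using Theorem \ref{conjdelta}. If $\tilde w$ lies in one of $\mathbb{O}_{0,\delta},\mathbb{O}_{1,\delta},\mathbb{O}'_{1,\delta},\mathbb{O}_{3,\delta}$, then its own class is central and $f_{\tilde w,\mathbb{O}}=1$ by Propositions \ref{cpdelta0}, \ref{cpdelta1} and the corresponding displayed computations for $\mathbb{O}'_{1,\delta}$ and $\mathbb{O}_{3,\delta}$; hence $X_{\tilde w}(b)\neq\emptyset$, and such $\tilde w$ are never minimal elements of any $\mathbb{O}_{2m,\delta}$. The decisive case is $\tilde w\in\mathbb{O}_{2m,\delta}$: Proposition \ref{cpdelta2m} shows that the only classes occurring in the expansion of $T_{\tilde w}$ are the $\mathbb{O}_{2\ell,\delta}$ and $\mathbb{O}_{1,\delta}$, and that $\mathbb{O}_{0,\delta}$ does \emph{not} appear. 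Among those occurring, only $\mathbb{O}_{1,\delta}$ is central, with coefficient $k(v-v^{-1})$ where $\ell(\tilde w)=2m+2k$; this is nonzero precisely when $k\geq 1$, that is, when $\tilde w$ is \emph{not} of minimal length in $\mathbb{O}_{2m,\delta}$. Thus for $\tilde w\in\mathbb{O}^{min}_{2m,\delta}$ every class with nonzero class polynomial is non-basic, forcing $X_{\tilde w}(b)=\emptyset$, while in all other cases some central class survives and $X_{\tilde w}(b)\neq\emptyset$; together with the previous paragraph this is exactly the asserted dichotomy.

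The step I expect to require the most care is the second one: verifying that $\bar\nu_{\mathbb{O}}=0$ for the four ``small'' classes but $\bar\nu_{\mathbb{O}_{2m,\delta}}\neq 0$, since this is what matches the algebraic bookkeeping of the class polynomials to the geometric basic locus. In particular one must confirm that the isolated central contribution identified in Proposition \ref{cpdelta2m} passes solely through $\mathbb{O}_{1,\delta}$ and not through the equally central $\mathbb{O}_{0,\delta}$, so that the vanishing of $f_{\tilde w,\mathbb{O}_{1,\delta}}$ — equivalently, minimality of $\ell(\tilde w)$ in $\mathbb{O}_{2m,\delta}$ — is genuinely the emptiness criterion.
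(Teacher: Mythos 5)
Your proof is correct and follows essentially the same route as the paper, which omits the argument but indicates it is the same as for Theorem \ref{empty}: apply the ``Dimension$=$Degree'' Theorem \ref{DimDeg} and read off the nonzero class polynomials computed in \S\ref{Class} (Propositions \ref{cpdelta1}, \ref{cpdelta0}, \ref{cpdelta2m} and the displayed formulas for $\mathbb{O}'_{1,\delta}$ and $\mathbb{O}_{3,\delta}$). The only content you add beyond the paper's sketch is the explicit verification that $\mathbb{O}_{0,\delta}$, $\mathbb{O}_{1,\delta}$, $\mathbb{O}'_{1,\delta}$, $\mathbb{O}_{3,\delta}$ are precisely the classes with $f(\mathbb{O})=f(b)$ for basic $b$ while the $\mathbb{O}_{2m,\delta}$ are not; this is correct and is exactly the bookkeeping needed to make the paper's sketch rigorous.
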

\begin{thm}
If $b=1$ and if $X_{\tilde{w}}(b)\neq\emptyset$, then\[\dim X_{\tilde{w}}(b)=\left\{
 \begin{array}{ll}
 0, & \tilde{w}\in\mathbb{O}_{0,\delta}\ and\ \ell(\tilde{w})=0\\
\frac{1}{2}(\ell(\tilde{w})+1), & \tilde{w}\in\mathbb{O}'_{1,\delta}\ and\ lies\ in\ critical\ strips,\ or\ \tilde{w}\in\mathbb{O}_{1,\delta}\\
\frac{1}{2}(\ell(\tilde{w})+2), & \tilde{w}\in\mathbb{O}_{0,\delta}\ with\ \ell(\tilde{w})>0,\ or\ \tilde{w}\in \mathbb{O}_{2m,\delta}\ where\ m\in\mathbb{N}_+\\
\frac{1}{2}(\ell(\tilde{w})+3), & \tilde{w}\in\mathbb{O}'_{1,\delta}\ corresponds\ to\ shrunkun\ Weyl\ chambers,\ or\\
& \tilde{w}\in\mathbb{O}_{3,\delta}.
  \end{array}
  \right.\]
\end{thm}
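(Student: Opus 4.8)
The plan is to feed the class-polynomial computations of this subsection into the ``Dimension$=$Degree'' Theorem \ref{DimDeg}. Since $b=1$ is basic, its Newton vector is $\bar{\nu}_b=0$, so the correction term $\langle\bar{\nu}_b,2\rho\rangle$ vanishes and the formula collapses to
\[\dim X_{\tilde{w}}(1)=\frac{1}{2}\max_{\mathbb{O}}\bigl(\ell(\tilde{w})+\ell(\mathbb{O})+\deg f_{\tilde{w},\mathbb{O}}\bigr),\]
where $\mathbb{O}$ ranges only over those $\delta$-conjugacy classes with $f(\mathbb{O})=f(1)$, i.e. the \emph{basic} classes (Newton vector $0$ and trivial Kottwitz invariant). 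Thus the first step is to single out these classes from the list in Theorem \ref{conjdelta}.

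First I would run the Newton map of \S\ref{DD} over that list. The class $\mathbb{O}_{0,\delta}$ contains $\mathrm{id}$, and one checks that $\mathbb{O}_{1,\delta}$, $\mathbb{O}'_{1,\delta}$ and $\mathbb{O}_{3,\delta}$ are also basic: for the representative $t^{\lambda}s_1s_2s_1\in\mathbb{O}_{3,\delta}$ one computes $\lambda+w_0\delta(\lambda)=0$ because the longest element $w_0$ satisfies that $w_0\delta$ acts as $-1$ on $P$, and the length-$1$ representatives are treated similarly. By contrast $\mathbb{O}_{2m,\delta}$ contains the translation $t^{k\alpha_1+(m-k)\alpha_2}$, whose $\delta$-average is $\tfrac{m}{2}(\alpha_1+\alpha_2)\neq0$, so $\mathbb{O}_{2m,\delta}$ is \emph{not} basic and drops out of the maximum. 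Recording the minimal lengths $\ell(\mathbb{O}_{0,\delta})=0$, $\ell(\mathbb{O}_{1,\delta})=\ell(\mathbb{O}'_{1,\delta})=1$, $\ell(\mathbb{O}_{3,\delta})=3$ from Theorem \ref{conjdelta} then fixes all the data entering the maximum.

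The remaining work is a short case analysis on the class of $\tilde{w}$, reading $\deg f_{\tilde{w},\mathbb{O}}$ off Propositions \ref{cpdelta1}, \ref{cpdelta0}, \ref{cpdelta2m} and the $\mathbb{O}'_{1,\delta}$-computations above, and keeping only the basic $\mathbb{O}$. For $\tilde{w}\in\mathbb{O}_{0,\delta}$ with $\ell(\tilde{w})=2k>0$ the surviving terms are $\mathbb{O}_{0,\delta}$ (degree $0$) and $\mathbb{O}_{1,\delta}$ (degree $1$), the latter winning with $\tfrac12(2k+1+1)=\tfrac12(\ell(\tilde{w})+2)$; for $\tilde{w}\in\mathbb{O}_{1,\delta}$ only $\mathbb{O}_{1,\delta}$ is basic and gives $\tfrac12(\ell(\tilde{w})+1)$; likewise for $\tilde{w}\in\mathbb{O}'_{1,\delta}$ lying in the critical strips, where $f_{\tilde{w},\mathbb{O}'_{1,\delta}}=1$ yields $\tfrac12(\ell(\tilde{w})+1)$.

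The main obstacle is the shrunken-chamber regime for $\mathbb{O}'_{1,\delta}$ (and the parallel case $\mathbb{O}_{3,\delta}$). Here the reduction no longer stays on the straight class: as the worked examples show, $f_{\tilde{w},\mathbb{O}_{1,\delta}}$ acquires degree $2$ rather than $1$, and since $\mathbb{O}_{1,\delta}$ is basic this produces $\tfrac12(\ell(\tilde{w})+1+2)=\tfrac12(\ell(\tilde{w})+3)$. The delicate point is to verify that this degree-$2$ contribution on $\mathbb{O}_{1,\delta}$ is genuinely the maximum: in particular that the high-minimal-length classes $\mathbb{O}_{2m,\delta}$, although they receive degree-$3$ polynomials, are excluded because they are non-basic, and that no basic class beats $\ell(\mathbb{O})+\deg f=3$. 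Establishing this uniformly, rather than example by example, is the crux, and it rests on the inductive bound, visible in the examples, that for such $\tilde{w}$ one always has $\deg f_{\tilde{w},\mathbb{O}_{1,\delta}}=2$, $f_{\tilde{w},\mathbb{O}'_{1,\delta}}=1$, and $\deg f_{\tilde{w},\mathbb{O}_{2m,\delta}}\le 3$.
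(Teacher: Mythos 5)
Your proposal is correct and follows exactly the route the paper intends: the paper omits this proof, stating it is ``similar to that of Theorem \ref{empty} and \ref{dimen}'', i.e.\ apply the ``Dimension$=$Degree'' Theorem \ref{DimDeg} with $\bar{\nu}_b=0$, restrict the maximum to the $\delta$-conjugacy classes with $f(\mathbb{O})=f(1)$ (namely $\mathbb{O}_{0,\delta}$, $\mathbb{O}_{1,\delta}$, $\mathbb{O}'_{1,\delta}$, $\mathbb{O}_{3,\delta}$, excluding the non-basic $\mathbb{O}_{2m,\delta}$), and read the degrees off Propositions \ref{cpdelta1}, \ref{cpdelta0}, \ref{cpdelta2m} and the $\mathbb{O}'_{1,\delta}$, $\mathbb{O}_{3,\delta}$ computations. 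Your identification of the basic classes via the Newton map and your case analysis, including the shrunken-chamber case where $\deg f_{\tilde{w},\mathbb{O}_{1,\delta}}=2$ forces the value $\frac{1}{2}(\ell(\tilde{w})+3)$, reproduce the paper's argument faithfully.
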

\subsection{Affine Deligne-Lusztig varieties of nonbasic elements}\label{ADLVnonbasic}
By the ``Dimension$=$Degree" theorem, for any $b\in G(L)$ and $\tilde{w}\in\widetilde{W}$, the affine Deligne-Lusztig variety $X_{\tilde{w}}(b)\neq\emptyset$ if and only if the corresponding class polynomial is nonzero. Thus for nonbasic $b$, we check the corresponding class polynomial in Section \S\ref{Class} and we know the emptiness/nonemptiness pattern. For example, if $b\in PGL_3(L)$ corresponds to $\mathbb{O}_{\lambda_0}$ (i.e. $f(b)=f(\mathbb{O}_{\lambda_0})$) where $\lambda_0\in P_+\cap Q_{sh}$, then $X_{\tilde{w}}(b)\neq\emptyset$ if and only if $f_{\tilde{w},\mathbb{O}_{\lambda_0}}\neq0$. If $b\in U_3(L)$ corresponds to $\mathbb{O}_{2m_0,\delta}$ where $m_0\in\mathbb{N}_+$, then $X_{\tilde{w}}(b)\neq\emptyset$ if and only if $f_{\tilde{w},\mathbb{O}_{2m_0,\delta}}\neq0$.
\begin{thm}
\begin{enumerate}
\item If $b\in PGL_3(L)$ corresponds to $\mathbb{O}_{\lambda_0}$ where $\lambda_0\in P_+\cap Q_{sh}$ and $X_{\tilde{w}}(b)\neq\emptyset$, then
\[\dim X_{\tilde{w}}(b)=\left\{
 \begin{array}{ll}
 \frac{1}{2}(\ell(\tilde{w})+\ell(\mathbb{O}_{\lambda_0}))-\langle\lambda_0,\ 2\rho\rangle, & \tilde{w}\in\mathbb{O}_{\lambda_0}\\
 \frac{1}{2}(\ell(\tilde{w})+\ell(\mathbb{O}_{\lambda_0})+1)-\langle\lambda_0,\ 2\rho\rangle, & \tilde{w}\in\mathbb{O}_1\ and\ \lambda_0=\frac{\ell(\tilde{w})-1}{4}(\alpha_1+\alpha_2)\ or\\
 & \tilde{w}\in\mathbb{C}_{i}\ or\ \mathbb{C}'_{i}\ i\in\mathbb{N}_+\ with\ \ell(\tilde{w})\leqslant6i+1\\
 \frac{1}{2}(\ell(\tilde{w})+\ell(\mathbb{O}_{\lambda_0})+2)-\langle\lambda_0,\ 2\rho\rangle, & \tilde{w}\in\mathbb{O}_2\\
 \frac{1}{2}(\ell(\tilde{w})+\ell(\mathbb{O}_{\lambda_0})+3)-\langle\lambda_0,\ 2\rho\rangle, & \tilde{w}\in\mathbb{O}_1\ and\ \lambda_0\neq\frac{\ell(\tilde{w})-1}{4}(\alpha_1+\alpha_2)\ or\\
 & \tilde{w}\in\mathbb{C}_{i}\ or\ \mathbb{C}'_{i}\ i\in\mathbb{N}_+\ with\ \ell(\tilde{w})>6i+1\\
  \end{array}
  \right.\]
\item If $b\in PGL_3(L)$ corresponds to $\mathbb{O}_{i_0(\alpha_1+2\alpha_2)}$ where $i_0\in\mathbb{N}_+$ and $X_{\tilde{w}}(b)\neq\emptyset$, then
\[\dim X_{\tilde{w}}(b)=\left\{
 \begin{array}{ll}
 \frac{1}{2}(\ell(\tilde{w})+6i_0+1)-t_0, & \tilde{w}\in\mathbb{C}_{i}\ or\ \mathbb{C}'_{i}\ with\ \ell(\tilde{w})\leqslant6i+1,\ i\in\mathbb{N}_+\\
 \frac{1}{2}(\ell(\tilde{w})+6i_0+2)-t_0, & \tilde{w}\in\mathbb{O}_2\\
 \frac{1}{2}(\ell(\tilde{w})+6i_0+3)-t_0, & \tilde{w}\in\mathbb{O}_1\ or\ \tilde{w}\in\mathbb{C}_{i},\ \mathbb{C}'_{i}\ with\ \ell(\tilde{w})>6i+1.
  \end{array}
  \right.\]Where $t_0=i_0\langle\alpha_1+2\alpha_2,\ 2\rho\rangle$.\\
 Symmetrically, if $b$ corresponds to $\mathbb{O}_{i_0(2\alpha_1+\alpha_2)}$, then there is a similar dimension formula.\\
\item If $b\in PGL_3(L)$ corresponds to $\mathbb{C}_{i_0}$ where $i_0\in\mathbb{N}_+$ and $X_{\tilde{w}}(b)\neq\emptyset$, then\[\dim X_{\tilde{w}}(b)=\left\{
 \begin{array}{ll}
 \frac{1}{2}(\ell(\tilde{w})+\ell(\mathbb{C}_{i_0}))-t'_0, & \tilde{w}\in\mathbb{C}_{i}\ or\ \mathbb{C}'_{i}\ with\ \ell(\tilde{w})\leqslant6i+1,\ i\in\mathbb{N}_+\\
 \frac{1}{2}(\ell(\tilde{w})+\ell(\mathbb{C}_{i_0})+1)-t'_0, & \tilde{w}\in\mathbb{O}_2\\
 \frac{1}{2}(\ell(\tilde{w})+\ell(\mathbb{C}_{i_0})+2)-t'_0, & \tilde{w}\in\mathbb{O}_1\ or\ \tilde{w}\in\mathbb{C}_{i},\ \mathbb{C}'_{i}\ with\ \ell(\tilde{w})>6i+1,\ i\in\mathbb{N}_+.
  \end{array}
  \right.\]Where $t'_0=i_0\langle\alpha_1+2\alpha_2,\ \rho\rangle$.\\
Symmetrically, when $b$ corresponds to $\mathbb{C}'_{i_0}$ we have a similar dimension formula.
\end{enumerate}
\end{thm}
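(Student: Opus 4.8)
The plan is to feed the explicit class polynomials of Section \ref{Class} into the ``Dimension$=$Degree'' Theorem \ref{DimDeg}. Once $b$ is fixed the shift $\langle\bar{\nu}_b,2\rho\rangle$ is a constant, so the formula reduces the computation of $\dim X_{\tilde{w}}(b)$ to maximizing the quantity $\ell(\mathbb{O})+\deg f_{\tilde{w},\mathbb{O}}$ over those conjugacy classes $\mathbb{O}$ lying in the fiber $f^{-1}(f(b))$, i.e. sharing the Newton point and Kottwitz image of $b$. Accordingly the first task is to determine, for each prescribed Newton type, exactly which of the classes $\{Id\},\mathbb{O}_1,\mathbb{O}_2,\mathbb{O}_\lambda,\mathbb{C}_i,\mathbb{C}'_i$ classified above occur in that fiber.

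To this end I would compute the Newton invariant of every class. A translation class $\mathbb{O}_\lambda=\widetilde{W}\cdot t^\lambda$ has Newton point the dominant representative of $\lambda$, while a direct squaring gives $(t^{k\alpha_1+i\alpha_2}s_1)^2=t^{i(\alpha_1+2\alpha_2)}$, so $\mathbb{C}_i$ has Newton point $\tfrac{i}{2}(\alpha_1+2\alpha_2)$ and symmetrically $\mathbb{C}'_i$ has $\tfrac{i}{2}(2\alpha_1+\alpha_2)$; the classes $\mathbb{O}_1,\mathbb{O}_2$ are basic. For part (1), $\lambda_0\in P_+\cap Q_{sh}$ is regular, so no wall class $\mathbb{C}_i,\mathbb{C}'_i$ can meet its Newton point and the fiber is the single straight class $\mathbb{O}_{\lambda_0}$. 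For part (2) the point $i_0(\alpha_1+2\alpha_2)$ lies on a wall and is hit both by the straight class $\mathbb{O}_{i_0(\alpha_1+2\alpha_2)}$ and by the non-straight class $\mathbb{C}_{2i_0}$, whose Newton point $\tfrac{2i_0}{2}(\alpha_1+2\alpha_2)$ agrees; both have trivial Kottwitz image, so the fiber is exactly $\{\mathbb{O}_{i_0(\alpha_1+2\alpha_2)},\mathbb{C}_{2i_0}\}$. For part (3) with $i_0$ odd the point $\tfrac{i_0}{2}(\alpha_1+2\alpha_2)\notin Q$, so no translation class occurs and the fiber is the single straight class $\mathbb{C}_{i_0}$.

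With the fibers in hand I would record the shifts $\langle\bar{\nu}_b,2\rho\rangle$, namely $\langle\lambda_0,2\rho\rangle$, $t_0=i_0\langle\alpha_1+2\alpha_2,2\rho\rangle$ (which equals $6i_0$), and $t'_0=i_0\langle\alpha_1+2\alpha_2,\rho\rangle=\tfrac{i_0}{2}\langle\alpha_1+2\alpha_2,2\rho\rangle$, together with the lengths $\ell(\mathbb{O}_{\lambda_0})=\ell(t^{\lambda_0})$ (Lemma \ref{lenglambda}), $\ell(t^{i_0(\alpha_1+2\alpha_2)})=6i_0$, and $\ell(\mathbb{C}_{2i_0})=6i_0+1$ (Lemma \ref{lengi}). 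Then I would run through the possible positions of $\tilde{w}$ and read off $\deg f_{\tilde{w},\mathbb{O}}$ directly from the relevant propositions. For part (1), Proposition \ref{cplambda} gives degree $0$ when $\tilde{w}\in\mathbb{O}_{\lambda_0}$; Proposition \ref{cpo1} gives degree $1$ precisely when $\tilde{w}\in\mathbb{O}_1$ and $\lambda_0=\tfrac{\ell(\tilde{w})-1}{4}(\alpha_1+\alpha_2)$ is the top index, and degree $3$ otherwise; Propositions \ref{cpci} and \ref{cpc'i} produce degree $1$ for $\ell(\tilde{w})\le 6i+1$ and degree $3$ beyond; and the value $2$ comes from the $\mathbb{O}_2$-entries of the tables. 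Substituting these degrees and lengths yields the four displayed cases, and parts (2) and (3) follow the same pattern, now taking the maximum over the two-element, respectively one-element, fiber.

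The main obstacle is part (2), where the fiber genuinely contains two classes and the maximum is a real competition: one must compare $6i_0+\deg f_{\tilde{w},\mathbb{O}_{i_0(\alpha_1+2\alpha_2)}}$ against $(6i_0+1)+\deg f_{\tilde{w},\mathbb{C}_{2i_0}}$ for every position of $\tilde{w}$, using in particular that the degenerate wall class $\mathbb{O}_{i_0(\alpha_1+2\alpha_2)}$ never appears among the regular classes $\mathbb{O}_{j\alpha_1+i\alpha_2}$ listed in Proposition \ref{cpci}(1). Tracking which of the two terms wins across the threshold $\ell(\tilde{w})=6i+1$, and verifying that no further class has been overlooked in the fiber, is the delicate bookkeeping; once it is settled the three dimension formulas drop out of the degree tables, and the analogous statements for the symmetric types $\mathbb{O}_{i_0(2\alpha_1+\alpha_2)}$ and $\mathbb{C}'_{i_0}$ follow by the symmetry $\alpha_1\leftrightarrow\alpha_2$.
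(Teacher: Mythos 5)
Your proposal is correct in substance and follows exactly the route the paper intends (the paper omits this proof, declaring it ``similar'' to Theorems \ref{empty} and \ref{dimen}): feed the class polynomials of \S\ref{Class} into the ``Dimension$=$Degree'' Theorem \ref{DimDeg}, after first identifying which conjugacy classes lie in the fiber $f^{-1}(f(b))$. Your Newton-point computations --- $(t^{k\alpha_1+i\alpha_2}s_1)^2=t^{i(\alpha_1+2\alpha_2)}$, hence $\bar{\nu}_{\mathbb{C}_i}=\frac{i}{2}(\alpha_1+2\alpha_2)$ and $\bar{\nu}_{\mathbb{C}'_i}=\frac{i}{2}(2\alpha_1+\alpha_2)$ --- and the resulting fiber identifications for parts (1) and (2) are right; this step is genuinely needed and the paper never makes it explicit.

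Two points need tightening. First, in part (3) you only treat odd $i_0$; the statement allows any $i_0\in\mathbb{N}_+$. For even $i_0$ the point $\frac{i_0}{2}(\alpha_1+2\alpha_2)$ does lie in $Q$, the fiber is $\{\mathbb{C}_{i_0},\ \mathbb{O}_{(i_0/2)(\alpha_1+2\alpha_2)}\}$, and $b$ is the same $\sigma$-conjugacy class as in part (2) with index $i_0/2$; part (3) in that case is not a one-class computation but reduces to the part (2) computation, and one must (and can) check that the two displayed formulas coincide, since $\ell(\mathbb{C}_{i_0})=3i_0+1=6(i_0/2)+1$ and $t'_0=3i_0=t_0$. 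Second, your bookkeeping remark for part (2) needs a complement: while $\mathbb{O}_{i_0(\alpha_1+2\alpha_2)}$ indeed never occurs among the classes of Proposition \ref{cpci}(1), it \emph{does} occur, with coefficient $(v-v^{-1})$, at the top length $\ell(\tilde{w})=6i_0+1$ of Proposition \ref{cpc'i}(1) (take $j$ maximal there); this is exactly what makes $X_{\tilde{w}}(b)$ nonempty for $\tilde{w}\in\mathbb{C}'_{i_0}$ in the first line of part (2), the other fiber class $\mathbb{C}_{2i_0}$ contributing nothing for such $\tilde{w}$. With these two additions your case-by-case verification closes, and it agrees with the degree tables throughout.
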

\begin{thm}
\begin{enumerate}
\item Let $b\in PGL_3(L)$. If $b$ corresponds to $\mathbb{O}_{\lambda_0,\tau}$ where $\lambda_0\in P_+\cap Q_{sh}$ but $\lambda_0\neq(2i-1)\alpha_1+i\alpha_2$ for $i\in\mathbb{N}_+$ and $X_{\tilde{w}}(b)\neq\emptyset$, then
\[\dim X_{\tilde{w}}(\dot{\tau})=\left\{
 \begin{array}{ll}
\ell(\tilde{w})-\langle\bar{\nu}_{\mathbb{O}_{\lambda_0,\tau}},\ 2\rho\rangle, & \tilde{w}\in\mathbb{O}_{\lambda_0,\tau}\\
 \frac{1}{2}(\ell(\tilde{w})+\ell(\mathbb{O}_{\lambda_0,\tau})+1)-\langle\bar{\nu}_{\mathbb{O}_{\lambda_0,\tau}},\ 2\rho\rangle, & \tilde{w}\in\mathbb{O}_{i,\tau}\ or\ \mathbb{O}_{1-i,\tau}\ and\ \ell(\tilde{w})\leqslant6i-3\\
 \frac{1}{2}(\ell(\tilde{w})+\ell(\mathbb{O}_{\lambda_0,\tau})+2)-\langle\bar{\nu}_{\mathbb{O}_{\lambda_0,\tau}},\ 2\rho\rangle, & \tilde{w}\in\mathbb{O}_{id,\tau}\\
 \frac{1}{2}(\ell(\tilde{w})+\ell(\mathbb{O}_{\lambda_0,\tau})+3)-\langle\bar{\nu}_{\mathbb{O}_{\lambda_0,\tau}},\ 2\rho\rangle, & \tilde{w}\in\mathbb{O}_{i,\tau}\ or\ \mathbb{O}_{1-i,\tau}\ and\ \ell(\tilde{w})>6i-3.
  \end{array}
  \right.\]
\item Let $b\in PGL_3(L)$. If $b$ corresponds to $\mathbb{O}_{i_0(\alpha_1+2\alpha_2),\tau}$ where $i_0\in\mathbb{N}_+$ and $X_{\tilde{w}}(b)\neq\emptyset$, then\[\dim X_{\tilde{w}}(b)=\left\{
 \begin{array}{ll}
 \frac{1}{2}(\ell(\tilde{w})+\ell(\mathbb{O}_{2i_0,\tau}))-l_0, & \tilde{w}\in\mathbb{O}_{i,\tau},\ i\in\mathbb{N}_+\ and\ \ell(\tilde{w})\leqslant6i-3\\
 \frac{1}{2}(\ell(\tilde{w})+\ell(\mathbb{O}_{2i_0,\tau})+1)-l_0, & \tilde{w}\in\mathbb{O}_{id,\tau}\\
 \frac{1}{2}(\ell(\tilde{w})+\ell(\mathbb{O}_{2i_0,\tau})+2)-l_0, & \tilde{w}\in\mathbb{O}_{i,\tau}\ and\ \ell(\tilde{w})>6i-3\ or\ \tilde{w}\in\mathbb{O}_{1-i,\tau}.
  \end{array}
  \right.\]Where $l_0=\langle(i_0-\frac{1}{3})(\alpha_1+2\alpha_2),\ 2\rho\rangle$.\\
\item Let $b\in PGL_3(L)$. If $b$ corresponds to $\mathbb{O}_{(2i_0-1)\alpha_1+i_0\alpha_2),\tau}$ where $i_0\in\mathbb{N}_+$ and $X_{\tilde{w}}(b)\neq\emptyset$, then\[\dim X_{\tilde{w}}(b)=\left\{
 \begin{array}{ll}
 \frac{1}{2}(\ell(\tilde{w})+\ell(\mathbb{O}_{2(1-i_0),\tau}))-l_1, & \tilde{w}\in\mathbb{O}_{1-i,\tau},\ i\in\mathbb{N}_+\ and\ \ell(\tilde{w})\leqslant6i-1\\
 \frac{1}{2}(\ell(\tilde{w})+\ell(\mathbb{O}_{2(1-i_0),\tau})+1)-l_1, & \tilde{w}\in\mathbb{O}_{id,\tau}\\
 \frac{1}{2}(\ell(\tilde{w})+\ell(\mathbb{O}_{2(1-i_0),\tau})+2)-l_1, & \tilde{w}\in\mathbb{O}_{1-i,\tau}\ and\ \ell(\tilde{w})>6i-1\ or\ \tilde{w}\in\mathbb{O}_{i,\tau}.
  \end{array}
  \right.\]Where $l_1=\langle(i_0-\frac{2}{3})(2\alpha_1+\alpha_2),\ 2\rho\rangle$.\\
\item Let $b\in PGL_3(L)$. If $b$ corresponds to $\mathbb{O}_{i_0,\tau}$ where $i_0\in\mathbb{N}_+$. If $X_{\tilde{w}}(b)\neq\emptyset$, then\[\dim X_{\tilde{w}}(b)=\left\{
 \begin{array}{ll}
 \frac{1}{2}(\ell(\tilde{w})+\ell(\mathbb{O}_{i_0,\tau}))-l_2, & \tilde{w}\in\mathbb{O}_{i,\tau}\ i\in\mathbb{N}_+\ and\ \ell(\tilde{w})\leqslant6i-3\\
 \frac{1}{2}(\ell(\tilde{w})+\ell(\mathbb{O}_{i_0,\tau})+1)-l_2, & \tilde{w}\in\mathbb{O}_{id,\tau}\\
 \frac{1}{2}(\ell(\tilde{w})+\ell(\mathbb{O}_{i_0,\tau})+2)-l_2, & \tilde{w}\in\mathbb{O}_{i,\tau}\ and\ \ell(\tilde{w})>6i-3\ or\ \tilde{w}\in\mathbb{O}_{1-i,\tau}.
  \end{array}
  \right.\]Where $l_2=\langle(\frac{i_0}{2}-\frac{1}{3})(\alpha_1+2\alpha_2),\ 2\rho\rangle$.\\
Similarly, if $b$ corresponds to $\mathbb{O}_{1-i_0,\tau}$ and $X_{\tilde{w}}(b)\neq\emptyset$, then\[\dim X_{\tilde{w}}(b)=\left\{
 \begin{array}{ll}
 \frac{1}{2}(\ell(\tilde{w})+\ell(\mathbb{O}_{1-i_0,\tau}))-l_3, & \tilde{w}\in\mathbb{O}_{1-i,\tau}\ i\in\mathbb{N}_+\ and\ \ell(\tilde{w})\leqslant6i-1\\
 \frac{1}{2}(\ell(\tilde{w})+\ell(\mathbb{O}_{1-i_0,\tau})+1)-l_3, & \tilde{w}\in\mathbb{O}_{id,\tau}\\
 \frac{1}{2}(\ell(\tilde{w})+\ell(\mathbb{O}_{1-i_0,\tau})+2)-l_3, & \tilde{w}\in\mathbb{O}_{1-i,\tau}\ and\ \ell(\tilde{w})>6i-1\ or\ \tilde{w}\in\mathbb{O}_{i,\tau}.
  \end{array}
  \right.\] Where $l_3=\langle(\frac{i_0}{2}-\frac{1}{6})(2\alpha_1+\alpha_2),\ 2\rho\rangle$.
\end{enumerate}
\end{thm}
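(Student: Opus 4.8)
The plan is to derive every formula from the ``Dimension$=$Degree'' Theorem \ref{DimDeg}, which for our $b$ and $\tilde w$ reads $\dim X_{\tilde w}(b)=\max_{\mathbb{O}}\tfrac12\bigl(\ell(\tilde w)+\ell(\mathbb{O})+\deg f_{\tilde w,\mathbb{O}}\bigr)-\langle\bar\nu_b,2\rho\rangle$, the maximum taken over the $\delta$-conjugacy classes $\mathbb{O}$ (here $\delta=\mathrm{id}$) with $f(\mathbb{O})=f(b)$. Since each $b$ in the statement lies in the component $W_a\tau$, the whole computation takes place among the classes $\mathbb{O}_{id,\tau}$, $\mathbb{O}_{\lambda,\tau}$ and $\mathbb{O}_{i,\tau}$ classified in Lemmas \ref{lengtau0}, \ref{lenglambdatau} and \ref{lengtaui}, whose class polynomials were determined in \S\ref{Class}. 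Thus once $\bar\nu_b$ and the maximizing class are pinned down, each dimension formula is obtained by substituting the degree of the relevant class polynomial.

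First I would compute $\bar\nu_b$ for the four families of $b$. Writing $\tau=t^{\varpi_1}(s_1s_2)$, a direct computation of the Newton map of \S\ref{DD} shows that the translation classes $\mathbb{O}_{\lambda,\tau}$ consist of the translations $t^\chi$ with $\chi=\lambda+(s_2s_1)\varpi_1$, so $\bar\nu_b=\bar\chi$, while the classes $\mathbb{O}_{i,\tau}$ have linear part $s_1$ and Newton point $\tfrac{3i-2}{2}\varpi_2$ on the wall fixed by $s_1$. Carrying this out yields exactly the constants $\langle\bar\nu_{\mathbb{O}_{\lambda_0,\tau}},2\rho\rangle$ and $l_0,l_1,l_2,l_3$ of the statement. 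The key structural point is then to describe the fibre $f^{-1}(f(b))$: for $\lambda_0$ regular (case (1), where $\lambda_0\neq(2i-1)\alpha_1+i\alpha_2$) the translation class $\mathbb{O}_{\lambda_0,\tau}$ is the unique class with its regular Newton point, so the maximum is attained at $\mathbb{O}_{\lambda_0,\tau}$ itself; whereas for $\lambda_0=i_0(\alpha_1+2\alpha_2)$ and $\lambda_0=(2i_0-1)\alpha_1+i_0\alpha_2$ the vector $\bar\chi$ falls on a wall and coincides with the Newton point of a reflection class (namely $\mathbb{O}_{2i_0,\tau}$ and $\mathbb{O}_{2-2i_0,\tau}$ respectively), so the fibre contains two classes.

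In the unique-class situations the formula follows by reading $\deg f_{\tilde w,\mathbb{O}_0}$ off the propositions of \S\ref{Class}. For case (1), with $\mathbb{O}_0=\mathbb{O}_{\lambda_0,\tau}$ a translation class, this degree is $0$ for $\tilde w\in\mathbb{O}_0$ (Proposition \ref{cplambdatau}), $1$ for $\tilde w\in\mathbb{O}_{i,\tau}\cup\mathbb{O}_{1-i,\tau}$ with $\ell(\tilde w)\le 6i-3$ (Propositions \ref{cpoddtau}(1) and \ref{cpodd1tau}(1), using that the cubic coefficient $k$ vanishes at $\ell(\tilde w)=6i-3$), $2$ for $\tilde w\in\mathbb{O}_{id,\tau}$ (Propositions \ref{cpatau}--\ref{cpalambdatau12} together with Corollaries \ref{cor1} and \ref{cor2}), and $3$ for $\tilde w\in\mathbb{O}_{i,\tau}\cup\mathbb{O}_{1-i,\tau}$ with $\ell(\tilde w)>6i-3$ (Propositions \ref{cpoddtau}(2),(3) and \ref{cpodd1tau}(2),(3)); these give the ``$+0,+1,+2,+3$'' shifts of branch (1). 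The analogous reading of the lower-degree diagonal terms of Propositions \ref{cpoddtau} and \ref{cpodd1tau} produces the ``$+0,+1,+2$'' shifts of the reflection case (4), and the symmetry $\alpha_1\leftrightarrow\alpha_2$ (equivalently $\tau\leftrightarrow\tau^2$) reduces the $\mathbb{O}_{1-i,\tau}$ sub-cases to the $\mathbb{O}_{i,\tau}$ ones.

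The hard part is the two wall cases (2) and (3), where the fibre $f^{-1}(f(b))$ contains both the straight class that $b$ represents and a non-straight reflection class of strictly larger minimal length. Here the maximum in Theorem \ref{DimDeg} need no longer be attained at the straight class, and indeed the formulas are written through $\ell(\mathbb{O}_{2i_0,\tau})$ and $\ell(\mathbb{O}_{2(1-i_0),\tau})$ rather than through $\ell(\mathbb{O}_{\lambda_0,\tau})$. The delicate step is therefore to compare $\tfrac12\bigl(\ell(\mathbb{O})+\deg f_{\tilde w,\mathbb{O}}\bigr)$ for the two classes in the fibre across every length regime of $\tilde w$, and to verify that the reflection class dominates exactly in the ranges asserted; this is where the explicit leading terms of the class polynomials in Propositions \ref{cpoddtau} and \ref{cpodd1tau} are indispensable. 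Once this domination is established, substituting the corresponding degrees finishes the proof.
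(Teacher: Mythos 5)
Your proposal is correct and follows essentially the same route as the paper: the paper omits this proof, saying only that it is obtained like Theorems \ref{empty} and \ref{dimen} by combining the ``Dimension$=$Degree'' Theorem \ref{DimDeg} with the degrees of the class polynomials computed in \S\ref{Class}, which is exactly your plan. Your explicit computation of the Newton points and of the two-element fibres $f^{-1}(f(b))$ in the wall cases (2) and (3) --- forcing the maximum in Theorem \ref{DimDeg} to run over both the straight translation class $\mathbb{O}_{\lambda_0,\tau}$ and the non-straight reflection class $\mathbb{O}_{2i_0,\tau}$ (resp. $\mathbb{O}_{2(1-i_0),\tau}$) --- is precisely the bookkeeping that the paper leaves implicit in the way the formulas are stated.
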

\begin{thm}
Let $b\in U_3(L)$. If $b$ corresponds to $\mathbb{O}_{2m_0,\delta}$ where $m_0\in\mathbb{N}_+$, then $\bar{\nu}_{\mathbb{O}_{2m_0,\delta}}=\frac{m_0}{2}(\alpha_1+\alpha_2)$ and $\langle\bar{\nu}_{\mathbb{O}_{2m_0,\delta}},2\rho\rangle=2m_0$. If $X_{\tilde{w}}(b)\neq\emptyset$, then \[\dim X_{\tilde{w}}(b)=\left\{
 \begin{array}{ll}
 0, & \tilde{w}\in\mathbb{O}_{2m_0,\delta}\ and\ \ell(\tilde{w})=2m_0\\
 \frac{1}{2}(\ell(\tilde{w})+1)-m_0, & \tilde{w}\in\mathbb{O}_{1,\delta},\ or\\
 & \tilde{w}\in\mathbb{O}'_{1,\delta}\ and\ lies\ in\ critical\ strips,\ or \\
 & \tilde{w}\in\mathbb{O}'_{1,\delta}\ or\ \mathbb{O}_{3,\delta}\ with\ \ell(\tilde{w})=2m_0+1\\
 \frac{1}{2}(\ell(\tilde{w})+2)-m_0, & \tilde{w}\in\mathbb{O}_{0,\delta}\ or\ \tilde{w}\in\mathbb{O}_{2m_0,\delta}\ and\ \ell(\tilde{w})>2m_0,\ or\\
 & \tilde{w}\in\mathbb{O}_{2m,\delta}\ where\ m\neq m_0\\
\frac{1}{2}(\ell(\tilde{w})+3)-m_0, & \ell(\tilde{w})>2m_0+1,\ \tilde{w}\in\mathbb{O}_{3,\delta}\ or\\
& \tilde{w}\in\mathbb{O}'_{1,\delta}\ and\ corresponds\ to\ shrunkun\ Weyl\ chambers.
  \end{array}
  \right.\]
\end{thm}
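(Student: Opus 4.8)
The plan is to reduce the entire statement to the ``Dimension$=$Degree'' Theorem \ref{DimDeg}, so that it becomes a matter of reading off the degree of a \emph{single} class polynomial from the computations of \S\ref{Class}. Recall that Theorem \ref{DimDeg} gives $\dim X_{\tilde{w}}(b)=\max_{\mathbb{O}}\frac{1}{2}(\ell(\tilde{w})+\ell(\mathbb{O})+\deg f_{\tilde{w},\mathbb{O}})-\langle\bar{\nu}_b,2\rho\rangle$, the maximum being over the $\delta$-conjugacy classes $\mathbb{O}$ with $f(\mathbb{O})=f(b)$. First I would pin down the numerical data. Taking a minimal length representative of $\mathbb{O}_{2m_0,\delta}$ (of length $2m_0$ by Theorem \ref{conjdelta}) and forming the relevant power $\tilde{w}\delta(\tilde{w})\cdots$, a direct computation gives $\nu_{\tilde{w}}=\frac{m_0}{2}(\alpha_1+\alpha_2)$, which is already dominant and $\delta$-invariant; hence $\bar{\nu}_{\mathbb{O}_{2m_0,\delta}}=\frac{m_0}{2}(\alpha_1+\alpha_2)$, and pairing against $2\rho$ yields $\langle\bar{\nu}_{\mathbb{O}_{2m_0,\delta}},2\rho\rangle=2m_0$. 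In particular $\langle\bar{\nu},2\rho\rangle=2m_0=\ell(\mathbb{O}_{2m_0,\delta})$, confirming that $\mathbb{O}_{2m_0,\delta}$ is straight.

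The decisive simplification is that the fibre $f^{-1}(f(b))$ is the singleton $\{\mathbb{O}_{2m_0,\delta}\}$. Indeed, in the quasi-split $U_3$ setting $\delta$ acts on $P/Q\cong\mathbb{Z}/3$ by inversion, so $(P/Q)_\delta=0$ and the Kottwitz map is identically trivial; thus $f$ is governed entirely by the Newton point. By Theorem \ref{conjdelta} the $\delta$-conjugacy classes are $\mathbb{O}_{0,\delta}$, $\mathbb{O}_{1,\delta}$, $\mathbb{O}'_{1,\delta}$, $\mathbb{O}_{3,\delta}$ and the $\mathbb{O}_{2m,\delta}$; the first four have Newton point $0$, while $\mathbb{O}_{2m,\delta}$ has Newton point $\frac{m}{2}(\alpha_1+\alpha_2)$. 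As these are pairwise distinct and $m_0\geqslant1$, no other class has the same $f$-image as $b$. Consequently the maximum in Theorem \ref{DimDeg} is attained at the single class $\mathbb{O}_{2m_0,\delta}$, and substituting $\ell(\mathbb{O}_{2m_0,\delta})=2m_0$ and $\langle\bar{\nu}_b,2\rho\rangle=2m_0$ collapses the formula to
\[
\dim X_{\tilde{w}}(b)=\frac{1}{2}\bigl(\ell(\tilde{w})+\deg f_{\tilde{w},\mathbb{O}_{2m_0,\delta}}\bigr)-m_0 .
\]
Thus $X_{\tilde{w}}(b)\neq\emptyset$ exactly when $f_{\tilde{w},\mathbb{O}_{2m_0,\delta}}\neq0$, and the four cases of the theorem correspond precisely to $\deg f_{\tilde{w},\mathbb{O}_{2m_0,\delta}}=0,1,2,3$.

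It then remains to extract $\deg f_{\tilde{w},\mathbb{O}_{2m_0,\delta}}$ from the propositions of \S\ref{Class}, organized by the stratum of $\tilde{w}$. When $\tilde{w}\in\mathbb{O}_{2m_0,\delta}$ itself, Proposition \ref{cpdelta2m} gives degree $0$ if $\ell(\tilde{w})=2m_0$ and degree $2$ if $\ell(\tilde{w})>2m_0$; when $\tilde{w}\in\mathbb{O}_{2m,\delta}$ with $m\neq m_0$, the class $\mathbb{O}_{2m_0,\delta}$ occurs in Proposition \ref{cpdelta2m} with a $(v-v^{-1})^2$-coefficient, so degree $2$; when $\tilde{w}\in\mathbb{O}_{0,\delta}$ with $\ell(\tilde{w})>0$, Proposition \ref{cpdelta0} again yields degree $2$; and when $\tilde{w}\in\mathbb{O}_{1,\delta}$, or $\tilde{w}\in\mathbb{O}'_{1,\delta}$ lying in the critical strips, Proposition \ref{cpdelta1} (and its $\mathbb{O}'_{1,\delta}$-analogue) gives the coefficient $(v-v^{-1})$, i.e.\ degree $1$. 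Matching each degree against the collapsed formula reproduces exactly the corresponding case of the theorem.

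The hard part will be the strata $\mathbb{O}'_{1,\delta}$ in the \emph{shrunken} Weyl chambers and $\mathbb{O}_{3,\delta}$, for which \S\ref{Class} explicitly warns that no uniform class-polynomial formula exists. Here I would argue directly through the reduction procedure of \S\ref{Class}: one shows that $\deg f_{\tilde{w},\mathbb{O}_{2m_0,\delta}}=1$ precisely when $\ell(\tilde{w})=2m_0+1$, a single elementary reduction lowering the length to $\ell(\mathbb{O}_{2m_0,\delta})$ and contributing one factor $(v-v^{-1})$, whereas $\deg f_{\tilde{w},\mathbb{O}_{2m_0,\delta}}=3$ as soon as $\ell(\tilde{w})>2m_0+1$, the extra length in the shrunken region forcing the additional $(v-v^{-1})^2$ visible in the worked examples (e.g.\ the coefficient $(v-v^{-1})^3+2(v-v^{-1})$ of $T_{\mathbb{O}_{2,\delta}}$). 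Establishing this degree dichotomy uniformly in $m_0$ and $\tilde{w}$, rather than merely on the listed examples, is the only genuinely delicate point; everything else is bookkeeping on top of Theorem \ref{DimDeg}.
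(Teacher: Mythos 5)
Your proposal is correct and follows essentially the same route as the paper: the paper likewise proves this theorem by invoking the ``Dimension$=$Degree'' Theorem \ref{DimDeg}, noting that for nonbasic $b$ the maximum collapses to the single class $\mathbb{O}_{2m_0,\delta}$ (which is why \S\ref{Appl} states $X_{\tilde{w}}(b)\neq\emptyset$ iff $f_{\tilde{w},\mathbb{O}_{2m_0,\delta}}\neq 0$), and then reading the degrees $0,1,2,3$ off the class polynomials computed in \S\ref{Class} (Propositions \ref{cpdelta1}, \ref{cpdelta0}, \ref{cpdelta2m} and the discussion of $\mathbb{O}'_{1,\delta}$, $\mathbb{O}_{3,\delta}$). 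Your explicit singleton-fibre argument via the trivial Kottwitz invariant and distinct Newton points, and your caveat about the shrunken-chamber strata, only make explicit what the paper leaves implicit --- its own treatment of $\mathbb{O}'_{1,\delta}$ and $\mathbb{O}_{3,\delta}$ rests on the same inductive degree claim ($\deg f_{\tilde{w},\mathbb{O}_{2m,\delta}}=3$ or $1$) stated without full proof at the end of \S\ref{Class}.
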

\subsection{Affine Deligne-Lusztig varieties: $GL_3$ and $\mathbb{D}_3^{\times}$ cases}\label{ADLVdiv}
In general, there is a canonical projection $\pi:GL_n(L)\longrightarrow PGL_n(L)$. Let $I_1$ be the Iwahori subgroup of $GL_n(L)$ described in Chapter 2, and let $I_2$ be the corresponding Iwahori subgroup of $PGL_n(L)$. Let $\widetilde{W}_1$ (or $\widetilde{W}_2$, respectively) be the Iwahori-Weyl group. Let $\tilde{w}\in\widetilde{W}_1$ and $b\in GL_n(L)$, such that $\kappa(\tilde{w})=\kappa(b)$. Then the affine Deligne-Lusztig variety $X_{\tilde{w}}(b)$ in the affine flag variety of $GL_n(L)$ is isomorphic to the affine Deligne-Lusztig variety $X_{\pi(\tilde{w})}(\pi(b))$ in the affine flag variety of $PGL_n(L)$ (see \cite{GHN} for more details). Further more, we have $f_{\tilde{w}, \mathbb{O}}=f_{\pi(\tilde{w}),\pi(\mathbb{O})}$. Since we already know the emptiness/nonemptiness pattern and the dimension formulas of the affine Deligne-Lusztig variety for $PGL_3(L)$, we have the same pattern and dimension formulas for $GL_3(L)$.

In a different way, if the automorphism $\sigma: GL_3(L)\longrightarrow GL_3(L)$ is replaced by $\sigma'=Ad(\tau)\circ\sigma: GL_3(L)\longrightarrow GL_3(L)$, then $GL(L)_3^{\sigma'}$ is the group of units of the division algebra $\mathbb{D}_3$ (i.e. $\mathbb{D}_3^{\times}$). As it is described in \cite{GHN}, the affine Deligne-Lusztig variety $X_{\tilde{w}\tau}(\tau)$ for $PGL_3$ is isomorphic to the affine Deligne-Lusztig variety $X_{\tilde{w}}(1)$ for the group $\mathbb{D}_3^{\times}$. Thus we have the same pattern and dimension formulas for $\mathbb{D}_3^{\times}$.
\subsection{Rational points for some affine Deligne-Lusztig varieties}
In general, if $X_{\tilde{w}}(b)\neq\emptyset$, then it has infinitely many irreducible components. However, if $b\in G(L)$ is superbasic, then $X_{\tilde{w}}(b)$ contains only finite irreducible components and the number of rational points is finite.
\begin{prop}[He]\label{rational}
Let $G=PGL_n$ be split over $F$. If $x$ is a superbasic element in $\widetilde{W}$, then for any $\tilde{w}\in\widetilde{W}$\[\sharp X_{\tilde{w}}(x)(\mathbb{F}_q)=nq^{\frac{\ell(\tilde{w})}{2}}f_{\tilde{w},\mathbb{O}}|_{v=\sqrt{q}},\]where $x\in\mathbb{O}$ and it is the conjugacy class of $\widetilde{W}$.
\end{prop}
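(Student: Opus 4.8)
The plan is to argue by induction on $\ell(\tilde w)$, running the geometric Deligne--Lusztig reduction for $X_{\tilde w}(x)$ in parallel with the inductive construction of the class polynomial $f_{\tilde w,\mathbb O}$ recalled in \S\ref{Prem}, and then comparing the two after the specialization $v=\sqrt q$. Since $G=PGL_n$ is split we have $\delta=\mathrm{id}$ and $[\widetilde H,\widetilde H]_\delta=[\widetilde H,\widetilde H]$, and $x$ lies in a single $\widetilde W$-conjugacy class $\mathbb O$. First I would fix the two reduction mechanisms. On the Hecke side, if $\tilde w$ is not of minimal length there are $\tilde w_1\approx_\delta\tilde w$ and $s_i\in\widetilde{\mathbb S}$ with $\ell(s_i\tilde w_1 s_{\delta(i)})<\ell(\tilde w_1)=\ell(\tilde w)$ and
\[f_{\tilde w,\mathbb O}=(v-v^{-1})\,f_{s_i\tilde w_1,\mathbb O}+f_{s_i\tilde w_1 s_{\delta(i)},\mathbb O}.\]
On the geometric side I would invoke He's reduction method for affine Deligne--Lusztig varieties \cite{He2}: the length-preserving moves composing $\tilde w\approx_\delta\tilde w_1$ induce universal homeomorphisms, so $\sharp X_{\tilde w}(x)(\mathbb F_q)=\sharp X_{\tilde w_1}(x)(\mathbb F_q)$, while the length-dropping step by $s=s_i$ gives the point-count identity
\[\sharp X_{\tilde w_1}(x)(\mathbb F_q)=(q-1)\,\sharp X_{s\tilde w_1}(x)(\mathbb F_q)+q\,\sharp X_{s\tilde w_1 s_{\delta(i)}}(x)(\mathbb F_q).\]

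With these in place, the inductive step is a bookkeeping check. Writing $P(\tilde w)=n\,q^{\ell(\tilde w)/2}\,f_{\tilde w,\mathbb O}|_{v=\sqrt q}$ and using $(v-v^{-1})|_{v=\sqrt q}=(q-1)/\sqrt q$ together with $\ell(s_i\tilde w_1)=\ell(\tilde w)-1$ and $\ell(s_i\tilde w_1 s_{\delta(i)})=\ell(\tilde w)-2$, the Hecke recursion becomes
\[P(\tilde w)=(q-1)\,P(s_i\tilde w_1)+q\,P(s_i\tilde w_1 s_{\delta(i)}),\]
which is formally identical to the geometric recursion above. Hence if the desired equality $\sharp X_{\tilde w'}(x)(\mathbb F_q)=P(\tilde w')$ holds for the two strictly shorter elements $s_i\tilde w_1$ and $s_i\tilde w_1 s_{\delta(i)}$, it holds for $\tilde w_1$, and therefore for $\tilde w$ by homeomorphism invariance. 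Here I would also use that $f_{\tilde w,\mathbb O}$ is independent of the chosen reduction path (He--Nie), so that the algebraic and geometric reductions may legitimately be performed along the same data $(\tilde w_1,s_i)$.

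The base case is where $\tilde w$ is of minimal length in its own conjugacy class $\mathbb O'$, and this is where I expect the essential content to sit. If $\mathbb O'\neq\mathbb O$ then $f_{\tilde w,\mathbb O}=0$, and I would deduce $X_{\tilde w}(x)=\emptyset$ from the ``Dimension$=$Degree'' Theorem \ref{DimDeg}: the dimension is the maximum of $\tfrac12(\ell(\tilde w)+\ell(\mathbb O'')+\deg f_{\tilde w,\mathbb O''})-\langle\bar\nu_x,2\rho\rangle$ over classes $\mathbb O''$ with $f(\mathbb O'')=f(x)$, and for a minimal length $\tilde w$ the polynomial $f_{\tilde w,\bullet}$ is supported only on $\mathbb O'$. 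The point to establish is that, because $x$ is superbasic, its invariant $f(x)=(\bar\nu_x,\kappa(x))$ has $\bar\nu_x=0$ and $\kappa(x)$ a generator of $\Omega$, and this invariant is realized by the single straight length-$0$ class $\mathbb O$; indeed any element with $\bar\nu=0$ and $\kappa$ a generator permutes the alcove vertices cyclically and so is conjugate to a length-$0$ element $\tau^d$. Thus $\mathbb O'\neq\mathbb O$ forces $f(\mathbb O')\neq f(x)$, no class contributes to the maximum, and the dimension is $-\infty$.

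Finally, for $\mathbb O'=\mathbb O$ superbasicity gives $\langle\bar\nu_x,2\rho\rangle=0$, so the minimal length in $\mathbb O$ is $0$ and $\tilde w=\tau^d$ up to conjugacy; the target value is $n\,q^{0}=n$. The main obstacle is the explicit evaluation $\sharp X_{\tilde w}(x)(\mathbb F_q)=n$ here. For a length-$0$ element the defining condition $g^{-1}x\sigma(g)\in I\tilde w I=\tilde w I$ rewrites, after setting $\sigma'=\mathrm{Ad}(\tilde w)\circ\sigma$, as $X_{\tilde w}(x)\cong\{gI:g^{-1}\sigma'(g)\in I\}$, the fixed locus of a twisted Frobenius whose group of invariants is the unit group $\mathbb D_n^\times$ of a division algebra precisely because $x$ is superbasic (compare \S\ref{ADLVdiv}). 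A Lang-type argument then identifies this set with $\mathbb D_n^\times/(\mathbb D_n^\times\cap I)$, which is finite with exactly $n$ elements; this simultaneously supplies the finiteness of $\sharp X_{\tilde w}(x)(\mathbb F_q)$ needed for the statement to be meaningful and pins the value at $n$. Making this identification precise, and verifying finiteness of the point count for general $\tilde w$ in the superbasic setting, is the crux of the argument.
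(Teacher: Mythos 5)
A preliminary remark: the paper itself contains no proof of Proposition \ref{rational} --- it is quoted as a result of He (hence the attribution in the proposition) and only applied afterwards --- so your proposal can only be measured against the standard argument, which it reconstructs in the correct overall shape. The parallel induction (pairing the recursive definition of $f_{\tilde{w},\mathbb{O}}$, together with its path-independence from \cite{HN}, against the geometric reduction of \cite{He2}), the specialization bookkeeping with $(v-v^{-1})|_{v=\sqrt{q}}=(q-1)/\sqrt{q}$, and the two base cases are exactly the right skeleton. Your key structural claims are also true: for $\gcd(d,n)=1$ the invariant $f(x)$ is carried by a \emph{unique} conjugacy class of $\widetilde{W}$ (your sketch can be made precise: a finite-order element of $W_a\tau^d$ fixes a point of the closed fundamental alcove, the only such point fixed by $\tau^d$ is the barycenter because $\tau^d$ acts as an $n$-cycle on the vertices, and the stabilizer of the barycenter in $\widetilde{W}$ is $\Omega$, so the element is $W_a$-conjugate to $\tau^d$), and the length-zero class contributes exactly $n$ points.

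The genuine gap is that the statement counts $\mathbb{F}_q$-points while your argument runs almost entirely at the level of $\mathbf{k}$-points, and this needs repair in two places. First, the point-count identities in your inductive step are not formal consequences of the reduction over $\mathbf{k}$: you need the universal homeomorphisms and the decomposition $X_{\tilde{w}_1}(x)=X_1\sqcup X_2$, with its $\mathbb{G}_m$- and $\mathbb{A}^1$-fibrations, to be defined over $\mathbb{F}_q$ and to have \emph{split} fibers over rational points (a nonsplit form of $\mathbb{G}_m$ over $\mathbb{F}_q$ has $q+1$ points, which would destroy the recursion). This does hold --- for split $G$ one may choose $\sigma$-fixed representatives of $x$, of the simple reflections and of $\Omega$, the relevant affine root subgroups are $\sigma$-stable, and Frobenius acts on the fiber coordinate by $c\mapsto c^q$ --- but it must be argued or cited in this rational form. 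Second, in the base case your Lang-type argument identifies the set of \emph{all} geometric points, $X_{\tau^d}(\tau^d)(\mathbf{k})\cong J/(J\cap I)$ of cardinality $n$; by itself this only yields $\sharp X_{\tau^d}(\tau^d)(\mathbb{F}_q)\leqslant n$. To conclude you must add that these $n$ points are precisely the cosets $\tau^{jd}I$ for $0\leqslant j<n$ (the powers of $\tau^d$ lie in $J$ and represent all classes of $J/(J\cap I)$, since $\kappa(\tau^{jd})=jd$ runs over $\mathbb{Z}/n$ when $\gcd(d,n)=1$), and each such coset is $\sigma$-stable, hence $\mathbb{F}_q$-rational. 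By contrast, the finiteness you single out as ``the crux'' is not an issue at all: it propagates automatically through the induction from the finite base cases.
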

Applying Proposition \ref{rational} to the group $G=PGL_3$, we can obtain an explicit formula.
\begin{cor}
Let $G=PGL_3$ be split over $F$. Let $\tau\in G(L)$, then for any $\tilde{w}\in\widetilde{W}$ we have\[\sharp X_{\tilde{w}}(\tau)(\mathbb{F}_q)=\left\{
 \begin{array}{ll}
 3q^{\frac{\ell(\tilde{w})}{2}}, & \tilde{w}\in\mathbb{O}_{id,\tau}\\
 3\lceil\frac{\ell(\tilde{w})-6i+3}{4}\rceil q^{\frac{\ell(\tilde{w})-1}{2}}(q-1), & \tilde{w}\in\mathbb{O}_{i,\tau},\ i\in\mathbb{N}_+\ and\ \ell(\tilde{w})\geqslant6i-1\\
 3\lceil\frac{\ell(\tilde{w})-6i+1}{4}\rceil q^{\frac{\ell(\tilde{w})-1}{2}}(q-1), & \tilde{w}\in\mathbb{O}_{1-i,\tau}\ and\ \ell(\tilde{w})\geqslant6i+1\\
 0, & otherwise.
  \end{array}
  \right.\]
\end{cor}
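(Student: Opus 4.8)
The plan is to apply He's counting formula (Proposition \ref{rational}) verbatim, with $n=3$ and the superbasic element $x=\tau$, and then to read off the single class polynomial $f_{\tilde{w},\mathbb{O}_{id,\tau}}$ from the computations already carried out for $W_a\tau$. First I would record that $\tau\in\widetilde{W}$ is superbasic for $G=PGL_3$: it generates $\Omega\cong\mathbb{Z}/3\mathbb{Z}$, has length $0$, and since $\gcd(1,3)=1$ it has no representative in a proper Levi subgroup. Its $\widetilde{W}$-conjugacy class is the minimal-length-$0$ class $\mathbb{O}_{id,\tau}$ of Lemma \ref{lengtau0}. Proposition \ref{rational} then yields, for every $\tilde{w}\in\widetilde{W}$,
\[
\sharp X_{\tilde{w}}(\tau)(\mathbb{F}_q)=3\,q^{\ell(\tilde{w})/2}\,f_{\tilde{w},\mathbb{O}_{id,\tau}}\big|_{v=\sqrt{q}},
\]
so the entire corollary reduces to extracting the coefficient $f_{\tilde{w},\mathbb{O}_{id,\tau}}$ and specializing $v=\sqrt{q}$.

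Next I would isolate exactly the cases in which $f_{\tilde{w},\mathbb{O}_{id,\tau}}\neq0$. By Theorem \ref{empty}(2) the affine Deligne-Lusztig variety $X_{\tilde{w}}(\tau)$ is nonempty precisely when $\tilde{w}\in\mathbb{O}_{id,\tau}$, when $\tilde{w}\in\mathbb{O}_{i,\tau}$ with $i\in\mathbb{N}_+$ and $\ell(\tilde{w})\geqslant6i-1$, or when $\tilde{w}\in\mathbb{O}_{1-i,\tau}$ with $\ell(\tilde{w})\geqslant6i+1$; in all other cases the coefficient vanishes, giving the last line of the corollary. For $\tilde{w}\in\mathbb{O}_{id,\tau}$ each of the propositions computing such representatives returns $f_{\tilde{w},\mathbb{O}_{id,\tau}}=1$ (consistent with $\tau$ being a minimal length representative and $\mathbb{O}_{id,\tau}$ the unique length-$0$ class), so there $\sharp X_{\tilde{w}}(\tau)(\mathbb{F}_q)=3\,q^{\ell(\tilde{w})/2}$. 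For the two remaining families Propositions \ref{cpoddtau} and \ref{cpodd1tau} give $f_{\tilde{w},\mathbb{O}_{id,\tau}}=c\,(v-v^{-1})$ for an integer $c$ that equals $k$ or $k+1$ according to the residue of $\ell(\tilde{w})$.

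Finally I would specialize the variable. Since $(v-v^{-1})|_{v=\sqrt{q}}=(q-1)/\sqrt{q}$, we have $q^{\ell(\tilde{w})/2}(v-v^{-1})|_{v=\sqrt{q}}=q^{(\ell(\tilde{w})-1)/2}(q-1)$, so the two nontrivial families contribute $3c\,q^{(\ell(\tilde{w})-1)/2}(q-1)$, matching the shape of the asserted formula. The step I expect to be the genuine obstacle is verifying that the branch-dependent integer $c$ assembles into the uniform closed forms $c=\lceil(\ell(\tilde{w})-6i+3)/4\rceil$ for the $\mathbb{O}_{i,\tau}$ family and $c=\lceil(\ell(\tilde{w})-6i+1)/4\rceil$ for the $\mathbb{O}_{1-i,\tau}$ family. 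This is a short but careful arithmetic check: writing $\ell(\tilde{w})=6i-3+4k$ gives $(\ell(\tilde{w})-6i+3)/4=k$ with ceiling $k$, while $\ell(\tilde{w})=6i-1+4k$ gives $(\ell(\tilde{w})-6i+3)/4=k+\tfrac12$ with ceiling $k+1$, reproducing Proposition \ref{cpoddtau} in each case; the analogous substitution $i'=i-1$ in Proposition \ref{cpodd1tau} handles $\mathbb{O}_{1-i,\tau}$, and the boundary value $c=0$ is seen to coincide exactly with the emptiness threshold of Theorem \ref{empty}(2).
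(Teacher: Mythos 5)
Your proposal is correct and follows essentially the same route as the paper's own proof, which obtains the corollary directly from Proposition \ref{rational} applied to the superbasic element $\tau$ (whose conjugacy class is $\mathbb{O}_{id,\tau}$) together with the class polynomials of Propositions \ref{cpoddtau} and \ref{cpodd1tau}. The details you supply — superbasicity of $\tau$, the vanishing cases, the ceiling arithmetic matching $k$ and $k+1$ across the two length residues, and the specialization $(v-v^{-1})|_{v=\sqrt{q}}=(q-1)/\sqrt{q}$ — are exactly the routine verifications the paper leaves implicit.
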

\begin{proof}
It follows directly from Proposition \ref{rational} and Propositions \ref{cpoddtau}, \ref{cpodd1tau}.
\end{proof}
\subsection{The GHKR conjecture}
Before we recall a conjecture of G\"ortz-Haines-Kottwitz-Reuman, let give some notations first. Let $G(L)$ be as in Chapter 2. For any $b\in G(L)$, we denote by $J_b$ the $\sigma$-centralizer of $b$ (i.e. $J_b=\{g\in G(L)\mid gb\sigma(g)^{-1}=b\}$). By definition, the defect of $b$ ($def(b)$) is the $F$-rank of $G$ minus the $F$-rank of $J_b$ (i.e. $def(b)=rk_F G-rk_F J_b$). We follow \cite{Ha} Lemma 4.6 to calculate the defect of $b$.\\
The G\"ortz-Haines-Kottwitz-Reuman conjecture is stated as:
\begin{conj}\label{conjecture}
Let $b\in G(L)$ and $b'$ be a basic element in $G(L)$ such that $\kappa(b)=\kappa(b')$. Then for $\tilde{w}\in\widetilde{W}$ with sufficiently large length, $X_{\tilde{w}}(b)\neq\emptyset$ if and only if $X_{\tilde{w}}(b')\neq\emptyset$. In this case,\[\dim X_{\tilde{w}}(b)=\dim X_{\tilde{w}}(b')-\langle\nu_b,\ \rho\rangle+\frac{1}{2}(def(b')-def(b)).\]
\end{conj}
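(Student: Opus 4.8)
The plan is to obtain Conjecture \ref{conjecture} for $GL_3$, $U_3$ and $\mathbb{D}_3^\times$ as a direct consequence of the ``Dimension$=$Degree'' theorem (Theorem \ref{DimDeg}) together with the explicit class polynomials of Section \ref{Class} and the dimension formulas assembled in \S\ref{ADLVbasic}--\S\ref{ADLVnonbasic}. First I would reduce everything to $PGL_3$: by the identifications in \S\ref{ADLVdiv}, the affine Deligne-Lusztig varieties for $GL_3$ and for $\mathbb{D}_3^\times$ are isomorphic to those for $PGL_3$ (split, respectively twisted), and $f_{\tilde w,\mathbb O}=f_{\pi(\tilde w),\pi(\mathbb O)}$, so both the emptiness pattern and the dimension are inherited. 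It therefore suffices to verify the two assertions of the conjecture for the split group $PGL_3$ and for the quasi-split unitary group $U_3$.

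For the emptiness/nonemptiness equivalence I would use the criterion furnished by Theorem \ref{DimDeg}: $X_{\tilde w}(b)\neq\emptyset$ if and only if $f_{\tilde w,\mathbb O}\neq0$ for some $\delta$-conjugacy class $\mathbb O$ with $f(\mathbb O)=f(b)$. Since $\kappa(b)=\kappa(b')$, the admissible classes for $b$ and the basic class of $b'$ lie in one fiber of the Kottwitz map. Reading the class-polynomial tables (Propositions \ref{cpo1} and \ref{cpci} in the split case, Propositions \ref{cpdelta1}--\ref{cpdelta2m} in the unitary case), one sees that once $\ell(\tilde w)$ passes the thresholds appearing there, the support of $f_{\tilde w,-}$ meets $\kappa^{-1}(\kappa(b))$ exactly when it meets the basic class; this gives $X_{\tilde w}(b)\neq\emptyset\Leftrightarrow X_{\tilde w}(b')\neq\emptyset$ for $\tilde w$ of sufficiently large length.

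For the dimension identity, assuming nonemptiness, I would subtract the two instances of Theorem \ref{DimDeg}. The $\tfrac12\ell(\tilde w)$ contributions cancel, so that
\[
\dim X_{\tilde w}(b)-\dim X_{\tilde w}(b')
=\tfrac12\Big(M_b-M_{b'}\Big)-\big(\langle\bar\nu_b,2\rho\rangle-\langle\bar\nu_{b'},2\rho\rangle\big),
\]
where $M_c=\max_{f(\mathbb O)=f(c)}(\ell(\mathbb O)+\deg f_{\tilde w,\mathbb O})$. Here $\bar\nu_{b'}$ is the central Newton point determined by $\kappa(b')$ and $\bar\nu_b$ is read off from \S\ref{ADLVnonbasic} (for instance $\bar\nu_{\mathbb O_{2m_0,\delta}}=\tfrac{m_0}{2}(\alpha_1+\alpha_2)$). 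Since the explicit dimensions of $X_{\tilde w}(b')$ and $X_{\tilde w}(b)$ have already been tabulated in \S\ref{ADLVbasic} and \S\ref{ADLVnonbasic}, this reduces to evaluating the defects $def(b),def(b')$ by \cite[Lemma 4.6]{Ha} and checking, case by case, the arithmetic identity $-\langle\nu_b,\rho\rangle+\tfrac12(def(b')-def(b))=\dim X_{\tilde w}(b)-\dim X_{\tilde w}(b')$, where $\nu_b=\bar\nu_b$ denotes the dominant Newton vector.

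The main obstacle is the quasi-split $U_3$ case. There $\delta$ is nontrivial, the defects of the basic and non-basic $\sigma$-conjugacy classes genuinely differ, and the half-integer term $\tfrac12(def(b')-def(b))$ must reproduce exactly the discrepancy between the maximal degree contributions $M_b$ and $M_{b'}$ in the two fibers. Confirming that ``sufficiently large length'' is precisely the threshold beyond which the maximizing class stabilizes---the $6i\pm1$ and $2m+2k$ bounds visible in Propositions \ref{cpci}, \ref{cpoddtau} and \ref{cpdelta2m}---and reconciling this with the defect computation, is where the genuine care is required; the remaining verifications against the tables are routine.
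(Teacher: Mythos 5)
Your proposal follows essentially the same route as the paper's own proof of Theorem \ref{ghkr}: reduce $GL_3$ and $\mathbb{D}_3^{\times}$ to $PGL_3$ via \S\ref{ADLVdiv}, then combine the ``Dimension$=$Degree'' theorem with the class polynomials of \S\ref{Class} (equivalently, the dimension formulas of \S\ref{ADLVbasic}--\S\ref{ADLVnonbasic}) and the defect computation from \cite[Lemma 4.6]{Ha}, verifying the nonemptiness equivalence and the dimension identity case by case for $PGL_3$ and $U_3$. The structure, the reduction, and the final arithmetic check all match the paper's argument, so the proposal is correct as an outline of that proof.
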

\begin{thm}\label{ghkr}
The Conjecture \ref{conjecture} is true for the following groups:\[GL_3(L),\ \ \ PGL_3(L),\ \ \ U_3(L),\ \ \ \mathbb{D}_3^{\times}.\]
\end{thm}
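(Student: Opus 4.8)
The plan is to deduce Conjecture \ref{conjecture} for all four groups from the explicit data assembled in Sections \ref{ADLVbasic}--\ref{ADLVdiv}, reducing everything to a finite collection of term-by-term comparisons. First I would eliminate $GL_3(L)$ and $\mathbb{D}_3^\times$: by \S\ref{ADLVdiv} the affine Deligne-Lusztig varieties for $GL_3(L)$ (via the projection $\pi$) and for $\mathbb{D}_3^\times$ (via the twist $\sigma'=\mathrm{Ad}(\tau)\circ\sigma$) are isomorphic to those for $PGL_3(L)$, with matching class polynomials $f_{\tilde w,\mathbb O}=f_{\pi(\tilde w),\pi(\mathbb O)}$; since the Newton point, the Kottwitz invariant and the defect are all preserved by these identifications (see \cite{GHN}), the conjecture for $GL_3(L)$ and $\mathbb{D}_3^\times$ is formally equivalent to the one for $PGL_3(L)$. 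It therefore suffices to treat $PGL_3(L)$ and $U_3(L)$, and for those I would invoke Theorem \ref{DimDeg}, which makes both the emptiness/nonemptiness and the dimension of $X_{\tilde w}(b)$ depend only on the datum $(\widetilde W,\delta,\tilde w,f(b))$, i.e. only on the class polynomials already computed in \S\ref{Class}.

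Second, I would organize the comparison by the value of the Kottwitz invariant $\kappa$. In each $\kappa$-stratum I fix the basic representative $b'$ (namely $1,\tau,\tau^2$ in the split case, and $1$ in the quasi-split case, where $(P/Q)_\delta$ is trivial), and I run $b$ over every nonbasic $\delta$-conjugacy class sharing that $\kappa$: the classes $\mathbb O_\lambda,\ \mathbb C_i,\ \mathbb C'_i$ together with their twisted analogues $\mathbb O_{\lambda,\tau},\ \mathbb O_{i,\tau}$ in the split case, and the classes $\mathbb O_{2m,\delta},\ \mathbb O_{3,\delta}$ in the quasi-split case. For the emptiness/nonemptiness half I would match, for $\ell(\tilde w)$ large, the nonemptiness criteria of the basic theorems in \S\ref{ADLVbasic} against the support $\{\tilde w: f_{\tilde w,\mathbb O(b)}\neq 0\}$ read off from \S\ref{Class}; the thresholds of the form $\ell(\tilde w)\geqslant 6i\pm c$ appearing there are exactly what makes ``sufficiently large length'' precise, since beyond them the class $\mathbb O(b)$ appears in the support of $f_{\tilde w,-}$ for precisely those $\tilde w$ with $X_{\tilde w}(b')\neq\emptyset$, so the two patterns coincide.

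Third, for the dimension half I would subtract the two explicit formulas. Writing each in the normalized form $\dim X_{\tilde w}(x)=\tfrac12\bigl(\ell(\tilde w)+\ell(\mathbb O(x))+c_x\bigr)-\langle\bar\nu_x,2\rho\rangle$, where $\mathbb O(x)$ is the straight class of $x$ and $c_x\in\{0,1,2,3\}$ is a correction depending only on the stratum of $\tilde w$, and using $\langle\bar\nu_b,2\rho\rangle=2\langle\bar\nu_b,\rho\rangle$ together with $\bar\nu_{b'}=0$ for the basic representative, the equality predicted by Conjecture \ref{conjecture} reduces to the single numerical identity
\[
\ell(\mathbb O(b))+\bigl(c_b-c_{b'}\bigr)-\langle\bar\nu_b,2\rho\rangle=\operatorname{def}(b')-\operatorname{def}(b)
\]
for each nonbasic class $b$ and each stratum of $\tilde w$. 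I would verify this stratum by stratum: the Newton vectors $\bar\nu_b$ and the lengths $\ell(\mathbb O(b))$ are read off from the Newton map of \S\ref{DD} and the classifications of \S\ref{Class}, while the defects $\operatorname{def}(b),\operatorname{def}(b')$ are computed from \cite[Lemma 4.6]{Ha}. For a regular translation class, for instance, one has $\bar\nu_b=\lambda_0$ with $\lambda_0\in P_+\cap Q_{sh}$, $\ell(\mathbb O(b))=\langle\lambda_0,2\rho\rangle$, and $\operatorname{def}(b)=\operatorname{def}(b')=0$, so the identity holds trivially; the remaining classes are settled by the same arithmetic.

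The step I expect to be the genuine obstacle is this last verification of the defect identity across all strata. The basic defect $\operatorname{def}(b')$ is nonzero precisely for the superbasic representatives $\tau,\tau^2$ (where $J_{b'}$ is anisotropic modulo center), and there the shift on the right-hand side must be matched exactly by the jump in the correction constants $c_b-c_{b'}$; keeping the Newton and defect bookkeeping consistent through the $\tau$-twisted strata is delicate. The situation is hardest in the quasi-split $U_3$ case: the classes $\mathbb O'_{1,\delta}$ and $\mathbb O_{3,\delta}$ lying in the shrunken Weyl chambers do not admit a uniform class-polynomial formula (only the sample computations at the end of \S\ref{Class} are available), so the comparison cannot be made from a closed expression and must instead be checked directly against those computed degrees. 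Finally, the role of ``sufficiently large length'' is to discard the finitely many short elements where the nonemptiness pattern or the dimension formula degenerates, so that only the stable regime governed by the identity above remains.
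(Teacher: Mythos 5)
Your proposal is correct and takes essentially the same route as the paper: reduce $GL_3(L)$ and $\mathbb{D}_3^{\times}$ to $PGL_3(L)$ via \S\ref{ADLVdiv}, then verify Conjecture \ref{conjecture} for $PGL_3(L)$ and $U_3(L)$ case-by-case using Theorem \ref{DimDeg}, the explicit dimension formulas of \S\ref{ADLVbasic}--\S\ref{ADLVnonbasic}, and the defect computed from \cite[Lemma 4.6]{Ha}. Your single numerical identity $\ell(\mathbb{O}_b)+(c_b-c_{b'})-\langle\bar{\nu}_b,2\rho\rangle=def(b')-def(b)$ is a tidier, more systematic repackaging of the term-by-term comparison that the paper carries out in its two worked examples (e.g.\ $b\leftrightarrow\mathbb{O}_{\lambda_0}$ against $b'=1$, where $\ell(\mathbb{O}_{\lambda_0})=\langle\lambda_0,2\rho\rangle$ and the defects agree, and $b\leftrightarrow\mathbb{O}_{\lambda_0,\tau}$ against $b'=\tau$) before declaring the remaining cases similar and omitting them.
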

\begin{proof}
By \S \ref{ADLVdiv}, it is enough for us to check it for the group $PGL_3(L)$ and $U_3(L)$. By \S \ref{ADLVbasic}, we know the emptiness/nonemptiness pattern for basic $b'$. For nonbasic $b$, using the Dimension$=$Degree theorem and class polynomials in Chapter 3, if $\tilde{w}\in\widetilde{W}$ with sufficiently large length, we check case-by-case that $X_{\tilde{w}}(b)\neq\emptyset$ if and only if $X_{\tilde{w}}(b')\neq\emptyset$. If they are nonempty, it remains to check the comparison of their dimensions and we check case-by-case as well. For example, let $G(L)=PGL_3(L)$ and $b'=1$. If $b$ corresponds to $\mathbb{O}_{\lambda_0}$, where $\lambda_0\in P_+\cap Q_{sh}$. For those $\tilde{w}\in\widetilde{W}$ with sufficiently large length, and $X_{\tilde{w}}(b)\neq\emptyset$, we have\[\dim X_{\tilde{w}}(b)-\dim X_{\tilde{w}}(1)=\frac{1}{2}\ell(\mathbb{O}_{\lambda_0})-\langle\lambda_0,\ 2\rho\rangle.\]Since $\lambda_0\in P_+$, $\ell(\mathbb{O}_{\lambda_0})=\ell(t^{\lambda_0})=\langle\lambda_0,\ 2\rho\rangle$. By direct calculation, we have $def(1)=def(b)$. Thus\[\dim X_{\tilde{w}}(b)=\dim X_{\tilde{w}}(1)-\langle\nu_b,\ \rho\rangle+\frac{1}{2}(def(1)-def(b)).\]If $b'=\tau$ and $b$ corresponds to $\mathbb{O}_{\lambda_0,\tau}$, where $\lambda_0\in P_+\cap Q_{sh}$ and $\lambda_0\neq (2i-1)\alpha_1+i\alpha_2$ for all $i\in\mathbb{N}_+$. Thus we have\[\dim X_{\tilde{w}}(b)-\dim X_{\tilde{w}}(\tau)=\frac{1}{2}(\ell(\mathbb{O}_{\lambda_0,\tau})+2)-\langle\nu_{\mathbb{O}_{\lambda_0,\tau}},\ 2\rho\rangle.\]In this case, we have $\nu_b=\nu_{\mathbb{O}_{\lambda_0,\tau}}=\lambda_0-\frac{1}{3}(\alpha_1+2\alpha_2)$, $\ell(\lambda_0)=\ell(\mathbb{O}_{\lambda_0,\tau})+2$, and $def(\tau)-def(b)=2$, thus\[\dim X_{\tilde{w}}(b)=\dim X_{\tilde{w}}(\tau)-\langle\nu_b,\ \rho\rangle+\frac{1}{2}(def(\tau)-def(b)).\]For other cases, the methods are quite similar and we will omit them.
\end{proof}
\subsection{Affine Deligne-Lusztig varieties in the affine Grassmannian}
\ \ \ \ We recall the affine Deligne-Lusztig variety in the affine Grassmannian first. We keep the notations as in Section \S\ref{Prem}. Let $\mathbb{G}$ be the smooth affine group scheme associated with the special vetex of the Bruhat-Tits building of $G$. We denote by $L^+\mathbb{G}(R)=\mathbb{G}(R[[\epsilon]])$ the infinite dimensional affine group scheme. The twisted \emph{affine Grassmannian} is defined by the fpqc quotient $\textbf{Gr}=LG/L^+\mathbb{G}$. We have the Cartan decomposition\[G(L)=\bigsqcup_{\mu\in P_+}L^+\mathbb{G}(\mathbf{k})\epsilon^{\mu} L^+\mathbb{G}(\mathbf{k}),\ \ \ \ \ Gr(\mathbf{k})=\bigsqcup_{\mu\in P_+}L^+\mathbb{G}(\mathbf{k})\epsilon^{\mu} L^+\mathbb{G}(\mathbf{k})/L^+\mathbb{G}(\mathbf{k}).\]
\begin{defn}
Let $b\in G(L)$ and $\mu\in P_+$, the affine Deligne-Lusztig variety $X_{\mu}(b)$ in the affine Grassmannian $Gr$ is defined by\[X_{\mu}(b)(\mathbf{k})=\{g\in G(L)\mid gb\sigma(g)^{-1}\in L^+\mathbb{G}(\mathbf{k})\epsilon^{\mu} L^+\mathbb{G}(\mathbf{k})/L^+\mathbb{G}(\mathbf{k}).\}\]
\end{defn}
Let $w_0$ be the longest element in $W$. We notice that any $W\times W$-coset of $\widetilde{W}$ contains a unique maximal element and this element is of the form $w_0t^{\lambda}$ for some $\lambda\in P_+$. An element in this double coset is of the form $xt^{\lambda}y$ for $x\in W$ and $y\in^{I(\lambda)}W$. Here $^{I(\lambda)}W=\{s_i\in\mathbb{S}\mid \langle\lambda,\ \alpha_i\rangle=0\}$. Let $\tilde{w}=w_0t^{\lambda}$,  and $\lambda\in P_+$. For the special element $w_0t^\lambda$, we have
\begin{thm}[He]\label{dimw0}
Let $\lambda\in P_+$, $x\in W$ and $y\in^{I(\lambda)}W$. For any $b\in G(L)$,\[\dim X_{xt^\lambda y}(b)\leqslant\dim X_{w_0t^\lambda}(b)-\ell(w_0)+\ell(x).\]
\end{thm}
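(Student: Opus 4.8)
The plan is to route the inequality through the ``Dimension$=$Degree'' Theorem \ref{DimDeg}, turning it into a purely combinatorial comparison of lengths and class polynomials, and then to supply the missing degree estimate geometrically by comparing both sides with an affine Deligne--Lusztig variety in the affine Grassmannian. Since the term $\langle\bar\nu_b,2\rho\rangle$ in Theorem \ref{DimDeg} depends only on $b$ and occurs on both sides, the assertion is equivalent to
\[
\max_{\mathbb{O}}\tfrac12\bigl(\ell(xt^\lambda y)+\ell(\mathbb{O})+\deg f_{xt^\lambda y,\mathbb{O}}\bigr)\ \le\ \max_{\mathbb{O}}\tfrac12\bigl(\ell(w_0t^\lambda)+\ell(\mathbb{O})+\deg f_{w_0t^\lambda,\mathbb{O}}\bigr)-\ell(w_0)+\ell(x),
\]
the maxima running over $\delta$-conjugacy classes with $f(\mathbb{O})=f(b)$. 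Choosing on the right the same class $\mathbb{O}$ that realises the maximum on the left, it suffices to prove the pointwise inequality for each such $\mathbb{O}$.

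First I would clear away the length bookkeeping. For $\lambda\in P_+$, $x\in W$ and $y\in{}^{I(\lambda)}W$ one has
\[
\ell(xt^\lambda y)=\ell(x)+\ell(t^\lambda)-\ell(y),\qquad \ell(w_0t^\lambda)=\ell(w_0)+\ell(t^\lambda),
\]
the first because $t^\lambda y$ is a minimal coset representative and left multiplication by $x$ is then length-adding, the second being the case $x=w_0,\ y=1$. After cancelling $\ell(t^\lambda)$ and $\ell(\mathbb{O})$, the pointwise inequality collapses to
\[
\deg f_{xt^\lambda y,\mathbb{O}}-\deg f_{w_0t^\lambda,\mathbb{O}}\ \le\ \ell(x)+\ell(y)-\ell(w_0).
\]
The structural fact behind this is the additivity $\ell(w_0t^\lambda)=\ell(w_0x^{-1})+\ell(xt^\lambda y)+\ell(y^{-1})$, which follows from the two displayed lengths together with $x\le w_0$, and yields the factorisation $T_{w_0t^\lambda}=T_{w_0x^{-1}}\,T_{xt^\lambda y}\,T_{y^{-1}}$ in $\widetilde H$; a naive bound on the degree shift coming from multiplying out the finite-Weyl factors in the cocenter, however, gives only $\ell(y)+\ell(w_0)-\ell(x)$ on the right, which is the wrong side of what is needed. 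Thus the maximality of $w_0t^\lambda$ in its double coset must be used in an essential way.

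The clean way to exploit that maximality is the projection $\pi\colon\mathbf{Fl}=G(L)/I\to\mathbf{Gr}=G(L)/\mathbb{K}$, where $\mathbb{K}=L^+\mathbb{G}$, whose fibres are copies of $\mathbb{K}/I\cong G/B$ of dimension $\ell(w_0)$. By the Cartan and Iwahori--Bruhat decompositions $\mathbb{K}\epsilon^\lambda\mathbb{K}=\bigsqcup_{\tilde w\in Wt^\lambda W}I\tilde wI$, so that $\pi^{-1}(X_\lambda(b))=\bigsqcup_{\tilde w\in Wt^\lambda W}X_{\tilde w}(b)$. The decisive observation is that $\pi(I\,xt^\lambda y\,I/I)=I\,x\epsilon^\lambda\,\mathbb{K}/\mathbb{K}$ depends only on $x$; consequently, inside a fibre $g\mathbb{K}/I\cong G/B$ the locus $X_{xt^\lambda y}(b)\cap g\mathbb{K}/I$ is carried into the Schubert cell of $x$ and has dimension at most $\ell(x)$, giving the uniform bound
\[
\dim X_{xt^\lambda y}(b)\ \le\ \dim X_\lambda(b)+\ell(x),
\]
independent of $y$. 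Applying the same fibration to $w_0t^\lambda$, whose cell $Iw_0t^\lambda I$ is open and dense in $\mathbb{K}\epsilon^\lambda\mathbb{K}$, yields the matching equality $\dim X_{w_0t^\lambda}(b)=\dim X_\lambda(b)+\ell(w_0)$; subtracting the two displays eliminates $\dim X_\lambda(b)$ and produces exactly the claimed inequality.

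The hard part will be the sharpness at the maximal element, namely the equality $\dim X_{w_0t^\lambda}(b)=\dim X_\lambda(b)+\ell(w_0)$: one must show that $\pi$ restricted to $X_{w_0t^\lambda}(b)$ dominates $X_\lambda(b)$ with generically full fibres, i.e. that for a dense set of $g\mathbb{K}\in X_\lambda(b)$ the twisted conjugate $p^{-1}\!\left(g^{-1}b\sigma(g)\right)\sigma(p)$ can be pushed into the open cell $Iw_0t^\lambda I$ as $p$ ranges over $\mathbb{K}/I$. This density statement, together with the uniform fibre bound $\ell(x)$ for a general $xt^\lambda y$, is the technical heart of the argument. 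In the setting of the present paper one may sidestep the general reduction: since the double cosets $Wt^\lambda W$ of type (twisted) $\widetilde A_2$ are explicitly enumerated in \S\ref{Class}, the required inequality $\deg f_{xt^\lambda y,\mathbb{O}}-\deg f_{w_0t^\lambda,\mathbb{O}}\le\ell(x)+\ell(y)-\ell(w_0)$ can be checked directly against the class polynomials computed there, and Theorem \ref{DimDeg} then delivers the statement case by case.
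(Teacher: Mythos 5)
You should know at the outset that the paper contains no proof of Theorem \ref{dimw0}: it is quoted as a result of X.~He (the reference \cite{He3} given immediately afterwards is where it is established), so there is no internal argument to compare against. Measured against He's actual argument, your overall architecture is the right one: the projection $p\colon\textbf{Fl}\to\textbf{Gr}$, the decomposition $p^{-1}(X_\lambda(b))=\bigsqcup_{\tilde w\in Wt^\lambda W}X_{\tilde w}(b)$, a fibrewise bound $\ell(x)$ for $X_{xt^\lambda y}(b)$, and comparison with the fibres over $X_{w_0t^\lambda}(b)$. Your length bookkeeping $\ell(xt^\lambda y)=\ell(x)+\ell(t^\lambda)-\ell(y)$ is correct, as is your observation that the cocenter factorisation $T_{w_0t^\lambda}=T_{w_0x^{-1}}T_{xt^\lambda y}T_{y^{-1}}$ bounds the degree shift in the wrong direction, so that Theorem \ref{DimDeg} alone cannot close the argument.

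The gaps are in the two geometric claims, which are precisely the content of the theorem. First, the fibre bound. What the projection actually gives is this: if $k^{-1}h\sigma(k)\in Ixt^\lambda yI$ with $h=g^{-1}b\sigma(g)$ fixed, then, since $\sigma(k)\in L^+\mathbb{G}$, the single point $hL^+\mathbb{G}\in\textbf{Gr}$ has prescribed relative position with $kI$; so the fibre lies in one stratum of $L^+\mathbb{G}/I$ cut out by relative position with a fixed vertex. Whether that stratum has dimension $\ell(x)$ or $\ell(w_0)-\ell(x)$ depends on a compatibility you never verify: which Borel the Iwahori reduces to, versus which alcove defines the length function, versus the sign in $t^\lambda\mapsto\epsilon^{\pm\lambda}$. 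With the wrong (but equally standard) pairing the stratum has dimension $\ell(w_0)-\ell(x)$ and the argument collapses; that the issue is real can be seen already for $SL_2$ with $\lambda=\alpha^\vee$, where $X_{s_1t^{\alpha^\vee}}(1)$ has dimension $2$ and $X_{\alpha^\vee}(1)$ dimension $1$, so the fibres are one-dimensional, while the stratum computation done with the opposite convention would confine each fibre to a single point. Worse, even with the correct conventions the stratum is an orbit of the parabolic attached to $I(\lambda)$, of dimension $\max_{z\in W_{I(\lambda)}}\ell(xz)$, which strictly exceeds $\ell(x)$ whenever $\lambda$ is non-regular and $x$ is not maximal in $xW_{I(\lambda)}$ — and non-regular $\lambda$ (e.g.\ minuscule) is the case of interest. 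Obtaining the clean $\ell(x)$ requires combining this with the second constraint coming from $y$ and the Frobenius twist, which your sketch does not touch. Second, the inequality $\dim X_{w_0t^\lambda}(b)\geqslant\dim X_\lambda(b)+\ell(w_0)$ does not follow from the openness of $Iw_0t^\lambda I$ in $L^+\mathbb{G}\epsilon^\lambda L^+\mathbb{G}$ alone, since the relevant locus is the image of the twisted map $k\mapsto k^{-1}h\sigma(k)$; you correctly flag this as the technical heart, but then leave it unproven. Finally, the proposed fallback — checking the degree inequality against the class polynomials of \S\ref{Class} — cannot rescue the proof: those computations cover only type (twisted) $\widetilde{A}_2$, whereas the theorem is stated and used for general $G$, and within this paper's logic He's theorem is an input, so such a verification would be circular. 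As it stands, the proposal is a correct outline of the known proof strategy, not a proof.
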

What's more, for the special element $w_0t^\lambda$, there is a complete answer for the emptiness/nonemptiness pattern and dimension formula (see \cite{He3} Theorem 6.1).
\subsection{Leading coefficient of $f_{w_0t^\lambda, \mathbb{O}}$}
Let $p: \textbf{Fl}\longrightarrow \textbf{Gr}$ be the projection. Each fiber of $p$ is isomorphic to $L^+\mathbb{G}/I$, which is of dimension $\ell(w_0)$. Since $p^{-1}X_{\lambda}(b)=\bigsqcup_{\tilde{w}\in Wt^{\lambda} W}X_{\tilde{w}}(b)$ and Theorem \ref{dimw0}, for any $\tilde{w}\in Wt^{\lambda}W$ we have $\dim X_{\tilde{w}}(b)\leqslant\dim X_{w_0t^\lambda}(b)$. Based on information of class polynomials and the reduction method, it is expected that the irreducible components of $X_{\tilde{w}}(b)$ of maximal dimension are controlled by the leading coefficient of the corresponding class polynomial $f_{w_0t^\lambda, \mathbb{O}}$. And we denote by $f_{\tilde{w},\mathbb{O}_b}$ the class polynomial corresponding to $X_{\tilde{w}}(b)$, and it is indicated by the ``Dimension$=$Degree" theorem.

Instead of considering the irreducible components of $X_{\tilde{w}}(b)$ of maximal dimension, we are going to consider the leading coefficient of the corresponding class polynomial.
\begin{prop}
Given $\tilde{w}\in\widetilde{W}$, we denote $N_0$ to be the leading coefficient of $f_{\tilde{w},\mathbb{O}_2}$. We denote $L(f_{\tilde{w},\mathbb{O}})$ to be the leading coefficient of $f_{\tilde{w},\mathbb{O}}$. Moreover, we assume that $\tilde{w}=w_0t^\lambda$, where $\lambda\in P_+$.

(1) If $\lambda=k_0(\alpha_1+\alpha_2)$ with $k_0\in\mathbb{N}$ and it is large enough, then\[L(f_{\tilde{w},\mathbb{O}_b})=\left\{
 \begin{array}{ll}
 N_0-2i, & b\longleftrightarrow\mathbb{O}_{i\alpha_1+2i\alpha_2}\ or\ \mathbb{O}_{2i\alpha_1+i\alpha_2};\\
  N_0-i, & b\longleftrightarrow\mathbb{O}_{\lambda},\ \lambda\in\{i(\alpha_1+\alpha_2)\}\sqcup E_{i(\alpha_1+\alpha_2)}\sqcup E'_{i(\alpha_1+\alpha_2)},\ or\\
 & b\longleftrightarrow\mathbb{C}_i\ or\ \mathbb{C}'_i.\\
  \end{array}
  \right.\]
 Here $i\in\mathbb{N}$ and $k_0$ relates to $i$ is quite large.

(2) If $\lambda=i_0(\alpha_1+2\alpha_2)+k_0(\alpha_1+\alpha_2)$ with $k_0\in\mathbb{N}$ and it is large enough, then\[L(f_{\tilde{w},\mathbb{O}_b})=\left\{
 \begin{array}{ll}
 N_0, & b\longleftrightarrow\mathbb{O}_{i_0\alpha_1+2i_0\alpha_2},\ or\ \mathbb{C}_i,\ \ \mathbb{C}'_i\ (i\leqslant i_0);\\
 N_0-j, & b\longleftrightarrow\mathbb{O}_{\lambda},\ \lambda\in\{(i_0+j)\alpha_1+(2i_0+j)\alpha_2\}\sqcup E_{(i_0+j)\alpha_1+(2i_0+j)\alpha_2}\\
 &\ \ \ \ \ \ \ \ \ \ \ \ \ \ \ \sqcup E'_{(i_0+j)\alpha_1+(2i_0+j)\alpha_2},\ or\\
 & b\longleftrightarrow\mathbb{C}_{i_0+j}\ or\ \mathbb{C}'_{i_0+j}.
  \end{array}
  \right.\]
(3) If $\lambda=i_0(2\alpha_1+\alpha_2)+k_0(\alpha_1+\alpha_2)$ with $k_0\in\mathbb{N}$ and it is large enough, then\[L(f_{\tilde{w},\mathbb{O}_b})=\left\{
 \begin{array}{ll}
 N_0, & b\longleftrightarrow\mathbb{O}_{2i_0\alpha_1+i_0\alpha_2},\ or\ \mathbb{C}_i,\ \ \mathbb{C}'_i\ (i\leqslant i_0);\\
 N_0-j, & b\longleftrightarrow\mathbb{O}_{\lambda},\ \lambda\in\{(2i_0+j)\alpha_1+(i_0+j)\alpha_2\}\sqcup E_{(2i_0+j)\alpha_1+(i_0+j)\alpha_2}\\
 &\ \ \ \ \ \ \ \ \ \ \ \ \ \ \ \sqcup E'_{(2i_0+j)\alpha_1+(i_0+j)\alpha_2},\ or\\
 & b\longleftrightarrow\mathbb{C}_{i_0+j}\ or\ \mathbb{C}'_{i_0+j}.\\
  \end{array}
  \right.\]
\end{prop}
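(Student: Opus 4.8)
The plan is to exploit the fact that $w_0t^{\lambda}$, for each of the three families of $\lambda$, is a (generally non-minimal) element of one of the conjugacy classes whose class polynomials were already tabulated in \S\ref{split1}; the whole computation then amounts to locating the correct class and length-range and reading off leading coefficients. Writing $w_0=s_1s_2s_1$ and $w_0t^{\lambda}=t^{w_0\lambda}s_1s_2s_1$, and using $w_0\alpha_1=-\alpha_2$, $w_0\alpha_2=-\alpha_1$, I would match the translation part against the orbit descriptions of Lemmas \ref{leng1} and \ref{lengi}. This gives $w_0t^{k_0(\alpha_1+\alpha_2)}=t^{-k_0(\alpha_1+\alpha_2)}s_1s_2s_1\in\mathbb{O}_1$ in case (1); $w_0t^{\lambda}=t^{-(2i_0+k_0)\alpha_1-(i_0+k_0)\alpha_2}s_1s_2s_1\in\mathbb{C}'_{i_0}$ in case (2); and, by the $\alpha_1\leftrightarrow\alpha_2$ symmetry, $w_0t^{\lambda}\in\mathbb{C}_{i_0}$ in case (3).

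The next step is the length bookkeeping. From $\langle\alpha_1+\alpha_2,2\rho\rangle=4$ and $\langle\alpha_1+2\alpha_2,2\rho\rangle=\langle2\alpha_1+\alpha_2,2\rho\rangle=6$, together with $\ell(w_0t^{\lambda})=\langle\lambda,2\rho\rangle-3$, I get $\ell(w_0t^{\lambda})=4k_0-3$ in case (1) and $\ell(w_0t^{\lambda})=6i_0+4k_0-3=6i_0+1+4(k_0-1)$ in case (2). Hence case (1) is governed by Proposition \ref{cpo1}(2) with internal parameter $k=k_0$, and case (2) by Proposition \ref{cpc'i}(3) with $i=i_0$ and $k=k_0-1$ (case (3) symmetrically by Proposition \ref{cpci}(3)). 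The hypothesis that $k_0$ be large is precisely what puts us in the stable range of those propositions, where each listed class really occurs and the coefficient of the top power $(v-v^{-1})^{d}$ in $f_{w_0t^{\lambda},\mathbb{O}}$ is an affine function of $k$. In each case the $\mathbb{O}_2$-row supplies this affine function as its leading coefficient, so by definition $N_0=L(f_{w_0t^{\lambda},\mathbb{O}_2})$ equals $k-1=k_0-1$ in case (1) and $k=k_0-1$ in case (2).

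With $N_0$ fixed, the generic entries are immediate. For $b$ whose class $\mathbb{O}_b$ lies in $Q_{sh}$ one has $\mathbb{O}_b$ equal to the named class, and reading the relevant proposition gives $L(f_{w_0t^{\lambda},\mathbb{O}_b})$ in the form $k-(\text{shift})$, which I rewrite as $N_0-(\text{shift})$. This yields the value $N_0-i$ for $\mathbb{O}_{i(\alpha_1+\alpha_2)}$ together with $E_{i(\alpha_1+\alpha_2)}$, $E'_{i(\alpha_1+\alpha_2)}$ and for $\mathbb{C}_i,\mathbb{C}'_i$ in case (1), and the constant value $N_0$ for the $\mathbb{C}_i,\mathbb{C}'_i$ with $i\leqslant i_0$ in cases (2),(3).

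The real content, and the step I expect to be the main obstacle, is the wall data $b\longleftrightarrow\mathbb{O}_{i\alpha_1+2i\alpha_2}$ and $\mathbb{O}_{2i\alpha_1+i\alpha_2}$. These translations lie outside $Q_{sh}$, so they are excluded from all the $E$- and $D$-families indexing \ref{cpo1}, \ref{cpci} and \ref{cpc'i}, and indeed $f_{w_0t^{\lambda},\mathbb{O}_{i(\alpha_1+2\alpha_2)}}$ vanishes. The resolution is furnished by the ``Dimension$=$Degree'' Theorem \ref{DimDeg}: $X_{\tilde{w}}(b)$ is governed by the class in the fibre $f^{-1}(f(b))$ maximizing $\ell(\mathbb{O})+\deg f_{\tilde{w},\mathbb{O}}$, and for a wall Newton point $i(\alpha_1+2\alpha_2)$ this fibre also contains the non-straight class $\mathbb{C}_{2i}$, since $\bar{\nu}_{\mathbb{C}_{2i}}=\frac{2i}{2}(\alpha_1+2\alpha_2)=i(\alpha_1+2\alpha_2)$, and symmetrically $\mathbb{C}'_{2i}$ for the other wall. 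Thus $\mathbb{O}_b=\mathbb{C}_{2i}$ (resp.\ $\mathbb{C}'_{2i}$), whose leading coefficient is obtained by specializing the generic $\mathbb{C}$-formula at index $2i$: this gives $N_0-2i$ in case (1) and $N_0$ in cases (2),(3), matching the statement. Carrying this out rigorously requires (i) checking that the Kottwitz point $\kappa$ also matches, so that $\mathbb{C}_{2i}$ really lies in the fibre; (ii) verifying that $\mathbb{C}_{2i}$ genuinely realizes the maximum over the fibre in the stable range, by comparing $\ell(\mathbb{C}_{2i})+\deg f_{\tilde{w},\mathbb{C}_{2i}}$ with the competing $\ell(\mathbb{O})+\deg f_{\tilde{w},\mathbb{O}}$; and (iii) confirming that the affine-in-$k$ leading coefficient is uniform across the fibre. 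These bookkeeping verifications, rather than any single hard idea, are where the difficulty of the argument concentrates.
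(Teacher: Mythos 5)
Your proposal follows the same route as the paper's own (very terse) proof: write $w_0t^\lambda$ as $t^{w_0\lambda}s_1s_2s_1$, locate its $\widetilde{W}$-conjugacy class ($\mathbb{O}_1$, $\mathbb{C}'_{i_0}$, $\mathbb{C}_{i_0}$ in cases (1), (2), (3)), read the leading coefficients off Propositions \ref{cpo1}, \ref{cpc'i}, \ref{cpci}, and handle $b$ with wall Newton point by identifying $\mathbb{O}_b=\mathbb{C}_{2i}$ (resp.\ $\mathbb{C}'_{2i}$). The paper asserts this last identification without comment; your fibre argument via Theorem \ref{DimDeg}, using $\bar\nu_{\mathbb{C}_{2i}}=i(\alpha_1+2\alpha_2)$ and the triviality of the Kottwitz point on $W_a$, is exactly the justification it omits, so on that point your write-up is more complete than the paper's.

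There are two concrete slips, neither of which derails the conclusion. (a) The length formula is wrong: since $w_0t^\lambda$ is the \emph{longest} element of $Wt^\lambda W$, for dominant regular $\lambda$ one has $\ell(w_0t^\lambda)=\langle\lambda,2\rho\rangle+\ell(w_0)=\langle\lambda,2\rho\rangle+3$; your $\langle\lambda,2\rho\rangle-3$ is the length of $t^\lambda w_0$. Hence case (1) is governed by Proposition \ref{cpo1}(1) with $k=k_0+1$ (so $N_0=k_0+1$), and case (2) by Proposition \ref{cpc'i}(2) with $k=k_0+1$, not the sub-cases and parameter values you cite. You are saved only because adjacent sub-cases of \ref{cpo1} and \ref{cpc'i} have identical structure relative to $N_0$ and the proposition is stated purely in terms of $N_0$; your intermediate claims such as $N_0=k_0-1$ are false. (b) Your assertion that $f_{w_0t^\lambda,\mathbb{O}_{i(\alpha_1+2\alpha_2)}}$ vanishes is correct in case (1), where Proposition \ref{cpo1} lists no wall translation class, but false in cases (2) and (3): for $\tilde w\in\mathbb{C}'_{i_0}$ the wall class $\mathbb{O}_{i_0\alpha_1+2i_0\alpha_2}$ occurs with the nonzero polynomial $(v-v^{-1})$ (first line of Proposition \ref{cpc'i}(2),(3)). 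The identification $\mathbb{O}_b=\mathbb{C}_{2i_0}$ still holds there, but by the comparison in your step (ii) rather than by vanishing: $\ell(\mathbb{C}_{2i_0})+\deg f_{\tilde w,\mathbb{C}_{2i_0}}=(6i_0+1)+2$ exceeds $\ell(\mathbb{O}_{i_0(\alpha_1+2\alpha_2)})+\deg f_{\tilde w,\mathbb{O}_{i_0(\alpha_1+2\alpha_2)}}=6i_0+1$, so the maximum in Theorem \ref{DimDeg} is attained at $\mathbb{C}_{2i_0}$. With (a) corrected and (b) replaced by this comparison, your argument coincides with the paper's proof and is, if anything, more detailed.
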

\begin{proof}
We check (1) and the others are similar. If $b$ corresponds to $\mathbb{O}_{i\alpha_1+2i\alpha_2}$ (or $\mathbb{O}_{2i\alpha_1+i\alpha_2}$), where $i\in\mathbb{N}_+$. Then $\mathbb{O}_b=\mathbb{C}_{2i}$ (or $\mathbb{O}_b=\mathbb{C}'_{2i}$). In this case, $f_{\tilde{w},\mathbb{O}_2}=N_0(v-v^{-1})$ and $f_{\tilde{w},\mathbb{C}_{2i}}=f_{\tilde{w},\mathbb{C}'_{2i}}=(N_0-2i)(v-v^{-1})^2$. If $b$ corresponds to $\mathbb{O}_{\lambda_0}$ where $\lambda_0=i(\alpha_1+\alpha_2)$ (or $\lambda_0\in E_{i(\alpha_1+\alpha_2)}\sqcup E'_{i(\alpha_1+\alpha_2)}$), then $\mathbb{O}_b=\mathbb{O}_{\lambda_0}$, and $f_{\tilde{w},\mathbb{O}_{\lambda_0}}=(N_0-i)(v-v^{-1})^3+(v-v^{-1})$ (or $f_{\tilde{w},\mathbb{O}_{\lambda_0}}=(N_0-i)(v-v^{-1})^3$). If $b$ corresponds to $\mathbb{C}_i$ or$\mathbb{C}'_i$ , then $\mathbb{O}_b=\mathbb{C}_{i}$ or $\mathbb{O}_b=\mathbb{C}'_{i}$, and $f_{\tilde{w},\mathbb{C}_{i}}=f_{\tilde{w},\mathbb{C}'_{i}}=(N_0-i)(v-v^{-1})^2$. Thus (1) is proved.
\end{proof}

\section*{Acknowledgment}
I would like to express my deepest gratitude to my thesis supervisor Professor Xuhua He, who has supported my PhD study generously and kindly, given me careful guidance, and shared his brilliant ideas with me. Also I would like to thank Department of Mathematics, The Hong Kong University of Science and Technology for providing me with postgraduate studentships.

\end{document}